\newcommand{%
    
    \import{./fig/}{.pdf_tex}
}[1]{%
    
    \import{./fig/}{#1.pdf_tex}
}
 \newcommand{\R}{\mathbb{R}}
 \newcommand{\N}{\mathbb{N}}
 \newcommand{\eps}{\varepsilon}
 \newcommand{\lam}{\lambda}
 \newcommand{\Om}{\Omega}
 \newcommand{\T}[1]{\Theta\Rnd{#1}}
 \renewcommand{\t}{\theta}
 \newcommand{\sig}{\sigma}
 \newcommand{\g}{\gamma}
 \newcommand{\del}{\delta}
 \newcommand{\al}{\alpha}
 \newcommand{\be}{\beta}
\newcommand{\Once}{\mathsf{Once}}
    \newcommand{\All}{\mathsf{All}}
 \newcommand{\Nor}[2]{\mathcal{N}\left(#1, #2\right)}
 \newcommand{\Exp}[1]{\exp\left(#1\right)}
 \newcommand{\Abs}[1]{\left|#1\right|}
 \renewcommand{\Box}[1]{\left[#1\right]}
 \newcommand{\Rnd}[1]{\left(#1\right)}
 \newcommand{\Bra}[1]{\left\{#1\right\}}
 \renewcommand{\bar}[1]{\overline{#1}}
 \newcommand{\E}[1][\@nil]{%
 	\def\tmp{#1}%
 	\ifx\tmp\@nnil
 	\mathbb{E}
 	\else
 	\mathbb{E}\left[#1\right]
 	\fi}
 \renewcommand{\P}[1][\@nil]{%
 	\def\tmp{#1}%
 	\ifx\tmp\@nnil
 	\mathbb{P}
 	\else
 	\mathbb{P}\Rnd{#1}
 	\fi}
 \renewcommand{\hat}[1]{\widehat{#1}}
 \newcommand{\mnorm}[1]{{\left\vert\kern-0.25ex\left\vert\kern-0.25ex\left\vert #1 
 		\right\vert\kern-0.25ex\right\vert\kern-0.25ex\right\vert}}
 \newcommand{\1}{\mathbf{1}}
 \newcommand{\floor}[1]{\lfloor #1 \rfloor}
 \newcommand{\B}{\mathbb{B}}
 \newcommand{\cG}{\mathcal{G}}
 \newcommand{\cP}{\mathcal{P}}
 \newcommand{\cB}{\mathcal{B}^\mathsf{e}}
 \newcommand{\f}[2]{\frac{#1}{#2}}
 \newcommand{\tv}[1]{d_{\mathrm{TV}}\Rnd{#1}}
 \newcommand{\rlem}[1]{Lemma \ref{#1}}
 \newcommand{\ceil}[1]{{\lceil #1 \rceil}}
 \renewcommand{\floor}[1]{{\lfloor #1 \rfloor}}
 \renewcommand{\l}{\ell}
 \renewcommand{\c}{\mathsf{c}}
\newcommand{\dt}{\mathrm{d}t}
\newcommand{\FS}{\mu^{\mathrm{FS}}}
\newcommand{\aFS}{\mu^{a,\mathrm{FS}}}
\newcommand{\q}[1]{{#1}^{\f{1}{4}}}
\newcommand{\p}{\mathsf{p}}
\renewcommand{\B}{\mathbf{B}}
\newcommand{\A}{\mathbb{A}}
\newcommand{\Bav}[3]{{\B^{#1}_{#2}[#3]%
}}
\newcommand{\Lav}[3]{{\L^{#1}_{#2}[#3]
}}
\newcommand{\gtr}{\succ}
\renewcommand{\L}{{\mathcal{L}}}
 \newtheorem{thm}{Theorem}[section]
 \newtheorem{lem}[thm]{Lemma}
 \newtheorem{ppn}[thm]{Proposition}
 \newtheorem{cor}[thm]{Corollary}
 \newtheorem{defn}[thm]{Definition}
 \theoremstyle{definition}
 \newtheorem{rmk}[thm]{Remark}
\numberwithin{thm}{section}
\numberwithin{equation}{section}
\numberwithin{figure}{section}
\newcommand{\cC}{\ensuremath{\mathcal C}}
\newcommand{\cU}{\ensuremath{\mathcal U}}
\newcommand{\calB}{\mathcal{B}}
\newcommand{\calC}{\mathcal{C}}
\newcommand{\calL}{\mathcal{L}}
\newcommand{\calP}{\mathcal{P}}
\newcommand{\calS}{\mathcal{S}}
\newcommand{\calT}{\mathcal{T}}
\newcommand{\calU}{\mathcal{U}}
\newcommand{\bbA}{\mathbb{A}}
\newcommand{\bbE}{\mathbb{E}}
\newcommand{\bbI}{\mathbb{I}}
\newcommand{\bbN}{\mathbb{N}}
\newcommand{\bbP}{\mathbb{P}}
\newcommand{\bbR}{\mathbb{R}}
\newcommand{\bbS}{\mathbb{S}}
\newcommand{\bbT}{\mathbb{T}}
\newcommand{\sfe}{\mathsf e}
\newcommand{\sfl}{{\mathsf l}}
\newcommand{\ux}{\underline{x}}
\newcommand{\uy}{\underline{y}}
\newcommand{\uX}{\underline{X}}
\newcommand{\uY}{\underline{Y}}
\newcommand{\uZ}{\underline{Z}}
\newcommand{\uW}{\underline{W}}
\newcommand{\lb}{\left(}
\newcommand{\rb}{\right)}
\newcommand{\lbr}{\left\{}
\newcommand{\rbr}{\right\}}
\def\({\left(}
\def\){\right)}
\newcommand{\e}{\varepsilon}
\newcommand{\ind}{\mathbf{1}}
\newcommand{\tc}{\, |\, } 
\newcommand{\si}{\sigma}
\begin{document}

\title{Characterizing Gibbs states  for area-tilted Brownian lines}

\author{Mriganka Basu Roy Chowdhury, Pietro Caputo and Shirshendu Ganguly}
\address{Mriganka Basu Roy Chowdhury\\ University of California, Berkeley}
\email{mriganka\_brc@berkeley.edu}
\address{Pietro Caputo\\ Universit\`{a} Roma Tre}
\email{pietro.caputo@uniroma3.it}
\address{Shirshendu Ganguly\\ University of California, Berkeley}
\email{sganguly@berkeley.edu}

\begin{abstract}
Gibbsian line ensembles 
are families of  Brownian lines 
arising in many natural contexts such as the level curves of three dimensional Ising interfaces, the solid-on-solid model, multi-layered polynuclear growth, trajectories of eigenvalues as the entries of the corresponding matrix perform diffusions, to name a few. In particular, %
line ensembles with area tilt potentials play a significant role in the study of wetting  and entropic repulsion phenomena.   
An 
important example 
is a class of non-intersecting
Brownian lines 
above a hard wall,
which are subject to geometrically growing area tilt potentials, which we call the $\lambda$-\emph{tilted line ensemble}, where $\lambda>1$ is the parameter governing the geometric growth. The model was introduced in \cite{CIW18} as a putative scaling limit of the level lines of entropically repulsed solid-on-solid interfaces. While this model has infinitely many lines and is non-integrable, the case of  the single line,  known as the Ferrari-Spohn diffusion, is well studied  \cite{ferrarispohn2005}. 
In this article we address the problem of classifying \emph{all} Gibbs measures for  $\lambda$-tilted line ensembles.  A stationary infinite volume Gibbs measure was already constructed  in \cite{CIW18,CIW19}, and the uniqueness of this translation invariant Gibbs measure was established in  \cite{geomarea}. 
Our main result here is a strong characterization for 
 Gibbs measures of $\lambda$-tilted line ensembles in terms of a two parameter family. Namely, we show that the extremal Gibbs measures are completely characterized by the behavior of the top line $X^1$ at positive and negative infinity, which must satisfy the parabolic growth  
$$X^{1}(t)=t^2+L\,|t|\,\mathbf{1}_{t<0}+R\,|t|\,\mathbf{1}_{t>0}+o(|t|)\,,\quad  \text{ as } \;\;|t| \to \infty,$$ where $L,R$ are real parameters, including $-\infty,$ with $L+R<0$, while the lower lying lines 
remain uniformly confined.  The case $L=R=-\infty$ corresponds to the unique translation invariant Gibbs measure. The result bears some analogy to the Airy wanderers, an integrable model introduced and studied in \cite{10.1214/09-AOP493} in the context of the Airy line ensemble, except in this case only the top line can wander, owing to the geometrically increasing nature of the area tilting factor. 
A crucial step in our proof, which  holds independent significance, is a complete characterization of the extremal Gibbs states associated to a single area-tilted Brownian excursion, describing the equilibrium states of the top line when the lower lines are absent, and which can be interpreted as non-translation invariant versions of the Ferrari-Spohn diffusion.
\end{abstract}
\maketitle{}

\newcommand\PAR{{\textsf{PBR}}}

\setcounter{tocdepth}{1}
\tableofcontents

\thispagestyle{empty}

\section{Introduction and main results}
Random fields consisting of interacting Brownian lines form a significant  class of probabilistic models referred to as line ensembles (LE). These emerge as natural limiting objects of several fundamental  models ranging from statistical mechanics to random matrix theory.  

A key example is Dyson Brownian Motion (DBM)  describing the evolution of the eigenvalues of random matrices from the Gaussian unitary ensemble. Here the interaction is given by the constraint of non-intersection. In a breakthrough work \cite{corwinhammond}, Corwin and Hammond constructed the edge scaling limit of DBM,  a fundamental limiting random field known as the Airy line ensemble, which is a central object in the Kardar-Parisi-Zhang (KPZ) universality class \cite{prahofer2002scale,matetski2021kpz,dauvergne2022directed}. While the determinantal structure of DBM was used to prove finite dimensional convergence, the requisite tightness and regularity estimates were obtained using probabilistic and geometric ideas exploiting a Gibbs resampling invariance property of DBM. 
A family of modified  Airy processes, termed as the Airy wanderers, giving rise to different universality classes, was introduced in \cite{10.1214/09-AOP493}.

\begin{figure}[ht]
        \centering{\qquad\qquad\quad
                \input{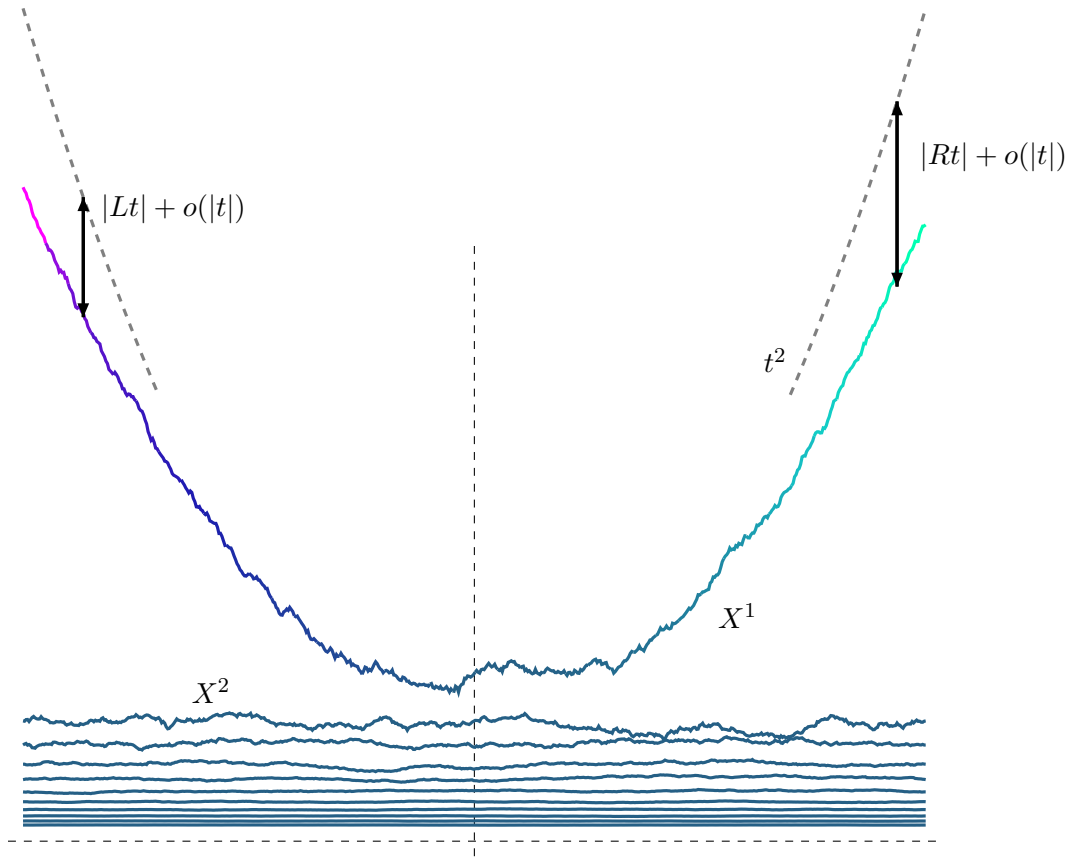}
        \caption{Illustration of an extremal Gibbs state where the top line satisfies  $X^{1}(t)=t^2+L\,|t|\,\mathbf{1}_{t<0}+R\,|t|\,\mathbf{1}_{t>0}+o(|t|)$ as $|t| \to \infty,$ and the lower lines are uniformly confined.}
        }\label{fig:gibbs}
    \end{figure}
 
 A related class of models is obtained by adding an area tilt potential which attracts each Brownian line to an underlying hard wall, while keeping the non-intersection constraint. The case of a single line is already an important one. The main object in this case was constructed by Ferrari and Spohn \cite{ferrarispohn2005} as the scaling limit of Brownian motion with circular or parabolic barriers, and is known as the Ferrari-Spohn (FS) diffusion. It was then shown that the FS diffusion is the natural limiting object for a class of interface models arising in the analysis of the wetting phenomenon in statistical mechanics \cite{ioffeshlosmanvelenik,prewet1,prewet2}.
 
If several lines are considered,  and all lines are subject to the same area tilt potential one obtains another interesting line ensemble known as the Dyson-Ferrari-Spohn (DFS) ensemble, see \cite{ioffevelenikwachtel,ioffe2016low,ferrari2023airy2,dimitrov2023uniform}. The exchangeability of the model, through the Karlin-McGregor formula, allows a determinantal description of the DFS ensemble. 
If instead the area tilts have varying strengths, then the integrability is lost. An example is the model with geometric area tilts introduced in \cite{CIW18, CIW19}, referred to as the $\lambda$-tilted LE, where the $i$-th line, counting from the top, feels a strength proportional to $\lambda^i$, and $\lambda>1$ is a fixed  parameter determining the rate of geometric growth of the strength of the tilting.
The $\lambda$-tilted LE is believed to be the limiting object for the level lines of entropically repulsed interfaces in statistical mechanics models such as the 3-dimensional Ising and the (2+1)-dimensional solid-on-solid (SOS) model at  low temperature, see \cite{bricmont1986random,caputoetal2016,gheissari2023entropic,serio2023scaling,caddeoetal}.  We refer to \cite{CIW18} for a more detailed discussion of the motivation behind the $\lambda$-tilted LE. We also refer to  \cite{ferrari2003step,johansson2005arctic,borodin2010gibbs,ioffe2019formation} for further examples of line ensembles arising as scaling limits of  the level lines of microscopic crystal shapes. 

Due to the rapid increase of the strength of the attraction to the hard wall of the low lying lines, the $\lambda$-tilted LE exhibits a strong form of confinement and there exists a well defined stationary line ensemble with infinitely many lines which can be defined by taking infinite volume limits of fields induced by zero or free boundary conditions, see \cite{CIW18, CIW19,DLZ}. Furthermore, several properties such as uniqueness, ergodicity and optimal tail estimates for this limiting object were recently derived in \cite{geomarea}.   

Since the seminal work of Corwin and Hammond \cite{corwinhammond} it has been become evident that a compelling approach to studying line ensembles is to treat them as Gibbs measures. The local resampling properties inherited by the Markov property of Brownian motion, known as Brownian-Gibbs in this context, is indeed one of the key tools for analyzing such random fields. This leads naturally to the investigation of questions of uniqueness, ergodicity, breaking of translation invariance symmetry and characterization of the extremal Gibbs measures. While these are well studied questions  within the context of random fields indexed by a discrete parameter such as lattice spin systems and random interfaces \cite{georgii2011gibbs,sheffield2005random}, their investigation in the setting of line ensembles is relatively new. We refer to the recent extensive work of Aggarwal and Huang \cite{aggarwal2023strong} for a strong characterization result in the case of the Airy line ensemble.

The aim of this paper is to give a complete characterization of the extremal Gibbs measures for area-tilted line ensembles. The main focus will be on %
$\lambda$-tilted LE consisting of infinitely many lines with geometric area tilt. While the case of stationary Gibbs measures was settled in \cite{geomarea}, which proves a strong uniqueness statement for such line ensembles, the case of non-stationary Gibbs measures remained fully open. Here we reveal an explicit geometric structure which characterizes the non-stationary extremal Gibbs measures in terms of parabolically growing boundary conditions.   
Related results can be obtained for the DFS ensemble with a finite number of lines as well, but we have chosen not to include them in this paper to maintain a manageable length. See however the remark at the end of the introduction for a more specific comment.

We now turn to a detailed description of our main results. A key ingredient in our analysis involves understanding the behavior of the top line of the $\lambda$-tilted LE. Therefore, the first step of our analysis characterizes the possible equilibrium states of the top line when the lower lines are absent. This complete characterization of the extremal Gibbs states associated to a single area-tilted path, which will serve as one of the key inputs for our multi-line analysis, is our first main result. 

\subsection{Gibbs measures for the area-tilted one-line ensemble} 
We start with the definition of the one-line ensemble corresponding to area-tilted Brownian excursions. We let $\cC=\cC(\R,\R)$ denote the space of continuous functions from $\R$ to $\R$, equipped with the topology of uniform convergence over compact sets. Similarly, for real numbers $\ell<r$ we write $\cC_{\ell,r}$ for the set of continuous functions from $[\ell,r]$ to $\R$. We use $\calP$ and $\calP_{\ell,r}$ for the set of probability measures over $\cC$ and $\cC_{\ell,r}$ respectively, equipped with the corresponding Borel $\si$-algebra. We also write $\calP^+\subset \calP$ for the probability measures supported on positive paths.  For a probability measure $\mu$ we sometimes write $X\sim \mu$ if $X$ is a random variable with law $\mu$ and  $ \E_{X \sim \mu}$ for the corresponding expectation. 

\begin{defn}[Area-tilted Brownian excursion] %
 \label{def:one line} Let $\B^{x, y}_{\ell, r}\in \calP_{\ell,r}$ be the law of the Brownian bridge on $[\ell, r]$ pinned at $x$ and $y$ at time $\ell$ and $r$ respectively.
    For any $a\geq 0$, $x,y>0$, the area-tilted Brownian excursion %
    from $x$ to $y$ with strength $a$, denoted by $\L^{x, y}_{\ell, r}$ is the tilted form of $\B^{x, y}_{\ell, r}$ defined by
    \begin{align*}
        \f{d\L^{x, y}_{\ell, r}}{d\B^{x, y}_{\ell, r}}(X) = \f{1}{Z} \,\Exp{-a \int_{\ell}^r X(t) \,\dt} \1_{X \in \Om^+_{\ell, r}}
    \end{align*}
    where $\Om^+_{\ell, r} = \{X\in \cC_{\ell,r}: X(t) > 0,\ \forall t \in (\ell, r)\}$ is the set of continuous positive functions on the interval $(\ell, r)$, and $Z=Z(a;x,y;\ell,r)$ is the normalization factor
    \begin{align*}
        Z =\E_{X \sim \B^{x, y}_{\ell, r}}\Box{\Exp{-a \int_{\ell}^r X(t) \,\dt}\1_{X \in \Om^+_{\ell, r}}}.
    \end{align*}
We sometimes write $\L^{a;x, y}_{\ell, r}$ if the parameter $a$ needs to be emphasized.
Unless otherwise stated, the parameter $a$  will be taken to be equal to 2; see Remark \ref{rem:a=2}. 
\end{defn}

Note that $Z > 0$, and thus $\L^{x, y}_{\ell, r}$ is well defined, for any $x, y > 0$.
    Using a limiting sequence $x \to 0$, it is also possible to define the measures $\L^{0,y}_{\ell, r},\L^{y,0}_{\ell, r}$, and $\L^{0,0}_{\ell, r}$, see e.g.\ \cite{corwinhammond,CIW19}.
We often use the shorthand notation $\B^{x}_{\ell, r}$ for $\B^{x, x}_{\ell, r}$ and %
 $\L^x_{\ell, r}$ for $\L^{x, x}_{\ell, r}$. Moreover, when the time  interval is symmetric we write $\L^{x}_T = \L^{x, x}_{-T, T}$ and $\L^{x, y}_T = \L^{x, y}_{-T, T}$, $T>0$.

\begin{defn}[Area-tilted Brownian-Gibbs measure]
\label{def:abg}
    A probability measure $\mu\in\calP^+$ is said to have the $a-$area-tilted Brownian-Gibbs property %
    for some parameter $a > 0$, if for every $\ell<r \in \R, x, y > 0$, the conditional distribution of $X$ on the interval $(\ell,r)$ given $X$ on $\R\setminus(\ell,r)$ has the law of the area-tilted Brownian excursion with parameter $a$ on $[\ell, r]$ with the endpoints $X(\ell), X(r)$. 
    Formally, if $X \sim \mu$, and $F : \Om^+_{\ell, r} \to \R$ is bounded measurable, then %
    \begin{align*}
        \E[F(X) \mid \cB_{\ell, r}] = \E_{Y \sim \L^{a;X(\ell), X(r)}_{\ell, r}}[F(Y)]
    \end{align*}
    where $\cB_{\ell, r} = \sig(X_t : t \notin (\ell, r))$ is the $\si$-algebra generated by the variables $X(t)$, $t\in\R\setminus(\ell,r)$.
    We call $\cG=\cG(a)\subset \calP^+$ the set of probability measures with the above %
    property. Unless otherwise stated, we take $a=2$.  %

\end{defn}

It follows from general facts that $\cG$ is a simplex, namely that every $\mu\in\cG$ can be  written  uniquely as a convex combination of extremal elements of $\cG$, see e.g.\ Theorem 7.26 in \cite{georgii2011gibbs}.
We denote the latter set by $\cG_{\rm ext}.$
An example of $\mu\in\cG$ is the Ferrari-Spohn state $\FS$. The latter is the law of a stationary, ergodic  process known as Ferrari-Spohn diffusion $\{X(t),\,t\in\bbR\}$, such that $X(0)$
has density proportional to 
\begin{equation}\label{eq:airyfs} 
{\rm Ai}\(\sqrt[3]{2a}\,x-\omega_1\)^2\ind_{x>0},
\end{equation} 
where ${\rm Ai}(\cdot)$ is the Airy function and $-\omega_1$ denotes its largest zero. We refer to \cite{ferrarispohn2005} for the original construction and to \cite{ioffeshlosmanvelenik} for the representation in terms of area-tilted excursions.

For any $h\in\R$, $\mu\in\cG$, we write $\t_h\mu$ for the push forward under time translation by $h$, that is $X(\cdot)\sim \mu$ iff %
 $X(\cdot - h) \sim\t_h\mu$.
The Ferrari-Spohn state is translation invariant: $\t_h\FS=\FS$ for all $h\in\R$.
Our main result concerning one-line ensembles establishes that $\FS$ is the only translation invariant state and that all non-translation invariant extremal states grow parabolically up to a linear correction at infinity and are in fact characterized by their linear correction term.
We introduce the set of pairs of admissible linear slopes,
 \begin{align}\label{lineartilts}
\calT&=\{(L,R)\in[-\infty,\infty)^2:\; L+R<0\} 
\nonumber
\\&=\calT_{-\infty,-\infty}\cup\calT_{-\infty,\R}\cup \calT_{\R,-\infty} \cup 
\{(L,R)\in \R^2:\; L+R<0\},
    \end{align}
where $\calT_{-\infty,-\infty}= \{-\infty\}\times\{-\infty\}$, while $\calT_{-\infty,\R}= \{-\infty\}\times \R$ and 
$\calT_{\R,-\infty}= \R\times\{-\infty\}$.
The following theorem states in particular that there is a bijection $(L,R)\leftrightarrow \nu_{L,R}$ between  $\calT$ and $\cG_{\rm ext}$, and that the map is such that the path $X_{L,R}$ with law $\nu_{L,R}$ has the asymptotic behavior
    \begin{align}\label{lineartiltas}
X_{L,R}(t) &= t^2 + L |t| +o(|t|)\,, \quad t\to -\infty\,,
\\
X_{L,R}(t) &= t^2 + Rt +o(t)\,, \quad t\to +\infty.
    \end{align}

\begin{thm}[The set $\cG_{\rm ext}$]\label{thmoneline}
For any $\mu\in  \cG_{\rm ext}$, there exists a pair $(L,R)\in\calT$ such that %
the following two limits exist $\mu$-almost surely: %
    \begin{align}\label{linearlim}
        L %
        = \lim_{t \to -\infty} \f{X(t) - t^2}{|t|}, \qquad 
        R %
         =\lim_{t \to \infty} \f{X(t) - t^2}{t}.
    \end{align}
Conversely, for any $(L,R)\in\calT$ there exists a unique $\nu_{L,R}\in \cG_{\rm ext}$ such that \eqref{linearlim} holds for $\mu = \nu_{L,R}$. If $(L,R)\in\calT_{-\infty,-\infty}$, then $\nu_{-\infty,-\infty}=\FS$,
 and $\FS$ is the only translation invariant element of $\cG$.
  Moreover, for any $h\in\R$, and $(L,R)\in\calT$, 
  \begin{align}\label{nulrshifts}
\theta_h\nu_{L,R}= \nu_{L+2h,R-2h},
   \end{align}
   with the convention $-\infty+x=-\infty$, for all $x\in\R$. 
\end{thm}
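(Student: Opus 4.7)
My plan splits the proof of Theorem~\ref{thmoneline} into four tasks: (a) showing that the almost-sure limits \eqref{linearlim} exist for every $\mu\in\cG_{\rm ext}$; (b) constructing $\nu_{L,R}$ for each $(L,R)\in\calT$; (c) proving uniqueness of the extremal state with prescribed slopes; and (d) identifying the translation invariant case with $\FS$ and verifying the shift rule \eqref{nulrshifts}. I would address them in this order.

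For (a), I would first observe that, as a standard consequence of extremality combined with the area-tilted Brownian-Gibbs property on arbitrarily large windows, the tail $\sig$-algebra $\cT_\infty=\bigcap_{T\geq 1}\sig(X(t):|t|\geq T)$ is $\mu$-trivial. Hence each of $\liminf$ and $\limsup$ of $(X(t)-t^2)/|t|$ at $\pm\infty$ is $\mu$-a.s.\ a constant in $[-\infty,+\infty]$. The core of this step is to upgrade these to a genuine limit. My plan is a coupling/instanton argument: were $L_-<L_+$ the two limits at $-\infty$, one could locate a random sequence $t_n<s_n\to-\infty$ along which $X(t_n)\approx t_n^2+L_+|t_n|$ and $X(s_n)\approx s_n^2+L_-|s_n|$, and the Gibbs resampling on $[t_n,s_n]$ would then force the path to travel between two widely separated endpoints against the area tilt. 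Comparing the large-deviation cost of such a transit with that of the parabolic instanton $t\mapsto t^2$ would yield a contradiction with the conditioning. The constraint $L+R<0$ is expected to emerge from the same variational computation: the parabolic minimizer with boundary values $T^2+LT$ and $T^2+RT$ has its bulk value centered at $T(L+R)/2$, which must be non-positive for the area cost $\exp(-2\int X)$ to give a finite limit after subtracting the $t^2$ contribution.

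For (b), I would take $\nu_{L,R}$ (for finite $L,R$) to be the weak limit of the finite-volume tilted bridges $\mu_T:=\L^{2;\,T^2-LT,\,T^2+RT}_{-T,T}$, whose endpoints match the prescribed asymptotics. Tightness would follow from FKG-type stochastic monotonicity of $\L^{a;x,y}_{\ell,r}$ in its endpoints, together with a priori one-point bounds around the instanton $t\mapsto t^2$ coming from the Ferrari--Spohn diffusion of \cite{ferrarispohn2005,ioffeshlosmanvelenik}. The Brownian-Gibbs property passes to the weak limit, and \eqref{linearlim} for $\nu_{L,R}$ follows by concentration of the boundary conditions. Cases with $L=-\infty$ or $R=-\infty$ are handled by holding the corresponding endpoint at a fixed non-growing value. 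For (c), given two extremal measures $\mu_1,\mu_2$ sharing the same pair $(L,R)$, I would condition on $X_i(\pm T)$ and use Brownian-Gibbs on $[-T,T]$: the a.s.\ convergence $X_i(\pm T)=T^2+\text{(linear)}+o(T)$, combined with monotone coupling of $\L^{2;x,y}_{-T,T}$ in $(x,y)$, lets one couple the two boundary pairs to agree up to $o(T)$ with high probability, whence the induced excursion measures on any fixed compact window are $o(1)$-close in total variation. Sending $T\to\infty$ gives $\mu_1=\mu_2$ everywhere, completing the bijection $\calT\leftrightarrow\cG_{\rm ext}$.

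Finally, (d) follows quickly. Since the Ferrari--Spohn diffusion is stationary with marginal density \eqref{eq:airyfs} supported on $(0,\infty)$ and with fast tails, one has $(X(t)-t^2)/|t|\to-\infty$ a.s., so by uniqueness $\FS=\nu_{-\infty,-\infty}$. For any translation invariant $\mu\in\cG$, the shift identity \eqref{nulrshifts}---which results from the substitution $s=t-h$ in \eqref{linearlim} applied to $X(\cdot-h)$---forces $L=L+2h$ for every $h\in\R$, hence $L=-\infty$ and likewise $R=-\infty$, so $\mu=\FS$. The hardest step is (a), and specifically ruling out $\liminf<\limsup$: tail-triviality only gives a.s.\ constancy, whereas actual convergence requires quantitative control of the Gibbs cost of realizing atypical parabolic slopes, and the requisite large-deviation estimates for area-tilted bridges with widely varying endpoints must be sharp enough to exclude slow oscillations of $(X(t)-t^2)/|t|$ over superpolynomial scales.
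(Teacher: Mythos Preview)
Your decomposition (a)--(d) matches the paper's, and (b), (d) are largely on track. But part (a)---which you correctly flag as the crux---has a genuine gap, and your reasoning for $L+R<0$ is not right.

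For (a), the instanton/large-deviation sketch does not yield a contradiction. If $X(t_n)\approx t_n^2+L_+|t_n|$ and $X(s_n)\approx s_n^2+L_-|s_n|$ with $t_n<s_n<0$, the Gibbs resampling on $[t_n,s_n]$ simply produces a legitimate area-tilted bridge between those endpoints; nothing is ``violated'', and there is no partition-function ratio to compare, since you are working with normalized probability measures throughout. The paper's mechanism (Lemma~\ref{lem:limexists}) is different and one-sided: if $(X(\tau)-\tau^2)/\tau\ge -2b$ at some stopping time $\tau\in[k,k+1)$, then resampling on the stopping domain $[b,\tau]$, lowering the boundary by monotonicity to $X(b)=0$, $X(\tau)=\tau^2-2b\tau$, and passing to the $\PAR$ picture reduces to a Brownian-bridge fluctuation bound, giving $(X(t)-t^2)/t\ge -2(b+\delta)$ on any fixed $[\ell,r]\subset[b,\tau]$ with probability $\ge 1-Ce^{-c\ell\delta^2}$. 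Sending $k,r\to\infty$, then $\ell\to\infty$, then $\delta\to 0$ yields $\{\overline R>-2b\}\subseteq\{\underline R\ge -2b\}$ for every $b$, hence $\underline R=\overline R$ a.s.\ for \emph{every} $\mu\in\cG$, before any appeal to extremality. The idea you are missing is this ``once high, stays high'' monotonicity argument on stopping domains.

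For $L+R<0$, ``the area cost $\exp(-2\int X)$ giving a finite limit'' is not a valid obstruction: the finite-volume measures are already normalized. The paper (Lemmas~\ref{lem:lr0}--\ref{lem:lpr}) instead shows that if $L,R\ge 0$ then $\bbP(X(0)\ge\sqrt s)$ is bounded below uniformly in $s$, contradicting tightness of the one-point marginal; the general case $L+R\ge 0$ reduces to this by a time shift. Finally, your uniqueness step (c) is right in outline but the ``whence'' is not automatic: an $o(T)$ boundary discrepancy at $\pm T$ gives $o(1)$ total variation on a fixed window only after you prove that, under the parabolic-floor conditioning, the marginal at time $s\ll T$ is approximately Gaussian with mean $s^2-2Ks+o(1)$ and variance $\Theta(s)$. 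The paper isolates precisely this as Lemma~\ref{lem:closei} and uses it both to prove the finite-volume sequence is Cauchy in TV (existence) and to couple two candidate Gibbs states at an intermediate scale (uniqueness).
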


It will be convenient to adopt the notation $\mu_K = \nu_{-2K,-2K}$, which, by the existence and uniqueness assertion above, is a well defined extremal Gibbs state for all $K\in(0,\infty]$. Our proof will also show that $\mu_K$ is  symmetric with respect to reflection around the origin. Note that any $\mu \in\cG_{\rm ext}$, except for the special cases of $\nu_{L,R}$ with $(L,R)\in\calT_{-\infty,\R}\cup \calT_{\R,-\infty} $,  can be recovered as a time shift  of $\mu_K$, for some %
$K \in (0,\infty]$ since, according to \eqref{nulrshifts}, for any $h\in\R$  and $K\in (0,\infty]$  %
one has 
 \begin{align}\label{nulrshiftsK}
\theta_h\mu_K  = \nu_{-2K+2h,-2K-2h}.
   \end{align}

As we will see, a natural way to construct the measures $\nu_{L,R}$ is by taking the infinite volume limit $T\to\infty$  of the measures $\L^{x_T,y_T}_T$ on the interval $[-T,T]$ with boundary conditions
\begin{equation}\label{eq:bc_parab}
x_T = T^2 + L T \,,\qquad y_T = T^2 + R T\,,
\end{equation}
with the case $L=-\infty$ (resp. $R=-\infty$) obtained by a suitable limiting procedure. 
Finally, as the proof will show, it will turn out that when $L$ and $R$ are finite, the $o(|t|)$ terms in \eqref{lineartiltas} will essentially be diffusive, i.e., $O(\sqrt{|t|}),$ owing to Brownian fluctuations.

\begin{rmk}\label{rem:a=2}
We will make use of an equivalent representation of the area-tilted excursion $X\sim\L^{x,y}_T$ as the path $Y+\p$ where $\p$ is the parabola 
\[
\p:\,t\mapsto t^2\,,
\] and $Y$ has the law of the Brownian bridge $\B^{x', y'}_{T}$ on the interval $[-T,T]$, with boundary data $x'=x-T^2, y'=y-T^2$, conditioned to stay above the negative parabola $-\p$. %
This representation is an immediate consequence of Girsanov's formula, see Lemma \ref{lem:girsanov} below.
We remark that if the strength $a>0$ is taken to be different from $2$ then the same representation holds with the parabola $\p$ %
replaced by $\p^{(a)}:\,t\mapsto \frac{a}2\,t^2$. In particular, the same statements from Theorem \ref{thmoneline} hold, but we have to replace the parabolas $T^2 + L T$ and $T^2 + R T$ in \eqref{eq:bc_parab} and \eqref{linearlim} by 
$\frac{a}2\,T^2 + L T$ and $\frac{a}2\,T^2 + R T$.
\end{rmk}

\bigskip
We turn to a description of our main results concerning the  $\lambda$-tilted LE with infinitely many lines.

\subsection{Gibbs measures for infinitely many lines with geometric area tilt}
We start by defining, analogous to the one-line case, the Gibbs measures corresponding to a finite interval with finitely many lines. 
For  $\ell < r$, $n\in\bbN$, and $\ux,\uy\in\bbR^n$, 
let ${\mathbf B}^{\ux,\uy}_{n;\ell , r} $ denote the law of $n$ independent Brownian bridges $\uX=(X^1(s),\dots,X^n(s))$, $s\in[\ell , r]$, pinned at 
$X^i(\ell)=x_i$ and $X^i(r)=y_i$, $i=1,\dots,n$.
   Let $\A_n^+ \subseteq (\R^+)^n$ be defined  by
    \begin{align*}
      \A_n^+ = \{ \ux \in \R^n : x_1 > x_2 > x_3 > \ldots > x_n > 0 \},
    \end{align*}
    and write $\Omega^{+}_{n;  \ell, r}$ for the set of positive non-intersecting $n$-tuples  $\uX$, 
\begin{equation}\label{eq:Omega-Set} 
\Omega^{+}_{n; \ell , r} = \lbr \uX\in(\cC_{\ell,r})^n:~ \uX (t )\in \bbA_n^+\,,\; \; \forall\, t\in (\ell , r)\rbr.
\end{equation}
This set is equipped with the topology of uniform convergence over compact sets, and   with the corresponding Borel $\si$-algebra. 
Let $\calP_{n;\ell,r}^{+}$ denote the set of probability laws on $\Omega^{+}_{n; \ell , r}$.

The potential is such that the $i$-th path $X^i$ has an area tilt of the form 
 \begin{equation}\label{eq:areatilt} 
\exp\left(-a\lambda^{i-1}\int_{\ell}^rX^i(s) d s
\right)\,,\quad i=1,\dots,n,
\end{equation}
where $a>0$ and $\lambda>1$ are fixed constants. As before, without loss of generality we fix $a=2$ unless otherwise stated. Everything in the sequel will hold for any fixed value of $\lambda>1$, which will often be omitted in our notation.

\begin{defn}[Finite $\lambda$-tilted LE]  \label{def:many-lines}
Given 
 $n\ge 1$, and $\ux,\uy\in\bbA_n^+$, $\ell<r$, we consider the probability measure $\L^{\ux, \uy}_{n;\ell, r}\in \calP_{n;\ell,r}^{+}$ defined by 
    \begin{align}\label{eq:finitegibbs}
        \f{d\L^{\ux, \uy}_{n;\ell, r}}{d\B^{\ux, \uy}_{n;\ell, r}}(\uX) = \f{1}{Z} \,\Exp{-a \sum_{i=1}^n\lambda^{i-1}\int_{\ell}^r X^i(t) \,\dt} \1_{\uX \in \Om^+_{n;\ell, r}}
    \end{align}
    where $Z=Z(a,\lambda;n;\ux,\uy; \ell,r)$ is the normalizing constant  
        \begin{align*}
        Z =\E_{\uX \sim \B^{\ux, \uy}_{n;\ell, r}}\Box{\Exp{-a
        \sum_{i=1}^n\lambda^{i-1}\int_{\ell}^r X^i(t) \,\dt }\1_{\uX \in \Om^+_{n;\ell, r}}
        }.
    \end{align*}
The measure $\L^{\ux, \uy}_{n;\ell, r}$ and the corresponding random paths $\uX\in\Omega_{n;\ell,r}^{+}$ are called the $\lambda$-tilted LE with $n$ lines. We write $\L^{a;\ux,\uy}_{n;\ell, r}$ if the overall strength  $a$ needs to be emphasized, and will tacitly assume that $a=2$ otherwise. 
\end{defn}
Clearly, when $n=1$ we are back to the one-line model from Definition \ref{def:one line}. The partition function $Z$, and therefore the measure $\L^{\ux, \uy}_{n;\ell, r}$ is well defined for all $\ux,\uy\in\bbA_n^+$, and one can take limits to obtain a well defined measure with $\ux$ and $\uy$ in the closure of $ \bbA_n^+$. In particular, the measure $\L^{\underline 0,\underline 0}_{n;\ell, r}$ with zero boundary conditions is well defined, see \cite{CIW18}. As in the one-line case, we often use  the shorthand notation $\L^{\ux}_{n;T}$ for $\L^{\ux,\ux}_{n;-T, T}$. 

In order to define the infinite line ensemble we introduce the notation $\A^+_\infty$ for the $n = \infty$ version of the set $\A^+_n$. The sample space is   
$\Omega^+_{\infty} = \cC\lb \bbR,\bbA_\infty^+\rb$, the set of continuous functions $f:\bbR\mapsto\bbA_\infty^+$, regarded as an infinite collection of ordered paths, equipped with the topology of uniform convergence of any finite number of paths on compact time intervals, and   with the corresponding Borel $\sigma$-field $\calB$. Similarly, $\Omega^+_{\infty; \ell , r}= \cC\lb [\ell,r],\bbA_\infty^+\rb$ is the sample space for finite time intervals.  
 We write $\calP^+_\infty$ and $\calP^+_{\infty;\ell,r}$ for the set of probability measures on $\Omega^+_{\infty}$ and $\Omega^+_{\infty;\ell , r}$ respectively. 
For each $n\in\bbN$, and finite time interval $[\ell,r]\subset\bbR$, let 
\begin{gather}\label{eq:Bext} 
\calB_{n; \ell , r}^{\sfe} 
= \sigma\lb X^i (t ):\, \text{{either $t{\notin} (\ell , r)$ {or} $i >n$}}\rb
\end{gather}
denote the %
external $\sigma$-algebra. %
Given %
a continuous function $h:\bbR\mapsto \bbR_+$, we write $\L_{n ;\ell , r}^{\ux, \uy}[h]\in\calP_{n;\ell,r}^{+}$
for the law of the $n$-line ensemble with floor $h$, that is the measure obtained by conditioning $\L^{\ux, \uy}_{n;\ell, r}$
in \eqref{eq:finitegibbs} on the event that the lowest path $X^n$  satisfies $X^n(s)> h(s)\,,\;\forall s\in(\ell,r)$.
\begin{defn}[$\lambda$-tilted LE]\label{def:laBG}
A probability measure $\mu\in\calP_\infty^+$ is said to have the 
{\em Brownian-Gibbs %
property} 
with respect to $(a , \lambda )$-geometric area tilts (or in short simply the BG property) %
if 
for any bounded measurable $F:\Omega^+_{\infty;\ell,r}\mapsto \bbR$, the corresponding conditional expectations $\bbE[\cdot\tc\calB^{\sfe}_{n ;\ell , r}] $ satisfy 
\begin{equation}\label{eq:BG-prop} 
\bbE\left[ \, F(\uX)\tc
\calB^{\sfe}_{n ;\ell , r} \right] 
= 
\bbE_{\uY \sim
 \L_{n ;\ell , r}^{\uX^{\le n} (\ell) , \uX^{\le n} (r )}[X^{n+1}]}
 \left[\,F(\uY\,,\uX^{>n}) 
\right], 
\end{equation}
$\mu$-a.s. for any $-\infty <\ell <r <\infty$ and $n\in \bbN$.
In \eqref{eq:BG-prop}, we use the notation $\uX^{\le n}=(X^1,\dots,X^n)$ and $\uX^{>n}=(X^{n+1},X^{n+2},\dots)$.  A probability measure $\mu\in\calP_\infty^+$ with the above BG property is called a $\lambda$-tilted Gibbs measures, or $\lambda$-tilted LE. 
 \end{defn} 
We note that the BG property only specifies the conditional law of finitely many paths on a finite domain. Thus, if  $\underline{X}\sim \mu$ satisfies the BG property then for any constant $c>0,$   $\underline{X}+c=(X^{i}+c)_{i\ge 1}$ also satisfies the BG property, since the area increase for each curve on a given finite domain is deterministic, and the Brownian bridge density is invariant under shifting by a constant. This shift can be also thought of as raising the hard floor from $0$ to $c$. To ensure that the floor indeed stays at zero, throughout this article we will be always considering LEs satisfying the following property.

\begin{defn}[Asymptotically pinned to zero and the set $\cG_\infty$] %
\label{def:asympin} A probability measure $\mu\in\cP^+$ is said to be asymptotically pinned to zero %
if for any $\e>0$ and $T>0,$ there exists $k=k(T,\e)$ such that 
\begin{equation}\label{eq:asymppin}
\bbP\left(\sup_{s\in [-T,T]}X^{k}(s)\le \e\right)\ge 1-\e.
\end{equation}
We write $\cG_\infty=\cG_{\infty}(a,\lambda)$ for the set of $\lambda$-tilted Gibbs measures which are asymptotically pinned to zero.  
\end{defn}
 As before, the set $\cG_\infty$ is a simplex and we write $\cG_{\infty,\rm ext}$ for the set of extremal Gibbs measures.  An example of a Gibbs measure %
 $\mu\in\cG_\infty$ 
 can be obtained by taking the limits as $T\to\infty $ and $n\to\infty$ (in arbitrary order) of the zero boundary measures $\L^{\underline 0}_{n;T}$, see \cite{CIW19}. We call $\mu^0$ this Gibbs measure, and refer to it as the 
 zero boundary LE.  Moreover, it is known that the same measure can be obtained by taking limits with free or bounded boundary conditions, see \cite{DLZ} and \cite{geomarea}. It was also shown in \cite{geomarea} that $\mu^0$ is mixing, and thus ergodic, and is the only stationary element of $\cG_\infty$. We write again $\theta_h$ for the time translation acting on probability measures $\calP_\infty^+$, so that $\theta_h\mu^0=\mu^0$ for all $h\in\R$. 
 
  Our main result is a complete characterization of the set $\cG_{\infty,\rm ext}$. Perhaps somewhat surprisingly on first glance, it turns out that these measures are uniquely determined by the behavior at infinity of their top line $X^1$. As in the case of the one-line ensemble, the key identifying feature is the linear deviation from the main parabolic behavior. In fact, the statement is essentially identical to that of Theorem \ref{thmoneline} provided one replaces the single line by the top line $X^1$. On the other hand the lower lying paths satisfy a strong form of confinement. In particular, letting $\calT$ be the set of linear slopes defined by \eqref{lineartilts}, 
  we establish a bijection $(L,R)\leftrightarrow \nu_{\infty,L,R}$ between  $\calT$ and $\cG_{\infty,\rm ext}$, such that the paths $\uX_{L,R}$ with law $\nu_{\infty,L,R}$ have the asymptotic behavior
    \begin{gather}\label{ltiltsandconf}
X^1_{L,R}(t) = t^2  +L |t| +o(|t|)\,, \quad t\to -\infty\,,\nonumber
\\
X^1_{L,R}(t) = t^2 + Rt +o(t)\,, \quad t\to +\infty\,,\nonumber\\
\lambda ^{k/3} X_{L,R}^k(t)= O(1)\,, 
    \end{gather}
    uniformly in $k\ge 2$.

\begin{thm}[The set $\cG_{\infty,\rm ext}$]\label{thmmanylines}
For any $\mu\in  \cG_{\infty,\rm ext}$, there exists a pair $(L,R)\in\calT$ such that %
the following two limits for the top path $X^1$ exist $\mu$-almost surely: %
    \begin{align}\label{linearlimla}
        L %
        = \lim_{t \to -\infty} \f{X^1(t) - t^2}{|t|}, \qquad 
        R %
         =\lim_{t \to \infty} \f{X^1(t) - t^2}{t}.
    \end{align}
Conversely, for any $(L,R)\in\calT$ there exists a unique $\nu_{\infty,L,R}\in \cG_{\infty,\rm ext}$ such that \eqref{linearlimla} holds for $\mu = \nu_{\infty,L,R}$. When $L=R=-\infty$, then $\nu_{\infty,-\infty,-\infty}=\mu^0$ is the zero boundary field,
 the only translation invariant element of $\cG_\infty$.
  Moreover, for any $h\in\R$, and $(L,R)\in\calT$, 
  \begin{align}\label{nulrshiftsla}
\theta_h\nu_{\infty,L,R}= \nu_{\infty,L+2h,R-2h},
   \end{align}
   with the convention $-\infty+x=-\infty$, for all $x\in\R$. 
\end{thm}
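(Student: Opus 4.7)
The plan is to bootstrap the one-line characterization (Theorem \ref{thmoneline}) up to the infinite line ensemble by exploiting the fact that the geometric area tilts enforce strong confinement on all lower lying paths, effectively decoupling the problem of classifying extremal states into (a) the asymptotic behavior of the top line $X^1$, which mimics the one-line theory, and (b) a uniform confinement statement for $(X^k)_{k\ge 2}$. The main input from \cite{geomarea} is the one-sided Ferrari–Spohn type tail bounds of the form $\lambda^{k/3}\|X^k\|_{L^\infty[-T,T]}=O(\log T)$ for the zero boundary field, together with stochastic domination (monotonicity) of $\lambda$-tilted LEs in their boundary data and in additional floors/ceilings.

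First I would establish confinement: starting from an arbitrary $\mu\in\cG_\infty$, I would use the BG resampling property together with stochastic domination to sandwich the collection $(X^k)_{k\ge 2}$ between two zero-boundary infinite ensembles for the tilt parameters $\lambda, \lambda^2$, which are dominated by the $\lambda$-shifted stationary field $\mu^0$ (by the geometric growth this is where the asymptotic pinning to zero hypothesis kicks in). This gives uniform tail bounds $\lambda^{k/3}X^k(t)=O(1)$ for $t$ in any compact set, proving the third line of \eqref{ltiltsandconf}. Consequently, the effective floor $X^2$ seen by $X^1$ is uniformly small, so that on any window $[-T,T]$, $X^1$ is close (in the sense of Radon–Nikodym derivative bounds) to a one-line area-tilted excursion with the same endpoints and zero floor. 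This is the key bridge back to the one-line analysis.

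Next I would transfer the existence of the limits in \eqref{linearlimla} from the one-line case. Given any $\mu\in\cG_{\infty,\rm ext}$, the conditional law of $X^1|_{[-T,T]}$ given the external $\sigma$-algebra $\calB^{\sfe}_{1;-T,T}$ is an area-tilted excursion above $X^2$ with parameter $a=2$. Because $X^2$ is of order $\lambda^{-2/3}$, this conditional law is in total variation comparable to a true one-line area-tilted excursion on $[-T,T]$ with the same endpoints. This comparability, together with an ergodic decomposition argument on the tail $\sigma$-algebra of $X^1$ at $\pm\infty$, will let me invoke the asymptotic analysis developed for Theorem \ref{thmoneline} line by line: the parabolic term $t^2$ arises from the Girsanov identity of Remark \ref{rem:a=2}, and the linear correction limits $L,R$ exist a.s.\ by a $0$–$1$ law on tail events for extremal measures, with values in $[-\infty,\infty)$ constrained by $L+R<0$ (the latter reflecting the fact that the Brownian bridge above $-\p$ with slopes summing to zero has infinite partition function, again as in the one-line case).

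For the converse, I would construct $\nu_{\infty,L,R}$ as the $T\to\infty$ limit of finite-volume measures $\L^{\ux_T,\uy_T}_{n;T}$, with top-line boundary data $x_T^1=T^2+LT$, $y_T^1=T^2+RT$, and the remaining endpoints taken at zero (or, for the cases $(L,R)\in \calT_{-\infty,\R}\cup\calT_{\R,-\infty}$, via a suitable monotone limit). Tightness follows from the confinement estimate above (applied to the finite system via the FKG-type monotonicity from \cite{CIW19,geomarea}). Subsequential limits clearly belong to $\cG_\infty$ and inherit the prescribed asymptotics via a diagonal argument combined with the one-line boundary data analysis. Uniqueness of the resulting measure, given $(L,R)$, is the main obstacle: I would prove it by a monotone coupling argument on increments, showing that two Gibbs measures with the same $(L,R)$ can be coupled so that their lower lines agree (via the confinement and monotonicity in external data) and their top lines have the same tail at $\pm\infty$, then invoke the one-line uniqueness from Theorem \ref{thmoneline} together with the BG property to conclude equality on all finite windows. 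The translation invariant case $L=R=-\infty$ then identifies $\nu_{\infty,-\infty,-\infty}$ with the stationary field $\mu^0$ of \cite{geomarea}, and \eqref{nulrshiftsla} follows directly from the transformation rule $\theta_h\p(t)=\p(t)-2ht+h^2$ applied to the parabolic reference, exactly as in \eqref{nulrshifts}. The hardest technical step will be uniqueness, which requires controlling how the asymptotic linear slopes determine the joint distribution of lower lines through the BG property, and for this I expect to need a quantitative Dobrushin-type uniqueness input from the strong decoupling proved in \cite{geomarea}.
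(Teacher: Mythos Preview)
Your overall architecture is right, but the confinement step contains a genuine gap that hides the main difficulty of the theorem. You propose to ``sandwich the collection $(X^k)_{k\ge 2}$ between two zero-boundary infinite ensembles'' via stochastic domination. Removing the ceiling $X^1$ does give an upper bound on $(X^k)_{k\ge 2}$ by a $\lambda$-tilted ensemble with overall strength $2\lambda$, but the \emph{boundary data} of that ensemble on any window $[-T,T]$ are inherited from $\mu$, not set to zero. For an arbitrary $\mu\in\cG_\infty$ with top-line growth $\sim T^2+LT$, nothing a priori prevents $X^2(\pm T)$ from also being of order $T^2$; the asymptotic-pinning-to-zero hypothesis only controls $X^k$ for large $k$ at fixed $T$, not $X^2$ at large $T$. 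So you cannot dominate by a zero-boundary field, and the confinement of $X^2$ is not free. This is exactly the content of the paper's Proposition~\ref{ppn:refinedbnd}: starting all $n$ lines at height $T^2-2KT$ at $\pm T$, one must show that $X^2$ descends to $O(T^\delta)$ on a macroscopic subinterval $[-\eta T,\eta T]$. The proof (Section~\ref{sec:inductive}) is an elaborate downward induction over the line index, using tangency geometry of the parabolas $-\lambda^{k-1}t^2$ and a ``heavy/light path'' dichotomy, and it is the technical heart of the whole argument. Your proposal treats this as a soft input when it is in fact the hardest step; without it, the reduction of $X^1$ to a one-line problem is circular, since the floor $X^2$ is not yet known to be small.

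A second, smaller gap is in uniqueness. A monotone coupling alone gives only a one-sided ordering between two candidate measures with the same $(L,R)$; to close the sandwich the paper builds a \emph{reverse coupling} (Lemma~\ref{reversecoupling}) on a mesoscopic window $[-T^c,T^c]$, exploiting that the explicit finite-volume proxy $\rho_{K,T}$ has zero boundary for lines $j\ge 2$ while the arbitrary $\mu'_K$ has confined (but not zero) lower lines. This step uses a pinned lower-bound ensemble and a stopping-domain argument to force the $\rho_{K,T}$ lines above those of $\mu'_K$ with high probability. Your ``Dobrushin-type uniqueness input'' gesture does not capture this mechanism, and the results in \cite{geomarea} are stated for uniformly confined ensembles, which again presupposes the coming-down estimate you have not established.
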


As the proof of Theorem \ref{thmmanylines} will show, the $o(|t|)$ terms in \eqref{ltiltsandconf} can be shown to be fluctuation terms of order $\sqrt {|t|}$. On the other hand,  the confinement of the lower lying paths $X^j$, $j\ge 2$ in 
  \eqref{ltiltsandconf} is quantified by the following statement. 

\begin{thm}[Confinement of lower lying paths]\label{thmmanylinesconf}
There exists a constant $C$ such that
for all integers $ k\ge 1$, and $S>0$, for all %
$\mu \in \cG_{\infty,\rm ext}$, if $\uX\sim \mu$, then 
\begin{gather}
\label{eq:tight3aq}
\sup_{s\in\R}\,
\bbE\left[\max_{u\in[-S,S]}X^{k+1}(s+u) \right]\le C\lambda^{-k/3}[1+\log(1+S\lambda^{2k/3})].
\end{gather}
Moreover, there exist  constants $C,c>0$ such that for all $k\ge 1$,
 for any  $\mu \in \cG_{\infty,\rm ext}$, the $(k+1)$-th line of  $\uX\sim \mu$ satisfies, for all $t>0$,  
\begin{equation}\label{eq:stretched}
\sup_{s\in\R}\,\bbP\left(X^{k+1}(s)> t\lambda^{-k/3}\right)\le C\exp{\left(-c 
\,t^{3/2}\right)}\,. %
\end{equation}
\end{thm}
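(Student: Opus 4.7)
The two estimates are coupled: \eqref{eq:tight3aq} will follow from \eqref{eq:stretched} by integrating the tail together with a union bound over a grid of spacing $\lambda^{-2k/3}$ in $[s-S, s+S]$ (giving the logarithmic factor $\log(1+S\lambda^{2k/3})$) and a Brownian modulus of continuity estimate within each cell, which holds for $X^{k+1}$ conditionally on the other lines by BG. The core task is thus to prove \eqref{eq:stretched}. My plan is to do this by an induction on the line index, combined with the BG property at level $k+1$ and stochastic monotonicity. The base case is supplied by the asymptotic pinning property (Definition \ref{def:asympin}): for any $\e>0$, there is a large $k_0$ with $\sup_{s\in[-T,T]}X^{k_0}(s)<\e$ with probability at least $1-\e$; this gives a weak tail bound for $X^{k_0}$ that we then propagate upward one level at a time while sharpening the scale.

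The inductive step uses BG at level $k+1$ on a time window $[s-L_k, s+L_k]$ with $L_k \asymp \lambda^{-2k/3}$, the natural Airy scale of the $(k+1)$-th line. Conditionally on $X^k, X^{k+2}$ and on the boundary values $X^{k+1}(s \pm L_k)$, the path $X^{k+1}$ on this window is an area-tilted bridge of strength $\lambda^k$ with ceiling $X^k$ and floor $X^{k+2}$. Removing the ceiling by stochastic monotonicity, and bounding the floor by $X^{k+2} \lesssim \lambda^{-(k+1)/3}$ (inductive hypothesis) and the boundary by $\lambda^{-k/3}$ (a companion high-probability event), the problem reduces, after rescaling space by $\lambda^{k/3}$ and time by $\lambda^{2k/3}$, to a standard area-tilted Brownian bridge of strength $2$ on an $\O{1}$ interval with $\O{1}$ boundary data and floor. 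The key one-line input is: for each $M>0$ there exist $C,c>0$ with
\[
\bbP\Rnd{X(0)>t} \le C\exp(-ct^{3/2}), \qquad t>M+1,
\]
uniformly for $X \sim \L^{x,y}_{-T,T}$ with $|x|,|y|\le M$, $T \ge 1$, conditioned to lie above any continuous floor $f$ with $\sup|f|\le M$. This follows from the Radon-Nikodym density in \eqref{eq:finitegibbs} against a Brownian bridge, standard Gaussian tail bounds, and a lower bound on the normalizing partition function obtained by a barrier argument using the Airy tail in \eqref{eq:airyfs}.

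The chief obstacle will be closing the induction uniformly over $\mu\in\cG_{\infty,\rm ext}$, i.e., with constants independent of the top-line parameters $(L,R)$ classifying $\mu$. The subtlety is that dropping the ceiling $X^{k+1}\le X^k$ produces a shifted sub-ensemble whose own top line might grow parabolically, matching the behavior of $X^k$ under $\mu$; however, on the window of length $L_k = \lambda^{-2k/3}$, any such parabolic contribution is $\O{\lambda^{-4k/3}}$, which is negligible relative to the scale $\lambda^{-k/3}$ of interest. Thus the estimate is essentially local, governed by the area tilt $\lambda^k$ rather than by the global asymptotic behavior of $\mu$, and this locality is what permits the uniform bound. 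The most delicate part of the argument will be controlling the random boundary values $X^{k+1}(s \pm L_k)$ without introducing a circular dependence; I anticipate doing this via a bootstrap that combines the inductive hypothesis on lower lines with the one-line tail estimate above, iterated a bounded number of times to descend from $k_0$ to the desired level $k+1$ while keeping the constants $C,c$ uniform.
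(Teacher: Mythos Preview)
Your approach differs fundamentally from the paper's. The paper does \emph{not} prove Theorem~\ref{thmmanylinesconf} by a direct induction on the line index for a general $\mu$; instead it deduces it as a one-paragraph corollary of the classification Theorem~\ref{thmmanylines}. Concretely: by Theorem~\ref{thmmanylines}, any $\mu\in\cG_{\infty,\rm ext}$ equals some $\nu_{\infty,L,R}$, which after shifting (and handling the degenerate cases by monotonicity) is the weak limit of the explicit finite-volume ensembles $\rho_{K,T}$ of Lemma~\ref{lem:rkt}. In $\rho_{K,T}$ the lines indexed $j\ge 2$ have zero boundary data, so removing the top line and passing to the limit yields the stochastic domination $X^{k+1}\preceq Y^k$ with $\uY\sim\mu^0$. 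Both \eqref{eq:tight3aq} and \eqref{eq:stretched} are then quoted from \cite{geomarea} for $\mu^0$. This buys uniformity in $(L,R)$ and the sharp exponent $3/2$ essentially for free.

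Your proposed direct route has a genuine gap. In the inductive step you resample $X^{k+1}$ on the window $[s-L_k,s+L_k]$ and need the boundary values $X^{k+1}(s\pm L_k)$ to be of order $\lambda^{-k/3}$. You call this ``a companion high-probability event'' and later ``a bootstrap'', but it is precisely the conclusion you are trying to reach for $X^{k+1}$; the inductive hypothesis only controls $X^{k+2}$, not $X^{k+1}$ at nearby times. The sole a~priori upper bound available is $X^{k+1}\le X^1$, and $X^1(s)$ grows like $s^2$, which is useless uniformly in $s\in\R$. The paper does contain an inductive scheme of the flavor you sketch (Section~\ref{sec:inductive}, proving Proposition~\ref{ppn:refinedbnd}), but there it is run on a \emph{finite} domain $[-T,T]$ with \emph{explicit} boundary data $S_k$ prescribed for every line, so no circularity arises; it relies on delicate tangency-geometry computations and produces only the much weaker conclusion $X^2(\pm\eta T)\le T^\delta$. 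That output is then fed into the proof of Theorem~\ref{thmmanylines}, and only once the classification is in hand does the clean domination by $\mu^0$ become available.
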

The estimates in Theorem \ref{thmmanylinesconf} are essentially optimal. They were shown to hold for the zero boundary LE $\mu^0$ in \cite{geomarea}, in which case they include the top line as well, that is the bounds above extend to any $k\ge 0$. The important feature here is that while by Theorem \ref{thmmanylines} any element of $\cG_{\infty,\rm ext}$ except $\mu^0$ is not translation invariant, with the top path growing parabolically at infinity, all of its lower lying paths $X^{\ge 2}$ still behave roughly as the lower lying paths of $\mu^0$. %

In analogy with the one-line case, one way to construct the measures $\nu_{\infty,L,R}$ is by taking the limits $T\to\infty$ and $n\to\infty$  of the measures $\L^{\ux_T,\uy_T}_{n,T}$, the $\lambda$-tilted LE %
on the interval $[-T,T]$, with boundary conditions
\begin{equation}\label{eq:bc_parabla}
\ux_T \equiv T^2 + L T \,,\qquad \uy_T \equiv T^2 + R T\,.
\end{equation}
Here one takes all paths with the same parabolically growing height. However, as will be seen, the only relevant boundary is the one concerning the top path $X^1$ and the same limiting measure $\nu_{\infty,L,R}$  is achieved by taking the top path with the prescribed parabolic growth while the lower lying paths can be given essentially any other compatible boundary value. %

\subsection{Proof ideas and related work} We end this section with a brief overview of the main proof ideas and their relation to previous work.
For the one-line case in Theorem \ref{thmoneline}, %
a key role is played by the alternate representation in terms of a Brownian bridge with a parabolic barrier, see Remark \ref{rem:a=2}, which we term as the parabolic barrier representation ($\PAR$). 
We will sometimes refer to this as the ``dual'' picture, consequently referring to the original description as the ``primal'' picture.  Such a representation had originally appeared in the seminal work  \cite{ferrarispohn2005} where the FS diffusion was first introduced and investigated. More recent applications of this representation can for instance be found in \cite{CIW18,dimitrov2023uniform}.
  \smallskip
    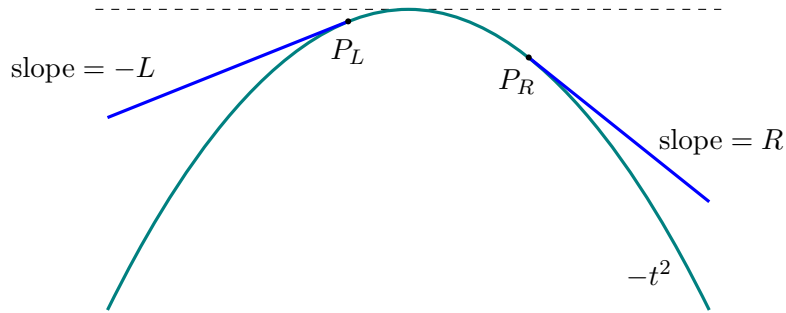
\begin{figure}[ht]
        \centering
                \begin{tikzpicture}[scale=.8]
\colorlet{colora}{magenta}
\colorlet{colorb}{teal}
\colorlet{colorc}{blue}
\draw[dashed] (-5.2,0) -- (5.2,0);
\draw[colorb,very thick] (-5,-5) parabola bend (0,0) (5,-5);
\draw[very thick,colorc] (-5,-1.8) -- (-1,-0.2);
\draw[very thick,colorc] (5,-3.2) -- (2,-0.8);
\draw (5.2,-2.2) node {$\mathrm{slope} = R$};
\draw (1.8,-1.2) node {$P_R$};
\draw (-5.4,-1.0) node {$\mathrm{slope} = -L$};
\draw (-1.0,-0.7) node {$P_L$};
\draw (4.0,-4.4) node {$-t^2$};
\fill (-1,-0.2) circle[radius=0.05];
\fill (2,-0.8) circle[radius=0.05];
\end{tikzpicture}
        \caption{Sketch of the macroscopic behavior of $Y=X-\p$, where $X\sim \nu_{L,R}$,  $L+R<0$, and $-\p$ is the negative parabola $-t^2$. The tangency locations are $P_L=(L/2,-L^2/4)$ and 
    $P_R=(-R/2,-R^2/4)$.     %
        }
        \label{fig:hydrodynamic}
    \end{figure}
This leads to a rather geometric way of interpreting Theorem \ref{thmoneline}. Namely, for any Gibbs state $X\sim \nu_{L,R}$, with $L+R<0$, the associated path $Y=X-\p$ in the $\PAR$ 
corresponds to a hydrodynamic picture consisting of a pair of straight lines with slopes $-L$ and $R$. These lines are tangent to the negative parabola $-\p$ at points $P_L,P_R$, with horizontal locations $L/2$ and $-R/2$, respectively; see Figure \ref{fig:hydrodynamic}. To the left of $P_L$ and to the right of $P_R$, %
the path $Y$ follows essentially two independent Brownian trajectories with slopes $-L$ and $R$ respectively, while non-trivial interactions with the parabola $-\p$ primarily occur on the interval $[L/2, -R/2].$ As $L,R\to-\infty$, the measure converges to the stationary FS diffusion. 
Moreover, the above picture should also convince the reader that if the slopes were such that $L+R\ge 0$, then the height $Y(0)$ at the origin would blow up, and thus tightness would be lost. This explains, at least intuitively, why $L+R$ must be negative. Such tangency considerations were also central in the recent works  \cite{aggarwal2020arctic,ganguly2022sharp}.

The proof of existence in Theorem \ref{thmoneline}, say for the symmetric case $L=R=-2K$, involves looking at finite measures $\mu_{K,T}$ on $[-T,T]$ with boundary conditions growing as $T^{2}-2KT$ and showing that these sequences of measures locally converge weakly. A key ingredient is to show that for large times $T_1 \gg T_2\gg s$ the one-point  marginal for both $\mu_{K,T_1}$ and $\mu_{K,T_2}$ at time $s$,  %
is close in total variation distance to a Gaussian variable with mean $s^2 - 2Ks$ and variance $s.$
Without going into more details, let us mention that the proof of uniqueness also proceeds by proving the last statement about the marginal Gaussian behavior even for a general one-line Gibbs state simply under the assumption of the correct asymptotic behavior.

With this as an input, we now briefly overview the pieces involved in the analysis of the multi-line case. We stress that, since the model is not integrable, we're compelled to make extensive use of probabilistic tools like monotonicity and coupling techniques.  At a very high level the key observation is the following. Note that for the single line case, any growth rate smaller than $\p$, say by a super-linear correction term, implies that it must then be  the stationary FS diffusion and hence uniformly confined. On the other hand, growth faster than $\p$ results in the loss of tightness. %
Given this, let us  consider the case of simply two lines $(X^1, X^2)$ where the area tilt factors for $X^1$ and $X^2$ are $2$ and $2\lambda>2$ respectively. Since $X^1$ is bound to be below $\p$ owing to tightness, $X^2$ is also below $\p$. However, applying the above reasoning to $X^2,$ one observes that growth slower than $\lambda \p$ ought to imply uniform confinement, and thus while $X^1$ may grow roughly as much as $\p$, $X^2$ must remain bounded. 
This rationale continues to apply even %
when there are more than two lines, explaining why, in the multi-line case, only  the top line can exhibit asymptotic growth. 

Thus our approach to proving Theorem \ref{thmmanylines}, and Theorem \ref{thmmanylinesconf}, involves decoupling the top line from the remaining lines. The mathematical implementation of this however needs to consider the countering upwards push, say, the second curve faces from the third curve etc.  The analysis of this involves a delicate inductive framework, iteratively bounding the $i^{th}$ curve, given control on the $(i+1)^{th}$ curve. Once this is achieved, the arguments for the one-line case are, at least in spirit, back in play again. However, several technical issues remain. 

Let us offer a brief glimpse of the main steps involved in the arguments. For the proofs of existence, we introduce a family of Gibbs measures on finite domains $[-T,T]$ with boundary conditions chosen in such a way that: a)  %
the local restrictions, say on a compact interval, form an approximately monotone sequence as $T \to \infty$, and b) the top line $X^1$ has the prescribed parabolic behavior. This allows us  to obtain weak convergence to a Gibbs measure with the the correct asymptotic behavior for the top path.
Regarding uniqueness, recall that the proof in the one-line case involved showing that regardless of the exact boundary values, the marginal distribution at an interior point was close to a Gaussian with a certain mean and variance. This along with the Gibbs property was enough to complete the proof. In the infinite line case however, one has to consider the locations of the infinitely many lines. A key idea that we employ here is a reverse coupling strategy. Namely, if the boundary values of $\uX\sim \mu_1$  are higher than those of $\uY \sim \mu_2$, say at some given  interval $[-T,T],$ then under the usual monotone coupling one has the stochastic domination $\uX \succ \uY.$ However, if the boundary conditions do not differ much, then one can construct a reverse coupling, under which with high probability, $\uY\succ \uX$ on a mesoscopically large interval $[-T^c, T^c]$, for some $c\in(0,1)$, which will suffice for our purposes.  Such a strategy also featured in the prior work \cite{geomarea}.

It might be instructive to state our results in the language of  the very recent work \cite{aggarwal2023strong}, which, settling a long-standing problem, provides a  strong characterization of the Airy LE in terms of the parabolic behavior of the top path. In this context the Brownian-Gibbs property states that the conditional distribution of any finite number of lines on a compact domain given everything else is simply that of non-intersecting Brownian lines respecting the boundary data. The main result in  \cite{aggarwal2023strong}
says that if $\uX$ is an infinite LE with the Brownian-Gibbs property,  such that %
for any $\e>0$ there exists a constant $\kappa(\e)>0$ with 
\begin{equation}\label{aghuang1}
\P[\left|X^{1}(t)+\p(t)\right|\le \e t^2 +\kappa(\e)] \ge 1-\e,\qquad t\in\R,
\end{equation} 
then $\uX$ is the parabolic Airy LE, up to an independent random shift. 
A potential analogous statement in our setting maybe formulated as stating that the growth rate of $X^{1}$ smaller than $(1-\e)\p$ is enough to force it to be the FS diffusion in the one-line case and $\mu^0$ in the multi-line case. However as Theorems \ref{thmoneline} and \ref{thmmanylines} show, the same conclusion holds under the weaker assumption $X^{1}(T)\le T^2-\omega(T),$ where $\omega(T)/T\to\infty$ as $T\to\infty$. Thus, perhaps the more accurate formulation of our result in the language of \eqref{aghuang1} 
 for the multi-line case rather reads as follows. 
If $\uX\sim \mu\in\cG_{\infty}$ is such that for any $M>0$ there exists a constant $\kappa(M)>0$ with 
\[
\P\left(X^1(t) - \p(t) \leq -M|t| +  \kappa(M)\right) > 1- \tfrac1M\,,\qquad  t\in\R\,,
\]
then $\uX\sim \mu^0$ is the zero boundary $\lambda$-tilted LE. Moreover, Theorem \ref{thmmanylines} shows that the following refined statement holds as well:  If $\uX\sim \mu\in\cG_{\infty}$ is such that for any $\e>0$ there exists a constant $\kappa(\e)>0$ with 
\[
\P\left(\left|X^1(t) - \p(t) + L|t| \ind_{t<0} + R|t|\ind_{t>0}\right| \,\le \,\e |t| + \kappa(\e)\right) > 1- \e\,,\qquad  t\in\R\,,
\]
for some constants $L,R$ with $L+R<0$, then $\uX\sim \nu_{\infty,L,R}$ is the unique Gibbs measure with the prescribed asymptotic behavior for the top line $X^1$. 
Nonetheless, there are significant differences between the settings of \cite{aggarwal2023strong} and this article. The most major one being that for the Airy LE, there is no floor and hence no a priori control on the lower curves. Thus, much of the work in \cite{aggarwal2023strong} is devoted to establishing the requisite control on the lower curves starting simply with the hypothesis \eqref{aghuang1}, by developing novel estimates involving DBM. On the other hand, while DBM estimates are not available in our case, the presence of the floor and 
Definition \ref{def:asympin}  automatically provides some control on the lower curves.

We conclude this section with a brief remark on related examples of non translation invariant Brownian-Gibbs measures. 
As alluded to above,
a class of non-stationary measures related to the Airy LE, called the Airy wanderers, was investigated in \cite{10.1214/09-AOP493}.  There, using integrable techniques, non-stationary Gibbs measures were constructed in which the first few lines 
follow linear paths instead of parabolic ones. 
Related to the Airy LE, and more aligned with the setting of our paper, is the DFS diffusion, where the area tilt factor for each path  is set to one, resulting in exchangeability and, consequently, integrable properties, see  \cite{ioffevelenikwachtel,ferrari2023airy2}. In this scenario, while such integrable inputs may be used, the purely geometric and probabilistic arguments developed in our paper should already allow a characterization of all Gibbs states for the DFS with an arbitrary finite number of lines.  
Specifically, if there are $k$ lines, each of them can have its own left and right linear drifts away from the common parabolic growth, and non-intersection  forces the drift values $(L_i,R_i)$ for the $i^{th}$ path to be ordered across $1, \ldots, k,$ giving rise to a $2k$ parameter family of measures. 
The edge limit in the case $k \to \infty$ for the stationary DFS  was shown to be the Airy LE in \cite{dimitrov2023uniform}. The non-stationary measures in principle can lead to infinitely many wanderers. 
We do not pursue this formally in this article deferring it to future work.

\subsection{Organization of the article} In Section \ref{sec2}, we collect a preliminary set of results which will appear repeatedly throughout the sequel. Sections \ref{sec:asymp} and \ref{sec:proofofth1} are devoted to the proof of Theorem \ref{thmoneline}. In particular, in Section \ref{sec:asymp} we establish the asymptotic behavior of Gibbs states for the one-line case, whereas in Section \ref{sec:proofofth1} we address the questions of existence and uniqueness and prove Theorem \ref{thmoneline}. Section \ref{sec5} treats the infinite line case, proving Theorem \ref{thmmanylines} relying on the key Proposition \ref{ppn:refinedbnd}. The proof of the latter is the content of Section \ref{sec:inductive}. 

\subsection{Acknowledgements} The authors thank Senya Shlosman for asking a question that led to the investigation carried out in this article. The research was initiated when PC was visiting UC Berkeley as a Miller Visiting Professor. MBRC was supported in part by NSF grant DMS-$1855688$. SG was partially supported by NSF grant DMS-$1855688$, NSF CAREER grant
DMS-$1945172$, and a Sloan Research Fellowship.

\section{Preliminaries}\label{sec2}

In this section we record a number of preliminary facts %
that will be repeatedly used throughout the rest of the article. 
\subsection{Monotonicity}
Recall the definition \eqref{eq:Omega-Set} of the sample space $\Omega^+_{n;\ell,r}$ and the set of probability measures $\calP^+_{n;\ell,r}$. 
Stochastic domination is defined w.r.t.\ the following partial order on $ \Om^+_{n;\ell, r} $:
\begin{equation}\label{eq:partialorder}
\uX\preceq \uY\;\;\iff \;\;X^i(t)\le Y^i(t)\,,\;\quad\forall t\in(\ell,r)\,,\; \forall i=1,\dots,n\,.
  \end{equation}
  If the inequalities are strict, we use $\prec$ instead of $\preceq$.
For two line ensembles $\uX\sim \mu$ and $\uY\sim \nu$, with $\mu,\nu\in\calP^+_{n;\ell,r}$, we write $\mu\succeq \nu$ if there exists a coupling $\Gamma$  of $(\mu,\nu)$ such that $\Gamma(\uX\succeq \uY)=1$.
We also use the notation $\ux\preceq \uy$ iff $x_i\leq y_i$ for all $i=1,\dots,n$ when $\ux,\uy\in\R^n$. 
If $\uZ,\uW\in\Om^+_{n;\ell, r}$ are fixed, with $\uZ\prec\uW$, we write $\L^{a; \ux, \uy}_{n;\ell, r}[\uZ,\uW]$ for 
the probability measure $\L^{a; \ux, \uy}_{n;\ell, r}$
conditioned on the event $\{\uZ\prec \uX\prec \uW\}$.
The main monotonicity properties of our line ensembles are expressed by the following lemma which appears as Lemma 2.3 in \cite{geomarea}.
\begin{lem}\label{lem:monotonicity}
For any $\ell<r$,  $\ux\prec \ux'$, $\uy\prec \uy'$,  $a\geq a'\geq 0$, $n\in\N$, and $\uZ\prec \uZ'$, $\uW\prec \uW'$, with $\uZ\prec \uW$ and $\uZ'\prec \uW'$, one has
\begin{equation}\label{eq:FKG}
\L^{a; \ux, \uy}_{n;\ell, r}[\uZ,\uW]\preceq \L^{a'; \ux', \uy'}_{n;\ell, r}[\uZ',\uW'].
\end{equation}
\end{lem}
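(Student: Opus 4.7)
The plan is to reduce \eqref{eq:FKG} to a lattice FKG/Holley comparison by discretizing in both space and time, and then pass to the continuum limit. Concretely, for each small $\e>0$ I would replace the Brownian bridges by simple random walks on $\e\Z$ with time step $\e^2$, pinned at (the discretizations of) the boundary values $\ux,\uy$ (respectively $\ux',\uy'$). The joint law of the $n$-tuple $\uX$ then becomes a probability measure on a finite distributive lattice, with the componentwise pointwise order \eqref{eq:partialorder}. Its density with respect to counting measure is the product of Gaussian step weights, times the indicator of $\{\uZ\prec\uX\prec\uW,\ X^1>\cdots>X^n>0\}$, times the area tilt $\exp(-a\e^2\sum_{i,t}\lambda^{i-1}X^i_t)$. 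Call the resulting discretized measures $\mu_\e$ (for the parameters on the left of \eqref{eq:FKG}) and $\nu_\e$ (for those on the right).

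Next I would verify Holley's criterion $\nu_\e(\ux\vee\uy)\mu_\e(\ux\wedge\uy)\ge \mu_\e(\ux)\nu_\e(\uy)$. This splits into independent contributions. The product of Gaussian random walk kernels is log-supermodular on $\e\Z$ in each bond, which is the standard log-concavity input used for non-intersecting random walks. The area tilt factors satisfy the Holley inequality as an \emph{equality} whenever $a=a'$, because $\sum (X\vee Y)+\sum(X\wedge Y)=\sum X+\sum Y$; when $a>a'$ the difference of exponents is $(a-a')\sum\lambda^{i-1}((X\vee Y)-X)\ge0$, which goes the correct way since the stronger tilt is applied to the smaller configuration. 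Finally, the non-crossing/positivity/confinement indicators are lattice intervals, and the hypotheses $\uZ\prec\uZ'$, $\uW\prec\uW'$, $\ux\prec\ux'$, $\uy\prec\uy'$ are exactly what is needed for the meet $\ux\wedge\uy$ to satisfy $\mu_\e$'s trapping constraints whenever $\uy$ satisfies $\nu_\e$'s constraints and $\ux$ satisfies $\mu_\e$'s, and symmetrically for the join. Holley's theorem then yields $\mu_\e\preceq\nu_\e$.

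Finally I would pass to the limit $\e\downarrow0$. Pinned random walks converge weakly to the pinned Brownian bridges, and the conditioning events $\{\uZ\prec\uX\prec\uW,\ \uX\in\A_n^+\}$ are continuity events for the limit laws (their boundaries have measure zero since Brownian bridges almost surely do not touch a prescribed continuous barrier without crossing it). Hence $\mu_\e\Rightarrow \L^{a;\ux,\uy}_{n;\ell,r}[\uZ,\uW]$ and likewise for $\nu_\e$, and stochastic domination passes to weak limits by Strassen's theorem, giving \eqref{eq:FKG}. The main technical point, and the step I expect to require care, is ensuring that the conditioning events have positive and continuous mass in $\e$ so that the conditional laws converge; this is where the assumed strict orderings $\uZ\prec\uW$ and $\uZ'\prec\uW'$ are needed, and the requisite small-ball lower bounds for Brownian bridges between the barriers are standard (cf.\ \cite{CIW18,CIW19}).
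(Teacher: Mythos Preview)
The paper does not give a proof here; it simply cites the result as Lemma~2.3 of \cite{geomarea}. So there is no ``paper's own proof'' to compare against, and your Holley/discretization route is a reasonable self-contained argument. One concrete step, however, does not work as written.

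You propose to use \emph{simple random walks on $\e\Z$} and then speak of ``Gaussian step weights.'' These are not the same thing, and the first choice breaks the Holley verification. For nearest-neighbor SRW the one-step kernel $p$ is supported on $\{-1,+1\}$, and $(s,t)\mapsto p(s-t)$ is \emph{not} log-supermodular: take, on a single bond, $x_\tau=1$, $x_{\tau+1}=0$ and $y_\tau=0$, $y_{\tau+1}=1$. Both $x$ and $y$ make valid $\pm1$ steps, but $(x\wedge y)$ and $(x\vee y)$ each make step $0$, which has weight zero, so $\mu_\e(x\wedge y)\nu_\e(x\vee y)=0<\mu_\e(x)\nu_\e(y)$ on that bond. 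The fix is to discretize time only and keep Gaussian increments (or use a lazy walk with a log-concave step law); then $p$ is log-concave, $(s,t)\mapsto p(s-t)$ is genuinely log-supermodular, and the rest of your verification is correct: the non-intersection/positivity set is a sublattice under the coordinatewise order, the ordered floor/ceiling indicators satisfy the join/meet compatibility exactly because $\uZ\prec\uZ'$ and $\uW\prec\uW'$, and the area-tilt exponent difference equals $(a-a')\sum_{i,t}\lambda^{i-1}(X^i_t-Y^i_t)^+\ge 0$ (your formula had $(X\vee Y)-X$, i.e.\ $(Y-X)^+$, which is the wrong expression but harmlessly also nonnegative). The passage to the time-continuum limit is then standard, using that the strict orderings make the conditioning events open with positive mass and null boundary.

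For context, the proof in \cite{geomarea} and in the broader line-ensemble literature usually takes a different route: one builds a monotone Markov chain (resampling one line at a time from its conditional bridge law given its neighbors) and checks that coordinatewise order is preserved pathwise under the dynamics; stochastic domination of the stationary laws follows. That approach sidesteps discretization entirely, while yours packages everything into a single Holley inequality --- both are valid once the step kernel is chosen correctly.
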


\subsection{Scaling}
For any fixed $\lambda>1$ and $n\in\bbN$, the probability measures $\L^{a; \ux, \uy}_{n;\ell, r}$ satisfy the following scaling relation, inherited from Brownian scaling and the linearity of area tilt potentials. We refer to Lemma 1.1 in \cite{CIW18}
for a proof. 
\begin{lem} 
\label{lem:scaling} 
Fix $\lambda>1$ and $n\in\bbN$. For all $a>0$, and $ \ux , \uy\in\bbA_n^+$, for all $\ell<r$, one has
\[
\uX\sim \L^{a; \ux, \uy}_{n;\ell, r}\;\;\Longleftrightarrow\;\;\uY\sim 
\L^{a\lambda; \lambda^{-1/3}\ux, \lambda^{-1/3}\uy}_{n;\lambda^{-2/3}\ell, \lambda^{-2/3}r},
\] 
where $\uY$ is the rescaled field 
\begin{equation}\label{eq:BrownScale}  
\uY (t ) = \frac{1}{\lambda^{1/3}} \uX (\lambda^{2/3}t ) \,,\qquad t\in[\lambda^{-2/3}\ell,\lambda^{-2/3}r]\,.
\end{equation}
\end{lem}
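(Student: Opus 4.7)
The plan is to verify the identity directly by pushing forward the law of $\uX$ through the rescaling map defined in \eqref{eq:BrownScale} and checking that the Radon--Nikodym derivative defining $\L^{a;\ux,\uy}_{n;\ell,r}$ transforms into the one defining $\L^{a\lambda;\lambda^{-1/3}\ux,\lambda^{-1/3}\uy}_{n;\lambda^{-2/3}\ell,\lambda^{-2/3}r}$. The argument decomposes into three independent pieces: the transformation of the Brownian bridge reference measure, of the positivity/non-intersection indicator, and of the area tilt exponent.

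First I would handle the Brownian bridge law: for a single coordinate, standard Brownian scaling states that if $X$ is distributed as $\B^{x,y}_{\ell,r}$, then the rescaled path $Y(t)=\lambda^{-1/3}X(\lambda^{2/3}t)$ is distributed as $\B^{\lambda^{-1/3}x,\lambda^{-1/3}y}_{\lambda^{-2/3}\ell,\lambda^{-2/3}r}$. Indeed, writing a Brownian bridge as a time-changed and conditioned Brownian motion, the map $X \mapsto c^{-1}X(c^2\cdot)$ with $c=\lambda^{1/3}$ preserves the covariance structure and sends the endpoints $(x,y)$ at times $(\ell,r)$ to $(\lambda^{-1/3}x,\lambda^{-1/3}y)$ at times $(\lambda^{-2/3}\ell,\lambda^{-2/3}r)$. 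Applying this componentwise, $\B^{\ux,\uy}_{n;\ell,r}$ is pushed to $\B^{\lambda^{-1/3}\ux,\lambda^{-1/3}\uy}_{n;\lambda^{-2/3}\ell,\lambda^{-2/3}r}$ under \eqref{eq:BrownScale}.

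Next, positivity and strict ordering are preserved under multiplication by the positive constant $\lambda^{-1/3}$, so $\uX(s)\in\bbA_n^+$ for all $s\in(\ell,r)$ if and only if $\uY(t)\in\bbA_n^+$ for all $t\in(\lambda^{-2/3}\ell,\lambda^{-2/3}r)$; hence $\ind_{\uX\in\Omega^+_{n;\ell,r}}=\ind_{\uY\in\Omega^+_{n;\lambda^{-2/3}\ell,\lambda^{-2/3}r}}$. For the area tilt, the substitution $s=\lambda^{2/3}t$ gives $ds=\lambda^{2/3}\dt$ and $X^i(s)=\lambda^{1/3}Y^i(t)$, so
\[
\int_{\ell}^{r} X^i(s)\,ds \;=\; \lambda^{1/3}\cdot\lambda^{2/3}\int_{\lambda^{-2/3}\ell}^{\lambda^{-2/3}r} Y^i(t)\,\dt \;=\; \lambda\int_{\lambda^{-2/3}\ell}^{\lambda^{-2/3}r} Y^i(t)\,\dt,
\]
and therefore $a\lambda^{i-1}\int_\ell^r X^i(s)\,ds=(a\lambda)\lambda^{i-1}\int Y^i(t)\,\dt$. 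Summing over $i$ identifies the exponential weight in \eqref{eq:finitegibbs} for $\uX$ with the corresponding weight for $\uY$ at strength $a\lambda$.

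Combining the three observations, the unnormalized densities agree; by uniqueness of normalization, the normalizing constants $Z$ also match, and the laws coincide, which proves the claimed equivalence. There is no real obstacle here: the only points requiring minor care are the bookkeeping in the change of variables (tracking the factor $\lambda=\lambda^{1/3}\cdot\lambda^{2/3}$) and the verification that scaling is applied consistently to endpoints, time interval, and area integrand; I would write these explicitly for clarity but expect no further subtlety.
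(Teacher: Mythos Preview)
Your proposal is correct and is the natural direct verification of the scaling identity. The paper does not give its own proof of this lemma but instead refers to Lemma~1.1 in \cite{CIW18}; your three-step check (Brownian scaling of the reference bridge, invariance of the ordering constraint, and the change of variables in the area integral producing the extra factor of $\lambda$) is exactly the expected argument and contains no gaps.
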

We remark that, as a consequence of Lemma \ref{lem:monotonicity} and Lemma \ref{lem:scaling}, 
if $X^i_{n,T}$ denotes the $i$-th line of the ensemble $\L^{\underline 0}_{n;T}$ with zero boundary condition on the interval $[-T,T]$, then for all $n\ge 2$ and $T>0$, $X^2_{n,T}$ is stochastically dominated by $\lambda^{-1/3}X^1_{n-1,\lambda^{2/3}T}$.
This follows by first removing the ceiling $X^1_{n,T}$ imposed on $X^2_{n,T}$, which yields a $(n-1)$-line ensemble with overall strength $a\lambda$ on $[-T,T]$,  and then by applying the scaling relation \eqref{eq:BrownScale} with $r=-\ell=\lambda^{2/3}T$. More generally, by removing all top $k$ paths and applying the scaling relation  $k$ times,  one has  that  for any $n>k\ge 1$, 
$X^{k+1}_{n,T}$ is stochastically dominated by $\lambda^{-k/3}X^1_{n-k,\lambda^{2k/3}T}$.

\subsection{Strong Gibbs property}
Following \cite{corwinhammond}, we observe that a $\lambda$-tilted LE has the strong Gibbs property, that is the Brownian-Gibbs property in \eqref{eq:BG-prop} extends to the case where the interval $[\ell,r]$ is replaced by a 
 stopping domain. This will be crucial for a number of coupling arguments to be used in the sequel. 

We say that the random variables $(\tau_\ell,\tau_r)$ form a stopping domain for the first $n$ lines of the line ensemble if   $\{\tau_\ell \le t\}\cap\{\tau_r \ge s\}\in\calB_{n; t , s}^{\sfe}$ for all $t<s$. Define $\calU=\{(\ell,r,\Omega^+_{\infty;\ell,r}),\, \ell<r\}$ and recall the notation from Definition \ref{def:laBG}. 
 The argument of Lemma 2.5  in \cite{corwinhammond} proves the following statement. 
\begin{lem}\label{sabg}
Let $\mu\in\calP^+_\infty$ be a $\lambda$-tilted LE in the sense of Definition \ref{def:laBG}. Then for any $n\in\N$, if $(\tau_\ell,\tau_r)$ is a stopping domain for the first $n$ lines, and
 $F:\cU\mapsto \bbR$ is bounded and measurable, 
then %
\begin{equation}\label{eq:BG-props} 
\bbE\left[ \, F(\tau_\ell,\tau_r,\uX)\tc
\calB^{\sfe}_{n ;\tau_\ell , \tau_r} \right] 
= 
\bbE_{\uY \sim
 \L_{n ;\tau_\ell , \tau_r}^{\uX^{\le n} (\tau_\ell) , \uX^{\le n} (\tau_r )}[X^{n+1}]}
 \left[\,F(\tau_\ell,\tau_r,\uY\,,\uX^{>n}) 
\right], \qquad \mu-\text{a.s.}
\end{equation}
\end{lem}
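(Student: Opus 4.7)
The plan is to follow the strategy of Lemma 2.5 in \cite{corwinhammond}, reducing the strong Gibbs property to the ordinary Brownian-Gibbs property by a discretization of the stopping domain. The first step is to handle the case of a countably-valued stopping domain. Assume that $(\tau_\ell,\tau_r)$ takes values in a countable set $D\subset\{(\ell,r)\in\R^2:\ell<r\}$. For any $A\in\calB^{\sfe}_{n;\tau_\ell,\tau_r}$ and any pair $(\ell,r)\in D$, the defining property of the stopping domain together with that of the $\sigma$-algebra $\calB^{\sfe}_{n;\tau_\ell,\tau_r}$ implies that $A\cap\{\tau_\ell=\ell,\tau_r=r\}\in\calB^{\sfe}_{n;\ell,r}$. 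Summing over $(\ell,r)\in D$ and applying the ordinary Brownian-Gibbs property \eqref{eq:BG-prop} on each slice yields
\begin{align*}
\mathbb{E}\left[F(\tau_\ell,\tau_r,\uX)\ind_A\right] &= \sum_{(\ell,r)\in D}\mathbb{E}\left[F(\ell,r,\uX)\ind_{A\cap\{\tau_\ell=\ell,\tau_r=r\}}\right]\\
&= \sum_{(\ell,r)\in D}\mathbb{E}\left[\mathbb{E}_{\uY\sim\L_{n;\ell,r}^{\uX^{\le n}(\ell),\uX^{\le n}(r)}[X^{n+1}]}\!\left[F(\ell,r,\uY,\uX^{>n})\right]\ind_{A\cap\{\tau_\ell=\ell,\tau_r=r\}}\right],
\end{align*}
which is exactly \eqref{eq:BG-props} in this discrete setting.

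The second step is to pass to general continuous-valued stopping domains by discretization. Given a stopping domain $(\tau_\ell,\tau_r)$, set $\tau_\ell^{(k)}=2^{-k}\lfloor 2^k \tau_\ell\rfloor$ and $\tau_r^{(k)}=2^{-k}\lceil 2^k\tau_r\rceil$. One verifies directly that $(\tau_\ell^{(k)},\tau_r^{(k)})$ is again a stopping domain, taking values in the dyadic grid, and that $\tau_\ell^{(k)}\downarrow\tau_\ell$, $\tau_r^{(k)}\uparrow\tau_r$ as $k\to\infty$. Moreover $\calB^{\sfe}_{n;\tau_\ell^{(k)},\tau_r^{(k)}}\supseteq\calB^{\sfe}_{n;\tau_\ell,\tau_r}$, so any $A\in\calB^{\sfe}_{n;\tau_\ell,\tau_r}$ qualifies at every level. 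Applying the first step to each $(\tau_\ell^{(k)},\tau_r^{(k)})$ gives \eqref{eq:BG-props} with these discrete stopping times.

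The third and main step is to take $k\to\infty$. On the left-hand side this is immediate for any $F$ that is bounded and continuous in $(\ell,r,X)$ (with respect to the topology of uniform convergence on compacts), by dominated convergence and continuity of the paths. For the right-hand side, the conditional laws $\L_{n;\tau_\ell^{(k)},\tau_r^{(k)}}^{\uX^{\le n}(\tau_\ell^{(k)}),\uX^{\le n}(\tau_r^{(k)})}[X^{n+1}]$ converge weakly to $\L_{n;\tau_\ell,\tau_r}^{\uX^{\le n}(\tau_\ell),\uX^{\le n}(\tau_r)}[X^{n+1}]$ almost surely, since the Brownian bridge density depends continuously on its endpoints, the area-tilt and floor indicator factors are continuous in paths uniform on compacts (using that a.s.\ the floor $X^{n+1}$ is strictly separated from $\uX^{\le n}$ on a neighbourhood of $[\tau_\ell,\tau_r]$), and the partition functions are bounded below on the event in question. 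A standard monotone class / $\pi$-$\lambda$ argument extends the identity from continuous bounded $F$ to all bounded measurable $F$, yielding \eqref{eq:BG-props}. The main technical point, which I expect to require the most care, is the convergence of the normalizing constants and the stability of the floor and non-intersection indicators under the limit $k\to\infty$; this will rely on a.s.\ strict inequality between consecutive lines at the (random) endpoints $\tau_\ell,\tau_r$, which is standard for such Brownian-type ensembles.
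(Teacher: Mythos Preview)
Your approach is exactly the argument of Lemma~2.5 in \cite{corwinhammond} that the paper invokes without further detail, so the strategy matches. Two small slips to fix: with your definitions $\tau_\ell^{(k)}=2^{-k}\lfloor 2^k\tau_\ell\rfloor\le\tau_\ell$ and $\tau_r^{(k)}=2^{-k}\lceil 2^k\tau_r\rceil\ge\tau_r$, so the discretized interval \emph{contains} $[\tau_\ell,\tau_r]$ and shrinks to it (the convergence directions you wrote are reversed); consequently $\calB^{\sfe}_{n;\tau_\ell^{(k)},\tau_r^{(k)}}\subseteq\calB^{\sfe}_{n;\tau_\ell,\tau_r}$, not $\supseteq$, and these $\sigma$-algebras increase to $\calB^{\sfe}_{n;\tau_\ell,\tau_r}$ as $k\to\infty$. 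The passage to the limit then goes through martingale convergence for the conditional expectations rather than by directly checking $A\in\calB^{\sfe}_{n;\tau_\ell^{(k)},\tau_r^{(k)}}$, which need not hold.
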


\subsection{Parabolic barrier representation}
Let us consider a single Brownian bridge and its area-tilted deformation introduced in Definition \ref{def:one line}. It will be convenient to use the following notation.
For any $a>0$ we write $\p^{(a)}(t) = \f{a}{2}\,t^2$, and if $a=2$ we simply write $\p$ for $\p^{(a)}$. For any $h\in\calC_{\ell,r}$, write $\B^{x, y}_{\ell, r}[h]=\B^{x, y}_{\ell, r}(\cdot\,|\,X\succ h) $ for the distribution obtained from $\B^{x, y}_{\ell, r}$ by conditioning on the event 
\[
\{X\succ h\}=\{X(t)> h(t)\,,\;\forall \;t\in(\ell,r)\}.
\]

\begin{lem}[Parabolic barrier representation (\PAR)]
    \label{lem:girsanov}
    For any  $a > 0$, for all $x,y>0$ and $\ell<r$,  
    \[
 Y+\p^{(a)} \sim \L^{a;x, y}_{\ell, r}\;\; \Longleftrightarrow  \;\; Y\sim \B^{x-\p^{(a)}(\ell), y-\p^{(a)}(r)}_{\ell, r}[-\p^{(a)}]
 .
    \] 
\end{lem}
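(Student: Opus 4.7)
The plan is to prove this via a Cameron--Martin style change-of-measure argument, exploiting the fact that the transformation $X \mapsto X - \p^{(a)}$ is a deterministic smooth shift that produces an absolutely continuous change of law between two Brownian bridges with shifted endpoints. The key reduction is the following claim: if $X \sim \B^{x, y}_{\ell, r}$, then the pushforward $\widetilde Y := X - \p^{(a)}$ is absolutely continuous with respect to $\B^{x - \p^{(a)}(\ell),\, y - \p^{(a)}(r)}_{\ell, r}$, with Radon--Nikodym derivative of the form
\[
\frac{d\,\mathrm{Law}(\widetilde Y)}{d\B^{x - \p^{(a)}(\ell),\, y - \p^{(a)}(r)}_{\ell, r}}(\phi)
\;=\; C\,\exp\!\left(a\int_\ell^r \phi(t)\,dt\right),
\]
for some path-independent constant $C$. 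Granted this, decomposing $\exp\bigl(-a\int_\ell^r X\,dt\bigr) = \exp\bigl(-a\int_\ell^r \p^{(a)}\,dt\bigr)\cdot \exp\bigl(-a\int_\ell^r \widetilde Y\,dt\bigr)$ and observing that $\{X > 0\} = \{\widetilde Y > -\p^{(a)}\}$, the area-tilt factor $\exp(-a\int \widetilde Y\,dt)$ exactly cancels the Radon--Nikodym factor $\exp(a\int \widetilde Y\,dt)$. Normalizing both sides by the case $F \equiv 1$ of any bounded test functional $F$ will then yield the identity of laws asserted by the lemma.

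To establish the density, I will use that both $\widetilde Y$ and any $Y \sim \B^{x - \p^{(a)}(\ell),\, y - \p^{(a)}(r)}_{\ell, r}$ share the same Brownian-bridge covariance and the same deterministic endpoints, and differ solely by the smooth deterministic mean shift
\[
m(t) \;=\; -\p^{(a)}(t) + \p^{(a)}(\ell) + \frac{t-\ell}{r-\ell}\bigl(\p^{(a)}(r) - \p^{(a)}(\ell)\bigr),
\]
which satisfies $m(\ell) = m(r) = 0$, so that $\widetilde Y \eqd Y + m$. The Cameron--Martin formula for Brownian bridges then gives
\[
\frac{d\,\mathrm{Law}(\widetilde Y)}{d\B^{x - \p^{(a)}(\ell),\, y - \p^{(a)}(r)}_{\ell, r}}(\phi)
\;=\; \exp\!\left( \int_\ell^r m'(t)\, d\phi(t) - \tfrac{1}{2}\int_\ell^r m'(t)^2 dt\right).
\]
Since $m''(t) = -a$ is constant and the path $\phi$ has deterministic endpoints under the reference bridge, a single integration by parts turns the stochastic integral into $a\int_\ell^r \phi(t)\,dt$ plus a path-independent boundary/constant term, giving exactly the required form of the density.

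The only technical subtlety will be the proper interpretation of the Cameron--Martin formula on the bridge space; because $m$ is a smooth polynomial, this is transparent and in fact can be bypassed entirely by a direct finite-dimensional Gaussian calculation, which I expect to be the cleanest route. Specifically, at an arbitrary partition $\ell = t_0 < \cdots < t_{n+1} = r$, writing $x_i = \phi_i + \p^{(a)}(t_i)$ and applying the elementary identity $p_\delta(u - \Delta) = p_\delta(u)\exp\bigl(u\Delta/\delta - \Delta^2/(2\delta)\bigr)$ to each factor of the joint Gaussian density expresses the ratio of finite-dimensional densities in closed form. Summation by parts then converts $\sum_i (\phi_{i+1}-\phi_i)\Delta_i/\delta_i$, as the mesh vanishes, into $a\int_\ell^r \phi(t)\,dt$ plus deterministic boundary terms (with $\Delta_i = \p^{(a)}(t_{i+1})-\p^{(a)}(t_i)$ and $\delta_i = t_{i+1}-t_i$), while the remainder $\sum_i \Delta_i^2/\delta_i$ converges to a path-independent constant. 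Since both candidate laws are Gaussian, equality of finite-dimensional marginals on all partitions determines the full infinite-dimensional identity, completing the argument.
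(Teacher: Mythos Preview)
Your proposal is correct and takes essentially the same approach as the paper: both compute the Radon--Nikodym derivative between a Brownian bridge and its parabolic shift via a Girsanov/Cameron--Martin change of measure, then use integration by parts (exploiting $m''\equiv -a$) to convert the stochastic integral into $a\int\phi\,dt$ plus constants, so that the area tilt cancels exactly. The paper does this in two lines by invoking Girsanov directly for the density of $Y+\p^{(a)}$ with respect to $\B^{x,y}_{\ell,r}$, whereas you compute the inverse density and spell out the Cameron--Martin/finite-dimensional route; the substance is identical.
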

\begin{proof}
By Girsanov's theorem, if $Y\sim \B^{x-\p^{(a)}(\ell), y-\p^{(a)}(r)}_{\ell, r}$ then $X=Y+\p^{(a)}$ has a density with respect to $\B^{x, y}_{\ell, r}$ which is proportional to 
\[
\exp\left(\int_\ell^r h(t)dB(t) 
\right)\,,\quad \text{where }\;\; h(t) =   \f{d}{dt} \p^{(a)} =  a\,t\,. 
\]
The conclusion follows using 
\[
\int_\ell^r h(t)dB(t)  = - \int_\ell^r h'(t)B(t)dt +B(r)h(r) - B(\ell)h(\ell)  = - a\int_\ell^r B(t)dt + {\rm constant}\,,
\]
and the fact that $Y\succ - \p^{(a)}$ iff $X\succ 0$. 
\end{proof}

\subsection{A first tightness estimate}
Suppose $X\sim \L^{x,y}_{T}$ is the area-tilted excursion on $[-T,T]$, with strength $a=2$, and with boundary values
  $x,y\ge 0$ such that 
   \begin{equation}\label{eq:tight1e}  
\max\{x,y\}\leq T^2-\alpha T\,, 
\end{equation}
for some $\alpha >0$. %
Lemma \ref{lem:girsanov} and the following lemma immediately imply the tightness of the height at the origin $X(0)$ as $T\to\infty$. 
\begin{lem}[Tail bounds at the origin] %
    \label{lem:tailbounds}
    Let %
    $Y\sim \B^{y_1, y_2}_{T}[-\p]$ where $y_1, y_2 \leq -\al T$ for some $\al > 0$.
Then there exists a constant $C>0$ depending only on $\al$ such that for all $T\ge 1$,
    \begin{align*}
        \P(Y(0) \geq t) \leq C\,e^{-2\al t}, \qquad t\geq 0.
    \end{align*}
\end{lem}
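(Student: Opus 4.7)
The plan is first to reduce, via monotonicity, to a symmetric canonical case, then to decompose the Brownian bridge at time $0$ by the Markov property and combine Gaussian tail estimates with a one-sided finite-volume probability bound.

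By Lemma \ref{lem:monotonicity} (applied with $n=1$ and $a=0$), the conditioned law $\B^{y_1,y_2}_{T}[-\p]$ is stochastically increasing in each endpoint, since $\{W\succ -\p\}$ is an increasing event. Thus for any $y_1,y_2\leq -\al T$ one has $\B^{y_1,y_2}_{T}[-\p]\preceq \B^{-\al T,-\al T}_{T}[-\p]$, so it suffices to prove the tail bound in the symmetric case $y_1=y_2=-\al T$, which I assume henceforth.

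The next step is to decompose at $t=0$ by the Markov property of the underlying Brownian bridge. Write $\phi$ for the Gaussian density of $W(0)\sim\Nor{m}{T/2}$ with $m=-\al T$, and set
\[
q_L(z)=\P\!\left(\B^{-\al T,z}_{-T,0}\succ -\p\right),\qquad q_R(z)=\P\!\left(\B^{z,-\al T}_{0,T}\succ -\p\right).
\]
Both $q_L$ and $q_R$ are nondecreasing in $z$, and the density of $Y(0)$ is proportional to $\phi(z)q_L(z)q_R(z)$, giving
\[
\P(Y(0)\geq t)=\frac{\int_t^{\infty}\phi(z)q_L(z)q_R(z)\,dz}{\int_{-\infty}^{\infty}\phi(z)q_L(z)q_R(z)\,dz}.
\]
For the numerator, the expansion $(z+\al T)^2/T=z^2/T+2\al z+\al^2 T$ yields $\phi(z)\leq(\pi T)^{-1/2}e^{-\al^2 T-2\al z}$ for $z\geq 0$, and bounding $q_Lq_R\leq 1$ gives
\[
\int_t^{\infty}\phi(z)q_L(z)q_R(z)\,dz\leq\frac{1}{2\al\sqrt{\pi T}}\,e^{-\al^2 T-2\al t}.
\]
For the denominator I restrict the integration to $z\in[0,1]$, where $\phi(z)\geq c_1/\sqrt{T}\cdot e^{-\al^2 T}$ by the same expansion, and use the monotonicity $q_L(z)q_R(z)\geq q_L(0)q_R(0)$. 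Taking the ratio will then produce $\P(Y(0)\geq t)\leq C(\al)e^{-2\al t}$, provided we can lower bound $q_L(0)q_R(0)$ by a positive constant depending only on $\al$.

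This last point is the only real obstacle. To establish $q_L(0)\geq c(\al)>0$ uniformly in $T$, I will shift by the linear mean trajectory: setting $\tilde W(t)=W(t)-\al t$ turns $\B^{-\al T,0}_{-T,0}$ into the standard Brownian bridge $\B^{0,0}_{-T,0}$ and transforms the constraint $W(t)>-t^2$ into $\tilde W(t)>-t(t+\al)$. The obstacle $-t(t+\al)$ is non-positive outside the compact interval $(-\al,0)$ and attains its maximum value $\al^2/4$ at $t=-\al/2$ inside it. Since $\B^{0,0}_{-T,0}$ restricted to the fixed compact interval $[-\al,0]$ converges in distribution, as $T\to\infty$, to a standard Brownian motion run backward from $0$ (both the pointwise variance and the covariance structure converge), the probability of clearing the bounded, compactly supported obstacle converges to a positive constant depending only on $\al$, and hence is uniformly bounded below for all sufficiently large $T$. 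An identical argument handles $q_R(0)$, and assembling the three estimates completes the proof.
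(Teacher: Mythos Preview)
Your overall strategy matches the paper's: reduce by monotonicity to the symmetric case, write the density of $Y(0)$ as proportional to $\phi(z)q_L(z)q_R(z)$, drop the avoidance factors in the numerator, and lower-bound the denominator by restricting $z$ to a favorable window. The paper packages this as $\P(Y(0)\ge t)\le 4\,\P(X(0)\ge t)/\P(X(0)\ge t_0)$ after showing $p_s\ge 1/2$ for $s\ge t_0$.

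The gap is in your choice of window. You take $z\in[0,1]$ and invoke $q_L(z)q_R(z)\ge q_L(0)q_R(0)$, but in fact $q_L(0)=q_R(0)=0$. Your own shift shows why: with $\tilde W=W-\al t$ the obstacle is $-t(t+\al)$, which near $t=0^-$ equals $\al|t|+O(t^2)$. A Brownian bridge pinned at $0$ at time $0$ cannot stay strictly above a barrier leaving $0$ with positive slope, since by Blumenthal's $0$--$1$ law the path takes negative values in every right-neighborhood of $0$; hence $\{\tilde W(t)>\al|t|\text{ for all small }|t|\}$ has probability zero, and the limiting ``positive constant'' you invoke is actually $0$. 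The fix, which is exactly what the paper does, is to restrict instead to $z\in[t_0,t_0+1]$ for some fixed $t_0>0$ depending on $\al$, so that the bridge endpoint at $0$ sits strictly above the obstacle; a parabola-avoidance estimate such as Corollary~\ref{paraavoidancecor} then gives $q_L(t_0)q_R(t_0)\ge c(\al)>0$ uniformly in $T$, and the Gaussian ratio yields $Ce^{-2\al t}$ with the extra factor $e^{2\al t_0}$ absorbed into $C$.
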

\begin{proof}
    By monotonicity, it suffices to prove the upper bound for $Y\sim \B^{-\al T}_{T}[-\p]$.
If $X \sim \B^{-\al T}_{T}$, then 
    \begin{align}
        \label{eq:tailorig}
       \P(Y(0) \geq t)= \P(X(0) \geq t\, \big|\, X + \p \gtr 0) \leq \f{\P(X(0) \geq t)}{\P(X + \p \gtr 0)}.
    \end{align}
    By conditioning on $X(0)$ we write %
    \begin{align*}
        \P(X + \p \gtr 0) &= \int_{-\infty}^{\infty} \P(X + \p \gtr 0\, \big|\, X(0) = s) f(s) ds,
    \end{align*}
    where $f$ is the density of $X(0)$.
Define 
\[
p_s := \P({X(t) + t^2 > 0, \forall t \in [0, T]} \, \big|\, X(0) = s).
\]
By  monotonicity, $p_s$ is increasing in $s$, and as shown in \rlem{lem:brownianav2} below, one has $p_s \geq 1/2$ for all $s \geq t_0$ for some constant $t_0$ depending only on $\al$.
Thus
    \begin{align*}
        \P(X + \p \gtr 0) \geq \int_{t_0}^\infty p_s^2 f(s) ds \geq
         \f{1}{4}\,\P(X(0) \geq t_0),
    \end{align*}
    where the $p_s^2$ term appears since conditioning on $X(0)$ makes the left and right sides of $X$ i.i.d.
 Substituting in \eqref{eq:tailorig}, 
    \begin{align*}
        \P(X(0) \geq t\, \big|\, X + \p \gtr 0) \leq 4 \cdot \f{\P(X(0) \geq t)}{\P(X(0) \geq t_0)}.
    \end{align*}
    Recall that $X(0) \sim \Nor{-\al T}{T/2}$. If 
    $\xi \sim \Nor{0}{1}$, and $\phi$ denotes the density of $\xi$, then the well known inequalities 
    \[
    \frac{t}{t^2+1}\, \phi(t) \leq \P(\xi \geq t) \leq \frac1t\,\phi(t)
    \]
imply 
   \begin{align*}
         \P(X(0) \geq t\ \big|\ X + \p \gtr 0)\leq 4\,\f{u(t_0,T)^2+1}{u(t_0,T)u(t,T)}\,
         \exp\left(-\frac12(u(t,T)^2-u(t_0,T)^2)\right)
    \end{align*}
    where $u(s,T):=(\al T + s)\sqrt{2/T}$. Since $u(t,T)^2-u(t_0,T)^2\geq 4\al(t-t_0)$ if $t\geq t_0$, taking $t_0$  large enough depending only on $\al$, then for all $T\geq 1$ one can bound the right hand side above by $8e^{-2\al(t-t_0)}$. Adjusting the value of the constants concludes the proof.
\end{proof}

\subsection{FS diffusion}
The following lemma gathers some basic facts about the Ferrari-Spohn diffusion process introduced in \cite{ferrarispohn2005}. 
Recall Definition \ref{def:one line} of the set of Gibbs measures $\cG$. 
We write $\aFS$ for the Ferrari-Spohn diffusion with strength $a>0$ and write simply $\FS$ when $a=2$. 
 \begin{lem} %
    \label{lem:fsfact}
The measure $\FS\in\calP^+$ satisfies the following. 
    \begin{enumerate}
        \item As $T\to\infty$ the measures $\L^{0}_{T}$ converge weakly to $\FS$; 
        \item $\FS$ is the unique stationary element of $\cG$;
        \item for any $a>0$, one has $X\sim \FS$ iff  $X_a = (a/2)^{-1/3} X ((a/2)^{2/3} \cdot ) \sim \aFS$; 
        \item  if $X \sim \aFS$ then $\P(X(0) >t ) = \Exp{-\left(\tfrac{2\sqrt {2a}}{3}+o(1)\right)t^{3/2}}$ as $t \to\infty$; 
        \item if $X \sim \aFS$ then 
          \begin{align}
            \label{eq:fsmax}
                \P[\max_{s\in[-T, T]} X(t) >t ] \leq  CT a^{2/3}\Exp{-c \sqrt a \,t^{3/2}}
            \end{align}
            for some constants $c,C>0$ independent of $T\geq 1,t>0$ and $a>0$. 
            
    \end{enumerate}
\end{lem}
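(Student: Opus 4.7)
The plan is to handle the five items drawing on the parabolic barrier representation of Lemma \ref{lem:girsanov} throughout, and organizing the arguments so that items (3) and (5) can be uniformly reduced to the $a=2$ case. Item (3) is a direct Brownian scaling computation: if $X \sim \FS$ and I write $X = Y + \p$ via Lemma \ref{lem:girsanov}, then the rescaled process $\tilde Y(t) := (a/2)^{-1/3}Y((a/2)^{2/3}t)$ is again a two-sided Brownian motion (by Brownian scaling) and the barrier condition $Y(s) > -s^2$ transforms into $\tilde Y(t) > -(a/2)\,t^2 = -\p^{(a)}(t)$; therefore $\tilde Y + \p^{(a)}$ has the PAR representation of $\aFS$, which is exactly $X_a$. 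Item (1) is the content of the original analysis in \cite{ferrarispohn2005}: by Lemma \ref{lem:girsanov}, $\L^0_T$ is realized as a Brownian bridge on $[-T,T]$ with endpoint values $-T^2$ conditioned to stay above $-\p$, and a Sturm--Liouville analysis of the associated Schr\"odinger-type semigroup, whose potential is linear and whose ground state is given by the translated Airy function, identifies the infinite-volume limit as the stationary diffusion with one-point density proportional to $\mathrm{Ai}(\sqrt[3]{2a}\,x-\omega_1)^2 \ind_{x>0}$, namely $\FS$.

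For item (4), the plan is to invoke the classical Airy asymptotic $\mathrm{Ai}(y) = \frac{1}{2\sqrt{\pi}}\,y^{-1/4}e^{-\frac{2}{3}y^{3/2}}(1+o(1))$ as $y\to+\infty$. Squaring and substituting $y = \sqrt[3]{2a}\,x - \omega_1$, the normalized density of $X(0)$ under $\aFS$ becomes asymptotic to a constant multiple of $x^{-1/2}\exp(-c_a\,x^{3/2})$ for an explicit constant $c_a>0$. A standard Laplace-method tail integration from $t$ to $\infty$ then produces the claimed stretched-exponential decay, with the polynomial prefactor absorbed into the $o(1)$ correction inside the exponent.

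Item (2) asserts the uniqueness of the stationary element of $\cG$. Let $\mu\in\cG$ be stationary, fix $\e>0$ and a compact window $[-S,S]$. By stationarity the law of $X(\pm T)$ under $\mu$ coincides with that of $X(0)$, so there exists $M=M(\e)$ with $\mathbb{P}_\mu(\max\{X(-T),X(T)\}\le M)\ge 1-\e$ uniformly in $T$. On this event the Brownian-Gibbs property of Definition \ref{def:abg} together with the monotonicity of Lemma \ref{lem:monotonicity} sandwich the restriction of $X$ to $[-S,S]$ between $\L^{0,0}_T$ and $\L^{M,M}_T$. The former converges weakly on $[-S,S]$ to $\FS$ by item (1); for the latter, the PAR representation translates the statement into comparing two Brownian bridges above $-\p$ whose endpoints differ by a fixed amount $M$, and a straightforward coupling, exploiting the depth $-T^2$ of the endpoints together with Lemma \ref{lem:tailbounds} for a priori tail control, shows that this endpoint perturbation is washed out on any compact window once $T$ is large, yielding the same limit $\FS$. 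Sending first $T\to\infty$ and then $\e\to 0$ identifies $\mu$ with $\FS$ on every bounded window, hence $\mu=\FS$.

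Finally, for item (5) the plan is to combine stationarity with item (4). Using item (3) to rescale to $a=2$ turns the interval $[-T,T]$ into $[-T(a/2)^{2/3},T(a/2)^{2/3}]$ and transforms a height $t$ into $t(a/2)^{1/3}$, producing the factors $a^{2/3}$ and $\sqrt{a}$ in the final bound. Covering the rescaled interval by $O(Ta^{2/3})$ unit-length subintervals and invoking stationarity reduces the task to estimating $\mathbb{P}_{\FS}[\max_{s\in[0,1]}X(s)>t]$. A short argument based on the PAR representation and Brownian regularity shows that this maximum exceeds $t$ only if either $X(0)>t/2$, controlled by item (4) with bound $e^{-ct^{3/2}}$, or a unit-interval Brownian fluctuation exceeds $t/2$, contributing a Gaussian $e^{-ct^2}$ tail that is dominated by $e^{-ct^{3/2}}$ for all $t\ge 1$. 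The main obstacle throughout is the coupling step in item (2), where the boundary decoupling $\L^{M,M}_T\to\FS$ at fixed $M$ must be performed carefully in the PAR picture, exploiting the curvature of the parabolic barrier to absorb endpoint perturbations.
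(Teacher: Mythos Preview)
Your proposal is essentially correct, but it takes a genuinely different route from the paper, which treats Lemma~\ref{lem:fsfact} almost entirely by citation: item~(1) is obtained from the monotonicity-based construction in \cite{CIW19}, item~(2) by invoking the uniqueness result of \cite{geomarea} together with the fact (from \cite{ioffeshlosmanvelenik}) that $\FS$ is a stationary Gibbs measure, item~(3) from Lemma~\ref{lem:scaling} applied to $\L^0_T$ and then passing to the limit via item~(1), and items~(4)--(5) from Airy asymptotics as recorded in \cite[Lemma~5.3]{geomarea}. Your version is more self-contained, which is a virtue, but two steps deserve comment.

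For item~(3), invoking Lemma~\ref{lem:girsanov} directly on $X\sim\FS$ to write ``$X=Y+\p$ with $Y$ a two-sided Brownian motion conditioned to stay above $-\p$'' is informal: the \PAR\ representation is only stated and proved for finite intervals, and no infinite-volume version of this decomposition is established in the paper. The rigorous route is the one the paper takes: apply the finite-interval scaling of Lemma~\ref{lem:scaling} to $\L^0_T$, obtaining $\L^{a;0}_{(a/2)^{-2/3}T}$, and then pass to the limit $T\to\infty$ using item~(1). This avoids any appeal to an ill-defined full-line conditioned Brownian motion.

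For item~(2), your sandwich argument is correct in outline, and the step you flag as the main obstacle---showing that $\L^{M,M}_T\to\FS$ on compacts for fixed $M$---is indeed the substantive part. It is worth noting that this decoupling is not as ``straightforward'' as your phrasing suggests: carrying it out carefully requires a coming-down estimate (Lemma~\ref{lem:comingdown}) followed by a coupling to the finite FS via an intermediate stopping domain, which is essentially the three-step argument of Proposition~\ref{ppn:fscouple} later in the paper. The paper sidesteps this by citing \cite{geomarea}, where the analogous uniqueness is proved for the full $\lambda$-tilted ensemble; your approach buys self-containment at the cost of anticipating machinery developed only in Section~\ref{sec:fsuni}.
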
 %
\begin{proof}
The weak convergence of $\L^{0}_{T}$ is guaranteed by monotonicity, and the limiting measure must be a stationary element of $\cG$. This follows as a special case of  a general argument which applies to an arbitrary number of lines, see \cite{CIW19}. Moreover, as a consequence of the uniqueness results in \cite{geomarea} it is immediate to deduce that there exists a unique stationary Gibbs measure. 
On the other hand the Markov property and stationarity of the FS diffusion process %
show that $\FS$ is a stationary element of $\cG$; see e.g.\ \cite{ioffeshlosmanvelenik}. 
This proves the first two items in the lemma. 
The scaling is an immediate consequence of Lemma \ref{lem:scaling} and the convergence stated in the first item.  The last two items follow from the scaling in item 3 and well known asymptotic estimates for the Airy function, see e.g.\  Lemma 5.3 from \cite{geomarea}.
\end{proof}

    \subsection{Coming down estimates}
The next lemma quantifies the pull down effect induced by the area tilt. Assuming a  boundary height $H$, it provides useful, albeit %
not entirely optimal, predictions regarding the time it takes for the line to come down %
to a constant height.

\begin{lem}
\label{lem:comingdown}
There exist   absolute constants $C,c>0$ such that for all $H>0$ and $T \ge H^{3/2}$, if $X \sim \L^{H}_{T}$, then 
   \begin{align*}
        \P\left(X(t)\ge  C\,,\;\forall t\in[-T,T]\right) \leq C\Exp{-c\,T}.
    \end{align*}
\end{lem}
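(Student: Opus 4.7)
The plan is to prove this via an exponential Markov inequality on the area integral $\int_{-T}^T X(t)\,dt$, combined with a partition-function ratio estimate coming from the \PAR{} representation. The observation is that the event $E := \{X(t) \geq C,\ \forall t \in [-T,T]\}$ forces $\int_{-T}^T X\,dt \geq 2TC$, which will be a large-deviation event once $C$ exceeds the typical value of $\int X/(2T)$. Under $\L^H_T$, the typical path follows the Ferrari--Spohn profile of order $O(1)$ in the bulk, with boundary-layer contributions to the area of order $H^{3/2}$, so for $T \geq H^{3/2}$ one expects $\E_{\L^H_T}[\int X] = O(T)$.

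First I would apply the exponential Chebyshev bound: for any $\lambda \in (0, 2)$,
\[
\P_{\L^H_T}(E) \;\leq\; e^{-2\lambda TC}\,\E_{\L^H_T}\!\bigl[e^{\lambda \int X}\bigr] \;=\; e^{-2\lambda TC}\, \frac{Z(2-\lambda, H, T)}{Z(2, H, T)},
\]
where $Z(a, H, T) := \E_{\B^H_T}[e^{-a\int X}\mathbf{1}_{X > 0}]$. Next, using Lemma \ref{lem:girsanov} generalized to tilt strength $a$ (Remark \ref{rem:a=2}) together with a Girsanov change of measure, I would derive the explicit identity $Z(a, H, T) = \exp(a^2 T^3/3 - 2aTH)\,q_a(H)$, where $q_a(H) := \B^{H - aT^2/2}_T(Y > -\p^{(a)})$ is the survival probability of a Brownian bridge above the parabolic barrier $-\p^{(a)}(t) = -at^2/2$.

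The central technical step is then to show
\[
\frac{Z(2-\lambda, H, T)}{Z(2, H, T)} \;\leq\; e^{K(\lambda)\, T}
\]
for some absolute constant $K(\lambda) < \infty$, uniformly in $H$ with $T \geq H^{3/2}$. The leading $T^3$ contributions from the Girsanov factor cancel against corresponding leading terms in $\log q_a(H)$ (both governed by typical paths tracking the parabolic barrier), as do the leading $TH$ terms. What remains is a free-energy difference of order $T$ plus a boundary contribution from the non-zero endpoint $H$, which one bounds by $O(H^{3/2})$ through an explicit test-path construction: a Brownian-bridge descent from height $H$ down to the Ferrari--Spohn scale over time $\tau \sim \sqrt{H}$, whose Gaussian cost $H^2/\tau$ and area cost $\tau H$ balance at $O(H^{3/2})$. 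The hypothesis $T \geq H^{3/2}$ then absorbs this boundary cost into the linear-in-$T$ rate.

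Combining the two bounds yields $\P(E) \leq \exp\!\bigl(-T[2\lambda C - K(\lambda)]\bigr)$. Fixing $\lambda = 1$ and choosing $C$ to be an absolute constant with $C > K(1)/2$ gives the desired decay with $c := 2C - K(1) > 0$. The hardest part will be the partition-function ratio estimate: the leading-order cancellations must be tracked carefully, and the sub-leading $H$-dependence controlled uniformly via the test-path lower bound on $q_a(H)$. This is precisely where the hypothesis $T \geq H^{3/2}$ plays its essential role.
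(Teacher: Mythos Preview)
Your approach is correct in outline and lands very close to the paper's proof, but you have taken an unnecessarily circuitous route, and the Girsanov/cancellation step as you describe it has a gap.

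The paper does exactly your exponential Chebyshev bound but with $\lambda=2$: it writes
\[
\P(E)=\frac{\E_{\B^H_T}[\1_E\1_{Y>0}e^{-2\int Y}]}{\E_{\B^H_T}[\1_{Y>0}e^{-2\int Y}]}
\le \frac{e^{-4CT}}{Z(2,H,T)},
\]
using only $\1_E\Rightarrow \int Y\ge 2CT$ in the numerator. Then it lower-bounds $Z(2,H,T)$ by the very test-path construction you describe: let the path descend from $H$ to $O(1)$ over time $\sqrt{H}$, stay in $[0,3]$ on the bulk, and ascend back; this has area $O(T)$ and Brownian-tube probability $\ge e^{-C_3T}$ (the $O(H^{3/2})$ boundary cost is absorbed since $H^{3/2}\le T$). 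Choosing $4C>20+C_3$ finishes.

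Your version with $\lambda=1$ works too, but the clean way to bound the ratio is simply $Z(2-\lambda,H,T)\le 1$ (since $X>0\Rightarrow\int X>0$), reducing again to a lower bound on $Z(2,H,T)$. Your proposed route through $Z(a)=e^{a^2T^3/3-2aTH}q_a(H)$ and ``$T^3$ and $TH$ cancellations'' is where the gap lies: to make those cancellations rigorous in the ratio $q_{2-\lambda}/q_2$ you need \emph{two-sided} estimates on each $q_a$ to precision $o(T)$ after the $T^3$ and $TH$ terms, and you only supply a test-path \emph{lower} bound. With the trivial bound $q_{2-\lambda}\le 1$ alone, the Girsanov prefactor leaves a spurious $e^{+T^3/3}$ after dividing by your lower bound on $q_2$. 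So either you must also prove the matching upper bound $\log q_a\le -a^2T^3/3+2aTH+O(T)$, which is more work, or---much simpler---drop the Girsanov detour and use $Z(2-\lambda)\le 1$ directly, which is exactly the paper's argument.
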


\begin{proof}
    By monotonicity, and the assumption $T \ge H^{3/2}$, we can actually assume $H=T^{2/3}$ for the rest of the proof. Moreover, by adjusting the value of the constant $C$ we may assume that $T$ is larger  than some absolute constant $T_0$.  
    Define the event $E = \{X(t)\ge  C\,,\;\forall t\in[-T,T]\}$, where $C$ is a constant to be chosen later.
  By  definition, taking  $Y \sim \B^H_{ T}$,  \begin{align}
        \label{eq:comingdownmain}
        \P(E) = \f{\E[\1_E \1_{Y \succ 0}\Exp{-2\int_{-T}^T Y(t) \,\dt}]}{\E[\1_{Y \succ 0}\Exp{-2\int_{-T}^T Y(t) \,\dt}]}.
    \end{align}
In the numerator, the integral inside the exponential is always at least $2TC$,
so the numerator is at most $\exp(-4CT)$.
To prove a lower bound on the denominator we restrict to the event defined as follows.
Let $J = T - \sqrt{H}$
and consider the set of {good trajectories} %
    \begin{align*}
        G &= \Bra{Y \in\Omega^+_{-T, T} :\; Y(\pm J) \in [1, 2],\;\; \max_{|t|\le J}\,Y(t) \le 3,\;\;\max_{J\le |t|\le T}  Y(t) \leq 2H }.
    \end{align*}
    For any $Y \in G$, we have
    \begin{align*}
        \int_{-T}^T Y(t) \,\dt \leq 2H \cdot \sqrt{H} \cdot 2 + 3 \cdot 2T = 10\,T. %
    \end{align*}
Using the Markov property,
    \begin{align}
        \label{eq:good}
        \P(Y \in G) &\ge 
        P\left(Y(t) \in [0, 2H]\,,\; \forall \,\, J\le |t|\le T\,,\;Y(\pm J) \in [1, 2]\right)\\      
& \qquad \qquad \qquad \times \inf_{y_+,y_-\in [1, 2]}\P\left(Y(t) \in [0, 3]\,,\; |t|\le J\; |\; Y(\pm J) =y_\pm\right). %
    \end{align}
    We lower bound the two factors individually.
A standard estimate for tube probabilities of Brownian motions %
shows that the last term is at least $\Exp{-cJ} \geq \Exp{-cT}$ for some absolute constant $c>0$.
For the first term in \eqref{eq:good}, observe that $Y \sim \B^H_{T}$ satisfies
    \begin{align*}
        (Y(-J), Y(J)) \sim \Nor{\begin{bmatrix}H \\ H\end{bmatrix}}{\begin{bmatrix} \f{H^{1/2}(2T - H^{1/2})}{2T} & \f{H}{2T} \\ \f{H}{2T} & \f{H^{1/2}(2T - H^{1/2})}{2T} \end{bmatrix}}
    \end{align*}
Since $T= H^{3/2}$, the density of this vector on $[1, 2]^{2}$ is at least $ \Exp{-C_1T}$ for a constant $C_1$ for all $T$ large enough.
Now fix $y_+,y_-\in   [1, 2]$ and condition on $ Y(\pm J) =y_\pm$.
Given this conditioning, $Y|_{[-T, -J]}$ and $Y|_{[J, T]}$ are independent.
Furthermore, there is another constant $C_2$ such that uniformly over $y_-\in[1,2]$, %
    \begin{align}\label{eq:to1a}
        \P\left(Z(t) \in [0, 2H]\,,\; \forall t\in[0,\sqrt{H}]\right) \geq \Exp{-C_2 \sqrt H}
    \end{align}
    where $Z \sim \B^{H, y_-}_{0, \sqrt{H}}$.  Although this is not an optimal lower bound (a little thought shows that the true bound is a constant), it will suffice for our purposes.
To prove %
\eqref{eq:to1a}, we may replace the $[0, \sqrt{H}] \times [0, 2H]$ box with a strip of width $3$ around the line connecting $(0, H)$ and $(\sqrt{H}, 1)$. Our choice of the strip has height 2 above this line, and height 1 below this line:
    \begin{align*}
        \P[Z(t) \in {[0, 2H]}\,,\; \forall t\in{[0,\sqrt{H}]}
        ]
&\geq \P[Z(t) \in H - (H - 1)t H^{-1/2} + {[-1,2]}%
] 
    \end{align*}
        if $H$ is sufficiently large.
This is lower bounded by $\Exp{-C_2 H^{1/2}}$
    using again a standard estimate for tube probabilities of Brownian motions. %
    Putting everything together, from \eqref{eq:good} we have obtained
    \begin{align*}
        \P(G) \geq \Exp{-C_3T}
    \end{align*}
    where $C_3$ is some absolute constant and $T$ is large enough.
Therefore from \eqref{eq:comingdownmain} we have $        \P(E) \leq \Exp{-cT}$
    for a constant $c$ as long as $4C > C_3$, and for all $T$ large enough. Adjusting the value of the constants  ends the proof.
\end{proof}

We also need the following statement for general values of the strength parameter $a$.
\begin{cor}%
    \label{lem:comingdown2}
   There exist absolute constants $C,c>0$ such that for any $a\ge 1$, $H>0$, 
and $T \geq H^{3/2}a^{-1/6}$,   if $X \sim \L^{2a;H}_{ T}$ then
      \begin{align}\label{eq:comedowneq}
        \P\left(X(t)\ge  C\,a^{-1/3}\,,\;\forall t\in[-T,T]\right) \leq C\Exp{-c\,a^{2/3}T}.
    \end{align}
     Moreover,  for any $a\ge 1$, $H>0$, 
and $T \ge 1$,   if $X \sim \L^{2a;H}_{ T}$ then
     \[
      \P[\max_{t\in[-T, T]} X(t) \ge 2H]\leq 
 CTa^{2/3}\Exp{-c\,a^{1/2}H^{3/2}}\,.      
 \]
\end{cor}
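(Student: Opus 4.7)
The plan is to derive both claims by combining the Brownian scaling of Lemma \ref{lem:scaling} with the monotonicity properties of Lemma \ref{lem:monotonicity}, using Lemma \ref{lem:comingdown} for the first claim and the stationary Ferrari--Spohn max-tail bound of Lemma \ref{lem:fsfact}(5) for the second claim.

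For the first claim, I apply Lemma \ref{lem:scaling} with scaling parameter $\lambda=a$, $n=1$ and base strength $2$. This identifies the law $\L^{2a;H,H}_{-T,T}$ with the law of $Y(t)=a^{-1/3}X(a^{2/3}t)$ where $X\sim\L^{2;a^{1/3}H,a^{1/3}H}_{-a^{2/3}T,a^{2/3}T}$. The hypothesis $T\ge H^{3/2}a^{-1/6}$ becomes exactly $a^{2/3}T\ge(a^{1/3}H)^{3/2}$, matching the hypothesis of Lemma \ref{lem:comingdown}, and the event $\{Y(t)\ge Ca^{-1/3}\;\forall t\in[-T,T]\}$ pulls back to $\{X(s)\ge C\;\forall s\in[-a^{2/3}T,a^{2/3}T]\}$. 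Lemma \ref{lem:comingdown} then produces the required bound $C\exp(-c\,a^{2/3}T)$.

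The second claim will follow from two successive stochastic dominations. First, let $X\sim\L^{2a;H}_T$ and set $X'(t)=X(t)-H$. Translating the Brownian bridge $\B^{H,H}_{-T,T}$ by $-H$ yields $\B^{0,0}_{-T,T}$, and the area-tilt factor $\exp(-2a\int X)$ becomes $\exp(-2a\int X')$ up to the deterministic constant $\exp(-4aHT)$ that gets absorbed by the partition function, so that the normalized density of $X'$ with respect to $\B^{0,0}_{-T,T}$ is proportional to $\exp(-2a\int X')\ind_{X'>-H}$. In other words, $X'$ is the area-tilted excursion on $[-T,T]$ with strength $2a$, zero boundary and a floor at $-H$ rather than at $0$. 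By the monotonicity in the lower barrier (Lemma \ref{lem:monotonicity}), lifting the floor from $-H$ up to $0$ produces a stochastically larger process, which is precisely $X^{FS,T}\sim\L^{2a;0,0}_{-T,T}$. Hence $X'\preceq X^{FS,T}$, so that $X\preceq X^{FS,T}+H$ and $\P(\max_{[-T,T]}X\ge 2H)\le\P(\max_{[-T,T]}X^{FS,T}\ge H)$.

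For the second domination, let $W\sim\mu^{2a,\mathrm{FS}}$ be the stationary Ferrari--Spohn diffusion with strength $2a$; by Lemma \ref{lem:fsfact}(2) it is the unique stationary element of $\cG(2a)$ and in particular satisfies the Brownian-Gibbs property of Definition \ref{def:abg} with strength $2a$. Conditioning on $(W(-T),W(T))=(b_-,b_+)$, the law of $W$ on $[-T,T]$ is $\L^{2a;b_-,b_+}_{-T,T}$, and since $W>0$ a.s.\ we have $b_\pm\ge 0$. Monotonicity in the boundary data (Lemma \ref{lem:monotonicity}) then gives $X^{FS,T}=\L^{2a;0,0}_{-T,T}\preceq\L^{2a;b_-,b_+}_{-T,T}$ for every such $(b_-,b_+)$, and this pointwise dominance is preserved under averaging over $(b_-,b_+)$ by the monotone-coupling characterization of stochastic order, yielding $X^{FS,T}\preceq W|_{[-T,T]}$. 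Combining with the first domination,
\[
\P\bigl(\max_{[-T,T]}X\ge 2H\bigr)\;\le\;\P\bigl(\max_{[-T,T]}W\ge H\bigr),
\]
and the right-hand side is bounded by $CTa^{2/3}\exp(-c\sqrt a\,H^{3/2})$ by Lemma \ref{lem:fsfact}(5) applied with strength $2a$ (the factors of $2^{2/3}$ and $\sqrt 2$ being absorbed by adjusting $C$ and $c$). The proof does not present a genuine obstacle: all three main ingredients are already in place earlier in the paper, and the only subtle point---that pointwise domination of conditional laws survives averaging in the boundary data---is immediate from the monotone-coupling characterization of stochastic domination.
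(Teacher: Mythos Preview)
Your proof is correct and follows essentially the same approach as the paper: the first claim via Brownian scaling (Lemma~\ref{lem:scaling}) to reduce to Lemma~\ref{lem:comingdown}, and the second via the stochastic domination $X \preceq H + Z$ with $Z\sim\mu^{2a,\mathrm{FS}}$ followed by Lemma~\ref{lem:fsfact}(5). The paper simply states the domination $X\preceq H+Z$ in one line ``by monotonicity,'' whereas you spell out the two-step justification (shift, then raise the floor, then compare to the stationary FS via its Gibbs property); this is just a more explicit version of the same argument.
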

\begin{proof}
    Given $X$ as in the statement, let $Y = a^{1/3} X(\cdot / a^{2/3})$. Then $Y \sim \L^{Ha^{1/3}}_{Ta^{2/3}}$ by Lemma \ref{lem:scaling}. The lower bound $T \geq H^{{3/2}}a^{{-1/6}}$ ensures that  $Ta^{2/3} \geq (Ha^{1/3})^{3/2}$, so using Lemma \ref{lem:comingdown} applied to $Y$, we know that $Y$ comes down to a constant $C$ with probability $\geq 1 -C \Exp{-cTa^{2/3}}$. Therefore $X$ comes down to $Ca^{-1/3}$ with the same probability. This proves the first bound. 
    For the second one, observe that by monotonicity, the asserted probability is smaller for $Y = H + Z$ where $Z \sim \mu^{2a,{\rm FS}}$. Therefore, \rlem{lem:fsfact} implies 
    \begin{align*}
        \P[\max_{t\in[-T, T]} X(t) \geq 2H] &\leq \P[\max_{t\in[-T, T]} Z(t) \geq H] \leq CT a^{2/3}\Exp{-c \,a^{1/2} \,H^{3/2}}
    \end{align*}
    This establishes the second part.
\end{proof}

    The next lemma uses the results of Corollary \ref{lem:comingdown2} to show that if the endpoints $H$ of $X \sim \L^{H}_{ T}$ are small enough compared to the domain size $2T$, then the process comes down and stays bounded by  height $T^{\del}$, for any small $\delta>0$,  for most of the interval.
    Furthermore, outside this interval the process does not grow too far from its original height $H$.

\begin{lem}%
    \label{lem:doublesidedcomingdown}
    For any $\delta>0$, there exist constants $C,c>0$ such that %
    for all $a \geq 1, H>0, T \geq 3H^{3/2}a^{-1/6}$, if $X \sim \L^{2a;H}_{T}$
then     \begin{align*}
        &\P\left(
        \max_{T -H^{3/2}a^{-1/6}\le |t|\le T}X(t)\le 4H\,,\; \max_{|t|\le T -H^{3/2}a^{-1/6}}X(t)\le T^\delta\right)\\
        & %
       \qquad \qquad 
\geq  1- C\Exp{-c\,a^{1/2}\min(H^{3/2}, T^{3\del/2})}.
    \end{align*}
\end{lem}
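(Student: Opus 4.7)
The plan is to control the two complementary bad events separately: the boundary-layer excess $\{\max_{T-S\leq|t|\leq T}X(t) > 4H\}$ with $S := H^{3/2}a^{-1/6}$, and the bulk excess $\{\max_{|t|\leq T-S}X(t)>T^\delta\}$. For the boundary-layer bound, I would use the basic stochastic dominance $X \preceq H + Z$ with $Z\sim\mu^{2a,\mathrm{FS}}$, which follows from Lemma \ref{lem:monotonicity} by lifting the floor from $0$ to $H$ (the same coupling used in the proof of Corollary \ref{lem:comingdown2}, second part). Combined with the FS maximum-tail estimate of Lemma \ref{lem:fsfact}(5) applied on a length-$S$ strip, this gives $\P(\max_{[-T,-T+S]}X>4H)\leq C S a^{2/3}\exp(-c a^{1/2}(3H)^{3/2})$. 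Since the prefactor $S a^{2/3} = H^{3/2}a^{1/2}$ is dominated by the exponent (otherwise the conclusion of the lemma is trivial for a suitable choice of $C$), it absorbs into $\exp(-c' a^{1/2} H^{3/2})$, producing the $\min(H^{3/2},\cdot)$ factor of the target bound; symmetrically for the right boundary strip.

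For the bulk bound I would split into two regimes. In the easy regime $T^\delta \geq 2H$, the same FS dominance suffices: $\P(\max_{\mathrm{bulk}}X>T^\delta)\leq \P(\max_{\mathrm{bulk}} Z > T^\delta/2) \leq C T a^{2/3}\exp(-c a^{1/2}T^{3\delta/2})$ by Lemma \ref{lem:fsfact}(5); the polynomial factor $T a^{2/3}$ is absorbed into the exponent once $a^{1/2}T^{3\delta/2}$ is moderate (trivial otherwise), and here $H^{3/2}\leq T^{3\delta/2}$ so the target $\min=H^{3/2}$ is matched. In the complementary harder regime $T^\delta < 2H$, where $H^{3/2}\gtrsim T^{3\delta/2}$ and hence $\min(H^{3/2},T^{3\delta/2})=T^{3\delta/2}$ up to absolute constants, the plan is to invoke Corollary \ref{lem:comingdown2}, first part, on $[-T,T]$: with error at most $C\exp(-c a^{2/3}T)\leq C\exp(-3c a^{1/2}H^{3/2})\leq C\exp(-c'a^{1/2}T^{3\delta/2})$ (using $T\geq 3S$), the process descends to the stationary scale $K=Ca^{-1/3}$ at some point in $[-T,T]$. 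Defining $\tau_\ell$ and $\tau_r$ as the first and last such descent times, these form a stopping domain in the sense of Lemma \ref{sabg}, and the strong Gibbs property gives $X|_{[\tau_\ell,\tau_r]}\sim\L^{2a;K,K}_{\tau_\ell,\tau_r}$, whose maximum is bounded by $K+\max Z \leq T^\delta$ via one more application of the FS monotonicity and Lemma \ref{lem:fsfact}(5).

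The main obstacle -- and the technical crux -- is closing the bulk bound on the slivers $[-T+S,\tau_\ell]$ and $[\tau_r,T-S]$, i.e., establishing $\tau_\ell\leq -T+S$ (symmetrically $\tau_r\geq T-S$) with probability $\geq 1-C\exp(-ca^{1/2}T^{3\delta/2})$. This is delicate because Corollary \ref{lem:comingdown2} applied to a symmetric bridge of boundary $H$ only produces a descent point in an interval of half-length at least $H^{3/2}a^{-1/6}=S$, matching the strip $[-T,-T+2S]$ rather than the narrower strip $[-T,-T+S]$, and one cannot simply dominate by a bridge of boundary $2H$ on a length-$2S$ interval since that would need length $\geq 4\sqrt{2}\,S$. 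I would close this gap in three moves: (i) use the boundary-layer monotonicity of the first step to bound $X(-T+2S)\leq 2H$ with the required probability; (ii) apply Corollary \ref{lem:comingdown2} on the enlarged sub-interval $[-T,-T+6S]$ (whose half-length $3S>2\sqrt2\,S$ comfortably satisfies the hypothesis for a bridge of boundary $2H$, and which is contained in $[-T,T]$ by $T\geq 3S$), producing a descent to level $K$ within $[-T,-T+6S]$; (iii) iterate a monotone descent argument via the scaling of Lemma \ref{lem:scaling} on nested sub-intervals, using the strong Gibbs property at each step to pin the first descent location down into $[-T,-T+S]$, at the cost of only a constant factor in the exponent of the error bound.
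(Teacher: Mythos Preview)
Your overall strategy is essentially the paper's: control the boundary strip via FS-domination, locate a descent to the stationary scale in each strip, and then use the strong Gibbs property plus FS tails on the resulting stopping domain. Your boundary-layer argument (direct FS maximum tail on the strip) is in fact a clean shortcut compared to the paper, which first pins $X(\pm S)\le 2H$ via the one-point tail and then applies the second part of Corollary~\ref{lem:comingdown2}.

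The weak point is your step (iii). Having obtained a descent somewhere in $[-T,-T+6S]$, you do not explain how the ``iteration on nested sub-intervals via scaling'' forces a descent inside the narrower strip $[-T,-T+S]$; each re-application of Corollary~\ref{lem:comingdown2} still only guarantees a descent \emph{somewhere} in the chosen interval, with no geometric contraction toward the left endpoint, and invoking the scaling relation of Lemma~\ref{lem:scaling} does not change this. (There is also a mismatch between (i) and (ii): you bound $X(-T+2S)$ in (i) but need $X(-T+6S)$ to run (ii).) The paper sidesteps the whole issue by working directly on the strip $[T-S,T]$: after conditioning on $X(T-S)\le 2H$ it applies the first part of Corollary~\ref{lem:comingdown2} to the bridge on that strip, which \emph{by construction} places the descent inside the strip. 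The nominal hypothesis of the corollary (half-length $\ge (2H)^{3/2}a^{-1/6}$) is violated only by the fixed factor $2^{5/2}$, which is harmless because the proof of Lemma~\ref{lem:comingdown} uses merely $H\le c_0 T^{2/3}$ for a fixed constant (it sets $H=T^{2/3}$ by monotonicity, and all estimates survive any bounded multiplicative change). Recognizing this robustness is simpler and more direct than your three-step workaround, and it also renders the easy/hard regime split unnecessary: the two error terms $\exp(-ca^{1/2}H^{3/2})$ (from the strip descent) and $\exp(-ca^{1/2}T^{3\delta/2})$ (from the FS maximum on the stopping domain) automatically combine to the stated $\min$.
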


We observe that the bound $T^{\del}$ for the middle stretch is not necessarily optimal, and one may instead obtain a bound of $\T{\log T}$ at the price of a weaker control on the probability.
    However, %
    the polynomial bound will suffice for our purposes.
    Furthermore, notice that the probability bound is not useful when $H$ is too small.
However, in later applications %
it will suffice for us to consider $H$ to be a power of $T$ for which the lemma will produce effective bounds.
\begin{proof}
    Let $S = T - H^{3/2}a^{-1/6}$. Let $Y = H + W$ where $W \sim \mu^{2a,{\rm FS}}$. 
    Then $X$ is stochastically dominated by $Y$, and
    $\P(Y(\pm S) \leq 2H) \geq 1 - C\Exp{-cH^{3/2}a^{1/2}}$ by Lemma \ref{lem:fsfact}, part 4. 
     Let $Z \sim \L^{2a;2H}_{S, T}$. %
     Under the event $Y(S) \leq 2H$, $X|_{[S, T]}$ is stochastically bounded by $Z$. To $Z$ we apply Corollary \ref{lem:comingdown2} to obtain
    \begin{align*}
      \P(Z \leq 4H) &\geq 1 - H^{3/2}a^{-1/6}a^{2/3}\Exp{-c H^{3/2} a^{1/2}} \\
                    &= 1 - H^{3/2}a^{1/2} %
                    \Exp{-cH^{3/2}a^{1/2}} 
                    \geq 1 - C\Exp{-c'H^{3/2} a^{1/2}},
    \end{align*}
    for some absolute constants $C,c'>0$. 
Moreover, by \eqref{eq:comedowneq}, with probability $1- C\Exp{-c a^{2/3} H^{3/2}}$ there exists a stopping domain $(\tau_\ell,\tau_r)$ with $\tau_\ell<-S$ and $\tau_r>S$ such that $X(\tau_\ell),X(\tau_r)$ are both smaller than $Ca^{-1/3}\le C$, for some other absolute constant $C$. 
Given this stopping domain, according to Lemma \ref{sabg} we may resample $X|_{[\tau_\l, \tau_r]}$, which is stochastically dominated by $C+W$ with $W\sim \mu^{2a,{\rm FS}}$. Using Lemma \ref{lem:fsfact}, part 5, for $W$, applied to the interval $[-T,T],$ we then have %
    \begin{align*}
        \P[\max_{t\in[\tau_\l, \tau_r]} X(t) \leq C + T^\del] \geq 1 - CTa^{2/3}\Exp{-c{T^{3\delta/2}a^{1/2}}} 
        \geq 1 - C'\Exp{-c'{T^{3\delta/2}a^{1/2}}}\,
    \end{align*}
    for some new constants $C',c'>0$.  
    This finishes the proof.
\end{proof}

Finally, in the following section we record a couple of lemmas which while easy to prove, will be invoked several times throughout the rest of the paper.

\subsection{Tangency geometry and Brownian bridge avoidance probabilities}
The first is a simple calculation about the $x$-tangency locations to the parabola $-\p^{(2a)}: t\mapsto -at^2$ from the point $(T, -\al T^{2})\in\R^2$ for arbitrary parameters $a>\al>0$. 
\begin{lem}[Tangency locations]
    \label{lem:tangency}
    For all $0 < \al < a$, consider the tangent line $\ell$ from the point with coordinates $(T, -\al T^{2})$ to the parabola $-\p^{(2a)}:  t \mapsto -at^{2}$. The $x$-coordinates of the two points where $\ell$  touches $-\p^{(2a)}$ are given by $T\Box{1 \pm \sqrt{1 - \al/a}}$.
\end{lem}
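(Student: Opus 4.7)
The plan is a short direct computation in plane geometry, so there is really no serious obstacle. I would parametrize the family of tangent lines to the parabola $y=-at^{2}$ by the abscissa $s$ of the tangency point, compute the slope via differentiation, impose passage through the fixed point $(T,-\alpha T^{2})$, and solve the resulting quadratic in $s$.

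Concretely, at a point $(s,-as^{2})$ the parabola has slope $-2as$, so the tangent line there has equation
\[
y = -as^{2}-2as(t-s)=as^{2}-2ast.
\]
The condition that this line pass through $(T,-\alpha T^{2})$ becomes
\[
-\alpha T^{2}=as^{2}-2asT,
\]
i.e.
\[
s^{2}-2Ts+\tfrac{\alpha}{a}T^{2}=0.
\]
Applying the quadratic formula yields
\[
s=T\pm T\sqrt{1-\tfrac{\alpha}{a}},
\]
which is the desired identity. The assumption $0<\alpha<a$ guarantees that the discriminant is strictly positive, so indeed there are two distinct tangency points, as claimed.

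Since the argument is purely algebraic, the only conceptual point worth stressing is the geometric interpretation that will be used downstream, namely that the two tangency abscissae are symmetric about $t=T$ with offset $\pm T\sqrt{1-\alpha/a}$; this will later govern the location of the ``macroscopic kinks'' in the $\PAR$ picture (cf. Figure \ref{fig:hydrodynamic}).
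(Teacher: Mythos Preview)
Your proof is correct and essentially identical to the paper's: both parametrize the tangent line at $(s,-as^{2})$ by its slope $-2as$, impose passage through $(T,-\alpha T^{2})$, and solve the resulting quadratic; the only cosmetic difference is that the paper completes the square to $(T-s)^{2}=(1-\alpha/a)T^{2}$ whereas you apply the quadratic formula directly.
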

\begin{proof}
    The result follows from simple calculus.
    At a generic point $(x, -\p^{(2a)}(x))$, the slope of the tangent is $-2ax$, so that the tangent line is $t \mapsto -2axt + ax^2$. This line passes through $(T, -\al T^{2})$, so that
    \begin{align*}
      -2axT + ax^{2} = -\al T^{2} \iff -2xT + x^2 = -(\al/a) T^{2}.
    \end{align*}
    Adding $T^{2}$ to both sides,
    \begin{align*}
      (T - x)^{2} = (1 - \al/a)T^{2} \implies x = \Box{1 \pm \sqrt{1 - \al/a}} T. 
    \end{align*}
\end{proof}
Unless otherwise specified, when we will refer to the tangency location from a point to a parabola we will mean the internal one, namely the one with the minus sign in Lemma \ref{lem:tangency}.
The next lemma delivers an estimate for the probability of a Brownian bridge avoiding an arbitrary function which will also be used when the function is specialized to be a parabola.
{
\begin{lem}
    \label{lem:brownianav2}
    Let $V\sim \B^0_{0,T}$ be a standard Brownian bridge on $[0, T]$,  and let $f$ be %
    the function $t \mapsto T^{\del}\sqrt t + T^{-10}$, for a constant $\delta\in(0,1/2)$. Then, for some absolute constants $C,c>0$, for all $T>0$, 
    $$\P[V \text{ \rm hits } f] \leq C\exp\big(-c\,T^{2\del}\big).$$
\end{lem}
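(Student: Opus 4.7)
The approach is to rescale to a standard Brownian bridge on $[0,1]$ and then apply a dyadic maximal inequality of reflection type.

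First I would use Brownian scaling: writing $V(t)=\sqrt{T}\,V_1(t/T)$ with $V_1\sim\B^0_{0,1}$ a standard Brownian bridge, the event $\{V\text{ hits }f\}$ becomes
$$E \;=\; \Bra{V_1(s)\ge T^\delta\sqrt{s}+T^{-21/2}\text{ for some }s\in[0,1]}.$$

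Next I would establish the local maximal inequality
$$\P\Bra{\sup_{s\in[0,s_0]}V_1(s)\ge a}\;\le\; C\exp\!\Rnd{-\frac{a^2}{8\,s_0}},\qquad s_0\in(0,1],\;a>0,$$
using the representation $V_1(s)=B(s)-sB(1)$ with $B$ a standard Brownian motion, the bound $\sup_{s\le s_0}V_1(s)\le\sup_{s\le s_0}B(s)+s_0|B(1)|$, the reflection principle for $B$ together with the Gaussian tail of $B(1)$, and the inequality $s_0\le\sqrt{s_0}$ valid for $s_0\le 1$, which forces the drift term $s_0|B(1)|$ to be controlled at the same Gaussian rate as the Brownian maximum.

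Then I would split $[0,1]$ into dyadic intervals $I_k=[2^{-k-1},2^{-k}]$ for $k=0,1,\dots,K$ with $K=\ceil{25\log_2 T}$, together with the residual interval $I_\infty=[0,2^{-K-1}]$. On each $I_k$, one has $T^\delta\sqrt{s}+T^{-21/2}\ge T^\delta\cdot 2^{-(k+1)/2}$, so the local maximal inequality applied with $s_0=2^{-k}$ and $a=T^\delta 2^{-(k+1)/2}$ yields
$$\P\Bra{\sup_{s\in I_k} V_1(s)\ge T^\delta 2^{-(k+1)/2}}\;\le\; C\exp(-c\,T^{2\delta}),$$
\emph{uniformly in $k$}. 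On the residual $I_\infty$, the hypothesis $\delta<1/2$ gives $T^\delta\sqrt{s}\le T^{\delta-25/2}\ll T^{-21/2}$, so the right-hand side is bounded by $2T^{-21/2}$, and the maximal inequality with $s_0\le T^{-25}/2$ yields the much smaller bound $\exp(-T^{4})$. A union bound over the $O(\log T)$ pieces then produces $\P(E)\le C(\log T)\exp(-c\,T^{2\delta})$, which is absorbed into $C'\exp(-c'T^{2\delta})$ as soon as $T$ exceeds a universal constant $T_0$; for $T\le T_0$ the claim is trivial after adjusting constants.

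The only mildly subtle point — and thus where to focus care — is the uniform-in-$k$ character of the local maximal inequality: the naive global bound $\P[\sup_{[0,1]}V_1\ge a]=\exp(-2a^2)$ would become useless at small dyadic scales, whereas the local version exploits the fact that the bridge restricted to a short time window behaves essentially like a Brownian motion of small variance, so that every scale contributes the same factor $\exp(-c\,T^{2\delta})$. The small-time additive term $T^{-10}$ in $f$ creates no genuine difficulty, since the Gaussian exponent $a^2/s_0$ grows polynomially in $T$ much faster than $T^{2\delta}$ once $s_0\to 0$.
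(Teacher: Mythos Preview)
Your proposal is correct and takes essentially the same approach as the paper: both split the time interval into a small-time piece (where the additive $T^{-10}$ dominates) and a dyadic decomposition of the remainder, observing that on each dyadic scale the ratio $(\text{barrier})^2/(\text{scale})$ is uniformly of order $T^{2\delta}$, then union bound over $O(\log T)$ pieces. The only cosmetic difference is that you first globally rescale to $[0,1]$ and then invoke a local maximal inequality derived from $V_1(s)=B(s)-sB(1)$, whereas the paper rescales each dyadic block separately to $[0,1]$ and appeals directly to the sub-Gaussian tail of $\max_{[0,1]}V$; these are equivalent formulations of the same scaling argument.
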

While the lemma holds in vast generality, the precise form of the function is chosen for convenience of application in our arguments.
\begin{proof}
    Observe that $f(t) \geq T^{-10}$ when $t \leq T^{-21}$ and $f(t) \geq T^{\del}t^{0.5}$ when $t > T^{-21}$. By an union bound
    \begin{align*}
      \P[V \text{ hits } f] \leq \P[V|_{[0, T^{-21}]} \text{ hits } t \mapsto T^{-10}] + \P[V|_{[T^{-21}, T]} \text{ hits } t \mapsto T^{\del}t^{0.5}].
    \end{align*}
    We bound the terms individually. By Brownian scaling, by a factor of $T^{21}$ and $\sqrt{T^{21}}$ in time and space respectively, the first term satisfies, for some  absolute constants $C,c>0$,
    \begin{align*}
      \P[V|_{[0, T^{{-21}}]} \text{ hits } t \mapsto T^{-10}] = \P[V_{[0, 1]} \text{ hits } t \mapsto T^{0.5}] \leq C\exp\big(-c\,T\big)%
      ,
    \end{align*}
    the last inequality following from the fact that $\max_{t \in [0, 1]}V(t)$ is sub-Gaussian.
    For the second half, we consider partition $[T^{-21}, T]$ into scales given by the intervals $I_{k} = [T^{-21}2^{k - 1}, T^{-21}2^{k}]$ for $k = 1, \ldots, \T{\log T}$. For each scale $k$, we have
    \begin{align*}
      \P[V|_{I_{k}} \text{ hits } t \mapsto T^{\del}t^{0.5}] & \leq \P[V|_{[0, T^{-21}2^{k}]} \text { hits } T^{\del - 10.5} 2^{(k-1)/2}] .
      \end{align*}
      Using Brownian scaling, by $T^{21}2^{-k}$ in time and $T^{10.5}2^{-k/2}$ in space, the above is equal to
    \begin{align*}
                                                             \P[V|_{[0, 1]} \text{ hits } t \mapsto T^{\del} 2^{-1/2}] 
                                                               \leq C\exp\big(-c\,T^{2\del}\big), %
    \end{align*}
    where the last inequality again follows from sub-Gaussian tails on $\max_{{t \in [0, 1]}} V(t)$. A union bound then finishes the proof.
\end{proof}
Using the same dyadic decomposition, the following parabolic avoidance estimate follows which we state without proof. This can be seen quickly, for instance, by observing that the probability of Brownian bridge hitting the parabola on the interval $[2^{k}\sqrt{h},2^{k+1}\sqrt{h}]$ is $\exp\big(-\T{2^{3k}h^{3/2}}\big).$ 
\begin{cor}\label{paraavoidancecor} For $h>1,$ consider the parabola $ t \mapsto h + t^2$. For $T\ge 1,$ the probability that the standard Brownian bridge $ \B^0_{0, T}$ avoids the parabola is at least $1 - {\Exp{-\T{h^{3/2}}}}.$ 
\end{cor}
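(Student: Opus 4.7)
The plan is to follow exactly the dyadic decomposition hint provided in the excerpt, mirroring the scheme used in the proof of Lemma \ref{lem:brownianav2}. First, by the time-reversal symmetry of the Brownian bridge $V\sim \B^0_{0,T}$ around $t = T/2$, it suffices to bound the probability that $V$ hits the parabolic barrier $t\mapsto h+t^2$ on $[0,T/2]$ (and the same bound applies on $[T/2,T]$ by symmetry, at the price of an irrelevant factor of $2$). On $[0,T/2]$ I will partition into the piece $[0,\sqrt h]$ together with the dyadic scales $I_k = [2^k\sqrt h, 2^{k+1}\sqrt h]$ for $k=0,1,\dots,K$ where $K=O(\log T)$ is chosen so that $\bigcup_k I_k$ covers the remainder of $[0,T/2]$.

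On $[0,\sqrt h]$ the parabola is at height at least $h$, while $\max_{t\in[0,\sqrt h]} V(t)$ is stochastically dominated by the maximum of a standard Brownian motion on $[0,\sqrt h]$ (since we are in the first half of the interval $[0,T]$ with $T\ge 2\sqrt h$; the remaining case $T<2\sqrt h$ is even easier and handled directly by the same Gaussian estimate). The reflection principle bounds this by $2\,\P(B(\sqrt h)\ge h)$; since $B(\sqrt h)$ has variance $\sqrt h$, this is $\exp(-\Theta(h^{3/2}))$. On the dyadic scale $I_k$, the parabola is at height at least $4^k h$, and again by reflection and monotonicity the probability that $V$ exceeds $4^k h$ somewhere in $[0,2^{k+1}\sqrt h]$ is controlled by $2\,\P\bigl(V(2^{k+1}\sqrt h)\ge 4^k h\bigr)$, with $V(2^{k+1}\sqrt h)$ a Gaussian of variance at most $2^{k+1}\sqrt h$. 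A Gaussian tail estimate then yields an upper bound of $\exp\bigl(-\Theta(4^{2k}h^2/(2^{k+1}\sqrt h))\bigr) = \exp\bigl(-\Theta(2^{3k}h^{3/2})\bigr)$, which is exactly the scaling announced in the hint.

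Finally I take a union bound over all scales. The geometric-in-the-exponent decay $\exp(-\Theta(2^{3k}h^{3/2}))$ makes the sum over $k\ge 0$ dominated by the $k=0$ term (uniformly in $K$, and hence uniformly in $T$), producing a total of at most $C\exp(-\Theta(h^{3/2}))$. Combining with the symmetric contribution from $[T/2,T]$ and the $[0,\sqrt h]$ piece, I obtain the stated bound $1-\exp(-\Theta(h^{3/2}))$. There is no genuine obstacle here: the only minor technical care needed is the reduction from the Brownian bridge to the Brownian motion when bounding maxima on subintervals of length at most $T/2$, which is a standard stochastic domination argument. The rest is bookkeeping of Gaussian tails across the dyadic scales.
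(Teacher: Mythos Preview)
Your proposal is correct and follows essentially the same dyadic decomposition that the paper sketches: partition into scales $[2^k\sqrt h,2^{k+1}\sqrt h]$, obtain the Gaussian tail $\exp(-\Theta(2^{3k}h^{3/2}))$ on each, and sum. The paper states the corollary without a formal proof, only recording this per-scale estimate, so your write-up simply fills in the details (the initial piece $[0,\sqrt h]$, the reflection around $T/2$, and the bridge-to-Brownian-motion domination on $[0,T/2]$, which is standard via the time-change representation of the bridge).
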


\section{Asymptotic behavior}
\label{sec:asymp}
\subsection{Overall parabolic growth with linear correction}
As alluded to in the introductory section, the first  key observation in the proof of Theorem \ref{thmoneline} is that for any Gibbs measure $\mu\in\cG$, the behavior at infinity is parabolic with a linear correction. Recall Definition \ref{def:abg} of the set $\cG$ of one-line area-tilted Gibbs measures and assume that $a=2$.
\begin{lem} %
    \label{lem:limexists}
   Suppose $\mu\in\cG$ and %
   $X \sim \mu$.
Then the limits  $L=L(\mu), R=R(\mu)$ defined in \eqref{linearlim} exist $\mu$-almost surely as extended real random variables.
\end{lem}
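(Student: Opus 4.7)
The plan is to work in the parabolic barrier representation of \rlem{lem:girsanov}: set $Y := X - \p$, so that the Gibbs property for $\mu$ becomes the statement that, conditional on $Y$ outside any $[\ell,r]$, the restriction $Y|_{[\ell,r]}$ is a Brownian bridge with the prescribed endpoints, further conditioned on $\{Y > -\p\}$. The limits $L,R$ translate to the almost sure existence of $\lim_{t\to-\infty}Y(t)/|t|$ and $\lim_{t\to\infty}Y(t)/t$, and by reflection symmetry it suffices to treat $R$.

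The main step is a dyadic recursion. Set $T_n := 2^n$ and $U_n := Y(T_n)/T_n$; note that $U_n \ge -T_n$ by the barrier. By the strong Gibbs property applied on $[T_{n-1},T_{n+1}]$, the conditional law of $Y(T_n)$ given $Y(T_{n-1})$, $Y(T_{n+1})$ and the external $\sigma$-field is that of a Brownian bridge variable with mean $m_n = \tfrac13\bigl(2Y(T_{n-1}) + Y(T_{n+1})\bigr)$ and variance $T_n/3$, further conditioned on $\{Y > -\p\}$ on $[T_{n-1},T_{n+1}]$. Setting aside this barrier conditioning, dividing by $T_n$ and rearranging gives
\[
U_{n+1} - U_n \;=\; \tfrac12\,(U_n - U_{n-1}) \,-\, \tfrac{3\,\xi_n}{2T_n}, \qquad \xi_n \sim \calN(0,T_n/3),
\]
so the differences $D_n := U_{n+1}-U_n$ satisfy $D_n = \tfrac12 D_{n-1} + \eta_n$ with $\eta_n$ Gaussian of variance $O(2^{-n})$. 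Iterating yields $|D_n| \le 2^{-n}|D_0| + \sum_{k\le n} 2^{-(n-k)}|\eta_k|$, and a Borel--Cantelli argument based on the Gaussian tail of the $\eta_k$ shows that $\sum_n |D_n|<\infty$ almost surely; consequently $U_n$ converges to a finite limit $R$.

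The main technical obstacle is that the recursion above was derived after ignoring the barrier conditioning $\{Y>-\p\}$. I will control the error it introduces by combining \rlem{lem:tangency} with the Brownian-bridge avoidance estimate of \rlem{lem:brownianav2}: on any event where $|U_{n\pm 1}|$ stays bounded, the straight segment joining $(T_{n\pm 1},Y(T_{n\pm 1}))$ lies at height $\Theta(T_n^2)$ above $-\p$ on $[T_{n-1},T_{n+1}]$, whereas typical Brownian-bridge fluctuations over this interval have scale $O(\sqrt{T_n})$. The avoidance event thus has probability $1-o(1)$ at a super-polynomial rate, and the conditional law of $Y(T_n)$ agrees with the unconditional Gaussian up to a total-variation correction that is summable in $n$. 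On the complementary event, where $U_{n\pm 1}$ runs off to $-\infty$ along a subsequence, a monotonicity comparison (\rlem{lem:monotonicity}) to Gibbs states with boundary data driven to $-\infty$ will force $U_n \to -\infty$, so that $R=-\infty$ in the extended reals; the symmetric possibility $\limsup U_n = +\infty$ is treated by a dual monotonicity argument.

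Finally, to upgrade convergence of $U_n$ to the full limit $\lim_{t\to\infty}Y(t)/t = R$, I apply the Gibbs property on each block $[T_n,T_{n+1}]$: conditional on endpoints with $Y(T_n)/T_n$ and $Y(T_{n+1})/T_{n+1}$ close to $R$, the restriction of $Y$ is a bridge of slope $\to R$ with fluctuations of order $\sqrt{T_n}=o(T_n)$, so that $\sup_{t\in[T_n,T_{n+1}]}|Y(t)/t - R|\to 0$ almost surely. The symmetric argument on the negative half-line yields the existence of $L$.
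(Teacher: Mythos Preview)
Your dyadic recursion $D_n=\tfrac12 D_{n-1}+\eta_n$ is an algebraic identity; the substance lies entirely in showing that $\eta_n$ is small. You obtain the bound $\E[\eta_n^2]=O(2^{-n})$ only after dropping the conditioning $\{Y>-\p\}$, and you justify dropping it only on the event that $|U_{n\pm1}|$ stays bounded. On the complement you write that monotonicity ``will force $U_n\to-\infty$'' (and dually for $+\infty$). But that assertion is precisely $\liminf U_n=\limsup U_n$, which is the lemma. Knowing $U_{n_k}\to-\infty$ along a subsequence does not, by any direct monotonicity step, control $U_m$ at indices $m$ far from the $n_k$: if $n_{k'}\gg n_k$ then the Brownian-bridge fluctuation scale $\sqrt{T_{n_{k'}}}$ can swamp $T_m$ for $m$ near $n_k$, and the barrier conditioning only biases $Y$ \emph{upward}, which is the wrong direction for an upper bound. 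In fact when the chord between $(T_{n-1},Y(T_{n-1}))$ and $(T_{n+1},Y(T_{n+1}))$ dips below $-\p$, the conditional bias on $\xi_n$ is of order $T_n^2$, so $|\eta_n|$ is of order $T_n$, not $2^{-n/2}$. The circularity is therefore genuine.

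The paper avoids this by proving only the one-sided implication $\overline R>x\Rightarrow \underline R\ge x$ for each real $x$. This needs only a \emph{lower} bound on $Y(t)/t$ over a fixed window $[\ell,r]$, given a witness $Y(\tau)\ge -2b\tau$ at a stopping time $\tau\gg r$. For a lower bound one may legitimately drop the floor $-\p$ (which only lowers $Y$) and lower the left boundary $Y(b)$ to $0$ (monotonicity in the correct direction); the estimate then reduces to a clean Brownian-bridge computation with no barrier present. Your two-sided recursion forces you to control the barrier bias on $\eta_n$ uniformly in the boundary data, and that is exactly where the argument breaks.
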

\begin{proof}
The %
proof %
involves showing that almost surely, 
\[
\overline R:=\limsup_{t\to \infty}  
\,(X(t)-t^2)/t=\liminf_{t\to \infty} \,(X(t)-t^2)/t=:\underline R.
\] 
The same argument applies to the case  $t\to -\infty$ with the left limits $ \overline L, \underline L$.
We note that these random variables could take the value $\pm\infty$ at this point. %

We are going to prove that for any $x\in\R$ 
\begin{align}\label{eq:implic2} 
  \P(\underline R < x < \overline R)=0,
  \end{align} 
which implies $\underline R\ge \overline R$ a.s. Since $\underline R \le \overline R$, this proves  
 that $\underline R =\overline R$ a.s.

To prove \eqref{eq:implic2} it will be convenient to rewrite $x$ as $x=-2b$, $b\in\R$. 
  Fix  $b \in \R, \del > 0,$ and define %
   the events
    \begin{align*}
        \Once_k &= \{ \exists\ t \in [k, k + 1) : (X(t) - t^2)/t \geq -2b \}, \quad k \in \N, \\
        \All_{\ell, r} &= \{\forall\ t \in [\ell, r] : (X(t) - t^2)/t \geq -2(b + \del) \}, \quad \ell < r \in \R.
    \end{align*}
   The definitions are natural in the sense that $ \Once_k$ for large values of $k$ serve as witness for the event  $\overline R > -2b ,$ %
while $\All_{\ell, r}$ for  large values of $\ell $ and $r$ and small values of $\delta$ will ensure that $\underline R \ge -2b.$

We start by proving that  $\Once_k \implies \All_{\ell, r}$ with high probability if $k$ is large enough, for suitable choices of $\ell,r$.
 To carry this out formally we need %
 to define %
 the ``last witness time of $\Once_k$'' as the random time $\tau = \tau_k$ given by
    \begin{align*}
        \tau = \sup \{t \in [k, k + 1) : (X(t) - t^2)/t \geq -2b \}, \quad \text{on } \Once_k,
    \end{align*}
  and by  {$\tau = k$ on $\Once_k^\c$}. %
Therefore, for each sufficiently large $k$, $[b, \tau]$ is  a stopping domain.
Consider two times $\ell < r$ such that $\ell > 10b$ and choose $k > 2r^2$, which ensures in particular that $[\ell,r]\subset [b,\tau]$. 
We first %
show that $\P(\All_{\ell, r} \,|\, \Once_k)$ is high.
Once we prove this claim, the remainder of the proof reduces to taking the appropriate limits.

We can apply Lemma \ref{sabg} for one individual line, 
with the stopping domain $[b, \tau]$. 
Conditionally on the external $\sig$-algebra $\cB_{b, \tau}$, on  $\Once_{k}$, by monotonicity the probability of 
$\All_{\ell, r}$ does not increase if we lower the boundary %
condition to $X(b) = 0$ and $X(\tau) = \tau^2 - 2b\tau$. Thus we may now resample 
the area-tilted line on $[b,\tau]$ with these boundary conditions. %
Switching to the \PAR (see Lemma \ref{lem:girsanov}), %
    \[
    \P(\All_{\ell, r} \,|\, \Once_k)\ge \inf_{s\in[k,k+1)}\P(Y^s(t) \geq -2(b + \del)t,\ \forall t \in [\ell, r] ) ,
    \] 
    where  $Y^s \sim \B^{-b^2, -2bs}_{b, s}[- \p].$ Using monotonicity we may remove the floor $-\p$ and push the left endpoint down to $-2b^2$ (this makes the algebra cleaner), 
to obtain 
    \begin{align*}
        \P(Y^s(t) \geq -2(b + \del)t,\ \forall t \in [\ell, r] ) 
        & \geq \P(Z^s(t) \geq -2(b + \del)t,\ \forall t \in [\ell, r]),
        \end{align*}
where $Z^s \sim \B^{-2b^2, -2bs}_{b, s}$.
The line joining $(b, -2b^2)$ and $(s, -2bs)$ is $t \mapsto -2bt$.
Affine invariance for Brownian bridge implies that $Z^s(t) = -2bt + V^s(t)$ where $V^s \sim \B^{0}_{b, s}$.
We may then rewrite the above probability as $\P(V^s(t) \geq -2\del t,\ \forall t \in [\ell, r]) $. Moreover, using standard Brownian fluctuation estimates and the fact that by construction  $10b < \ell < r$ and $s\ge k > 2r^2$, one has 
\[  
\P(V^s(t) \geq -2\del t,\ \forall t \in [\ell, r]) \geq %
1 - C\Exp{-c\,{\ell \del^2}},
\]
where $C,c>0$ are absolute constants.
This shows that:
    \begin{align}\label{eq:oka}
        \P(\All_{\ell, r} \, |\,  \Once_k) \geq 1 - C\Exp{-c\,{\ell \del^2}}\,.
    \end{align}
    Now we replace the stopping domain $[b, \tau_{k}]$ by the stopping domain $\left[b, \tau_{k, m}\right]$ where $k<m$ and
    \begin{align*}
        \tau_{k, m} = \sup \{t \in [k, m + 1) : (X(t) - t^2)/t \geq -2b \}, \quad \text{on } \Once_k \cup \ldots \cup \Once_m,
    \end{align*}
   with $ \tau_{k, m}=k$ on the complementary event. The very same argument as above implies
    \begin{align}
        \label{eq:ktaom}
        \P(\All_{\ell, r} \,|\, \Once_k \cup \Once_{k + 1} \cup \ldots \cup \Once_m) \geq 1 - 
        C\Exp{-c\,{\ell \del^2}}, %
    \end{align}
    for any $m > k$. Abbreviating ``infinitely often'' as i.o., taking limits as $m \to \infty$ and then $k\to\infty$ in \eqref{eq:ktaom} 
we obtain, for all $\ell>10b$ and $r>\ell$,
    \begin{align*}
        \P[\All_{\ell, r} \,|\, \Once_j \text{ i.o.}] \geq 1 - C\Exp{-c\,{\ell \del^2}}.
    \end{align*}
    Sending $r \to \infty$,
    \begin{align*}
        \P[\forall t \in [\ell, \infty) : \f{X(t) - t^2}{t} \geq -2(b + \del)\, \Big|\, \Once_j \text{ i.o.}] \geq 1 -C\Exp{-c\,{\ell \del^2}}.
    \end{align*}
    Passing to the limit $\ell \to \infty$,
    \begin{align*}
        \P[\underline R
        \geq - 2(b + \del)\, \big|\, \Once_j \text{ i.o}] = 1.
    \end{align*}
 Letting $\del \to 0$ we get
    \begin{align}\label{eq:rxb}
        \P[\underline R %
        \ge - 2b\, \big|\, \Once_j \text{ i.o}] = 1.
    \end{align}
    We can now prove \eqref{eq:implic2}. Set $x=-2b$ and 
    observe that  $\{\overline R > x\}\subseteq\{\Once_j \text{ i.o.}\}$. Therefore, \eqref{eq:rxb} implies 
   \begin{align*}
    \P[\underline R < x < \overline R] &\le  \P\left(\underline R < x,  \,\Once_j \text{ i.o.}\right) = 0. 
        \end{align*}
\end{proof} 

\subsection{Extremal measures}
Recall the  well known fact that extremal Gibbs measures must be tail trivial, i.e., if $\mu\in\cG_{\rm ext}$ then  $\mu(A)\in\{0,1\}$ for all $A \in \cB_\infty$, where  $\cB_\infty = \cap_{k \geq 0} \cB_{-k, k}$ denotes the tail-$\sig$-algebra (see e.g.\  \cite[Theorem 7.7]{georgii2011gibbs}). %
Since the limits $L$ and $R$ from Lemma \ref{lem:limexists} are both tail-measurable random variables, we obtain the following fact. 
\begin{lem}
    \label{lem:deterministic}
    If $\mu\in\cG_{\rm ext}$, %
    then $L = \lim_{t \to -\infty} (X(t) - t^2)/|t|$ and $R = \lim_{t \to \infty}  (X(t) - t^2)/t$ exist a.s.\ and are deterministic.
\end{lem}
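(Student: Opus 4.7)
The plan is to reduce the assertion to Lemma \ref{lem:limexists} combined with tail triviality of extremal Gibbs measures. Lemma \ref{lem:limexists} already supplies the $\mu$-a.s.\ existence of $L$ and $R$ as extended-real random variables for any $\mu \in \cG$, so the only content left is the claim that, under the additional assumption $\mu \in \cG_{\rm ext}$, both limits are $\mu$-a.s.\ constant.

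First I would check that $L$ and $R$ lie in the tail $\sigma$-algebra $\cB_\infty = \cap_{k \geq 1} \cB_{-k,k}$. This is immediate from the definition: since $\cB_{\ell,r} = \sigma(X(t) : t \notin (\ell,r))$, for every fixed $k \geq 1$ the quantity $R = \lim_{t \to \infty}(X(t) - t^2)/t$ depends only on $\{X(t) : t \geq k\}$, so $R$ is $\cB_{-k,k}$-measurable, and intersecting over $k$ yields $R \in \cB_\infty$. The argument for $L$ is identical upon sending $t \to -\infty$.

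Next I would invoke the standard fact that extremal elements of the simplex $\cG$ are tail trivial, i.e., $\mu(A) \in \{0,1\}$ for every $A \in \cB_\infty$. The authors themselves flag this just above the lemma, referring to Theorem 7.7 of \cite{georgii2011gibbs} for the discrete-parameter prototype; in the continuous Brownian-Gibbs setting the same proof applies once one uses the definition of $\cG$ via the external $\sigma$-algebras $\cB_{\ell,r}$ given in Definition \ref{def:abg}. Applying tail triviality to the level sets $\{R \leq q\}$ and $\{L \leq q\}$ for $q$ ranging over a countable dense subset of $[-\infty,+\infty]$ forces $L$ and $R$ to coincide $\mu$-a.s.\ with deterministic values in $[-\infty, +\infty]$.

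No real obstacle is anticipated: the entire argument is essentially a one-line corollary of Lemma \ref{lem:limexists} once tail triviality is recorded. The only minor point that might deserve a sentence of care is the verification that the general tail-triviality statement for extremal Gibbs measures indeed applies to the present class of area-tilted Brownian-Gibbs measures, but this transfers without modification from the abstract framework.
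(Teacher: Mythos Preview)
Your proposal is correct and follows exactly the paper's approach: the paper simply observes (in the paragraph immediately preceding the lemma) that extremal Gibbs measures are tail trivial and that $L,R$ are tail measurable, then states the lemma as an immediate consequence without further proof.
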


\renewcommand{\A}{\mathsf{OutLB}}

We now turn to a sequence of statements that will allow us to conclude that the constants $L,R$ from Lemma \ref{lem:deterministic} must satisfy $(L,R)\in\calT$, where $\calT$ is the set  defined in \eqref{lineartilts}.
\begin{lem}[$L<0$ or $R<0$] 
    \label{lem:lr0}
    Let $\mu\in\cG_{\rm ext}$ and let $L,R$ be the constants from Lemma \ref{lem:deterministic}. %
Then it is not possible to have both $L\ge 0$ and $R\ge 0$. %
\end{lem}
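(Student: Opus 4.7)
The plan is to argue by contradiction: assume $\mu \in \cG_{\rm ext}$ has $L \ge 0$ and $R \ge 0$, both deterministic constants by Lemma~\ref{lem:deterministic}. The aim is to produce a stochastic lower bound on $X(0)$ that grows with $T$ and contradicts the almost-sure finiteness of $X(0)$.

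First I would pass to the parabolic-barrier representation via Lemma~\ref{lem:girsanov}: writing $X_*(t) := X(t) - t^2$, the Brownian-Gibbs property gives that conditional on $X_*(\pm T)$ the restriction $X_*|_{[-T,T]}$ is distributed as a Brownian bridge from $X_*(-T)$ to $X_*(T)$ conditioned on $\{Y \succ -\p\}$. Since this event is increasing in $Y$, FKG monotonicity (Lemma~\ref{lem:monotonicity}) yields the crucial stochastic domination
\[
\mathrm{Law}\bigl(X(0) \mid X_*(\pm T)\bigr) \;\succeq\; \mathcal{N}\!\Bigl(\tfrac{X_*(-T) + X_*(T)}{2},\ \tfrac{T}{2}\Bigr).
\]

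In the generic subcase $L + R > 0$, I would pick $\eps \in (0, (L+R)/4)$ and use Lemma~\ref{lem:limexists} together with the determinism from Lemma~\ref{lem:deterministic}: the event $\{X_*(-T) \ge (L - \eps) T,\ X_*(T) \ge (R - \eps) T\}$ has $\mu$-probability tending to $1$ as $T \to \infty$, on which the conditional Gaussian mean is at least $(L+R)T/4$. The stochastic domination then yields $\P(X(0) \le M) \le \Phi\bigl((M - (L+R)T/4)\sqrt{2/T}\bigr) + o(1) \to 0$ for every fixed $M > 0$. Since the left side is independent of $T$ and tends to $1$ as $M \to \infty$, we obtain a contradiction.

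The main obstacle is the degenerate subcase $L + R = 0$ (that is, $L = R = 0$): here the conditional Gaussian mean is only $o(T)$, which is too weak to force a divergent lower bound. Handling this requires finer quantitative control on the fluctuations of $X_*(\pm T)$. I would try to bootstrap a sub-linear tail bound of the form $|X_*(\pm T)| = O(\sqrt{T \log T})$ using the strong Brownian-Gibbs property (Lemma~\ref{sabg}) and a stopping-time witness construction analogous to the one employed in the proof of Lemma~\ref{lem:limexists}, or alternatively exploit the time-reversal symmetry forced by $L = R$ to show that $\P(X_*(-T) + X_*(T) \ge 0)$ is bounded away from $0$ along a subsequence. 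In either case, the stochastic Gaussian lower bound then produces a conditional mean no worse than $-C \sqrt T$ with variance $T/2$, making $\P(X(0) \le M) \le \Phi(M\sqrt{2/T} + C\sqrt 2) + o(1)$ uniformly bounded away from $1$ as $T \to \infty$ and $M \to \infty$, yielding the required contradiction. Securing this sub-linear tail control on $X_*(\pm T)$ under the sole assumption $L = R = 0$ is the technical heart of the argument.
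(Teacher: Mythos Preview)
Your overall strategy and the treatment of the subcase $L+R>0$ are correct and match the paper's spirit. The genuine gap is the case $L=R=0$, which you flag but do not resolve. Knowing only $X_*(\pm T)/T\to 0$ a.s.\ gives no control on $X_*(\pm T)/\sqrt T$; a priori one could have $X_*(\pm T)\sim -T^{3/4}$, which would make the conditional Gaussian mean $-T^{3/4}$ and variance $T/2$, so $\P(X(0)>M)\to 0$. Your suggested fixes are not obviously workable: bootstrapping $|X_*(\pm T)|=O(\sqrt{T\log T})$ from scratch is precisely the missing step, and ``time-reversal symmetry forced by $L=R$'' is not available, since extremality plus $L=R$ does not force $\mu$ to be reflection-invariant, and even if it were, sign control on $X_*(-T)+X_*(T)$ does not follow.

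The paper avoids the case split entirely via a \emph{two-scale} device that works uniformly for all $L,R\ge 0$. Fix a target scale $s$ and set $\eps=s^{-1/2}$. Since $L,R\ge 0$, the event $\{X_*(t)\ge -\eps|t|\text{ for all }|t|\ge T\}$ has probability $\ge 1/2$ for some large $T=T(s)$. Resampling on $[-T,T]$ with boundary monotonically lowered to $-\eps T$, and replacing the parabolic floor by the weaker conditioning $\{Y(0)=0\}$ (which makes the two halves i.i.d.), a direct Brownian-bridge computation gives $\P\bigl(X_*(\pm s)\ge -10\sqrt s\bigr)\ge \theta>0$ \emph{uniformly in $s$}. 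This is the paper's replacement for your unproved claim that $X_*(\pm T)\ge -C\sqrt T$ with positive probability, obtained by shrinking from $T$ to $s$ rather than controlling values at $T$ directly. A second resampling on $[-s,s]$, where $Y(0)\sim\Nor{-10\sqrt s}{s/2}$, then yields $\P(X(0)\ge\sqrt s)\ge c>0$ uniformly in $s$, contradicting tightness.
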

\begin{proof}
    The proof is by contradiction, and hence we start with the assumption that $L, R \geq 0$.
Fix $s>0$.
We will first show that for $\mu\in\cG_{\rm ext}$ and $X\sim \mu$, under the non-negativity hypothesis on $L,R,$ the  probability of the event $X(\pm s) \geq s^2 - 10 {\sqrt s}$ is positive uniformly  in $s$.
This, for growing $s$ will then be used to contradict tightness of the marginal distribution at the origin, thereby finishing the proof. 
   
    To prove the claim about $X(\pm s)$, let $\eps = s^{-1/2}$, %
     and define %
     the events
    \begin{align*}
        \A_m = \Bra{\forall\ t \in (-m, m)^\c : X(t) - t^2\geq - \eps\,|t|%
        }, \qquad m \in \N,
    \end{align*}
    with $\A$ being an acronym for ``Outside lower bound''.
    These are increasing events, and by definition, the non-negativity of $L, R$ implies that almost surely, $ \A_m $ holds eventually.
Thus there is a finite $T_0 =T_0(s)\in \N$ such that for all $T\ge T_0$,
\begin{align}
  \label{eq:eventuallywhp}
  \P(\A_T) \geq \tfrac{1}{2}.
\end{align}
Choose a $T \geq \min\{2s,T_0\}$. The reason for the choice will be clear at the end of the inequalities proved below. 
Using the Brownian-Gibbs property to resample in $[-s, s] \subseteq [-T, T]$, by monotonicity one has
    \begin{align*}
        \P[X(\pm s) \geq s^2 - 10{\sqrt s}\ \big|\ \A_T] &\geq \P[X(\pm s) \geq s^2 - 10 {\sqrt s}\ \big|\ X(\pm T) = T^2 - \eps T].
 \end{align*}
Switching to {\PAR}, letting $Y \sim \B^{-\eps T}_{ T}$,
the RHS in the above display is
\[
 \P[Y(\pm s) \geq -10{\sqrt s}\ \big|\ Y + \p \gtr 0].
\]
Due to monotonicity, we can replace the floor conditioning above simply by $Y(0) = 0$ without raising the probability.
So the above is larger or equal to 
\[
\P[Y(\pm s) \geq -10 {\sqrt s}\ \big|\ Y(0) = 0].
\]
Given $Y(0) = 0$, $\{Y(t) : t \leq 0\}$ and $\{Y(t) : t \geq 0\}$ are i.i.d. (up to reflection), so the above display is
equal to
\[
\P[Y(s) \geq -10{\sqrt s}\ \big|\ Y(0) = 0]^2.
\]
Now observe that given $Y(0) = 0,$ and $Y(T) = -\eps T$, we can write $Y(t) = -\eps t + Z(t)$ where $Z \sim \B^0_{0, T}$.
So the above probability is equal to
\[
\P[Z(s) \geq \eps s - 10 {\sqrt s}]^2 =
                                                       \P[Z(s) \geq {\sqrt s} - 10 {\sqrt s} = -9 {\sqrt s}]^2 \geq \t > 0\,,
                                                       \]
   where $\t$ is an absolute constant, %
since we have taken $T \geq 2s$. %
Here we use the fact that $Z(s) \sim 
\Nor{0}{\f{s \cdot T}{T - s}}$ and that $T/(T - s) \in [1, 2]$. 
The above chain of inequalities then yields the claimed, uniformly in $s$, probability bound
    \begin{align}\label{eq:pa1o}
        \P[X(\pm s) \geq s^2 - 10 {\sqrt s}] \geq \P[X(\pm s) \geq s^2 - 10 {\sqrt s}\ \big|\ \A_T] \P[\A_T] \geq \t/2.
    \end{align}
    To finish the proof we turn to obtaining the contradiction indicated above. 
    For  all $s>0$, the Brownian-Gibbs property at $[-s,s]$ and monotonicity imply 
    \begin{align*}
        \P[X(0) \geq {\sqrt s}] &\geq \P[X(\pm s) \geq s^2 - 10{\sqrt s}]\P[X(0) \geq {\sqrt s}\, \big|\, X(\pm s) = s^2 - 10{\sqrt s}] .
    \end{align*}
    The first factor is at least $\t/2$ by \eqref{eq:pa1o}. 
Switching to  {\PAR}, the second factor 
is 
equal to %
    \begin{align*}
        \P[Y(0) \geq {\sqrt s}\ \big|\ Y + \p \gtr 0] \geq \P[Y(0) \geq {\sqrt s}],
    \end{align*}
where  $Y \sim \B^{-10{\sqrt s}}_{s}$ and we use monotonicity to remove the floor $-\p$.
    Since $Y(0) \sim \Nor{-10{\sqrt s}}{s/2}$, it follows that $\P[Y(0) \geq {\sqrt s}]$ is uniformly positive in $s>0$.  
    In conclusion, $\P(X(0) \geq {\sqrt s})$ is uniformly positive in $s>0$, and thus $X(0)$ cannot be tight, which is the desired contradiction. 
\end{proof}

\smallskip

\begin{lem}[$L<\infty$ and $R<\infty$] 
    \label{lem:lrinf}
    If $\mu\in\cG_{\rm ext}$ then  the constants  from Lemma \ref{lem:deterministic} satisfy $L<\infty$ and $R<\infty$.
\end{lem}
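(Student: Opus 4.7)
The plan is to argue by contradiction, closely mirroring the method of \rlem{lem:lr0}. I assume $R=+\infty$ (the case $L=+\infty$ is symmetric) and use the strong Brownian--Gibbs property together with the PAR representation to show that $X(0)$ can exceed any prescribed threshold with positive probability, contradicting the fact that $X(0)$ is finite $\mu$-almost surely.

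Fix an arbitrary $M\ge 2$, and in analogy with \eqref{eq:ktaom} define
\[
\Once_k \;=\; \{\exists\, t \in [k, k+1) : X(t) \geq t^2 + Mt\},\qquad \tau_k \;=\; \sup\{t \in [k, k+1) : X(t) \geq t^2 + Mt\}
\]
on $\Once_k$, with $\tau_k = k$ otherwise. Since $R=+\infty$ deterministically by \rlem{lem:deterministic}, $\P(\Once_k)\to 1$ as $k\to\infty$ for every fixed $M$. By continuity of $X$ one has $\Once_k = \{X(\tau_k)\ge \tau_k^2+M\tau_k\}$, and since $\{\tau_k\ge s\}$ depends only on $X|_{[s,k+1)}$ for $s\in(k,k+1)$, the pair $(-1,\tau_k)$ is a stopping domain in the sense of \rlem{sabg} and $\Once_k\in \calB^{\sfe}_{-1,\tau_k}$. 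Applying the strong Gibbs property, conditionally on $\calB^{\sfe}_{-1,\tau_k}$ the law of $X|_{[-1,\tau_k]}$ equals $\L^{X(-1),X(\tau_k)}_{-1,\tau_k}$; combining $X(-1)\ge 0$ a.s.\ with $X(\tau_k)\ge \tau_k^2+M\tau_k$ on $\Once_k$, \rlem{lem:monotonicity} shows that $X(0)$ stochastically dominates $X''(0)$ for $X''\sim \L^{0,\,\tau_k^2+M\tau_k}_{-1,\tau_k}$.

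Passing to the parabolic barrier representation (\rlem{lem:girsanov}), I rewrite $X''=\p+Y''$ with $Y''\sim \B^{-1,M\tau_k}_{-1,\tau_k}[-\p]$; one further monotonicity step removes the floor $-\p$ to stochastically dominate $Y''(0)$ below by $\tilde Y(0)$, where $\tilde Y\sim \B^{-1,M\tau_k}_{-1,\tau_k}$ is the free Brownian bridge. An elementary Gaussian computation gives $\tilde Y(0)\sim \Nor{\tfrac{(M-1)\tau_k}{\tau_k+1}}{\tfrac{\tau_k}{\tau_k+1}}$; using $\tau_k\ge k\ge 1$ and $M\ge 2$, the mean is at least $\tfrac{M-1}{2}\ge \tfrac{M}{4}$ and the variance is at most $1$, so $\P(\tilde Y(0)\ge M/4)\ge 1/2$. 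Chaining these bounds and choosing $k$ large enough that $\P(\Once_k)\ge 1/2$ yields $\P(X(0)\ge M/4)\ge 1/4$ uniformly in $M\ge 2$; sending $M\to\infty$ then forces $\P(X(0)=\infty)\ge 1/4$, the desired contradiction. The main obstacle is verifying the measurability needed to treat $(-1,\tau_k)$ as a stopping domain and to confirm $\Once_k\in \calB^{\sfe}_{-1,\tau_k}$; once this is settled, each subsequent reduction is a clean application of monotonicity or a standard Gaussian estimate.
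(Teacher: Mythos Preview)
Your proof is correct and follows essentially the same approach as the paper: assume $R=+\infty$, use the Gibbs property and monotonicity to lower-bound $X(0)$ by an area-tilted line with a high right boundary, pass to the \PAR, drop the floor, and read off a Gaussian lower bound that forces $X(0)$ to be unbounded. The only difference is that the paper uses a deterministic interval $[-1,T]$ together with the event $\{\forall\,t\ge T:\,X(t)\ge t^2+Mt\}$, whereas you introduce the stopping domain $(-1,\tau_k)$; this extra machinery is sound but unnecessary, since $R=+\infty$ a.s.\ already guarantees $X(T)\ge T^2+MT$ at a fixed large $T$ with probability at least $1/2$.
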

\begin{proof}
By symmetry we may restrict to proving that $R<\infty$.     The proof is again by contradiction, that is we assume $R=\infty$ and show that this contradicts tightness of the marginal distribution at the origin. %
Fix   $M>0$ and define the events
    \begin{align*}
        \A^+_m = \Bra{\forall\ t\geq m: X(t) - t^2 \geq M\,t}, \qquad m \in \N,
    \end{align*}
    These are increasing events, and by definition, $R=+\infty$ implies that almost surely, $ \A_m $ holds eventually.
Thus there is a finite $T_0 =T_0(M)\in \N$ such that for all $T\ge T_0$, $  \P(\A^+_T) \geq 1/2$.
   The Brownian-Gibbs property at $[-1,T]$ and monotonicity imply 
    \begin{align*}
        \P(X(0) \geq M/2) &\geq \P(X(T) \geq T^2 + MT)\,\P[X(0) \geq M/2\, \big|\, X(-1) = 0,\,X(T)=T^2 + MT] .
    \end{align*}
    The first factor is at least $\P(\A^+_T) \geq 1/2$. 
Switching to  {\PAR}, the second factor 
is 
equal to %
    \begin{align*}
        \P[Y(0) \geq M/2\, \big|\, Y + \p \gtr 0] \geq \P[Y(0) \geq M/2],
    \end{align*}
where  $Y \sim \B^{-1,MT}_{-1, T}$ and we use monotonicity to remove the floor. Now, $Y(0)$ is a Gaussian random variable with mean $-1+\frac{MT+1}{T+1}\geq M/2$ if $M$ and $T$ are large enough. Therefore $ \P[Y(0) \geq M/2]\geq 1/2$. In conclusion,
$\P(X(0) \geq M/2)\geq 1/4$. Since $M$ is arbitrarily large this ends the proof. 
\end{proof}

\smallskip
\begin{lem}
    \label{lem:lpr}
    Let $\mu\in\cG_{\rm ext}$ and let $L,R$ be the constants from Lemma \ref{lem:deterministic}.
Then $(L,R)\in\calT$, with $\calT=\{(L,R)\in[-\infty,\infty)^2: \; L + R <0\}$ as in \eqref{lineartilts}.
\end{lem}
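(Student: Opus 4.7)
The plan is to reduce to Lemma~\ref{lem:lr0} via a time translation. Suppose for contradiction that $L+R\ge 0$. Since $L,R<\infty$ by Lemma~\ref{lem:lrinf}, the assumption forces both $L,R\in\R$: indeed, if $L=-\infty$ then $L+R=-\infty<0$, and the case $R=-\infty$ is ruled out analogously. Set $h:=-L/2\in\R$ and consider the shifted measure $\tilde\mu:=\theta_h\mu$, where by definition $\tilde X(t):=X(t-h)\sim\tilde\mu$ whenever $X\sim\mu$.

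The argument rests on two observations: (a) $\tilde\mu\in\cG_{\rm ext}$, and (b) the limits from Lemma~\ref{lem:deterministic} associated to $\tilde\mu$ are $\tilde L=L+2h=0$ and $\tilde R=R-2h=L+R\ge 0$. For (a), each ingredient in Definition~\ref{def:abg}---the Brownian bridge reference measure, the area-tilt factor $\exp(-a\int X\,\mathrm{d}t)$, and the positivity indicator---is invariant under time translation, so the conditional resampling law on any interval $(\ell,r)$ is unchanged when one passes from $X\sim\mu$ to $\tilde X\sim\tilde\mu$; thus $\tilde\mu\in\cG$. Since $\mu\mapsto\theta_h\mu$ is an affine bijection of the convex set $\cG$ (with inverse $\theta_{-h}$), extremal measures are mapped to extremal measures, giving $\tilde\mu\in\cG_{\rm ext}$. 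For (b), decompose
$$
\tilde X(t)-t^2=\bigl[X(t-h)-(t-h)^2\bigr]+\bigl[(t-h)^2-t^2\bigr].
$$
By Lemma~\ref{lem:deterministic} applied to $\mu$, the first bracket equals $L|t-h|+o(|t-h|)$ as $t\to-\infty$; since $|t-h|=|t|+h$ once $t<h$ and the second bracket equals $-2ht+h^2=2h|t|+h^2$ for $t<0$, one obtains $(\tilde X(t)-t^2)/|t|\to L+2h$ as $t\to-\infty$, and an identical computation on the right yields $(\tilde X(t)-t^2)/t\to R-2h$ as $t\to+\infty$. Substituting $h=-L/2$ gives $\tilde L=0$ and $\tilde R=L+R\ge 0$.

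With both limits of $\tilde\mu\in\cG_{\rm ext}$ non-negative, Lemma~\ref{lem:lr0} applied to $\tilde\mu$ yields the desired contradiction. The main---and in fact only---substantive point is the verification of (a), namely the shift-invariance of the area-tilted Brownian-Gibbs property and the preservation of extremality under the pushforward $\theta_h$; both are essentially immediate once the definitions are unfolded, so I expect no real technical obstacle in this argument.
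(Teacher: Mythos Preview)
Your proposal is correct and follows essentially the same approach as the paper: assume $L+R\ge 0$, apply a time shift to obtain an extremal Gibbs measure with both limits nonnegative, and invoke Lemma~\ref{lem:lr0} for a contradiction. The only cosmetic difference is the choice of shift---you take $h=-L/2$ (making $\tilde L=0$, $\tilde R=L+R$), while the paper takes $h=R/2$ (making the right limit $0$ and the left limit $L+R$)---and you spell out the invariance of the Gibbs property and extremality under $\theta_h$ in more detail than the paper, which simply asserts it.
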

\begin{proof}%
By Lemma \ref{lem:lrinf}, we know  that $L$ and $R$ are both less than $+\infty$, and all we have to show is that $L+R<0$, which includes the case where one of the two, or both, are $-\infty$. It suffices to show that %
assuming $L+R\ge 0$ yields a contradiction.
For any  $h \in \R$, observe that $\theta_h\mu\in \cG_{\rm ext}$ and that $Y\sim \theta_h\mu$ satisfies $Y(t)-t^2=X(t - h) - (t-h)^2 -2ht +h^2$. Thus, if  $L(\theta_h\mu),R(\theta_h\mu)$ are the limits \eqref{linearlimla} associated to the measure $\theta_h\mu$,
it follows that the shifted measure $\theta_h\mu$ satisfies %
$L(\theta_h\mu) = L + 2h$, and $R(\theta_h\mu) = R - 2h$.
Choosing $h = R/2$,  
    \begin{align*}
        L(\theta_h\mu) = L + R, \qquad 
        R(\theta_h\mu) = 0.
    \end{align*}
Thus, if $L+R\ge 0$, we have obtained a measure $ \theta_h\mu\in \cG_{\rm ext}$  whose left and right limits are both nonnegative. By \rlem{lem:lr0} this yields a contradiction.
\end{proof}

With the above preparation we are now in a position to dive into the proof of the remaining parts of Theorem \ref{thmoneline}.
\section{One-line Gibbs measures}
\label{sec:proofofth1}
In this section we prove Theorem \ref{thmoneline}. The proof is split into several parts and for the reader's aid we provide a brief road map. Note that we know by Lemma \ref{lem:fsfact} that $\FS$ is the unique stationary Gibbs measure. Moreover, we have already proven in \rlem{lem:deterministic} and \rlem{lem:lpr} that any extremal measure has the deterministic left and right limiting linear slopes $(L,R)\in\calT$. Thus it remains to show that for any $(L,R)\in\calT$ there exists a unique $\nu_{L,R}\in\cG$ with the given limits, and uniqueness will also imply extremality, that is $\nu_{L,R}\in\cG_{\rm ext}$. This will be addressed as follows.
    \begin{itemize}
        \item Proposition \ref{ppn:fscouple} covers the case $L = R = -\infty$, which corresponds to  $\FS$.
                
        \item Lemma \ref{lem:existence} and \rlem{lem:uniqueness} address the case of $(L, R)\in\R^2$, $L+R<0$.
        
        \item Finally, Lemma \ref{lem:existence2} and Lemma \ref{lem:uniqueness2} deal with the cases %
        $L = -\infty$ and $R\in\R$ or $L\in\R$ and $R = -\infty$.
    \end{itemize}
    We refer to Section \ref{sec:completed}
for a summary of our conclusions.

\subsection{ Ferrari-Spohn diffusion\;$\Longleftrightarrow$\; $L = R = -\infty$ } \label{sec:fsuni}
First  observe that if $\mu=\FS$ then the limits \eqref{linearlimla} must be $L = R = -\infty$ a.s. Indeed, this follows easily from the asymptotic behavior highlighted in Lemma \ref{lem:fsfact} and a Borel-Cantelli argument. 
The next proposition establishes the converse, namely that any Gibbs measure with these limits must be $\FS$. 
 \begin{ppn} %
    \label{ppn:fscouple}
    Let $\mu\in\cG$ and suppose that the left and right limits \eqref{linearlimla} satisfy $L = R = -\infty$ a.s.
Then $\mu=\FS$. %
\end{ppn}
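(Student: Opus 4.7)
The strategy is to show that for any $S > 0$ the restriction of $\mu$ to $[-S, S]$ agrees with that of $\FS$, which (since $S$ is arbitrary) forces $\mu = \FS$. Because bounded continuous monotone (w.r.t.\ the pointwise order) functionals $F$ on $\cC_{-S,S}$ determine such marginal laws through their action on finite-dimensional projections, it suffices to match $\E_\mu[F(X|_{[-S,S]})]$ with $\E_\FS[F]$ for every such $F$. The Brownian-Gibbs property on $[-T,T]$ for $T > S$ gives
\begin{equation*}
\E_\mu[F(X|_{[-S,S]})] \;=\; \E_\mu\bigl[\E_{Z\sim \L^{X(-T),X(T)}_T}[F(Z|_{[-S,S]})]\bigr].
\end{equation*}
Since $L=R=-\infty$ $\mu$-a.s., for any $M>0$ the event $A_{M,T}=\{X(\pm T) \le T^2 - MT\}$ satisfies $\P(A_{M,T}) \to 1$ as $T\to\infty$. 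Combining this with the monotonicity sandwich $\L^{0,0}_T \preceq \L^{X(-T),X(T)}_T \preceq \L^{T^2-MT}_T$ from Lemma \ref{lem:monotonicity} on $A_{M,T}$, and with the boundedness of $F$, yields
\begin{equation*}
\E_{\L^{0,0}_T}[F] + o_T(1) \;\le\; \E_\mu[F(X|_{[-S,S]})] \;\le\; \E_{\L^{T^2-MT}_T}[F] + o_T(1).
\end{equation*}
The lower end tends to $\E_\FS[F]$ by Lemma \ref{lem:fsfact}(1), so the main task becomes showing that the upper end also tends to $\E_\FS[F]$ in the iterated limit $T \to \infty$ followed by $M \to \infty$.

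For this, I switch to the parabolic barrier representation (Lemma \ref{lem:girsanov}): under $\L^{T^2-MT}_T$, the process $Y = X - \p$ is a Brownian bridge pinned at $-MT$ at both endpoints and conditioned to stay above $-\p$. By the tangency computation of Lemma \ref{lem:tangency}, the line from $(\pm T, -MT)$ tangents $-\p$ at $t \approx \mp M/2$ for $T$ large, so macroscopically $Y$ tracks these tangent lines outside $[-M/2, M/2]$ and hugs $-\p$ within, placing $X = Y + \p$ close to zero throughout the bulk of $[-M/2, M/2]$. Quantitatively, Lemma \ref{lem:tailbounds} (with $\al = M$) gives $X(0) = O(1/M)$ with probability $1-\eps(M)$; by combining this pointwise bound with its analogue at other points in the tangency region (obtained via the Markov property applied to suitably chosen sub-intervals of $[-T, T]$) and with the coming-down estimates of Lemmas \ref{lem:comingdown2} and \ref{lem:doublesidedcomingdown}, I plan to locate, uniformly in $T$ large, outer hitting times $\tau_- = \inf\{t \in [-M/3, -S] : X(t) \le C\}$ and $\tau_+ = \sup\{t \in [S, M/3] : X(t) \le C\}$ which are well defined and satisfy $X(\tau_\pm) \le C$, for an absolute constant $C$, with probability $1-\eps(M) \to 1$ as $M \to \infty$.

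Since $[\tau_-, \tau_+]$ is then a stopping domain for $X$, the strong Gibbs property (Lemma \ref{sabg}) together with monotonicity shows that, on this event, $X|_{[\tau_-, \tau_+]}$ is stochastically sandwiched between realizations of $\L^{0,0}_{\tau_-, \tau_+}$ and $\L^{C,C}_{\tau_-, \tau_+}$. As $|\tau_\pm| \to \infty$ (which occurs once $M \to \infty$), the lower end converges to $\FS$ on $[-S, S]$ by Lemma \ref{lem:fsfact}(1); the upper end $\L^{C,C}_{\tau_-, \tau_+}$ also converges to $\FS$ on $[-S, S]$, by iterating the very same coming-down-and-resample argument starting from the bounded height $C$ in place of $T^2 - MT$ (a strictly easier instance of the preceding step). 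Combining the two sandwiches and sending $T \to \infty$ then $M \to \infty$ forces $\E_\mu[F(X|_{[-S,S]})] = \E_\FS[F]$, and since $F$ and $S$ were arbitrary, $\mu = \FS$. The principal technical hurdle is the uniform-in-$T$ construction of $\tau_\pm$: Lemma \ref{lem:tailbounds} directly controls $X$ only at the midpoint of $[-T,T]$, and extending this to effective control on the mesoscopic interval needed to locate $\tau_\pm$ will require a careful combination of translated midpoint bounds, the Markov property, and the coming-down lemmas.
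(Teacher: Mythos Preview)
The monotonicity sandwich is a natural framing, but there is a genuine gap at the step you call ``a strictly easier instance'': the claim that $\L^{C,C}_{\tau_-,\tau_+}\to\FS$ on $[-S,S]$ as $|\tau_\pm|\to\infty$. The iteration you propose makes no progress. Starting from boundary height $C$, the coming-down lemma (Lemma~\ref{lem:comingdown}) only produces hitting points at the \emph{same} universal constant $C_0$, so one step later you are at $\L^{C_0}$ on a sub-interval, and further iteration just reproduces $\L^{C_0}$ on shrinking domains. The sandwich $\L^0\preceq\cdot\preceq\L^{C_0}$ never collapses, and you are left with exactly the statement you set out to prove. This is the real content of the proposition. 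The paper resolves it not by iteration but by a direct crossing argument: sample $Y\sim\L^0_{M/2}$ independently, lower-bound $Y$ by a process $Z$ pinned to zero at every even integer (so that the $\Theta(M)$ independent mid-interval values of $Z$ attain height $\Theta(\sqrt{\log M})$ somewhere on each side), and upper-bound $X$ by $C+W$ with $W\sim\FS$, whose one-point tail is stretched-exponential. This forces $X<Y$ at some point on each side with probability $\to 1$, yielding a stopping domain on which $X$ and $Y$ can be resampled to coincide. Your sketch contains no analogue of this fluctuation-versus-tail comparison, and without it the argument stalls at a fixed constant gap.

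A secondary issue: step (a) leans on Lemma~\ref{lem:tailbounds} with $\alpha=M$ to claim $X(0)=O(1/M)$, but the constant $C$ there depends on $\alpha$ (through the threshold $t_0(\alpha)$ in the proof), so as stated the lemma only yields $X(0)\le t_0(M)$, not a bound shrinking in $M$. More generally, extracting $X(\tau_\pm)\le C$ for an \emph{absolute} $C$ at locations $|\tau_\pm|\asymp M$ under $\L^{T^2-MT}_T$ needs more than a pointwise bound at the origin plus a wave at the Markov property. The paper sidesteps this by working with $X\sim\mu$ directly: the hypothesis $L=R=-\infty$ is used (via a resampling on $[0,T]$ with $T\gg M$) to show $X(\pm M)\le C_\eps\sqrt{M}$, and then the coming-down lemma on $[M/2,M]$ produces the constant-height stopping times. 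That route is more efficient than first passing to the proxy $\L^{T^2-MT}_T$.
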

\begin{proof}
  The proof proceeds by coupling $\mu$ to $\FS$.
    A few ingredients are involved. The first two steps involve showing that starting with $X \sim \mu$ as in the statement, there are large times at which the height of $X$ becomes $O(1)$ and hence comparable to $\FS$ which will then aid in coupling the two samples. This is accomplished by first showing that there are arbitrary large times $M$ such that $X(\pm M)=O(\sqrt{M}),$ which is  then upgraded with the aid of Lemma \ref{lem:comingdown}, to an $O(1)$ height at times $\pm M/2$.
Finally, given such a bounded boundary data, we show that with high probability $X$ can be coupled to the diffusion $Y\sim \FS$ inside a smaller domain, say $[-M/3,M/3]$.
Since $M$ can be arbitrarily large, this will be sufficient.
The above three steps are carried out sequentially below.
\\
    \def\Ce{C_{\eps}}
    \def\A{\mathsf{OutUB}}
    \def\GE{\mathsf{Good}}
    
    \newcounter{claim}

    \stepcounter{claim}
    \noindent $(\theclaim)$ \textbf{Coming down to $O(\sqrt{M})$: }
    Suppose $X\sim\mu\in\cG_{\rm ext}$ with  $L = R = -\infty$. We first show that for all $\eps>0$, there exist  constants $\Ce = O(\sqrt{\log(1/\eps)})$ and $M_0=M_0(\eps)>0$ such that  $X(\pm M) \leq \Ce {\sqrt M}$ with probability at least $1 - \eps$ for all $M\ge M_0$.

Define the event
    \begin{align*}
        \A_s &= \{X(t) \leq t^2 - 2M|t|, \quad \forall \,t \in (-s, s)^\c\}\,, \qquad s \in \N.
    \end{align*}
Reasoning as in \eqref{eq:eventuallywhp},  there exists $T = T(\eps)$ such that $\P(\A_T)\geq 1-\eps/4$. 
Next, choose a constant $M_0=M_0(\eps)$ such  that $\P(X(0) \leq M_0^2) \geq 1 - \eps/4$. 
Note that this simply uses the tightness of the one-point distribution of $X(0)$.
Therefore,
    \begin{align*}
        \P(\GE) &\geq 1 - \eps/2, \quad \text{where } \;
        \GE :=\A_T \cap \{X(0) \leq M_0^2\}
    \end{align*}
    We now condition on the event $\GE$.
Monotonicity implies that for all $M\ge M_0$, 
    \begin{align*}
&        \P[X(\pm M) \leq \Ce {\sqrt M}] \geq \P(\GE)\;\P[X(\pm M) \leq \Ce {\sqrt M}\ \big|\ \GE] \\
                                             &\qquad \geq (1 - \eps/2)\;  \P[X(\pm M) \leq \Ce {\sqrt M}\ \big|\ X(\pm T) = T^2 - 2MT + M^2, X(0) = M^2]  \\
                                             &\qquad \geq (1 - \eps/2) \; \P[X(M) \leq \Ce {\sqrt M}\ \big|\ X(0) = M^2, X(T) = T^2 - 2MT + M^2]^2
\end{align*}
Here we use the Gibbs property of $\mu$ and the fact that given $X(0), X(\pm T)$, with $X(T) = X(-T)$,  the left and right halves of $X$ are i.i.d.. 
Switching to \PAR,  the last display is equal to
  \begin{align*}
                                      (1 - \eps/2) \cdot \P[Y(M) \leq -M^2 + \Ce {\sqrt M}\ \big|\ Y + \p \gtr 0]^2
\end{align*}
where $Y\sim \B^{M^2,-2MT+M^2}_{0, T}$.
The line $t \mapsto M^2 - 2Mt$ always lies above the parabola $-\p(t)= -t^2$, so by monotonicity, we may replace the parabolic floor with this linear floor and reduce to estimating %
 \begin{align*}
                                     (1 - \eps/2) \cdot \P[Y(M) \leq -M^2 + \Ce {\sqrt M}\ \big|\ Y + 2Mt - M^2 \gtr 0]^2.
\end{align*}
However, given $Y + 2Mt - M^2 \gtr 0$, the random path $V=Y + 2Mt - M^2$ has the distribution of a Brownian excursion between $0$ and $T$ with both endpoints at 0.
Since $2Mt - M^2 = M^2$ when $t = M$, the above is lower bounded by
\begin{align*}
 (1 - \eps/2) \cdot \P[V(M) \leq \Ce {\sqrt M}]^2 \geq 1 - \eps,
    \end{align*}
where we choose $\Ce=O(\sqrt{\log(1/\eps)})$ such that $\P(V(M) \leq \Ce {\sqrt M})^2 \geq 1 - \eps/2$.
That this choice can be made follows from the standard fact that, for the Brownian excursion $V,$ the random variable  $V(M)$ is sub-Gaussian with variance  $O(M)$  as long as $T \geq 2M$.
In conclusion, we have obtained  
\begin{align}\label{eq:codo}
\P[X(\pm M) \leq C_\eps {\sqrt M}]\ge 1 - \eps.
\end{align}

    \def\CD{\mathsf{ComesDown}} 
    \stepcounter{claim}
\noindent    $(\theclaim)$ \textbf{Coming down to a constant:}
   Consider the event 
        \begin{align}\label{eq:claim2}
            \CD =\bigg\{\exists\ t_\l \in [-M, -M/2], t_r \in [M/2, M] : X(t_\l), X(t_r) \leq C\bigg\}.        \end{align}
          We wish to show that, for a suitable constant $C>0$, one has  $\P  (\CD) \geq 1 - 10\eps$ for all $M$ large enough (depending on $C$ and $\eps$).
    By the previously established estimate \eqref{eq:codo}, it suffices to show that
    \begin{align*}
        \P&\Rnd{{\CD\, \big|\, X(\pm M) \leq \Ce {\sqrt M}}} \geq 1 - 9\eps.
    \end{align*}

{    By monotonicity, we may condition on $X(\pm M) = \Ce {\sqrt M}$ instead. Under this conditioning, 
with probability at least $1 - \eps$, one has $X(\pm M/2) \leq \Ce' {\sqrt M}$ for some new constant  $\Ce'$ which still  satisfies the  $O(\sqrt{\log(1/\eps)})$} bound. To see this, observe that the value at $X(\pm M/2 )$ is stochastically dominated by the same value when $X$ has zero area tilt, and in that case the bound follows from standard  Brownian fluctuations. Adjusting the value of the constant $\Ce$ and using monotonicity, it is then sufficient to prove  
    \begin{align*}
        \P[\CD\,\big|\, X(\pm M)= X(\pm M/2) = \Ce {\sqrt M} ] \geq 1 - 8\eps.
    \end{align*}
    However, given this conditioning $X|_{[-M, -M/2]}$ and $X|_{[M/2, M]}$ are i.i.d., so it is sufficient to show
    \begin{align*}
        \P\Rnd{\exists\ t_r \in [M/2, M] : X(t_r) \leq C \,\big|\, X(M)=X(M/2) = \Ce {\sqrt M}} \geq 1 - 4\eps. 
    \end{align*}
    Now we may apply \rlem{lem:comingdown} as long as e.g.\ $M/4\geq \Ce^{3/2} M^{3/4}$, i.e, when $M \geq M_0(\eps)$ for some constant $M_0(\eps)$.
This shows that 
    \begin{align*}
        \P\left(X(t)\ge  C\,,\;\forall t\in[M/2,M]\,\big|\, X(M)=X(M/2) = \Ce {\sqrt M}
\right) \leq C\,\Exp{-c\,M}, %
    \end{align*}
    if $M$ is sufficiently large.
This establishes the second claim, namely that  $\P  (\CD) \geq 1 - 10\eps$ for all $M$ large enough.\\

    \stepcounter{claim}
    \noindent    $(\theclaim)$ \textbf{Coupling to finite FS.}
 The idea here is to start with a coupling where we independently sample $X\sim\mu$ and $Y\sim\calL^0_T$ for some large $T$. The latter converges to $\FS$ for $T\to\infty$ from  \rlem{lem:fsfact}, and we may refer to it as the %
 ``finite FS''. We show that there is a stopping domain $[\sig_{\l}, \sig_{r}]$ which contains a fixed interval $[-L,L]$ around $0$, such that $X$ and $Y$ agree at times $\sig_{\l}$ and $ \sig_{r}$ with high probability. On that event, we can resample $X, Y$ inside $[\sig_{\l}, \sig_{r}]$ so that they agree everywhere on $[-L,L]$, while on the complementary event we do not resample. Thanks to the strong Gibbs property this is a new coupling of the two measures, %
 showing that the restrictions of $\mu$ and the finite FS to $[-L,L]$ are close in total variation. %
 We can then conclude that $\mu$ and $\FS$ agree on any  fixed time interval by taking suitable limits. 

We define a first stopping domain $[\tau_\ell,\tau_r]$ as follows. 
  We set  \[
\tau_\l= \inf\Bra{t \in [-M,-M/2] : X(t) \leq C}\,  ,\qquad \tau_r = \sup\Bra{t \in [M/2, M] : X(t) \leq C}\,,
  \]
   where $C$ is as in \eqref{eq:claim2} and set $\tau_r = M/2 $ if there is no $t\in [M/2, M]$ with $X(t) \leq C$, and $\tau_\ell = -M/2$ if there is no $t\in [-M,-M/2]$ with $X(t) \leq C$.  Note that both conditions are satisfied on the event  \eqref{eq:claim2} 
and that 
$[\tau_\l, \tau_r]$ is a stopping domain containing $[-M/2, M /2]$.
 Let $Y \sim \L^{0}_{M/2}$ be the finite FS measure on $[-M/2, M/2]$ and sample $X,Y$ independently. 
We want to construct  another stopping domain $[\sig_\l,\sig_r]$ such that $X(\sig_\l) = Y(\sig_\l)$ and $X(\sig_r) = Y(\sig_r)$ and show that with high probability $[\sig_\l,\sig_r] \supseteq [-M/3, M/3]$. %
To that end, define the event
    \def\MT{\mathsf{Meets}} %
    \begin{align*}
        \MT := \big\{ \exists\ t_\l \in [-M/2, -M/3], t_r \in [M/3, M/2] : X(t_\l) \leq Y(t_\l), X(t_r) \leq Y(t_r) \big\}.
    \end{align*}
  We remark that by continuity of paths, the event $\MT$ implies the existence of the desired stopping domain $[\sig_\l,\sig_r] \supseteq [-M/3, M/3]$.  Therefore, to prove that the latter exists with probability
$\geq 1 - 20\eps$ it is sufficient to show that $\P(\MT) \geq 1 - 20\eps$.
In view of the second step described above, we can reduce our task to showing that 
    \begin{align*}
        \P(\MT\, |\, \CD) \geq 1 - 10\eps.
    \end{align*}
    Since $[\tau_\l, \tau_r]$ is a stopping domain, we may condition on $\cB_{[\tau_\l, \tau_r]}$, so that $X|_{[\tau_\l, \tau_r]} \sim \L^{X(\tau_\l), X(\tau_r)}_{\tau_\l, \tau_r}$.
On $\CD$, we have $X(\tau_\l), X(\tau_r) \leq C$, so by monotonicity it is sufficient to show that
    \begin{align*}
        \P[\MT'] \geq 1 - 10\eps
    \end{align*}
    where $\MT'$ is defined as  $\MT$ with $X' \sim \L^{C}_{\tau_\l, \tau_r}$ instead of $X$. 

Let $V$ be a sample from a measure such that $V - C \sim \FS$, coupled monotonically to $X'$ such that $V \succeq X'$, with $V$ independent from $Y$. Also let $Z$ be the area-tilted Brownian excursion pinned at $0$ at every even integer between $-M/2$ and $M/2$, independent from $X', V$ and coupled monotonically to $Y$, so that $Y \succeq Z$. 
Let $\MT''$ be the version of $\MT$ with $V$ and $Z$ instead of $X$ and $Y$. Observe that $\MT'' \implies \MT'$.
It is therefore sufficient to show that $\P(\MT'') \geq 1 - 10\eps$.

    \begin{figure}[ht]
        \centering
        \input{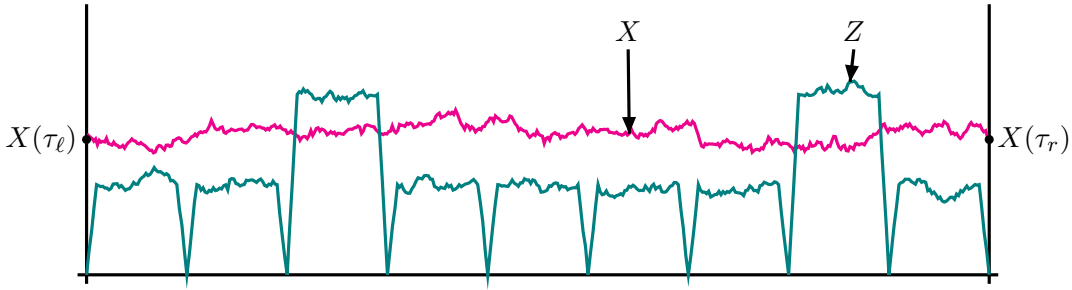}
        \caption{The pinned process $Z$ intersects the process $X$ on account of fluctuations.}
        \label{fig:fscouple}
    \end{figure}

    To show this, let $n \in \{\floor{M/2}, \floor{M/2} - 1\}$, and $m \in \{\ceil{M/3}, \ceil{M/3} + 1\}$ be even, so that $Z$ is pinned at zero for every even integer $2k \in [-n, -m] \cup [m, n]$.
Define $2k_\l + 1$ as the point where $Z(2k + 1)$ is maximized among $2k + 1 \in [-n, -m]$ and similarly define $2k_r + 1$ as the point where $Z(2k + 1)$ is maximized among $2k + 1 \in [m,n]$.
By independence of $Z|_{[2k, 2k + 2]}$ for different intervals $[2k, 2k + 2] \subseteq [-n, -m] \cup [m, n]$, and since $n - m = \T{M}$,
we have that $Z(2k_\l + 1), Z(2k_r + 1) = \T{\sqrt{\log M}}$ with probability $\geq 1 - \eps$, assuming  $M \geq M_0(\eps)$,
where we use the easily checked fact that the tails of $Z(2k + 1)$ have a Gaussian lower bound (see for instance Lemma 6.2 in \cite{geomarea}),  and therefore the maximum of $\T{M}$ of them is at least of size $\T{\log M}$ with large probability.
   However due to the one-point tail behavior of $\FS$, see \rlem{lem:fsfact}, the independence of $V,Z$ shows that
    \begin{align*}
        \P(V(2k_\l + 1) \geq Z(2k_\l + 1)) \leq \exp\big(-c\,(\log M)^{3/4}\big) \leq 5\eps\,,
    \end{align*}
for some %
absolute constant $c>0$. By symmetry, the very same bound applies to $2k_r + 1$.  So by a union bound
    \begin{align*}
        \P(V(2k_\l + 1) < Z(2k_\l + 1), V(2k_r + 1) < Z(2k_r+ 1)) \geq 1 - 10\eps,
    \end{align*}
    which implies $\P(\MT'')\geq 1-10\eps$ as desired. This ends the proof that $\P(\MT) \geq 1 - 20\eps$. 

    On the event $\MT $ we know that a domain $[\sig_\l, \sig_r]$ exists such that $X(\sig_\l) = Y(\sig_\l)$, $X(\sig_r) = Y(\sig_r)$ and such that $[\sig_\l,\sig_r] \supseteq [-M/3, M/3]$. Using that  $X$ are $Y$ are independent, a generalization of the strong Gibbs property of Lemma \ref{sabg}, where the domain $[\tau_{\ell}, \tau_r]$ is measurable with respect to the randomness of \emph{both} $X$ and $Y$ on $\R \setminus [\sig_\l, \sig_r],$ allows us to resample both $X$ and $Y$ inside $[\sig_\l, \sig_r]$ so that they agree everywhere on $[-M/3, M/3]$ on $\MT $. 
This gives us the
total variation distance  bound   \begin{align*}
        \tv{X|_{[-M/3, M/3]}, Y|_{[-M/3, M/3]}} \leq \P(\MT^\c) \leq 20\eps.
    \end{align*}
    Fix $L > 0$.
The above implies that for any fixed $\eps > 0$ and for all $M$ sufficiently large,
    \begin{align*}
        \tv{X|_{[-L, L]}, Y|_{[-L, L]}} \leq 20\eps.
    \end{align*}
    Recall that the definition of $Y$ depends on $M$, but \rlem{lem:fsfact} shows that $Y|_{[-L, L]}$ converges weakly as $M\to\infty$ to $\FS|_{[-L, L]}$ for any $L>0$. Convergence in total variation is stronger than weak convergence.   
Therefore, using  a triangle  inequality for a metric compatible with the weak topology,  and sending $\eps \to 0$ shows that $X|_{[-L, L]}$ has the same law as $\FS|_{[-L, L]}$ for any $L>0$.  Since $L$ is arbitrary this proves that $\mu=\FS$.
\end{proof}

\subsection{Existence}
In this section, we accomplish the existence part of Theorem \ref{thmoneline} and construct a Gibbs measure with given values of $L$ and $R.$ 
We start with the case of  finite values of $L,R$. 

\begin{ppn}[Existence for finite $L,R$] %
    \label{lem:existence}
    For any $(L,R)\in\R^2$ with $L+R<0$, there exists $\nu_{L,R}\in\cG$ satisfying  \eqref{linearlimla} a.s.
\end{ppn}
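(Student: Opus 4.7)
The plan is to construct $\nu_{L,R}$ as a weak subsequential limit of the finite-volume measures $\mu_T := \L^{T^2+LT,\,T^2+RT}_T$, taken for $T$ large enough that both endpoints are positive. Under $\mu_T$, the parabolic barrier representation (Lemma \ref{lem:girsanov}) yields that $Y := X - \p$ has the law of a Brownian bridge from $LT$ to $RT$ on $[-T,T]$ conditioned on $Y \succ -\p$. The hypothesis $L+R<0$ ensures that the two tangent lines from $(-T, LT)$ and $(T, RT)$ to $-\p$ touch the parabola at $(L/2, -L^2/4)$ and $(-R/2, -R^2/4)$, with $L/2 < -R/2$, giving the well-defined deterministic macroscopic profile for $Y$ depicted in Figure \ref{fig:hydrodynamic}.

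The first step is to prove tightness of $\{\mu_T|_{[-S,S]}\}_T$ in $C([-S,S])$ for every $S>0$. Since $X\ge 0$ is automatic, only an upper bound on $\max_{|t|\le S} X(t)$ is needed. Fixing $S' > \max(S,|L|/2,|R|/2)$, a combination of monotonicity (Lemma \ref{lem:monotonicity}), comparison with an unconstrained Brownian bridge, and Brownian tube estimates (Lemma \ref{lem:brownianav2}) shows that $Y(\pm S')$ lie in an $O(\sqrt{S'})$-window around their macroscopic values with probability tending to $1$ uniformly in large $T$. Applying the strong Gibbs property (Lemma \ref{sabg}) to resample $Y$ on $[-S',S']$ with these controlled boundary values then yields a uniform-in-$T$ upper bound on $\max_{|t|\le S}X(t)$; a modulus of continuity estimate follows from standard Brownian bridge considerations. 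Any weak subsequential limit $\nu$ inherits the Brownian-Gibbs property by a standard argument using the continuous dependence of the area-tilted laws on their boundary data, so $\nu\in\cG$.

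It remains to verify \eqref{linearlim}. By Lemma \ref{lem:limexists} the limits $L(\nu),R(\nu)$ exist $\nu$-a.s.\ as extended reals, so it suffices to identify them; by symmetry I focus on the right limit. The key estimate is a Gaussian-type concentration bound for the one-point marginal: uniformly in $n \ge n_0(\eps,L,R)$ and $T\ge 2n$,
\[
\mu_T\bigl(|Y(n)-Rn|>\eps n\bigr)\le Ce^{-c\eps^2 n}\,,
\]
which passes to the weak limit. On the event that $Y$ follows its macroscopic tangent near $t=n$, Brownian fluctuations give $Y(n) = Rn + R^2/4 + O(\sqrt n)$, so the estimate reduces to showing the hard-wall conditioning $Y\succ-\p$ contributes only a negligible bias at $t=n$ for large $n$, which holds because $n$ lies strictly to the right of the tangency point $-R/2$. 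A Borel-Cantelli argument along integer $n$, combined with Brownian continuity on unit intervals, then yields $R(\nu)=R$ $\nu$-a.s. The main technical obstacle will be the lower tail of the concentration bound: the upper tail follows from monotonicity (remove the floor and compare to a free Brownian bridge), but the lower tail requires decoupling $Y(n)$ from the portion of the path near the parabola, which I plan to do via a stopping-domain argument (Lemma \ref{sabg}) that isolates the tangent-line region, using Lemma \ref{lem:tangency} to identify the geometry.
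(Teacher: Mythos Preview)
Your approach is sound and shares the paper's core ingredients: the parabolic barrier representation, Gaussian concentration of $Y(n)$ around its tangent-line value (the paper's Lemma~\ref{lem:closei}), and a Borel--Cantelli argument to identify the limits. The main structural difference is that the paper proves the sequence $\mu_T|_{[\ell,r]}$ is Cauchy in total variation (Lemma~\ref{lem:cauchytv}), giving full-sequence convergence and an explicit coupling that is later recycled in the uniqueness proof; you instead take tightness plus a subsequential weak limit, which suffices for existence but yields a weaker conclusion. The paper also reduces to the symmetric case $L=R=-2K$ by shifting before constructing anything, whereas you work with general $(L,R)$ directly; either is fine.

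One genuine slip: you have the easy and hard tails reversed. Removing the floor $-\p$ makes the path go \emph{down} (monotonicity in the floor, Lemma~\ref{lem:monotonicity}), so $Y \succeq \tilde Y$ where $\tilde Y$ is the free bridge. This immediately controls the \emph{lower} tail $\P(Y(n) < Rn - \eps n) \le \P(\tilde Y(n) < Rn - \eps n)$, which is the easy direction. The hard part is the \emph{upper} tail $\P(Y(n) > Rn + \eps n)$, because the conditioning pushes $Y$ up; this is precisely where your ``decoupling via stopping domain'' idea is needed, and it matches what the paper does in part~(2) of the proof of Lemma~\ref{lem:closei}: first bound $Y(0)$ using Lemma~\ref{lem:tailbounds}, then show the unconstrained bridge from $(0,u)$ to $(T,RT)$ avoids $-\p$ with probability bounded below, so the conditioning distorts probabilities by at most a constant factor. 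With that correction your plan goes through.
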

The proof requires several steps. 
We first note that 
it suffices to show that for any $K\in(0,\infty)$,     there exists  $\mu_K\in\cG$ with $L(\mu_K)=R(\mu_K) = -2K$.
Indeed, if $h\in\R$, and $\mu_K\in\cG$ has limits $L(\mu_K)=R(\mu_K) = -2K$ a.s., then by shifting 
as in the proof of Lemma \ref{lem:lpr} one has  $\theta_h\mu_K\in\cG$, with limits $R(\theta_h\mu_K)=R(\mu_K)-2h$
and $L(\theta_h\mu_K)=L(\mu_K)+2h$.

    \begin{figure}[ht]
        \centering{
                \input{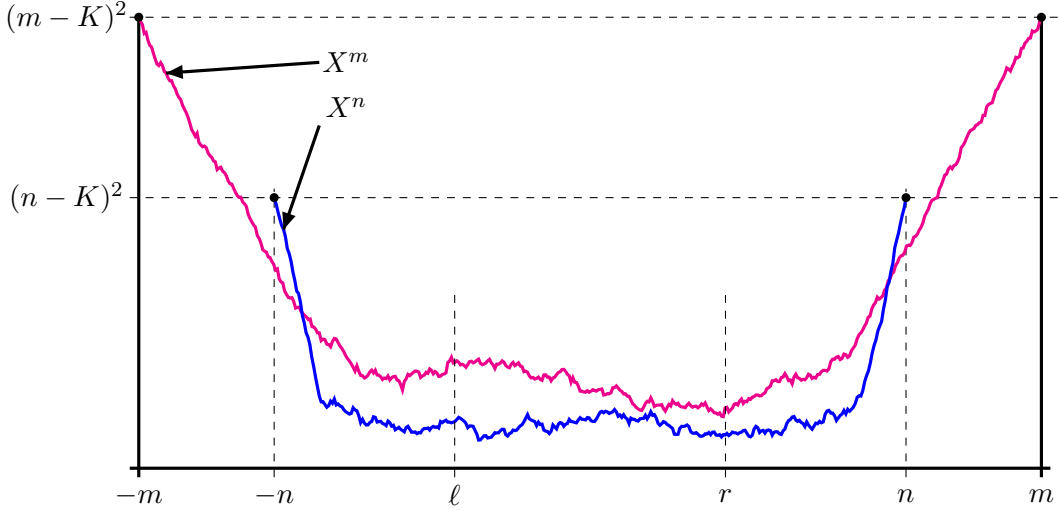}\qquad\qquad
}        \caption{Two independent copies of $X^{n}, X^{m}$ defined above. We resample inside the domain where they meet.}
        \label{fig:consistent-family}
    \end{figure}
    
    The strategy of proof is as follows. Fix $\l < r$, and a large integer $n$. 
    Consider $X^{n} \sim \L^{(n - K)^2}_{n}$, the area-tilted line on $[-n,n]$ with boundary $(n - K)^2$, and let $\mu^{n}_{\l, r}$ be the distribution of the restriction $X^{n}|_{[\l, r]}$ to the finite interval $[\ell,r]\subset [-n,n]$, see Figure \ref{fig:consistent-family}.
 A key step in the proof consists in showing 
 that for each $\ell<r$, there exists a probability measure $\mu_{\ell,r}$ on $\Omega_{\ell,r}^+$ such that  one has the weak convergence
 \begin{align}\label{eq:weakcon}
 \mu^{n}_{\ell,r}\;\Longrightarrow\; \mu_{\ell,r}\,,\qquad n\to\infty.
\end{align}
We pause to note that once this is achieved it is not too hard to see that $\{\mu_{\ell,r}, \;\ell<r\}$, must be a consistent family 
and that the global measure $\mu\in\calP^+$ which they define,  by Kolmogorov's consistency theorem, is a Gibbs measure. 
Indeed, the consistency follows from the consistency of $\{\mu^{n}_{\ell,r}, \;\ell<r\}$ for each fixed $n$. Moreover, since $\mu^{n}_{\ell,r}$ has the Gibbs property for every subdomain  $[a,b]\subset[\l, r]$, it follows that the same holds for the limit $ \mu_{\ell,r}$. This follows from the fact that according to our choice of (local) weak topology, a weak limit of Gibbs measures is a Gibbs measure, see e.g.\ Theorem 4.17 in \cite{georgii2011gibbs} or Theorem 6.26 in \cite{friedli2017statistical} for similar statements. 
Finally, the latter property for each $\ell<r$, together with the consistency, implies that the measure $\mu$ must have the Gibbs property on every domain, since being Gibbs is a condition that involves  finite domain marginals only.

Once the Gibbs measure $\mu$ has been constructed as above, it remains to check %
that it has the correct limiting behavior, namely that $\mu$-almost surely
\begin{align}\label{eq:LRlim}
 L(\mu)=R(\mu)=-2K.
\end{align}

We turn to  the details of the proof. For the first part of the proof, namely the convergence \eqref{eq:weakcon}, we will actually show the stronger fact that  $\mu^{n}_{\ell,r}$, $n\in\bbN$, forms a Cauchy sequence of probability measures on $\Omega^+_{\ell,r}$ for the total variation norm, see Lemma \ref{lem:cauchytv} below. Lemma \ref{lem:limit2K} will then establish the identity \eqref{eq:LRlim}.

We start %
with a technical estimate which quantifies the normal fluctuations around the mean for a Brownian bridge conditioned to avoid a parabola.  
Recall the notation $\B^{x,y}_n[h]$ for the Brownian bridge on $[-n,n]$ with boundary $x,y$ and conditioned to stay above the function $h$, and recall that $\p(t)=t^2$. The estimate shows that the effect of the conditioning does not significantly alter the fluctuations at sufficiently large times.
 We refer to Figure \ref{fig:linear-portion} for an illustration of the quantities in the lemma below.

\begin{lem}%
    \label{lem:closei}
    For any fixed $K > 0$ and $\beta>0$, there exist positive constants $\delta_0,c,C$ such that, if $\del\le \del_0$, $y^+ \in [-2(K + \del)n, -2(K - \del)n]$, and $y^-\leq -\beta n$,
    then $Y^n\sim  \B^{y^-,y^+}_n\![-\p]$ satisfies 
    \begin{align*}
        \P[Y^n(s) - \f{sy^{+}}{n} \in {\left[- u{\sqrt s},  +u{\sqrt s}\,\right]}
] \geq 1 - C\Exp{-cu}\,,
    \end{align*}
    for all $s,u\geq C$, and for all $n\geq C(u+s)$. 

\end{lem}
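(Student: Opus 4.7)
The plan is to isolate the tangent line from $(n,y^+)$ to $-\p$, subtract it off, and then reduce the problem to a standard Brownian bridge fluctuation estimate on the interval $[M,n]$ for an intermediate time $M$ just past the tangent point. Apply Lemma~\ref{lem:tangency} with $\alpha=-y^+/n^2$: the tangent line from $(n,y^+)$ to $-\p$ touches at $(t_0,-t_0^2)$ with $t_0=n-\sqrt{n^2+y^+}$, and writing $y^+=-2K_r n$ with $K_r\in[K-\delta,K+\delta]$, a direct Taylor expansion yields $t_0=K_r+K_r^2/(2n)+O(K_r^3/n^2)$, so $t_0$ lies in a compact subset of $(0,\infty)$ once $\delta_0$ is small and $n$ is large. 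The tangent line $T(t):=t_0^2-2t_0 t$ satisfies $T(t)+t^2=(t-t_0)^2\ge 0$ with equality only at $t_0$, and an elementary computation yields $|T(s)-sy^+/n|\le C_0(K)$ uniformly in $s\in[0,n]$. By enlarging the threshold $C$ in the statement to absorb this shift into $u\sqrt s$, it suffices to show that $\tilde Y^n(t):=Y^n(t)-T(t)$, which (by linearity of $T$) is a Brownian bridge on $[-n,n]$ from $y^--T(-n)\le -\beta'n$ (with $\beta'>0$ depending on $\beta,K$) to $y^+-T(n)=0$ conditioned on $\tilde Y^n(t)\ge -(t-t_0)^2$, satisfies $|\tilde Y^n(s)|\le u\sqrt s/2$ with probability $\ge 1-Ce^{-cu}$.

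Pick an intermediate time $M:=t_0+C_1$ for a large constant $C_1=C_1(K)$ to be fixed, and assume $s\ge 2M$ by taking $C$ in the statement large enough. By the strong Gibbs property (Lemma~\ref{sabg}, specialized to a single line), conditional on $\tilde Y^n(M)=v$ the restriction $\tilde Y^n|_{[M,n]}$ is distributed as a Brownian bridge from $v$ to $0$ on $[M,n]$ conditioned to stay above $-(t-t_0)^2$. The lower tail of $v$ is immediate: $v\ge -(M-t_0)^2=-C_1^2$ deterministically from the barrier. For the upper tail, I adapt the argument of Lemma~\ref{lem:tailbounds}: after translating the time origin to $M$, $\tilde Y^n$ is again a Brownian bridge from a linearly negative value to $0$ conditioned above a shifted parabolic barrier, and the ratio-of-densities argument there -- conditioning on the value at the new origin, using independence of the left and right halves given that value, and standard Gaussian tail bounds -- adapts to give $\P(v>\lambda)\le Ce^{-c\lambda}$ for $\lambda\ge C_2(K,C_1)$. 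Combining the two tails, $|v|\le u\sqrt s/4$ with probability $\ge 1-Ce^{-cu\sqrt s/4}\ge 1-Ce^{-cu}$, using $\sqrt s\ge 1$.

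Given $|v|\le u\sqrt s/4$, the unconstrained Brownian bridge $W$ from $v$ to $0$ on $[M,n]$ at time $s$ has mean $v(n-s)/(n-M)$ of magnitude $\le |v|$ and variance $(s-M)(n-s)/(n-M)\le s$, so a Gaussian fluctuation bound yields $|W(s)|\le u\sqrt s/2$ with probability $\ge 1-Ce^{-cu^2}\ge 1-Ce^{-cu}$. To pass from $W$ to the barrier-conditioned bridge $\tilde Y^n|_{[M,n]}$, invoke Corollary~\ref{paraavoidancecor}: on $[M,n]$ the barrier $-(t-t_0)^2$ lies below $-C_1^2$, so the unconstrained bridge $W$ (whose endpoints are both $O(u)$) avoids this barrier with probability $\ge 1-Ce^{-cC_1^3}$, and choosing $C_1$ large relative to $u$ absorbs the conditioning factor. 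Assembling the pieces and integrating over $v$ completes the proof. The main obstacle is the upper-tail bound on $v$ in the preceding paragraph: the coming-down estimates of Lemmas~\ref{lem:comingdown}--\ref{lem:doublesidedcomingdown} do not apply in the present regime since the boundary height is of order $n^2$ while the available domain is only $2n$, so a careful quantitative adaptation of Lemma~\ref{lem:tailbounds}'s proof, with the tangent-line shift explicitly tracked and the constants kept uniform in the admissible range of boundary data, is required.
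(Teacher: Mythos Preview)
Your tangent-line subtraction is a reasonable geometric reformulation, and the overall architecture (control the process at an intermediate point, then run a bridge fluctuation estimate on the right segment, paying for the conditioning via an avoidance bound) matches the paper's. However the execution has a real gap in the upper bound and is more delicate than necessary.

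The key problem is your passage from the unconditional bridge $W$ to the barrier-conditioned process on $[M,n]$. You claim the avoidance probability is $\ge 1-Ce^{-cC_1^3}$ because the barrier lies below $-C_1^2$, but this is not uniform over the range of $v$ you allow: when $v$ is near its lower end $-C_1^2$, the bridge $W$ starts essentially on the barrier, and the avoidance probability is not close to $1$. The phrase ``choosing $C_1$ large relative to $u$'' is inconsistent with having already fixed $C_1=C_1(K)$. The fix is to use monotonicity in the boundary value: after establishing $\P(v>\lambda)\le Ce^{-c\lambda}$, replace $v$ by its \emph{upper} bound $\lambda$ (not merely condition on $v\le\lambda$); the conditioned process is monotone in $v$, and at $v=\lambda$ the gap to the barrier at $M$ is $\lambda+C_1^2$, so Corollary~\ref{paraavoidancecor} gives avoidance with probability $\ge 1/2$ once $\lambda$ exceeds a constant. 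This is exactly the maneuver the paper uses, just in different coordinates. Separately, your upper tail on $v=\tilde Y^n(M)$ genuinely needs work: after the tangent shift the right endpoint of $\tilde Y^n$ is $0$, not linearly negative, and the barrier peak sits at $t_0\ne M$, so Lemma~\ref{lem:tailbounds} does not apply as stated and the adaptation (which you flag as ``the main obstacle'') must handle this asymmetry.

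The paper avoids both difficulties by \emph{not} subtracting the tangent. It conditions at time $0$, where Lemma~\ref{lem:tailbounds} applies verbatim since both $y^\pm$ are linearly negative, giving $Y^n(0)\le u$ with probability $\ge 1-Ce^{-cu}$. For the upper bound it then uses monotonicity to set $Y^n(0)=u$; the line from $(0,u)$ to $(n,y^+)$ sits at distance $\approx u$ above $-\p$, so the unconditional bridge on $[0,n]$ avoids the parabola with probability $\ge 1/2$, and the conditioning costs only a factor $2$. The lower bound is a one-liner: drop the floor and the left boundary to $0$ by monotonicity and use a standard bridge estimate. Your route can be repaired along the lines above, but the paper's choice of conditioning point makes the argument substantially shorter.
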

\begin{figure}[ht]
    \centering
\begin{tikzpicture}[scale=1]
\colorlet{colora}{magenta}
\colorlet{colorb}{teal}
\colorlet{colorc}{blue}
\draw[dashed] (-5.2,0) -- (5.2,0);
\draw[colorb,thick] (-5,-5) parabola bend (0,0) (5,-5);
\draw (-5.3,-2) node {$y^-$};
\draw (5.3,-2) node {$y^+$};
\fill[colora] (-5,-2) circle[radius=0.05];
\fill[colora] (5,-2) circle[radius=0.05];
\fill[colorc] (0,0) circle[radius=0.05];
\draw[very thick,colorc] (5,-2) -- (0,0);
\draw[thick,dashed,colorc] (0,-1) -- (2.5,-1);
\draw (-0.3,-0.7) node {$sy^+/n$};
\fill[colorc] (2.5,-1) circle[radius=0.05];
\draw[very thick,colora] (-5,-2) -- (-4.97958,-1.93835) -- (-4.95915,-1.9164) -- (-4.93873,-1.90439) -- (-4.9183,-1.94844) -- (-4.89788,-1.94604) -- (-4.87745,-1.94568) -- (-4.85703,-1.93734) -- (-4.8366,-1.94452) -- (-4.81618,-1.93466) -- (-4.79575,-1.93718) -- (-4.77533,-1.96334) -- (-4.7549,-1.92592) -- (-4.73448,-1.88914) -- (-4.71405,-1.82922) -- (-4.69363,-1.81774) -- (-4.6732,-1.81928) -- (-4.65278,-1.8246) -- (-4.63235,-1.85821) -- (-4.61193,-1.81863) -- (-4.5915,-1.8392) -- (-4.57108,-1.86154) -- (-4.55065,-1.8552) -- (-4.53023,-1.80043) -- (-4.5098,-1.78227) -- (-4.48938,-1.8003) -- (-4.46895,-1.80886) -- (-4.44853,-1.77878) -- (-4.4281,-1.77156) -- (-4.40768,-1.78166) -- (-4.38725,-1.77594) -- (-4.36683,-1.74225) -- (-4.34641,-1.67384) -- (-4.32598,-1.69866) -- (-4.30556,-1.7051) -- (-4.28513,-1.71627) -- (-4.26471,-1.77316) -- (-4.24428,-1.78594) -- (-4.22386,-1.8011) -- (-4.20343,-1.75435) -- (-4.18301,-1.74423) -- (-4.16258,-1.77673) -- (-4.14216,-1.74347) -- (-4.12173,-1.73934) -- (-4.10131,-1.77472) -- (-4.08088,-1.7764) -- (-4.06046,-1.77759) -- (-4.04003,-1.7867) -- (-4.01961,-1.76958) -- (-3.99918,-1.81771) -- (-3.97876,-1.80803) -- (-3.95833,-1.76172) -- (-3.93791,-1.72073) -- (-3.91748,-1.67305) -- (-3.89706,-1.62582) -- (-3.87663,-1.56881) -- (-3.85621,-1.58724) -- (-3.83578,-1.54893) -- (-3.81536,-1.58856) -- (-3.79493,-1.59121) -- (-3.77451,-1.63107) -- (-3.75408,-1.58523) -- (-3.73366,-1.53194) -- (-3.71324,-1.52329) -- (-3.69281,-1.45824) -- (-3.67239,-1.46431) -- (-3.65196,-1.44645) -- (-3.63154,-1.42868) -- (-3.61111,-1.44584) -- (-3.59069,-1.4195) -- (-3.57026,-1.35058) -- (-3.54984,-1.36757) -- (-3.52941,-1.329) -- (-3.50899,-1.32112) -- (-3.48856,-1.25819) -- (-3.46814,-1.23308) -- (-3.44771,-1.20277) -- (-3.42729,-1.17381) -- (-3.40686,-1.1565) -- (-3.38644,-1.18633) -- (-3.36601,-1.13567) -- (-3.34559,-1.16478) -- (-3.32516,-1.14709) -- (-3.30474,-1.11881) -- (-3.28431,-1.13239) -- (-3.26389,-1.11739) -- (-3.24346,-1.07324) -- (-3.22304,-1.06385) -- (-3.20261,-1.07914) -- (-3.18219,-1.15185) -- (-3.16176,-1.1673) -- (-3.14134,-1.17134) -- (-3.12092,-1.17294) -- (-3.10049,-1.18697) -- (-3.08007,-1.16228) -- (-3.05964,-1.16509) -- (-3.03922,-1.15433) -- (-3.01879,-1.12103) -- (-2.99837,-1.13206) -- (-2.97794,-1.12678) -- (-2.95752,-1.05113) -- (-2.93709,-1.0135) -- (-2.91667,-1.03839) -- (-2.89624,-1.02786) -- (-2.87582,-0.99892) -- (-2.85539,-0.95123) -- (-2.83497,-0.9353) -- (-2.81454,-0.93891) -- (-2.79412,-0.94794) -- (-2.77369,-0.91725) -- (-2.75327,-0.89647) -- (-2.73284,-0.8996) -- (-2.71242,-0.8931) -- (-2.69199,-0.89301) -- (-2.67157,-0.86009) -- (-2.65114,-0.86807) -- (-2.63072,-0.84204) -- (-2.61029,-0.8059) -- (-2.58987,-0.78072) -- (-2.56944,-0.79358) -- (-2.54902,-0.74367) -- (-2.52859,-0.7524) -- (-2.50817,-0.72916) -- (-2.48775,-0.73991) -- (-2.46732,-0.77853) -- (-2.4469,-0.75985) -- (-2.42647,-0.73959) -- (-2.40605,-0.72994) -- (-2.38562,-0.71017) -- (-2.3652,-0.71701) -- (-2.34477,-0.72604) -- (-2.32435,-0.7184) -- (-2.30392,-0.65132) -- (-2.2835,-0.61668) -- (-2.26307,-0.62069) -- (-2.24265,-0.59356) -- (-2.22222,-0.59642) -- (-2.2018,-0.5984) -- (-2.18137,-0.5625) -- (-2.16095,-0.56669) -- (-2.14052,-0.60346) -- (-2.1201,-0.61728) -- (-2.09967,-0.60216) -- (-2.07925,-0.58601) -- (-2.05882,-0.53814) -- (-2.0384,-0.54699) -- (-2.01797,-0.51407) -- (-1.99755,-0.49033) -- (-1.97712,-0.48331) -- (-1.9567,-0.44806) -- (-1.93627,-0.42314) -- (-1.91585,-0.43723) -- (-1.89542,-0.39554) -- (-1.875,-0.42001) -- (-1.85458,-0.40717) -- (-1.83415,-0.39601) -- (-1.81373,-0.37279) -- (-1.7933,-0.38271) -- (-1.77288,-0.37247) -- (-1.75245,-0.36524) -- (-1.73203,-0.32543) -- (-1.7116,-0.33747) -- (-1.69118,-0.32422) -- (-1.67075,-0.32057) -- (-1.65033,-0.34642) -- (-1.6299,-0.25792) -- (-1.60948,-0.24955) -- (-1.58905,-0.28159) -- (-1.56863,-0.23001) -- (-1.5482,-0.25013) -- (-1.52778,-0.22842) -- (-1.50735,-0.19288) -- (-1.48693,-0.20104) -- (-1.4665,-0.20261) -- (-1.44608,-0.20705) -- (-1.42565,-0.19945) -- (-1.40523,-0.23446) -- (-1.3848,-0.25748) -- (-1.36438,-0.24853) -- (-1.34395,-0.20582) -- (-1.32353,-0.17541) -- (-1.3031,-0.11403) -- (-1.28268,-0.08329) -- (-1.26225,-0.0874) -- (-1.24183,-0.08012) -- (-1.22141,-0.1273) -- (-1.20098,-0.14272) -- (-1.18056,-0.15932) -- (-1.16013,-0.13846) -- (-1.13971,-0.10703) -- (-1.11928,-0.11056) -- (-1.09886,-0.11461) -- (-1.07843,-0.14575) -- (-1.05801,-0.13548) -- (-1.03758,-0.11446) -- (-1.01716,-0.08801) -- (-0.99673,-0.097) -- (-0.97631,-0.02637) -- (-0.95588,0.02034) -- (-0.93546,0.02944) -- (-0.91503,0.02621) -- (-0.89461,0.02915) -- (-0.87418,0.04575) -- (-0.85376,0.05738) -- (-0.83333,0.08333) -- (-0.83333,0.08333) -- (-0.81197,0.10092) -- (-0.7906,0.13548) -- (-0.76923,0.1628) -- (-0.74786,0.15267) -- (-0.7265,0.15722) -- (-0.70513,0.13891) -- (-0.68376,0.11503) -- (-0.66239,0.16458) -- (-0.64103,0.14367) -- (-0.61966,0.19474) -- (-0.59829,0.17621) -- (-0.57692,0.17557) -- (-0.55556,0.1993) -- (-0.53419,0.19126) -- (-0.51282,0.19408) -- (-0.49145,0.17819) -- (-0.47009,0.15569) -- (-0.44872,0.14262) -- (-0.42735,0.11558) -- (-0.40598,0.1557) -- (-0.38462,0.14226) -- (-0.36325,0.18427) -- (-0.34188,0.09641) -- (-0.32051,0.08937) -- (-0.29915,0.09826) -- (-0.27778,0.08184) -- (-0.25641,0.10625) -- (-0.23504,0.11674) -- (-0.21368,0.10017) -- (-0.19231,0.1046) -- (-0.17094,0.17832) -- (-0.14957,0.17551) -- (-0.12821,0.17405) -- (-0.10684,0.1509) -- (-0.08547,0.1449) -- (-0.0641,0.09734) -- (-0.04274,0.08258) -- (-0.02137,0.14567) -- (0,0.16667) -- (0,0.16667) -- (0.02137,0.17844) -- (0.04274,0.20334) -- (0.0641,0.2503) -- (0.08547,0.25844) -- (0.10684,0.30946) -- (0.12821,0.32996) -- (0.14957,0.32347) -- (0.17094,0.36237) -- (0.19231,0.28105) -- (0.21368,0.25359) -- (0.23504,0.25104) -- (0.25641,0.21395) -- (0.27778,0.19962) -- (0.29915,0.17133) -- (0.32051,0.14303) -- (0.34188,0.21361) -- (0.36325,0.22159) -- (0.38462,0.20934) -- (0.40598,0.23722) -- (0.42735,0.21896) -- (0.44872,0.23377) -- (0.47009,0.23249) -- (0.49145,0.26839) -- (0.51282,0.2143) -- (0.53419,0.22512) -- (0.55556,0.21674) -- (0.57692,0.17098) -- (0.59829,0.19104) -- (0.61966,0.15387) -- (0.64103,0.19019) -- (0.66239,0.19324) -- (0.68376,0.20141) -- (0.70513,0.19127) -- (0.7265,0.2053) -- (0.74786,0.13789) -- (0.76923,0.1285) -- (0.7906,0.09861) -- (0.81197,0.09584) -- (0.83333,0.08333) -- (0.83333,0.08333) -- (0.85376,0.09104) -- (0.87418,0.05682) -- (0.89461,0.09961) -- (0.91503,0.15495) -- (0.93546,0.1368) -- (0.95588,0.12435) -- (0.97631,0.1206) -- (0.99673,0.14464) -- (1.01716,0.18606) -- (1.03758,0.13776) -- (1.05801,0.10712) -- (1.07843,0.06413) -- (1.09886,0.02037) -- (1.11928,-0.03775) -- (1.13971,-0.03766) -- (1.16013,-0.08127) -- (1.18056,-0.0481) -- (1.20098,-0.03282) -- (1.22141,-0.07344) -- (1.24183,-0.11345) -- (1.26225,-0.1409) -- (1.28268,-0.11048) -- (1.3031,-0.14518) -- (1.32353,-0.19778) -- (1.34395,-0.20823) -- (1.36438,-0.20658) -- (1.3848,-0.23463) -- (1.40523,-0.19723) -- (1.42565,-0.20022) -- (1.44608,-0.19727) -- (1.4665,-0.23199) -- (1.48693,-0.24288) -- (1.50735,-0.25709) -- (1.52778,-0.33168) -- (1.5482,-0.33179) -- (1.56863,-0.39162) -- (1.58905,-0.39995) -- (1.60948,-0.44962) -- (1.6299,-0.39862) -- (1.65033,-0.39727) -- (1.67075,-0.38085) -- (1.69118,-0.36333) -- (1.7116,-0.41049) -- (1.73203,-0.48425) -- (1.75245,-0.50091) -- (1.77288,-0.48236) -- (1.7933,-0.48744) -- (1.81373,-0.47277) -- (1.83415,-0.45428) -- (1.85458,-0.5058) -- (1.875,-0.50559) -- (1.89542,-0.4576) -- (1.91585,-0.48331) -- (1.93627,-0.55039) -- (1.9567,-0.56493) -- (1.97712,-0.58603) -- (1.99755,-0.58166) -- (2.01797,-0.61504) -- (2.0384,-0.61985) -- (2.05882,-0.69074) -- (2.07925,-0.67259) -- (2.09967,-0.63407) -- (2.1201,-0.60479) -- (2.14052,-0.65673) -- (2.16095,-0.65537) -- (2.18137,-0.74866) -- (2.2018,-0.74685) -- (2.22222,-0.75523) -- (2.24265,-0.79025) -- (2.26307,-0.78648) -- (2.2835,-0.78886) -- (2.30392,-0.76849) -- (2.32435,-0.77196) -- (2.34477,-0.77247) -- (2.3652,-0.8355) -- (2.38562,-0.87499) -- (2.40605,-0.89433) -- (2.42647,-0.89139) -- (2.4469,-0.89738) -- (2.46732,-0.86037) -- (2.48775,-0.89209) -- (2.50817,-0.90534) -- (2.52859,-0.90438) -- (2.54902,-0.91755) -- (2.56944,-0.90988) -- (2.58987,-0.93303) -- (2.61029,-0.89901) -- (2.63072,-0.91563) -- (2.65114,-0.89107) -- (2.67157,-0.89998) -- (2.69199,-0.86541) -- (2.71242,-0.91434) -- (2.73284,-0.90775) -- (2.75327,-0.92862) -- (2.77369,-0.9303) -- (2.79412,-0.90466) -- (2.81454,-0.9464) -- (2.83497,-0.99272) -- (2.85539,-1.02905) -- (2.87582,-1.02903) -- (2.89624,-1.09041) -- (2.91667,-1.09406) -- (2.93709,-1.13735) -- (2.95752,-1.17589) -- (2.97794,-1.19171) -- (2.99837,-1.18597) -- (3.01879,-1.20934) -- (3.03922,-1.22859) -- (3.05964,-1.23479) -- (3.08007,-1.24766) -- (3.10049,-1.32917) -- (3.12092,-1.34502) -- (3.14134,-1.34602) -- (3.16176,-1.31825) -- (3.18219,-1.26414) -- (3.20261,-1.25077) -- (3.22304,-1.25467) -- (3.24346,-1.28854) -- (3.26389,-1.29179) -- (3.28431,-1.27546) -- (3.30474,-1.24579) -- (3.32516,-1.27823) -- (3.34559,-1.2783) -- (3.36601,-1.26201) -- (3.38644,-1.21637) -- (3.40686,-1.21776) -- (3.42729,-1.2421) -- (3.44771,-1.27038) -- (3.46814,-1.2888) -- (3.48856,-1.31545) -- (3.50899,-1.29951) -- (3.52941,-1.27715) -- (3.54984,-1.2653) -- (3.57026,-1.24054) -- (3.59069,-1.26353) -- (3.61111,-1.26241) -- (3.63154,-1.28338) -- (3.65196,-1.29189) -- (3.67239,-1.26666) -- (3.69281,-1.27801) -- (3.71324,-1.34178) -- (3.73366,-1.35837) -- (3.75408,-1.38798) -- (3.77451,-1.43995) -- (3.79493,-1.47051) -- (3.81536,-1.4506) -- (3.83578,-1.41072) -- (3.85621,-1.43387) -- (3.87663,-1.41412) -- (3.89706,-1.38996) -- (3.91748,-1.43332) -- (3.93791,-1.44397) -- (3.95833,-1.45475) -- (3.97876,-1.45932) -- (3.99918,-1.51448) -- (4.01961,-1.53314) -- (4.04003,-1.5114) -- (4.06046,-1.48447) -- (4.08088,-1.44202) -- (4.10131,-1.46264) -- (4.12173,-1.41995) -- (4.14216,-1.41275) -- (4.16258,-1.46377) -- (4.18301,-1.45334) -- (4.20343,-1.47516) -- (4.22386,-1.47183) -- (4.24428,-1.48654) -- (4.26471,-1.46368) -- (4.28513,-1.4687) -- (4.30556,-1.48922) -- (4.32598,-1.47387) -- (4.34641,-1.4793) -- (4.36683,-1.50394) -- (4.38725,-1.47998) -- (4.40768,-1.50488) -- (4.4281,-1.53602) -- (4.44853,-1.56393) -- (4.46895,-1.61795) -- (4.48938,-1.64498) -- (4.5098,-1.60267) -- (4.53023,-1.64057) -- (4.55065,-1.6517) -- (4.57108,-1.69472) -- (4.5915,-1.72286) -- (4.61193,-1.68975) -- (4.63235,-1.67531) -- (4.65278,-1.70442) -- (4.6732,-1.71553) -- (4.69363,-1.75039) -- (4.71405,-1.78356) -- (4.73448,-1.81998) -- (4.7549,-1.85535) -- (4.77533,-1.85525) -- (4.79575,-1.8476) -- (4.81618,-1.84342) -- (4.8366,-1.85833) -- (4.85703,-1.90711) -- (4.87745,-1.91819) -- (4.89788,-1.95375) -- (4.9183,-1.93718) -- (4.93873,-1.93044) -- (4.95915,-1.91859) -- (4.97958,-1.93849) -- (5,-2);
\draw[very thick,colorb] (2.5,-1.6) -- (2.5,-0.4);
\draw[very thick,colorb] (2.43,-1.6) -- (2.57,-1.6);
\draw[very thick,colorb] (2.43,-0.4) -- (2.57,-0.4);
\draw (-5,0.3) node {$-n$};
\draw[thick] (-5,-0.1) -- (-5,0.1);
\draw (2.5,0.3) node {$s$};
\draw[thick] (2.5,-0.1) -- (2.5,0.1);
\draw (5,0.3) node {$n$};
\draw[thick] (5,-0.1) -- (5,0.1);
\draw[very thick] (1,-2) -- (2.5,-1.42);
\fill[very thick] (2.5,-1.42) -- (2.35627,-1.55908) -- (2.3001,-1.41379);
\draw (0.4,-2) node {$2u\sqrt{s}$};
\end{tikzpicture}
    \caption{Illustration of the Brownian trajectory quantified in Lemma \ref{lem:closei}.}
    \label{fig:linear-portion}
\end{figure}
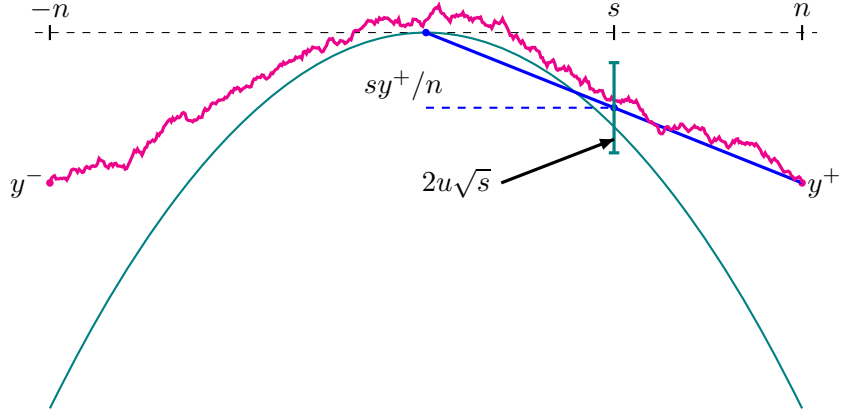

\begin{proof}

    \def\l{\ell}     We divide the argument into two parts.

\smallskip

\noindent   $(1)$ \textit{Lower bound: } By monotonicity, we may replace $Y^n|_{[0,n]}$ by $X\sim \B^{0, y^{+}}_{0, n}$. Since $X(t)= ty^+/n + X'(t)$ where $X'\sim \B^{0}_{0, n}$, a standard estimate for the Brownian bridge shows that 
            \begin{align*}
                \P[X(s) \geq \f{sy^{+}}{n} - u{\sqrt s}] \geq 1 - C\Exp{-cu^2},
            \end{align*}
       as soon as e.g.\ $n \geq 2s$.

\smallskip

\noindent            $(2)$ \textit{Upper bound: }  For the upper bound, we first  control the value of $Y^n(0)$.
                Observing that the distribution of the latter is monotone in the values $y^-, y^+$, and using %
                $y_2 = -2(K-\del)n\geq y^{-}, y_{1} = -\beta n\geq y^{-}$ in \rlem{lem:tailbounds}, the assumptions $\del\le \del_0$, $K-\del_0>0$ and $\beta>0$ show  %
                that $Y^n(0) \leq u$ with probability at least $1 - C\Exp{-cu}$, for all $u>0$ and  $n \geq 1$.
            Thus, using  monotonicity with respect to  $Y^n(0)$, we may replace $Y^n(s)$ by $X(s)$ where $X \sim \B^{u, y^{+}}_{0, n}[-\p]$.
Let $X'\sim \B^{u, y^{+}}_{0, n}$, that is the same as $X$ but without the conditioning, %
and let $Z \sim \B^{u, -2(K + \del_0)n}_{0, n}$.
Since $y^{+} \geq -2(K + \del_0)n$, monotonicity of Brownian bridges in the boundary conditions implies that %
            \begin{align*}
                \P[X' \succ %
                 -\p] \geq \P[Z \succ %
                  -\p].
            \end{align*}
            Now note that the line between $(0, u)$ and $(n, -2(K + \del_0)n)$ takes the form:
            \begin{align*}
                t \mapsto u- \f{2(K + \del_0)n + u}{n}\,t . %
            \end{align*}
            The distance between this and $-\p$ is minimized at 
            \begin{align*}
                t^* = K + \del_0 + \f{u}{2n}.
            \end{align*}
            Assume $n\ge u/2\del_0$, so that $t^* \in [K + \del_0, K + 2\del_0]$.
The actual distance at this $t^*$ is then
            \begin{align*}
                d^* & = u - \f{1}{4}\Rnd{2(K + \del_0) + \f{u}{n}}^2 \geq u - (K + 2\del_0)^2.
            \end{align*}
 We may assume also that $u\geq    100(K + 2\del_0)^2$, so that $d^*\geq .99u$ and one has  
            \begin{align*}
                \P[Z \succ  -\p] \geq %
               1/2,
            \end{align*}
  by an application of Corollary \ref{paraavoidancecor}, as soon as $u$ is also larger than some absolute constant.
This allows us to replace probabilities about $X \sim \B^{u, y^{+}}_{0, n}[-\p]$ by
probabilities about the unconditional $X'\sim \B^{u, y^{+}}_{0, n}$ up to a %
factor of 2.
 Note that $X'-w \sim   \B^{0}_{0, n}$ where  $w$ is the line
            \begin{align*}
               w(t)=u + \f{y^{+} - u}{n} \,t.
            \end{align*}
            Since $w(s)=\f{sy^{+}}{n} + u\Rnd{1 - \f{s}{n}}\leq \f{sy^{+}}{n} + u$,
              a simple estimate for the Gaussian fluctuations of $X'(s)$ allows us to conclude
            \begin{align*}
                \P[Y^n(s) \geq \f{sy^{+}}{n} + \sqrt{s}u] &\leq 2 \,\P[X'(s) \geq w(s) %
                + \tfrac12\sqrt{s}u] 
                                                                                 \leq C\Exp{-cu^2}\,,
            \end{align*}
as soon as $s\geq 4$ and $n\geq s$.
\end{proof} %

We now turn to the proof that $\mu^n_{[\l, r]}$ 
form a Cauchy sequence in total variation, for each $\ell<r$.

\begin{lem}[Cauchy in total variation] %
    \label{lem:cauchytv}
    Fix $\ell < r$ and let $X^n\sim \L^{(n - K)^2}_{n}$ %
    Then for $n < m$, 
$\tv{X^n|_{[\l, r]}, X^m|_{[\l, r]}} = o(1)$, as  $n\to\infty$.
\end{lem}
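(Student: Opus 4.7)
The plan is to sample $X^n$ and $X^m$ independently on the common probability space, exhibit a stopping domain $[\tau_\ell, \tau_r]$ containing $[\ell, r]$ on which $X^n$ and $X^m$ agree at the endpoints with high probability, and then invoke the strong Gibbs property (Lemma \ref{sabg}) to resample both processes inside $[\tau_\ell, \tau_r]$ so that they coincide on $[\ell, r]$. On the complement we leave them alone. This yields the total variation bound via the standard inequality $\tv{X^n|_{[\ell,r]}, X^m|_{[\ell,r]}}\le \P(\text{bad event})$.

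To find such a stopping domain, I plan to work with the parabolic barrier representation. Set $Y^n = X^n - \p$ and $Y^m = X^m - \p$, with the endpoints of $Y^n, Y^m$ equal to $-2Kn+K^2, -2Km+K^2$ at $\pm n, \pm m$ respectively, each conditioned to lie above $-\p$. Fix a scale $T = T(n)$ with $r \ll T \ll n$ (say $T = n^{1/2}$). A key step is to apply Lemma \ref{lem:closei} (with $y^\pm \approx -2Kn$ and $y^\pm \approx -2Km$ respectively) to deduce that with probability at least $1 - C\Exp{-cu}$, both $Y^n(\pm T)$ and $Y^m(\pm T)$ lie in the window $[-2KT - u\sqrt{T},\, -2KT + u\sqrt{T}]$, so that $X^n(\pm T)$ and $X^m(\pm T)$ lie in $T^2 - 2KT + u\sqrt{T}\cdot[-1,+1]$. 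Choosing $u$ large but bounded makes this window overlap significantly for both processes.

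Next, conditionally on this good event, I will argue that $X^n$ and $X^m$ must cross on each of the intervals $[-T, -T/2]$ and $[T/2, T]$ with high probability. The idea is to use monotonicity to reduce to the case $X^n(\pm T) = X^m(\pm T) - 2u\sqrt{T}$, and then further resample on say $[-T, -T/2]$ where, given the boundary heights are $O(T^2)$, the two processes are well-approximated over a subinterval by Brownian bridges translated by $t \mapsto t^2 - 2Kt$. Since the fluctuations of each Brownian bridge over a window of length $\Theta(T)$ have Gaussian lower tail of order $\sqrt{T}$ (in fact the maximum is of order $\sqrt{T\log T}$), and since $X^n, X^m$ are independent, with probability tending to $1$ as $T \to \infty$ the trajectories of $X^n$ and $X^m$ will cross at some time in $[-T, -T/2]$, and likewise on $[T/2, T]$. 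To make this rigorous I would insert auxiliary pinned processes (as in the proof of Proposition \ref{ppn:fscouple}) to decouple the two fluctuations.

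Given the crossing, define $\tau_\ell$ as the supremum of times in $[-T,-T/2]$ where $X^n = X^m$ and $\tau_r$ as the infimum of such times in $[T/2, T]$; by continuity these are finite on the good event, form a stopping domain for the pair $(X^n, X^m)$, and contain $[\ell, r]$. Applying the strong Gibbs property (extended to two independent lines, exactly as in the last step of the proof of Proposition \ref{ppn:fscouple}) we may jointly resample $X^n|_{[\tau_\ell,\tau_r]}$ and $X^m|_{[\tau_\ell,\tau_r]}$ from the common conditional law $\L^{X^n(\tau_\ell), X^n(\tau_r)}_{\tau_\ell,\tau_r}$, making them agree pointwise on $[\tau_\ell,\tau_r]$ and in particular on $[\ell, r]$. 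The main obstacle will be the crossing step: ensuring that the independent Gaussian-like fluctuations of $X^n$ and $X^m$ near $\pm T$ genuinely induce a crossing, despite the parabolic avoidance conditioning, which I intend to handle by the monotone sandwiching with pinned processes and Brownian-bridge avoidance estimates (Corollary \ref{paraavoidancecor} and Lemma \ref{lem:brownianav2}). Sending $n \to \infty$ (and hence $T \to \infty$) makes the failure probability vanish, yielding the Cauchy property in total variation.
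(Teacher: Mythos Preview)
Your crossing argument has a genuine gap. After applying Lemma \ref{lem:closei} at scale $T$, the boundary values $X^n(\pm T)$ and $X^m(\pm T)$ are only localized in a window of width $O(u\sqrt{T})$, and to make that localization hold with probability $1-o(1)$ one needs $u\to\infty$. But on $[T/2,T]$, both processes (after subtracting $t\mapsto t^2-2Kt$) behave like Brownian bridges with fluctuations of order $\sqrt{T}$, i.e.\ the \emph{same} scale as the boundary gap. In the worst case, where the gap is $2u\sqrt{T}$, the difference of the two independent bridges is itself a Brownian bridge pinned at height $\Theta(u\sqrt{T})$, and it hits zero on an interval of length $\Theta(T)$ only with probability $e^{-\Theta(u^2)}$: this tends to $0$ if $u\to\infty$ and is merely bounded away from $0$ (not close to $1$) if $u$ stays bounded. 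The pinning trick from Proposition \ref{ppn:fscouple} does not rescue this: there the crossing is driven by a genuine asymmetry (one process is $O(1)$, the pinned one has $\Theta(\sqrt{\log M})$ excursions), whereas here both processes are of the same type and sit at the same height $\Theta(T^2)$, so pinning one to a lower envelope gives excursions that are still the same order as the gap.

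The paper's proof bypasses crossings entirely via a two-scale argument in the \PAR. It first localizes both $Y^n$ and $Y^m$ at the small scale $s=n^\alpha$ (error $O(u\,n^{\alpha/2})$ with $u=O(\log n)$), and then compares the one-point marginals at the larger scale $n^{2\alpha}$. Conditionally on the values at $\pm n^\alpha$, the parabolic constraint is vacuous with high probability (Corollary \ref{paraavoidancecor}), so $Y^n(n^{2\alpha})$ and $Y^m(n^{2\alpha})$ are each close to explicit Gaussians whose means differ by $O(n^{\alpha/2}\log n)$ and whose common standard deviation is $\Theta(n^\alpha)$. Since the standard deviation dominates the mean difference, Lemma \ref{lem:tvgau} gives $\tv{Y^n(n^{2\alpha}),Y^m(n^{2\alpha})}=o(1)$ directly. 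One then couples so that $Y^n(\pm n^{2\alpha})=Y^m(\pm n^{2\alpha})$ with high probability and resamples inside. The point you are missing is that a direct TV coupling of one-point marginals, made possible by the scale separation $n^{\alpha/2}\ll n^\alpha$, removes any need for a pathwise crossing.
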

\begin{proof}
    Assume $n < m$ and let $\al \in (0, 1/2)$ %
    be a parameter to be taken sufficiently small later. Switching to \PAR\,, see Lemma \ref{lem:girsanov},  
    we replace $X^n$ by the path $Y^n\sim \B^{-2Kn + K^2}_n[-\p]$. 
Since 
    \begin{align*}
        Y^n(\pm n) & = -2Kn + K^2 \in [-2(K + \del)n, -2(K - \del)n],   \\
        Y^m(\pm m) & = -2Km + K^2 \in [-2(K + \del)m, -2(K - \del)m]
    \end{align*}
    if $n$ is large enough given the constants $K,\delta$, we may use \rlem{lem:closei} for both $Y^n$ and $Y^m$,  with e.g.\  $s = n^\al$ and  $u = C_1\log n$ for some constant $C_1$. 
Applying \rlem{lem:closei} to $Y^n$ we have
    \begin{align*}
        \P[{Y^n( n^\al) \in -2Kn^\al + \Box{ K^2n^{\al - 1} - un^{\al/2} ,  K^2n^{\al - 1} + u n^{\al/2}}}] \geq 1 - C\Exp{-cu}.
    \end{align*}
    By symmetry, the same estimate holds for $Y^n( -n^\al)$. Thus, choosing the constant $C_1$ appropriately, we may estimate, for all $n$ large enough
    \begin{align}\label{eq:tvp1}
        \P[{Y^n(\pm n^\al) \in-2Kn^\al +  \Box{ - 2u n^{\al/2} ,  + 2u n^{\al/2}}}] \geq 1 - 1/n.
    \end{align}
    Similarly, for $Y^m$, $m>n$, with the same choices $s= n^\al$ and $u = C_1\log n$ in the application of  \rlem{lem:closei}  we have
    \begin{align}\label{eq:tvp2}
        \P[{Y^m(\pm n^\al) \in -2Kn^\al + \Box{ - 2un^{\al/2},  + 2un^{\al/2} }}] \geq 1 - 1/n.
    \end{align}

    Now choose $y_n, y_m \in I_{K,n}:=[-2Kn^\al - 2un^{\al/2}, -2Kn^\al + 2un^{\al/2}]$.
We want to show that uniformly over $y_n, y_m\in I_{K,n}$, conditioned on $Y^n(n^\al) = y_n, Y^m(n^\al) = y_m$ one has a small  total variation distance between $Y^n(n^{2\al}), Y^m(n^{2\al})$. With slight abuse of notation, we write this claim as follows
\begin{align}\label{eq:tvclab}
 \sup_{y_n,y_m\in I_{K,n}}\tv{Y^n(n^{2\al}), Y^m(n^{2\al})\,|\,Y^n(n^\al) = y_n,Y^m(n^\al) = y_m}
 =o(1)\,. \end{align}
Let us first check that this is the case for the path without the parabolic barrier. Namely, 
    let $Z^n \sim \B^{y_n, -2Kn + K^2}_{n^\al, n}$.
Observe that %
    \begin{align*}
        Z^n(n^{2\al}) \sim \Nor{z_n}{\f{(n - n^{2\al})(n^{2\al} - n^\al)}{n - n^\al}}\,,\qquad 
        Z^m(n^{2\al}) \sim \Nor{z_m}{\f{(m - n^{2\al})(n^{2\al} - n^\al)}{m - n^\al}} ,
    \end{align*}
    where
    \begin{align*}
        z_n &:= -2K\, \f{n(n^{2\al} - n^\al)}{n - n^\al} + K^2 \,\f{n^{2\al} - n^\al}{n - n^\al} + y_n\Rnd{1 - \f{n^{2\al} - n^\al}{n - n^\al}}, \\
        z_m &:= -2K \,\f{m(n^{2\al} - n^\al)}{m - n^\al} + K^2\, \f{n^{2\al} - n^\al}{m - n^\al} + y_m\Rnd{1 - \f{n^{2\al} - n^\al}{m - n^\al}},
    \end{align*}
    so that, uniformly in $y_n,y_m\in I_{K,n}$,
        \begin{align*}
        |z_n - z_m| = O\left({n^{3\al - 1} + n^{\al/2} \log n}\right).
    \end{align*}

We now recall the  following well known result on the total variation distance between Gaussians,  which can be verified by straightforward calculus and triangle inequality.
\begin{lem}[TV distance between Gaussians]\label{lem:tvgau}
    Let $X_1 \sim \Nor{\mu_1}{\sig_1}, X_2 \sim \Nor{\mu_2}{\sig_2}$. Then
    \begin{align*}
    \tv{X_1, X_2} \leq %
    C\,\left(\f{\Abs{\mu_1 - \mu_2}}{\max(\sig_1, \sig_2)} +\frac{|\sigma^2_1-\sigma^2_2|}{\max(\sig^2_1, \sig^2_2)}\right),
    \end{align*}
    for some absolute constant $C>0$.
\end{lem} 
Using the above    %
    \begin{align*}
        \tv{Z^n(n^{2\al}), Z^m(n^{2\al})} = o(1)
    \end{align*}
    if $\al$ is small enough.
Therefore we can focus on showing %
that 
\[
\tv{Y^n(n^{2\al}), Z^n(n^{2\al})\,|\,Y^n(n^\al) = y_n} = o(1),
\] 
uniformly in $y_n \in I_{K,n}$, 
and similarly for $Y^m, Z^m$.
To establish this, observe that it is enough to show that $\P(Z^n \gtr -\p) = 1 - o(1)$ because, conditionally on this  event, $Y^n$ and $Z^n$ can be coupled to be equal at time $n^{2\al}$.
Note that the line joining $(n^\al, -2Kn^\al - 2un^{\al/2})$ and $(n, -2Kn + K^2)$ grows farther from $\p$ as we move along the line from $t= n^\al$ to $t = n$ (if $n$ is large enough compared to $K$).
The distance at $t = n^{\al}$ is 
    \begin{align*}
        -2Kn^\al - 2un^{\al/2} + n^{2\al}
    \end{align*}
    so that by \rlem{paraavoidancecor}, and using monotonicity, the claim $\P(Z^n \gtr -\p) = 1 - o(1)$ is established, uniformly in $y_n\in I_{K,n}$. The same argument applies to $Z^m$. Summarizing, we have proved \eqref{eq:tvclab}. By symmetry, the same argument applies to times $-n^{2\al}$. Therefore, using also 
    \begin{align}\label{eq:estatv}
        \tv{Y^n(n^{2\al}), Y^m(n^{2\al})} = o(1).
    \end{align}
The same bound applies to time $-n^{2\al}$.
  Thanks to \eqref{eq:tvp1} and \eqref{eq:tvp2}, we may then show that there exists a coupling of $(Y^n(\pm n^{2\al}), Y^m(\pm n^{2\al})$ such that $Y^n(n^{2\al})=Y^m(n^{2\al})$ and $Y^n(-n^{2\al})=Y^m(-n^{2\al})$ with probability $1-o(1)$. Thus, %
  using the Gibbs property to resample on the interval $[-n^{2\al},n^{2\al}]$, 
  we
  obtain a coupling of $Y^n,Y^m$ such that with probability $1-o(1)$ one has $Y^n(t)= Y^m(t)$ for all $t\in[-n^{2\al},n^{2\al}]$. Since eventually $[\ell,r]\subset [-n^{2\al},n^{2\al}]$, this ends the proof.

\end{proof} %

As we discussed after \eqref{eq:weakcon}, the above proves the existence of a measure $\mu=\mu_K\in\cG$, by consistency of the limiting marginals $\mu_{\ell,r}$.
To prove Proposition \ref{lem:existence} it remains to prove the next lemma.
\begin{lem} %
    \label{lem:limit2K}
    For any $K>0$, if $\mu_K$ is defined as above and $X \sim \mu_K$, then $\mu$- almost surely\[
    \lim_{t \to \pm\infty} \f{X(t) - t^2}{|t|} = -2K.\]
\end{lem}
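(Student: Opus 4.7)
The plan is to transfer the single-time concentration estimate of Lemma \ref{lem:closei} from the finite-volume ensembles $\L^{(n-K)^2}_n$ to the limit $\mu_K$ via the total-variation convergence supplied by Lemma \ref{lem:cauchytv}, and then to upgrade this pointwise information to a path-level limit using Borel--Cantelli and a short path-regularity argument.

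First, switching to \PAR, if $X^n \sim \L^{(n-K)^2}_n$ then $Y^n := X^n - \p \sim \B^{-2Kn+K^2}_n[-\p]$. For any fixed $\delta \in (0, K/2)$ and all $n$ sufficiently large, the boundary data $y^{\pm} = -2Kn + K^2$ satisfies the hypotheses of Lemma \ref{lem:closei} with $\beta = K$. Since $s y^+/n = -2Ks + sK^2/n$, and since $Y^n$ is symmetric under $t \mapsto -t$ (the boundary values at $\pm n$ coincide), applying that lemma at times $\pm s$ yields, for all $s, u \geq C$ and $n$ large enough depending on $s, u$,
\begin{align*}
\P\!\left[\bigl|Y^n(\pm s) + 2Ks\bigr| \leq 2u\sqrt{s}\right] \;\geq\; 1 - C e^{-cu}.
\end{align*}
By Lemma \ref{lem:cauchytv}, for any compact interval containing $\pm s$ the marginals of $\L^{(n-K)^2}_n$ converge in total variation to those of $\mu_K$. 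Passing to the limit $n \to \infty$ and writing $Y := X - \p$ with $X \sim \mu_K$, this gives
\begin{align*}
\P\!\left[\bigl|Y(\pm s) + 2Ks\bigr| \leq 2u\sqrt{s}\right] \;\geq\; 1 - C e^{-cu}, \qquad s, u \geq C.
\end{align*}

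Next, specializing to $s = k$ with $k \in \N$ and $u = (3/c)\log k$, the right-hand side becomes $1 - O(k^{-3})$, summable in $k$. Borel--Cantelli then yields
\begin{align*}
\bigl|Y(\pm k) + 2Kk\bigr| \leq C'\sqrt{k}\log k
\end{align*}
$\mu_K$-a.s.\ for all sufficiently large $k$, so $Y(\pm k)/k \to -2K$ along integers.

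Finally, to pass to a continuous limit, I use the Brownian-Gibbs property to resample $Y|_{[k,k+1]}$ conditionally on $Y(k), Y(k+1)$: it is the Brownian bridge between those endpoints, conditioned on $Y \succ -\p$ over $(k,k+1)$. Since $Y(k), Y(k+1) = -2Kk + o(k)$ while the floor $-\p(t) = -t^2$ lies far lower on $[k,k+1]$, the conditioning event has probability bounded away from zero uniformly in large $k$, so that estimates for the unconditioned bridge transfer to the conditioned one at negligible multiplicative cost. Standard Brownian-bridge tail bounds combined with a Borel--Cantelli argument across $k$ then give
\begin{align*}
\max_{t \in [k, k+1]} \bigl|Y(t) - L_k(t)\bigr| = O\bigl(\sqrt{\log k}\bigr) \quad \mu_K\text{-a.s.},
\end{align*}
where $L_k$ is the linear interpolation of $Y(k), Y(k+1)$; arguing identically on $[-k-1, -k]$ handles the left side. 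Combined with the integer-scale limit, this produces $Y(t)/t \to -2K$ as $t \to \pm\infty$ a.s., which is the claim. The main obstacle in this plan is the passage from the finite-volume bound to the infinite-volume measure $\mu_K$, and this is handled cleanly by the total-variation convergence of compact-interval marginals established in Lemma \ref{lem:cauchytv}; everything else reduces to standard Borel--Cantelli plus Brownian continuity arguments.
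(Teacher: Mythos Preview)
Your proof is correct and follows essentially the same skeleton as the paper: transfer the concentration from Lemma~\ref{lem:closei} to $\mu_K$ via the total-variation convergence of Lemma~\ref{lem:cauchytv}, then apply Borel--Cantelli along integers.

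The one genuine difference is in passing from integer-time convergence to a continuous limit. The paper avoids your final path-regularity step entirely: since $\mu_K\in\cG$ has already been established in the discussion following Lemma~\ref{lem:cauchytv}, Lemma~\ref{lem:limexists} guarantees a priori that $\lim_{t\to\pm\infty}(X(t)-t^2)/|t|$ exists almost surely, so it suffices to identify the limit along any subsequence. Your route instead argues directly, resampling on $[k,k+1]$ and controlling the bridge oscillation by a second Borel--Cantelli. Both are valid; the paper's version is shorter because it recycles prior work, while yours is more self-contained and would stand even without Lemma~\ref{lem:limexists}.
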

\begin{proof}
    We already know that $\mu=\mu_K\in\cG$ %
    and therefore the limit above exists a.s.\ by Lemma \ref{lem:limexists}.
    We therefore only need to show that the limit along increasing positive integers is indeed $-2K$.

    If   $X^n\sim \L^{(n - K)^2}_{n}$, by Lemma \ref{lem:cauchytv} we know that
$\tv{X^n|_{[-s, s]}, \mu|_{[-s, s]}} < s^{-2}$, for all $s\in\bbN$ and integers $n\ge n_0(s)$. Moreover, using the \PAR, for all $s$ large enough, and $u=C_1\log s$ for some absolute constant $C_1$, \rlem{lem:closei} shows 
    \begin{align}\label{eq:reas1}
        \P[X^n(s) - s^2 \in \Box{-2Ks - u{\sqrt s}, -2Ks + u{\sqrt s}}] \geq 1 - s^{-2}.
    \end{align}
    Therefore, for $X\sim\mu$ we have
    \begin{align}\label{eq:reas2}
        \P[X(s) - s^2 \in \Box{-2Ks - u{\sqrt s}, -2Ks + u{\sqrt s}}] \geq 1 - 2s^{-2}.
    \end{align}
    Since $2s^{-2}$ is summable, these events fail only for finitely many $s\in\bbN$, almost surely.
Therefore, a.s., the right limit $R$ exists along $\N$ and is $-2K$.
The same argument applies to the left limit $L$. 
\end{proof} %

We finally address the existence of the measures $\nu_{L,R}$ in the case where one (but not both) of $L,R$ equals $-\infty$. In the next lemma  this is accomplished %
by passing to the limit in the previous construction. 

\begin{lem}%
    \label{lem:existence2}
For any $x\in\R$,  there is a measure $\mu\in\cG$ with $L(\mu) = -\infty, R(\mu)=x$ and a  measure $\mu\in\cG$ with $L(\mu) = x, R(\mu)=-\infty$. 
\end{lem}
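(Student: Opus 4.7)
By the reflection $X(t)\mapsto X(-t)$, which preserves $\cG$ and interchanges the left and right slopes, it suffices to construct $\nu_{-\infty,x}\in\cG$ for each $x\in\R$. Moreover, if $\mu\in\cG$ has left slope $-\infty$ and right slope $x_0$ a.s., then $\theta_h\mu\in\cG$ and a direct computation (as in the proof of Lemma \ref{lem:lpr}) together with the convention $-\infty+2h=-\infty$ gives left slope $-\infty$ and right slope $x_0-2h$ for $\theta_h\mu$. It is therefore enough to construct $\nu_{-\infty,x_0}$ for a single value $x_0<0$, which we now fix.

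The strategy is to take the stochastic monotone limit of $\nu_{L,x_0}$ as $L\downarrow-\infty$. The family $\{\nu_{L,x_0}\}_{L<-x_0}$ is stochastically monotone in $L$: the finite-volume approximations $\L^{T^2+LT,\,T^2+x_0T}_{-T,T}$ are ordered in the left boundary value by Lemma \ref{lem:monotonicity}, and their total-variation convergence on compact intervals to $\nu_{L,x_0}$ (supplied by Lemma \ref{lem:cauchytv} combined with the shift from $\mu_K$) preserves this ordering. Fix $L_n\downarrow-\infty$. On any compact interval $[\ell,r]$ the restrictions $\nu_{L_n,x_0}|_{[\ell,r]}$ form a stochastically decreasing sequence, bounded below by $0$ and above in distribution by $\nu_{L_1,x_0}|_{[\ell,r]}$. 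Tightness on $\cC_{\ell,r}$ follows from the one-point tightness just described together with the Brownian-Gibbs property: given $X(\ell),X(r)\in[0,M]$, the measure on $[\ell,r]$ is absolutely continuous with respect to the Brownian bridge with endpoints $X(\ell),X(r)$, with Radon-Nikodym derivative bounded uniformly in $M$-bounded data, which yields uniform Kolmogorov-Chentsov modulus-of-continuity estimates. The tight monotone sequence thus converges weakly to a consistent family $\{\mu_{\ell,r}\}$, defining $\mu_\infty\in\calP^+$. Because local weak limits of Gibbs measures remain Gibbs (as noted after \eqref{eq:weakcon}), $\mu_\infty\in\cG$.

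It remains to verify that $L(\mu_\infty)=-\infty$ and $R(\mu_\infty)=x_0$ almost surely. For the left slope, fix $M>0$ and $\eps\in(0,1)$ and consider the decreasing event
\[
E_{M,\eps}:=\{\exists\,T_0:\ X(t)\leq t^2-(M-\eps)|t|\text{ for all }t\leq -T_0\}.
\]
For $L_n<-M+\eps$, since $L(\nu_{L_n,x_0})=L_n$ a.s.\ by Lemma \ref{lem:limit2K}, one has $\nu_{L_n,x_0}(E_{M,\eps})=1$, and stochastic monotonicity $\mu_\infty\preceq\nu_{L_n,x_0}$ yields $\mu_\infty(E_{M,\eps})=1$. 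Sending $\eps\downarrow 0$ and invoking Lemma \ref{lem:limexists} gives $L(\mu_\infty)\leq -M$ a.s.\ for every $M$, hence $L(\mu_\infty)=-\infty$ a.s. For the right slope, the bound $\P_{\nu_{L,x_0}}[X(s)-s^2\in x_0 s+[-u\sqrt{s},u\sqrt{s}]]\geq 1-s^{-2}$ with $u=C_1\log s$, derived in the proof of Lemma \ref{lem:limit2K} from the \PAR\ and Lemma \ref{lem:closei} (whose hypothesis $y^+\approx-2Kn$ is met with $K=-x_0/2>0$), holds uniformly for $L$ sufficiently negative. Weak convergence of the one-point marginal at time $s$ transfers the bound (with slightly worsened constants) to $\mu_\infty$, and Borel-Cantelli along $s\in\N$ combined with Lemma \ref{lem:limexists} yields $R(\mu_\infty)=x_0$ a.s.

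The main obstacle lies in the second paragraph: establishing uniform tightness on $\cC_{\ell,r}$ for the monotone family $\{\nu_{L_n,x_0}\}$, which requires coupling the Brownian-Gibbs resampling property with uniform control of the Brownian-bridge modulus of continuity under stochastically bounded boundary data.
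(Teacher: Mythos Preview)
Your proposal is correct and follows essentially the same route as the paper: reduce by reflection and shifting to a single $x_0<0$, then take the monotone weak limit of $\nu_{L,x_0}$ as $L\downarrow-\infty$, verify the Gibbs property is preserved, and check the two slopes. The paper handles what you call the ``main obstacle'' (tightness and weak convergence of the monotone family) by citing a general result for monotone line ensembles (\cite[Theorem 1.5]{CIW19} and the proof of \cite[Theorem 3.7]{geomarea}), rather than arguing via Kolmogorov--Chentsov; for the slope verification it uses the direct coupling $X\preceq Y$ to get $L(\mu_\infty)\le L_n\to-\infty$ and $R(\mu_\infty)\le x_0$, and invokes only the \emph{lower} bound from Lemma~\ref{lem:closei} (uniform in $K$) for $R(\mu_\infty)\ge x_0$, which is slightly more economical than your two-sided estimate.
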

\begin{proof}
By symmetry, it suffices to show the statement with $L(\mu) = -\infty, R(\mu)=x$ only. Moreover, by the shifting argument already used in the  proof of Lemma \ref{lem:lpr}, it suffices to  show the statement with e.g.\ $x=-2$. 
    Let $\mu_{K,1}$ denote the %
    measure constructed in the proof of Proposition \ref{lem:existence}, with $L(\mu_{K,1}) = -2K$ and $R(\mu_{K,1}) = -2$. %
It is clear from our construction that  $\mu_{K,1}$, $K>0$,  are stochastically ordered in the sense that if $K_1 > K_2$ then $\mu_{K_1,1} \preceq \mu_{K_2,1}$. %
As a consequence of a more general statement about monotone line ensembles, see e.g.\  \cite[Theorem 1.5]{CIW19} and the proof of \cite[Theorem 3.7]{geomarea}, this implies that the weak limit $\mu := \lim_{K \to \infty} \mu_{K,1}$ exists.
The fact that $\mu$ is Gibbs follows from the Gibbs property of $\mu_{K,1}$ established in Proposition \ref{lem:existence}, since the limit of Gibbs measures is a Gibbs measure, see the discussion after \eqref{eq:weakcon}.

Thus $\mu\in\cG$ and by Lemma \ref{lem:limexists} the limiting values $L(\mu)$ and $R(\mu)$ exist a.s. 
Let $X \sim \mu$, and let $Y \sim \mu_{K,1}$.
Then $X$ and $Y$ may be coupled so that $X \preceq Y$ a.s., and
    \begin{align*}
        \lim_{t \to -\infty} (X(t) - t^2)/|t| \leq \lim_{t \to -\infty} (Y(t) - t^2)/|t| = -2K.
    \end{align*}
    Sending $K \to \infty$ shows that $L(\mu) = -\infty$. Similarly,  $R(\mu_{K,1})=-2$ for all $K>0$ implies $R(\mu)\leq -2$. To prove the lower bound $R(\mu)\geq -2$ a.s.\ it is sufficient to observe that, from the proof of the lower bound in Lemma \ref{lem:closei}, and reasoning as in \eqref{eq:reas2},  one obtains that uniformly in $K,$ $X\sim\mu_{K,1}$, and hence   $X\sim\mu$, 
    satisfy
      \begin{align*}
        \P[X(s) - s^2 \ge -2s - u{\sqrt s}] \geq 1 - 2s^{-2}.
    \end{align*}
\end{proof}

\subsection{Uniqueness}
We start by showing  uniqueness for %
$(L,R)\in\R^2$ with $L+R<0$.
\begin{lem} %
    \label{lem:uniqueness}
    For each $(L,R)\in\R^2$ with $L+R<0$, there is 
     at most one Gibbs state $\mu$ such that \eqref{linearlim} holds.
\end{lem}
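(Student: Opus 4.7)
By the shift-invariance identity $L(\theta_h \mu) = L + 2h$, $R(\theta_h \mu) = R - 2h$ established in the proof of Lemma \ref{lem:lpr}, it suffices to handle the symmetric case $L = R = -2K$ with $K = -(L+R)/4 > 0$. I will show that any $\mu \in \cG$ with a.s. limits $L=R=-2K$ equals the measure $\mu_K = \nu_{-2K, -2K}$ from Proposition \ref{lem:existence}, which was constructed as the TV limit on compact intervals of $\L^{(n-K)^2}_n$. By FKG (Lemma \ref{lem:monotonicity}) applied to the defining sequence, the family $\{\mu_{K'}\}_{K' > 0}$ is stochastically decreasing in $K'$. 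The strategy is a monotone FKG sandwich: I will prove
\begin{equation}\label{eq:lrsandwich}
\mu_{K+\delta} \preceq \mu \preceq \mu_{K-\delta} \qquad \forall\,\delta \in (0, K/2),
\end{equation}
and then invoke the continuity $\mu_{K \pm \delta} \to \mu_K$ in TV on compact intervals as $\delta \to 0$ to conclude $\mu = \mu_K$.

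To obtain the upper bound in \eqref{eq:lrsandwich}, fix a compact interval $[\ell,r]$, a bounded non-increasing measurable cylinder functional $f$ on $\Omega^+_{\ell, r}$, and set $E_{T, \delta} = \{X(\pm T) \le (T-(K-\delta))^2\}$. The a.s. existence of the limits gives $\P_\mu(E_{T, \delta}) \to 1$ as $T \to \infty$. Applying the Brownian-Gibbs property of Definition \ref{def:abg} at $[-T, T]$ and FKG, the conditional law of $X|_{[\ell, r]}$ given $\calB_{-T, T}$ is, on $E_{T, \delta}$, stochastically dominated by $\L^{(T-(K-\delta))^2}_T|_{[\ell, r]}$. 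Consequently,
\[
\E_\mu[f(X)] \ge \P_\mu(E_{T, \delta})\cdot \E_{\L^{(T-(K-\delta))^2}_T}[f] - \|f\|_\infty \,\P_\mu(E_{T, \delta}^\c).
\]
Sending $T \to \infty$ and invoking Lemma \ref{lem:cauchytv} with parameter $K - \delta$ (which yields the TV convergence $\L^{(T-(K-\delta))^2}_T|_{[\ell, r]} \to \mu_{K-\delta}|_{[\ell, r]}$) produces $\E_\mu[f] \ge \E_{\mu_{K-\delta}}[f]$, i.e., $\mu \preceq \mu_{K-\delta}$. The reverse bound $\mu \succeq \mu_{K+\delta}$ is proved symmetrically by using the event $\{X(\pm T) \ge (T-(K+\delta))^2\}$ and the other direction of FKG.

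It remains to prove the continuity $\mu_{K \pm \delta} \to \mu_K$ in TV on compacts as $\delta \to 0$. By a triangle inequality,
\[
\tv{\mu_{K-\delta}|_{[\ell, r]}, \mu_K|_{[\ell, r]}} \le \sum_{K' \in \{K-\delta,\,K\}} \tv{\mu_{K'}|_{[\ell, r]}, \L^{(T-K')^2}_T|_{[\ell, r]}} + \tv{\L^{(T-(K-\delta))^2}_T|_{[\ell, r]}, \L^{(T-K)^2}_T|_{[\ell, r]}}.
\]
For any fixed $T$, the last term vanishes as $\delta \to 0$ by the TV continuity of the finite-volume measure $\L^{x, y}_T$ in its boundary data $(x, y)$, which follows from the Girsanov representation (Lemma \ref{lem:girsanov}) together with dominated convergence for the Brownian bridge density. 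The first two terms vanish as $T \to \infty$ by Lemma \ref{lem:cauchytv}; the key additional ingredient needed is that this Cauchy-in-TV convergence is \emph{uniform} for the parameter $K'$ in a compact subinterval of $(0, \infty)$. Sending $T \to \infty$ first (uniformly in $\delta$) and then $\delta \to 0$ yields the claim. Combined with \eqref{eq:lrsandwich}, this shows $\E_\mu[f] = \E_{\mu_K}[f]$ for every bounded non-increasing cylinder $f$; since such functionals separate probability measures on $\Omega^+_{\ell,r}$, we conclude $\mu = \mu_K$.

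\textbf{Main obstacle.} The principal technical point is the uniformity in $K'$ of the Cauchy-in-TV estimate of Lemma \ref{lem:cauchytv}. Inspecting its proof, all relevant constants ultimately come from the Gaussian concentration of Lemma \ref{lem:closei}, the parabolic avoidance bound of Corollary \ref{paraavoidancecor} and Lemma \ref{lem:brownianav2}, and the Gaussian TV estimate of Lemma \ref{lem:tvgau}; each depends continuously on $K'$, so the uniformity follows by a careful but routine tracking of these dependencies through the choice of intermediate scales $n^\alpha$ used in the proof of Lemma \ref{lem:cauchytv}.
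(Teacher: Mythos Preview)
Your argument is correct and takes a genuinely different route from the paper's proof. The paper argues directly: given two Gibbs states $\mu,\nu$ with the same limits $-2K$, it conditions both on their boundary values at $\pm M$ lying in a narrow window, applies Lemma~\ref{lem:closei} to pin down the values at $\pm s$, and then uses a Gaussian TV comparison (Lemma~\ref{lem:tvgau}) at the intermediate scale $\pm s^2$ to produce a coupling with $X(\pm s^2)=Y(\pm s^2)$ with high probability, after which resampling inside finishes the job. Your approach instead leverages the already-constructed one-parameter family $\{\mu_{K'}\}$: you sandwich any candidate $\mu$ between $\mu_{K+\delta}$ and $\mu_{K-\delta}$ via FKG and the Gibbs property, and then close the sandwich by proving TV-continuity of $K'\mapsto\mu_{K'}|_{[\ell,r]}$.

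The trade-off is this: the paper's proof is self-contained and does not require any uniformity statement beyond what is literally stated in Lemma~\ref{lem:closei} and Lemma~\ref{lem:cauchytv}. Your proof is conceptually cleaner and reuses the existence machinery wholesale, but at the cost of needing the locally-uniform-in-$K'$ version of the Cauchy-in-TV estimate. You correctly flag this as the main obstacle; it is indeed routine, since every constant in the chain Lemma~\ref{lem:closei}~$\to$~Lemma~\ref{lem:cauchytv} depends continuously on $K$ (the only $K$-dependent quantities are the tangency geometry and the threshold $\delta_0$ in Lemma~\ref{lem:closei}, both manifestly continuous). One small stylistic point: your final step ``non-increasing cylinder functionals separate measures'' is correct but can be replaced by the cleaner observation that stochastic domination squeezes, i.e.\ $\mu_{K+\delta}\preceq\mu\preceq\mu_{K-\delta}$ together with $\mu_{K\pm\delta}\to\mu_K$ forces equality of all finite-dimensional CDFs.
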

\begin{proof}
From the shifting argument in Lemma \ref{lem:lpr}, it suffices to show that 
if $\mu,\nu\in\cG$ satisfy $L(\mu)=L(\nu)= R(\mu)=R(\nu)=-2K$ for some $K>0$,  then $\mu=\nu$.
Define $X\sim\mu$ and $Y\sim \nu$ on the same space independently and switching to $\PAR$ set $\bar X = X-\p$ and $\bar Y=Y-\p$.  
Take $\del>0$ and consider  the events
    \begin{align*}
        A_m := \left\{\bar X(t), \bar Y(t) \in [-2(K + \del)|t|, -2(K - \del)|t|],\ \forall t \in [-m, m]^{\c}\right\}\,,\qquad m\in\bbN.
    \end{align*}
    The condition in the statement implies that $\lim_{m \to \infty} \P(A_m) = 1$, and therefore for any $\eps>0$, we may take $M$ %
     large enough so that $\P(A_M) \geq 1 - \eps$.
In particular, according to the independent coupling of $(X,Y)$,
 \begin{align}\label{eq:giv1}
        \P[F] \geq 1 - \eps\,,\qquad F:=\{\bar X(\pm M), \bar Y(\pm M) \in [-2(K + \del)M, -2(K - \del)M]\}.
    \end{align}
 For a constant $C_1$ to be fixed below, take $s\geq C_1$ and $u=C_1\log s$ and consider  the events 
    \begin{align*}
        &E^+_X := \Bra{\bar X(s) \in \Box{\f{s\bar X(M)}{M} - u{\sqrt s}, \f{s\bar X(M)}{M} + u{\sqrt s}}}, \\
        &E^-_X := \Bra{\bar X(-s) \in \Box{\f{s\bar X(-M)}{M} - u{\sqrt s}, \f{s\bar X(-M)}{M} + u{\sqrt s}}}, 
    \end{align*}
    and define $E^\pm_Y$ similarly. Conditionally on the event $F$ in \eqref{eq:giv1}, switching to $\PAR,$ for a suitably large choice of the constant $C_1$ and assuming $\del\le 1/C_1$,
    an application of   \rlem{lem:closei} and a union bound shows that the event $E:=E^+_X \cap E^-_X \cap E^+_Y \cap E^-_Y$ has probability at least $1 - 1/s$     for all $M\geq C_1 s$. %
 Thus, \eqref{eq:giv1} yields the estimate
    \begin{align}\label{eq:giv2}
       \P(E) \geq (1 - \eps)(1 - 1/s)\,.
    \end{align}
Conditionally on $\bar X(\pm M), \bar X(\pm s)$, $\bar X|_{[-M, -s]}$ and $\bar X|_{[s, M]}$ are independent and similarly for $\bar Y$.
We now show that $\tv{\bar X(s^2), \bar Y(s^2)}$ is small, which by independence will imply that the total variation distance $\tv{(\bar X(s^2), \bar X(-s^2)), (\bar Y(s^2), \bar Y(-s^2))}$ is also small.

    Note that given $\bar X(s), \bar X(M)$, we have $\bar X|_{[s, M]} \sim \B^{\bar X(s), \bar X(M)}_{s, M}[-\p]$.
Define the unconditional version $X' \sim \B^{\bar X(s), \bar X(M)}_{s, M}$ and similarly define $Y'$ for $\bar Y$.
The parabola avoidance probability for $X'$ is least when $\bar X(s), \bar X(M)$ are both at their lowest endpoints.
Assuming $E\cap F$ %
these endpoints satisfy %
the inequalities
    \begin{align*}
 \bar X(M) \geq -2(K + \del)M =: R^-\,,\qquad         \bar X(s) %
 \geq -2(K + \del)s - u{\sqrt s} =: L^-\,.
    \end{align*}
    By choosing $M\geq C\,s$ where $C$ is a constant depending on $K$, one may ensure that the slope of the line joining $(s, L^-)$ and $(M, R^-)$ is at least $-2(K + 2\del)$.
Since the slope of $t \mapsto -t^2$ at $s$ is $-2s$, the distance between this line and the parabola only increases after $s$, if $s \geq K + 2\del$, which we may assume. %
Therefore the minimum distance is at $s$, where the distance is 
    \begin{align*}
        d^* := s^2 - 2s(K + \del) - u{\sqrt s} \geq \tfrac12\,s^2,
    \end{align*}
    if $s$ is large enough depending on $K$, provided $\del$ is sufficiently small depending on $K$.
Using Corollary \ref{paraavoidancecor}, this shows that the probability that $X'$ avoids the parabola is at least $1 - \Exp{-\T{s^3}}$.
Thus it suffices to consider the unconditional paths $X', Y',$ and consider $\tv{X'(s^2), Y'(s^2)}$.
Given $\bar X(s), \bar X(M)$ and $\bar Y(s), \bar Y(M)$ we have
    \begin{align*}
        X'(s^2) & \sim \Nor{\bar X(s) + \f{s^2 - s}{M - s}(\bar X(M) - \bar X(s))}{\sig^2} \\
        Y'(s^2) & \sim \Nor{\bar Y(s) + \f{s^2 - s}{M - s}(\bar Y(M) - \bar Y(s))}{\sig^2} \\
        \sig^2 & := \f{(s^2 - s)(M - s^2)}{M - s} \approx s^2 \quad \text{if $M \gg s^2$}
    \end{align*}
    Using Lemma \ref{lem:tvgau}, 
   we have
    \begin{align*}
        \tv{X'(s^2), Y'(s^2)} \leq \f{\Abs{\E[X'(s^2)] - \E[Y'(s^2)]}}{2\sig}
    \end{align*}
 We may take $M$ so large that %
$(s^2 - s)/(M - s) \leq \be s^2 / M$ where $\be > 0$ is an absolute constant.
The difference of expectations above is now bounded by
    \begin{align*}
        \Abs{\E[X'(s^2)] - \E[Y'(s^2)]} &\leq \Abs{\bar X(s) - \bar Y(s)} + \be \f{s^2}{M}\Abs{\bar X(M) - \bar Y(M)} + \be\f{s^2}{M}\Abs{\bar X(s) -\bar Y(s)} \\
                                        &\leq 4\del s + 2u{\sqrt s} + 4\be s^2 \del + 4\be s^3 \del / M
    \end{align*}
    so that the total variation distance is bounded by
    \begin{align*}
        \tv{X'(s^2), Y'(s^2)} \leq Cs\del + 2s^{-1/2}u.
    \end{align*}
    where $C$ depends only on $K$ and $s \geq C_1$ for a constant $C_1$ depending on $K$.
Observe that we have not fixed $\del$ yet except that $\del <  \del_0$ for some constant $\del_0=\del_0(K)$ depending on $K$ as in Lemma \ref{lem:closei}.
So now choose $\del$ small enough depending on $s, u$, giving that
    \begin{align*}
        \tv{X'(s^2), Y'(s^2)} \leq 3s^{-1/2}u = O(s^{-1/3})\,.
    \end{align*}
    By a triangle inequality, uniformly in $E\cap F$, %
    \begin{align*}
        \tv{\bar X(s^2), \bar Y(s^2) \,|\, \bar X(s), \bar Y(s), \bar X(M), \bar Y(M)} =  %
        O(s^{-1/3}).
    \end{align*}
 By independence, uniformly in $E\cap F$,
    \begin{align*}
        \tv{\bar X(\pm s^2),\bar Y(\pm s^2) \,|\, \bar X(\pm s),\bar  Y(\pm s), \bar X(\pm M), \bar Y(\pm M)} 
        =O(s^{-1/3}).
    \end{align*}
    Recalling \eqref{eq:giv1}, and choosing e.g.\ $\eps = s^{-1/3}$ there, 
we obtain
    \begin{align*}
        \tv{\bar X(\pm s^2), \bar Y(\pm s^2)} \leq  \P(E^{\c}) +\P(F^{\c})+ O(s^{-1/3}) = O(s^{-1/3}) .
    \end{align*}
Thus, switching back to the primal representation, there exists a coupling such that $X(\pm s^2)= Y(\pm s^2)$ with probability $1-O(s^{-1/3})$. On this favorable event we may resample with an identity coupling inside $[-s^2,s^2]$.
Therefore, for any fixed $b>0$,
sending $s\to\infty$ one has 
$\tv{X|_{[-b, b]}, Y|_{[-b, b]}} =0$. Since $b$ can be taken arbitrarily large, we obtain the desired uniqueness statement. 
\end{proof}

\begin{lem}%
    \label{lem:uniqueness2}
    For any $x\in\R$, there exists at most one Gibbs state $\mu$ %
    with limits $L=-\infty$ and $R=x$. 
\end{lem}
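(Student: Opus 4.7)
By the shift relation \eqref{nulrshifts}, it suffices to treat a single fixed value of $x \in \R$, since existence of $\nu_{-\infty,x}$ is given by Lemma \ref{lem:existence2}. The plan is to show $\mu = \nu_{-\infty,x}$ for any Gibbs $\mu$ with $L(\mu) = -\infty$ and $R(\mu) = x$ almost surely, combining a monotonicity step with a coupling argument modeled on Lemma \ref{lem:uniqueness} but adapted to the infinite left slope.

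First I would establish the monotone upper bound $\mu \preceq \nu_{-\infty,x}$. For any $K > 0$, $\del > 0$, $\eps > 0$, the hypotheses on $L(\mu)$ and $R(\mu)$ ensure that for all sufficiently large $T$, the event $\{X(-T) \le T^2 - 2KT,\ X(T) \le T^2 + (x+\del)T\}$ has $\mu$-probability at least $1 - 2\eps$. On this event the Brownian-Gibbs property together with Lemma \ref{lem:monotonicity} stochastically dominates $X|_{[-T,T]}$ by $\L^{T^2-2KT,\,T^2+(x+\del)T}_T$, which converges in TV on bounded intervals to $\nu_{-2K,x+\del}$ as $T \to \infty$ (by the Cauchy-in-TV argument of Lemma \ref{lem:cauchytv} together with Proposition \ref{lem:existence}). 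Sending $\eps, \del \to 0$ and then $K \to \infty$, and invoking Lemma \ref{lem:existence2}, yields $\mu \preceq \nu_{-\infty,x}$.

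To upgrade this domination to equality, I would couple $X \sim \mu$ and $Y \sim \nu_{-\infty,x}$ and exhibit, for any $b, \eps > 0$, a stopping domain $(\sigma_\ell, \sigma_r) \supset [-b, b]$ on a probability-$(1-\eps)$ event with $X(\sigma_\ell) = Y(\sigma_\ell)$ and $X(\sigma_r) = Y(\sigma_r)$; the strong Gibbs property (Lemma \ref{sabg}) then allows resampling inside $(\sigma_\ell, \sigma_r)$ with the identity coupling to obtain $X|_{[-b,b]} = Y|_{[-b,b]}$. Working in the parabolic representation (Lemma \ref{lem:girsanov}) with $\bar X = X - \p$ and $\bar Y = Y - \p$, and choosing $M$ large, the right endpoint $\sigma_r \in (b, M)$ would be constructed exactly as in Lemma \ref{lem:uniqueness}: on the favorable event $\bar X(M), \bar Y(M) \in [(x-\del)M,(x+\del)M]$, Lemma \ref{lem:closei} gives concentration of $\bar X(s)$ and $\bar Y(s)$ at an intermediate time $s$ in a common Gaussian window about $xs$, and Lemma \ref{lem:tvgau} produces the matching TV coupling.

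The principal obstacle is the analogous construction of $\sigma_\ell$, because $L = -\infty$ yields no linear-slope control: while $\bar X(-M), \bar Y(-M) \le -2KM$ under both measures for $K$ arbitrarily large, their exact values can differ by amounts much larger than the relevant Gaussian scale $\sqrt M$, so Lemma \ref{lem:closei} alone does not produce matching marginals at time $-s$. The key additional ingredient will be a parabolic-barrier insensitivity: for $K$ large, the law of a Brownian bridge on $[-M, M]$ conditioned to stay above $-\p$, with left endpoint $w \in [-M^2, -2KM]$ and right endpoint $z$ satisfying $z/M \to x$, should be insensitive in TV, on $[-s, 0]$, to the precise value of $w$. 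Intuitively this is because when $w$ lies well below $-\p$ the bridge is pinned within $O(1)$ of the parabolic barrier very shortly after time $-M$, after which its law is governed by the barrier and by $z$ alone. Rigorously this should be established by a monotone coupling of two such bridges with endpoints $w_1 \le w_2 \le -2KM$ and quantifying the TV gap via control of the first time both paths lie within $O(1)$ of $-\p$; making this quantitative is the principal technical step. With the insensitivity in hand, the left-matching reduces to a common barrier-driven conditional law, the argument closes as on the right side, and $\mu = \nu_{-\infty,x}$ follows.
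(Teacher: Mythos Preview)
Your overall architecture is correct and matches the paper: reduce by shifting, handle the right endpoint via the Gaussian/TV argument of Lemma~\ref{lem:uniqueness}, and handle the left endpoint by a separate mechanism exploiting $L=-\infty$. The monotone upper bound $\mu\preceq\nu_{-\infty,x}$ is correct but entirely superfluous, since the stopping-domain coupling you describe afterwards already yields equality on its own; the paper simply compares two arbitrary Gibbs states $\mu,\nu$ with the given limits and never passes through a one-sided domination.

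The substantive difference is on the left side. You correctly diagnose that Lemma~\ref{lem:closei} is useless there because $\bar X(-M)$ is not confined to a window of width $O(\del M)$, and you propose a new ``parabolic-barrier insensitivity'' lemma: the law of $\B^{w,z}_{-M,M}[-\p]$ on $[-s,0]$ is TV-insensitive to $w$ once $w\le -2KM$ for large $K$. This is the right intuition, but you leave it as ``the principal technical step'' to be done. The paper does not develop a new lemma here; it observes that this insensitivity is precisely the content of Proposition~\ref{ppn:fscouple}, transported to the half-line. Concretely: control $X(0),Y(0)$ by Lemma~\ref{lem:tailbounds} (this uses only $R<0$, which holds after shifting), then on $(-\infty,0]$ both processes have boundary height $O(1)$ at $0$ and left limit $-\infty$, so the three-step machinery of Proposition~\ref{ppn:fscouple} (come down to $O(\sqrt M)$, then to $O(1)$ via Lemma~\ref{lem:comingdown}, then intersect with a pinned lower process) produces $\sigma_\ell$ with $X(\sigma_\ell)=Y(\sigma_\ell)$. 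In the dual picture your insensitivity claim says the bridge forgets $w$ once it is pinned near $-\p$; in the primal picture that is exactly ``$X$ comes down to $O(1)$ and couples to FS,'' which is already proved. So rather than proving a new insensitivity lemma, you should invoke Proposition~\ref{ppn:fscouple} on the left half-line; the paper's proof is essentially a two-line pointer to this.
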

\begin{proof}
By the shifting argument from Lemma \ref{lem:lpr} it is sufficient to show that if $\mu, \nu\in\cG$ with $L(\mu) = L(\nu) = -\infty$ and $R(\mu) = R(\nu) = -2$, then $\mu = \nu$.
 The proof follows from the same arguments already used in Proposition \ref{ppn:fscouple} and %
 Lemma \ref{lem:uniqueness} respectively, and so we will be brief to avoid repetitions. Let $X\sim \mu$ and $Y\sim \nu$. As in the proofs above, %
 it is sufficient to  show that there exists a coupling of $X,Y$ such that $X(+s)=Y(+s)$ and $X(-s)=Y(-s)$ with probability $1-o(1)$, for arbitrarily large $s$. For negative times, we may use the arguments from  Proposition \ref{ppn:fscouple} to show that as $s\to\infty$, one can couple the paths $X,Y$ so that $X(- s)=Y(- s)$, with probability $1-o(1)$. More precisely, to couple $X(-s),Y(- s)$, one first observes that $X(0),Y(0)$ can be controlled by using Lemma \ref{lem:tailbounds}. Once the heights at zero are fixed at two possibly large constants, one can restrict to the intervals $[-m,0]$ for large $m$ and then use the argument from Proposition \ref{ppn:fscouple} which guarantees the existence of a coupling  such that $X(-s)=Y(- s)$ with probability $1-o(1)$ as $s\to\infty$.
For positive times,  one can use the same argument from Lemma \ref{lem:uniqueness}, with very minor modification, to obtain a coupling such that $X(+s)=Y(+ s)$, with probability $1-o(1)$.  We omit the details.

\end{proof}

\subsection{Proof of Theorem \ref{thmoneline} completed} \label{sec:completed}
We can now summarize our results and complete the proof of the theorem. As we mentioned at the beginning of Section \ref{sec:proofofth1} it remained to prove that 
for any $(L,R)\in\calT$ there exists a unique $\nu_{L,R}\in\cG$ such that \eqref{linearlim} holds, and that the measure is extremal, that is $\nu_{L,R}\in\cG_{\rm ext}$. 
When $L=R=-\infty$, the existence and uniqueness has been obtained in Section \ref{sec:fsuni}. When $(L, R)\in\R^2$, $L+R<0$, the existence is in Lemma \ref{lem:existence} and the uniqueness in \rlem{lem:uniqueness}.  When $L=-\infty$ and $R\in\R$, or $L\in\R$ and $L=-\infty$, we have existence by Lemma \ref{lem:existence2} and uniqueness by Lemma \ref{lem:uniqueness2}. Finally, concerning extremality, let us note that for any $(L,R)\in\calT$, the unique measure $\nu_{L,R}$ obtained above must be extremal in $\cG$. This can be seen by contradiction. Indeed, assume that $\nu_{L,R}$ is given by a nontrivial mixture of extremal Gibbs measures. By Lemma \ref{lem:deterministic} and \rlem{lem:lpr} each of the extremal measures has its own pair of constants $(L,R)\in\calT$ and by the uniqueness results established above we know that these pairs are all distinct. This forces the distribution of the limits defined in \eqref{linearlim} to be nontrivial for our measure $\nu_{L,R}$, whereas we know that $\nu_{L,R}$ has deterministic limits by construction. \\

Given the above preparation, we are now in a position to dive in to the proof of Theorem \ref{thmmanylines}.
\section{Multi-line Gibbs measures}
\label{sec5}
Here we prove Theorem \ref{thmmanylines} and Theorem \ref{thmmanylinesconf}. Most of the section is devoted to the proof of Theorem \ref{thmmanylines}. The proof of Theorem \ref{thmmanylinesconf} will be a rather quick consequence of some technical estimates established during the proof of Theorem \ref{thmmanylines} and some confinement statements that were already obtained in \cite{geomarea}. 

To guide the reader, we provide  %
a brief roadmap of the section. 
\begin{itemize}

\item
We  start with Lemma  \ref{lem:limexists123} which proves the counterpart of Lemma \ref{lem:limexists} and Lemma \ref{lem:deterministic}, showing that the top line for any Gibbs measure $\mu\in\cG_\infty$ has asymptotic left and right linear behavior with slopes $L$ and $R$ respectively on subtracting $\p.$ Further, %
we record a multi-line counterpart of Lemma \ref{lem:lpr} stating that $L+R$ must be strictly smaller than zero which turns out to follow immediately from the one-line argument and monotonicity considerations. 

\item 
We then move on to the construction of infinite measures $\mu_K$ where $L=R=-2K.$ This proceeds by considering a certain pre-limiting proxy that we term $\rho_{K,T}$, defined on $[-T,T]$, which exhibits an approximate monotonicity in $T$. The latter  allows us to conclude %
weak convergence to a measure $\mu_K\in\cG_\infty$ as $T\to\infty$. We then show that the measure $\mu_K$ constructed in this way indeed exhibits the desired asymptotic behavior. 

\item Next, to show uniqueness, we take another measure $\mu'_K\in\cG_\infty$ with the same asymptotic behavior, and consider its restriction to $[-T,T]$ that we term  $\rho'_{K,T}.$
We then show that  $\rho_{K,T}$ and  $\rho'_{K,T}$ have  the same limit as $T\to\infty$. The proof of the latter statement requires two steps. First, by our choice of the boundary conditions of $\rho_{K,T}$, it turns out that one can naturally couple $\rho'_{K,T}$ and $\rho_{K,T}$ in such a way  that with high probability the former dominates the latter. The second step, which entails most of the work, is to show that one can construct a reverse coupling such that, with high probability, the latter dominates the former, at least in a mesoscopically large domain, i.e. $[-T^c, T^c]$ for some constant $c\in(0,1)$.

\end{itemize}

The following subsection analyzes the asymptotic properties of the top path, in analogy with 
Section \ref{sec:asymp}.
\subsection{Growth at infinity for Gibbs measures.}

\newcommand\scP{\mathcal{P}}
We will first prove a statement analogous to 
Lemma \ref{lem:limexists} and Lemma \ref{lem:deterministic}.
\begin{lem} %
    \label{lem:limexists123}
   For any $\mu\in  \cG_{\infty}$, %
the following two %
limits for the top path $X^1$ exist $\mu$-almost surely as extended real random variables: %
    \begin{align*}
        L %
        := \lim_{t \to -\infty} \f{X^1(t) - t^2}{|t|}, \qquad 
        R %
         :=\lim_{t \to \infty} \f{X^1(t) - t^2}{t}.
    \end{align*}
    Further, if $\mu\in  \cG_{\infty, \rm{ext}},$ then $L$ and $R$ are deterministic.
\end{lem}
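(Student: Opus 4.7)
My plan is to mirror the proof of Lemma \ref{lem:limexists} for the top line $X^1$, with the only modification that the Brownian-Gibbs property is applied to the first line alone on a stopping domain and that monotonicity with respect to the floor is used to decouple from the lower lines. Fix $b \in \R$ and $\delta > 0$ and define the events
\begin{align*}
\Once_k &= \{\exists\,t \in [k, k+1) : (X^1(t)-t^2)/t \geq -2b\}, \quad k \in \N,\\
\All_{\ell, r} &= \{\forall\,t \in [\ell, r] : (X^1(t) - t^2)/t \geq -2(b + \delta)\},\quad \ell < r,
\end{align*}
together with the last witness time $\tau_k = \sup\{t \in [k, k+1) : (X^1(t) - t^2)/t \geq -2b\}$ on $\Once_k$. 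For each large enough $k$ the pair $(b, \tau_k)$ is a stopping domain for the first line, so I can invoke Lemma \ref{sabg} with $n=1$: conditionally on $\calB^{\sfe}_{1;b,\tau_k}$, the restriction $X^1|_{[b,\tau_k]}$ has the law $\L^{X^1(b), X^1(\tau_k)}_{b,\tau_k}[X^2]$ of an area-tilted excursion with floor equal to the trajectory of the second line.

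The key point is that Lemma \ref{lem:monotonicity} lets me simultaneously lower the floor from $X^2$ down to zero and lower the boundary data to $X^1(b) = 0$, $X^1(\tau_k) = \tau_k^2 - 2b\tau_k$; this only decreases the probability of the (monotone increasing) event $\All_{\ell, r}$. After these reductions, the conditional setup is exactly the one-line problem treated in Lemma \ref{lem:limexists}. Translating to the \PAR{} representation via Lemma \ref{lem:girsanov}, removing the parabolic floor once more by monotonicity, and using the affine invariance of the Brownian bridge together with standard Gaussian fluctuation bounds then gives $\P(\All_{\ell,r}\,|\,\Once_k) \geq 1 - C\exp(-c\,\ell\delta^2)$, with the same chain of inequalities as in the one-line proof. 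Replacing $\tau_k$ by the last witness time $\tau_{k,m}$ relative to $\Once_k\cup\cdots\cup\Once_m$, sending $m\to\infty$, $k\to\infty$, $r\to\infty$, $\ell\to\infty$, and finally $\delta\to 0$ yields $\P[\underline R \geq -2b \mid \Once_j \text{ i.o.}] = 1$. As in the one-line case this forces $\underline R \geq \overline R$ almost surely, so $R = \lim_{t\to\infty}(X^1(t)-t^2)/t$ exists as an extended real random variable. The same reasoning, applied symmetrically, handles $L$.

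For the second assertion, note that both $L$ and $R$ depend only on the asymptotic behavior of $X^1$ at $\mp\infty$, and hence they are invariant under modifications of the configuration on any bounded time interval, so they are measurable with respect to the tail $\sigma$-algebra $\calB_\infty := \bigcap_{n\geq 1}\bigcap_{k\geq 1}\calB^{\sfe}_{n;-k,k}$. Since extremal elements of the simplex $\cG_\infty$ are tail-trivial (see, e.g., Theorem 7.7 in \cite{georgii2011gibbs}), both $L$ and $R$ must be almost surely constant under any $\mu \in \cG_{\infty,\rm ext}$.

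I do not anticipate a serious obstacle here: the one-line argument is robust to the presence of the lower lines once monotonicity is used to erase the $X^2$-floor, and the extremality step is standard Gibbs theory. The only mild care needed is in checking that $(b,\tau_k)$ is indeed a stopping domain for the \emph{top} line alone in the sense of Lemma \ref{sabg}, which is immediate since $\tau_k$ is measurable with respect to the history of $X^1$ only.
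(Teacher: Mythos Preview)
Your proposal is correct and follows essentially the same approach as the paper: both reduce to the one-line argument of Lemma \ref{lem:limexists} by applying the strong Gibbs property to $X^1$ alone on the stopping domain $[b,\tau_k]$ and invoking monotonicity to drop the floor $X^2$, after which the remainder is verbatim the one-line proof, and both conclude the extremality part via tail triviality. The paper's write-up is simply terser, compressing the monotonicity step into a single sentence.
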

Fortunately, monotonicity will allow us to essentially reduce the problem to a one-line case for which we will reuse the arguments from the proof of \rlem{lem:limexists}.

\begin{proof}
As in the proof of Lemma \ref{lem:limexists}, we will show that $\mu$-almost surely
  \begin{align}\label{Rrbar}
\overline R:=\limsup_{t\to \infty} \,(X^1(t)-t^2)/t
=\liminf_{t\to \infty} \,(X^1(t)-t^2)/t
=:\underline R.
\end{align}
Towards this it is sufficient to establish \eqref{eq:implic2}.
As in the proof of Lemma \ref{lem:limexists}, fixing some $b \in \R, \del > 0,$ and considering the events
    \begin{align*}
        \Once_k &= \{ \exists\ t \in [k, k + 1) : (X^1(t) - t^2)/t \geq -2b \}, \quad k \in \N \\
        \All_{\ell, r} &= \{\forall\ t \in [\ell, r] : (X^1(t) - t^2)/t \geq -2(b + \del) \}, \quad \ell < r \in \R,
    \end{align*}
it suffices to show that for any $\ell, r$, we have $\Once_k \implies \All_{\ell, r}$ with high probability if $k$ is large enough. 
 Recall the definition of %
 $\tau = \tau_k$ given by
    \begin{align*}
        \tau = \sup \{t \in [k, k + 1) : (X^1(t) - t^2)/t \geq -2b \}, \quad \text{on } \Once_k,
    \end{align*}
    and {$ \tau= k$ on $\Once_k^\c$}. Note now that monotonicity implies that given $X^{1}(\tau)=-2b,$ the probability of $\All_{\ell, r}$ is higher in the multi-line case than in the single line case.
Therefore, as in  \eqref{eq:oka} one has
    \begin{align}\label{mon345}
        \P(\All_{\ell, r} \ |\  \Once_k) \geq 1 - C\Exp{-c\,\ell \del^2}.
    \end{align}
The remainder of the proof of \eqref{Rrbar} is verbatim the proof of Lemma \ref{lem:limexists}. We omit the details to avoid repetition.  
To prove the last assertion, as in Lemma \ref{lem:deterministic}, if $\mu \in \cG_{\infty, \rm{ext}}$ then $\mu $ is tail trivial. Since $L$ and $R$ are tail measurable random variables one has that the limits $L,R$ must be deterministic for all extremal Gibbs measures. 
\end{proof} %

The next lemma is analogous to Lemma \ref{lem:lpr}.
 \begin{lem}
    \label{lem:lpr1234}
    Let $\mu\in\cG_{\infty, \rm ext}$ and let $L,R$ be the constants from Lemma \ref{lem:limexists123}.
Then $(L,R)\in\calT$, with $\calT=\{(L,R)\in[-\infty,\infty)^2: \; L + R <0\}$ as in \eqref{lineartilts}.
\end{lem}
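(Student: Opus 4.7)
The plan is to reduce the multi-line statement to the corresponding one-line statements (Lemmas \ref{lem:lr0}, \ref{lem:lrinf} and \ref{lem:lpr}) via monotonicity. Throughout let $\mu \in \cG_{\infty, \rm ext}$ and $\uX \sim \mu$. The central mechanism I will exploit is that whenever we isolate the top line $X^1$ on an interval $[\ell, r]$ via the Brownian-Gibbs property (Definition \ref{def:laBG} with $n = 1$), its conditional law is that of an area-tilted Brownian excursion with floor equal to $X^2 \ge 0$, so Lemma \ref{lem:monotonicity} allows a comparison with the single-line object having zero floor and/or lowered boundary data. Raising the floor or the boundary values yields a stochastically larger path, and this is exactly the direction needed to transfer the one-line lower bounds at the origin to the multi-line setting.

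The first step will be to establish $R < \infty$ a.s., and $L < \infty$ by symmetry, paralleling Lemma \ref{lem:lrinf}. Assuming for contradiction $R = +\infty$, for every $M > 0$ there is $T$ with $\P(X^1(T) \ge T^2 + MT) \ge 1/2$. Resampling $X^1$ on $[-1, T]$ via the Brownian-Gibbs property and using Lemma \ref{lem:monotonicity} to lower both $X^1(-1)$ and the floor $X^2$ to $0$, the conditional marginal of $X^1(0)$ stochastically dominates that of a single-line area-tilted excursion with boundary $(0, T^2 + MT)$ and zero floor. Switching to the \PAR representation of Lemma \ref{lem:girsanov} and arguing exactly as in the proof of Lemma \ref{lem:lrinf} gives $\P(X^1(0) \ge M/2) \ge 1/4$ for $M$ arbitrarily large, contradicting the finiteness of the random variable $X^1(0)$.

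Next I would rule out $L \ge 0$ and $R \ge 0$ occurring simultaneously, along the lines of Lemma \ref{lem:lr0}. With $\eps = s^{-1/2}$ and $E_s = \{X^1(t) - t^2 \ge -\eps|t|\ \text{for all}\ |t| > s\}$, the hypothesis forces $\P(E_T) \ge 1/2$ for some $T = T(s) \ge 2s$. A resampling of $X^1$ on $[-T, T]$, lowering the boundary values to $T^2 - \eps T$ and the floor $X^2$ to $0$ via monotonicity, reduces the problem to the single-line \PAR computation of Lemma \ref{lem:lr0} and yields $\P(X^1(\pm s) \ge s^2 - 10\sqrt s) \ge \theta/2$ for an absolute constant $\theta > 0$. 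A further resampling on $[-s, s]$, again lowering boundary and floor, combined with the \PAR marginal computation, then gives $\P(X^1(0) \ge \sqrt s) \ge \theta/4$ uniformly in $s$, which again contradicts the finiteness of $X^1(0)$.

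Finally, the shifting argument of Lemma \ref{lem:lpr} closes the proof. Time translation $\theta_h$ preserves $\cG_{\infty, \rm ext}$ (the area-tilt potentials are translation-invariant in $t$) and transforms the limits as $L(\theta_h \mu) = L + 2h$, $R(\theta_h \mu) = R - 2h$. If one of $L, R$ equals $-\infty$ then $L + R = -\infty < 0$ automatically; otherwise $L, R$ are both finite and, assuming $L + R \ge 0$, the choice $h = R/2$ produces a measure in $\cG_{\infty, \rm ext}$ with both slopes nonnegative, contradicting the previous step. Hence $(L, R) \in \calT$. The main technical point throughout, though a routine one, is that each monotonicity comparison should be carried out conditionally on the external $\sigma$-algebra $\calB^{\sfe}_{1; -T, T}$; this however follows from pointwise application of Lemma \ref{lem:monotonicity} on that $\sigma$-algebra.
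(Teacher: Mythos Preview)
Your proposal is correct and follows essentially the same approach as the paper: reduce to the one-line Lemmas \ref{lem:lr0}, \ref{lem:lrinf}, \ref{lem:lpr} by resampling the top line via the Brownian--Gibbs property and using Lemma \ref{lem:monotonicity} to lower the floor $X^2$ and the boundary data to zero, so that the one-line tail estimates for $X^1(0)$ carry over verbatim. The paper's own proof is just a two-sentence sketch of precisely this reduction, so your write-up is in fact more detailed than what appears there.
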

The proof of Lemma \ref{lem:lpr1234} is the same as that of  Lemma \ref{lem:lpr} along with an application of monotonicity. Indeed, the proof of Lemma \ref{lem:lpr} proceeded by showing that in the one-line case, if $(L,R) \notin \calT,$ then the marginal tail at a single point $\P(X^1(0)>t)$ remains uniformly bounded away from zero as $t$ goes to infinity, thereby contradicting tightness. In the presence of multiple lines, by monotonicity the marginal tail of the top line is at least as large as that of the single line, which allows one to reach the same contradiction.  %

\subsection{Existence} \label{thirdstep}
We now set out to prove the existence part of Theorem \ref{thmmanylines}. As in the one-line case, we first consider %
$(L,R)\in \R^2$ with $L+R<0.$ When the limits are both finite, by shifting we will take $L=R=-2K$ where $0<K <\infty.$ The arguments for the case where $L=R=-\infty$ or when only one of them is finite will involve minor modifications and will only be sketched to avoid lengthy repetitions.  

For reasons that will become apparent soon, we introduce the line ensemble $\rho_{K,T}$, defined as  the $\lambda$-tilted LE on $[-T,T]$, with infinitely many lines, with strength parameter $a=2$,  and with boundary conditions \begin{gather}
X^1(-T)= X^1(T) = T^2-2KT - T^b\\
X^j(-T)= X^j(T) = 0,\quad j=2,3,\dots
\end{gather}   
where $b\in(1/2,1)$ is a fixed constant. Actually,  $b=0.6$ will be sufficient and hence we will work with this specific choice below. %
We begin by showing %
weak convergence of $\rho_{K,T}$.

\begin{lem}\label{lem:rkt}
For every $K>0$, there exists %
$\mu_K\in\cG_{\infty}$ such that $\rho_{K,T}$ converges weakly to $\mu_K$ as $T\to\infty$.  
\end{lem}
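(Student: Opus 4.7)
The plan is to obtain $\mu_K$ as the weak limit of $\rho_{K,T}$ via an approximate-monotonicity argument. The choice of boundary condition $T^2 - 2KT - T^{0.6}$ on the top line is precisely tuned so that, with high probability, the configuration at time $\pm T_1$ in $\rho_{K,T_2}$ (with $T_2\gg T_1$) componentwise dominates the boundary data of $\rho_{K,T_1}$, which will drive the whole proof.

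\textbf{Step 1 (Tightness).} First I would establish tightness of the restrictions $\{\rho_{K,T}|_{[-L,L]}\}_{T>L}$ for each $L>0$. For the top line, \rlem{lem:monotonicity} and \PAR\ reduce the problem to a one-line Brownian bridge with endpoints $-2KT-T^{0.6}$ conditioned to stay above $-\p$; \rlem{lem:tailbounds} and \rlem{paraavoidancecor} then give uniform control on $\rho^1_{K,T}(s)$ for fixed $s$. For the $(k+1)$-th line, monotonicity (removing the upper $k$ lines only increases $X^{k+1}$) combined with the Brownian scaling of \rlem{lem:scaling} reduces the estimate to a one-line excursion with effective area-tilt $2\lambda^k$, yielding tails of the correct order $\lambda^{-k/3}$ via \rlem{lem:fsfact}. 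Combined with Brownian-bridge modulus-of-continuity estimates, this yields tightness on every compact interval.

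\textbf{Step 2 (Approximate monotonicity in $T$).} Fix $T_2 \ge 2T_1$ and consider
\[
E_{T_1,T_2} := \bigl\{\rho^1_{K,T_2}(\pm T_1) \ge T_1^2 - 2K T_1 - T_1^{0.6}\bigr\}.
\]
On $E_{T_1,T_2}$ the boundary data of $\rho_{K,T_2}$ at $\pm T_1$ componentwise dominate those of $\rho_{K,T_1}$ (the lower-line condition $X^j(\pm T_1)=0$ is automatically dominated since $\rho^j_{K,T_2}(\pm T_1)\ge 0$). The strong Gibbs property (\rlem{sabg}) and \rlem{lem:monotonicity} then produce a coupling $(\uX, \uY)$ of $(\rho_{K,T_1}, \rho_{K,T_2}|_{[-T_1,T_1]})$ with $\uY \succeq \uX$ on $E_{T_1,T_2}$. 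To show $\P(E_{T_1,T_2})\to 1$ as $T_1\to\infty$ uniformly in $T_2\ge 2T_1$, monotonicity gives $\rho^1_{K,T_2} \succeq X^{1}_{\mathrm{one}}$ where $X^{1}_{\mathrm{one}} \sim \L^{T_2^2-2K T_2 - T_2^{0.6}}_{T_2}$ is the one-line excursion with the same top-line boundary (removing lower lines only pushes $X^1$ down). Passing to \PAR\ and applying \rlem{lem:closei} to the Brownian bridge $Y^{T_2}\sim \B^{-2KT_2 - T_2^{0.6}}_{T_2}[-\p]$ with $s=T_1$ and $u=T_1^{0.05}$,
\[
Y^{T_2}(T_1) \ge \tfrac{T_1}{T_2}\bigl(-2KT_2 - T_2^{0.6}\bigr) - u\sqrt{T_1} = -2K T_1 - T_1 T_2^{-0.4} - u\sqrt{T_1},
\]
with probability $\ge 1-Ce^{-cu}$. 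For $T_2 \ge 2T_1$, both error terms are $o(T_1^{0.6})$, giving $\P(E_{T_1,T_2}) \ge 1 - Ce^{-c T_1^{0.05}}$, uniformly in $T_2\ge 2T_1$.

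\textbf{Step 3 (Convergence and Gibbs property).} Given any two subsequential weak limits $\mu^*_1, \mu^*_2$ of $\rho_{K,T}$, extract interlacing subsequences $T^{(1)}_{i_k}, T^{(2)}_{j_k}$ with ratios $\ge 2$ between consecutive terms, alternating which family is larger. Passing to the limit in the couplings of Step 2 along these interlacing pairs yields both $\mu^*_2 \succeq \mu^*_1$ and $\mu^*_1 \succeq \mu^*_2$, hence $\mu^*_1 = \mu^*_2$. With Step 1 this gives $\rho_{K,T}\Rightarrow \mu_K$ weakly. The Brownian-Gibbs property on compact subdomains of $(-T,T)$ passes to local weak limits (cf.\ the discussion after \eqref{eq:weakcon}), so $\mu_K$ satisfies Definition \ref{def:laBG}. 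Asymptotic pinning (Definition \ref{def:asympin}) follows from the uniform-in-$T$ tail bound $\P(X^k(s)> \e)\le C\exp(-c\e^{3/2}\lambda^{k/2})$ from Step 1, which is summable over $k$ and allows choosing $k=k(T,\e)$ as required. Hence $\mu_K \in \cG_\infty$.

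\textbf{Main obstacle.} The crux is the bound $\P(E_{T_1,T_2})\ge 1 - o(1)$ in Step 2: it requires a lower bound on a one-line conditioned Brownian bridge at an interior point $\pm T_1$ of a much larger domain $[-T_2,T_2]$. The exponent $0.6$ in the boundary data is tuned exactly so that $T_1^{0.6}$ beats both the Brownian fluctuation $\sqrt{T_1}$ and the linear-interpolation correction $T_1 T_2^{-0.4}$ (when $T_2\ge 2T_1$). A larger exponent would spoil the asymptotic linear slope $-2K$, while a smaller one would destroy the lower-bound slack needed for the componentwise ordering of boundary data; this delicate tuning is what makes the monotonicity comparison effective.
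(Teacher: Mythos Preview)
Your proposal is correct and follows essentially the same approach as the paper: both establish tightness and then deduce convergence from approximate monotonicity in $T$, using the $T^{0.6}$ correction in the top-line boundary together with \rlem{lem:closei} to guarantee that the larger-domain configuration dominates the smaller-domain boundary data with high probability. The only cosmetic difference is bookkeeping---the paper works along dyadic scales $T_i=2^i$ and shows directly that $\rho_{K,T_i}(E)$ is approximately increasing with summable error for increasing events $E$, whereas you package the same approximate monotonicity as a stochastic-domination sandwich between subsequential limits.
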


\begin{proof}
The proof %
involves two components, tightness and convergence of finite dimensional distribution. The former has been discussed extensively in \cite{CIW19}. In fact, were it not for the presence of the top path with growing boundary conditions, tightness of $\rho_{K,T}$ would follow immediately from \cite[Th.\,1.4]{CIW19}, and it is not hard to check that the same argument applies provided that one has a confinement statement for the top path. To this end, one has that for any $\e>0,S>0$ there exists $C=C(S,\e)>0$ such that
\begin{gather}\label{eq:tightx1}
\inf_{T\ge S}\,\P[\max_{s\in[-S,S]}X^1(s)\le C]\ge 1-\e\,.
\end{gather}   
The argument in \eqref{toplinecouple456} and \eqref{toplinecouple4567} below actually yields an estimate that is more precise than \eqref{eq:tightx1}, and thus we skip the details of the proof of this. 
Once the control \eqref{eq:tightx1} is available, a minor modification of the proof of \cite[Th.\,1.4]{CIW19}  provides tightness of $\{\rho_{K,T}, T>0\}$ in $\calP^+_\infty$. Moreover, the same theorem also implies that any limit point of the sequence $\rho_{K,T}$, as $T\to\infty$ is a Gibbs measure in the sense of Definition \ref{def:laBG} and is asymptotically pinned to zero in the sense of Definition \ref{def:asympin}.

We now move on to the proof of the 
 convergence of finite dimensional distributions.
To this end, fix $m\in\bbN$, let $\bbS=(s_1,\dots,s_m)\in \bbR^m$, $\mathbb{I}=(i_1,\dots,i_m)\in\{1,\dots,n\}^m$, and let $\mathbb{T}=(t_1,\dots,t_m)\in \bbR_+^m$. Consider the event
\begin{equation}\label{eq:monpis}
E=E(\bbS,\bbI,\bbT)=\left\{\uX\in\Omega:\; X^{i_j}(s_j) > t_j\,,\; j=1,\dots,m\right\}.
\end{equation}
Thanks to the tightness of the measures $\rho_{K,T}$, standard measure theoretic arguments imply that %
it suffices to show that for each $n\in\bbN$, and for each choice of $\bbS,\bbI,\bbT$, 
$\rho_{K,T}(E)$ converges when $T\to\infty$.
This will be achieved by showing that %
for any such $E,$ the sequence of numbers  $\rho_{K,T}(E)$ is approximately monotone, in a sense made precise shortly, and hence has a limit. 
 
Let $T_i=2^i$ and let $(X_i^{1}, X_i^{2}, \ldots)$ denote the corresponding LE sampled from $\rho_{K,T_i}$.  We will now compare the marginal of $\rho_{K,T_{i+1}}$ restricted to $[-T_i, T_i]$ to $\rho_{K,T_{i}}.$ Towards this, consider the marginal boundary induced on $[-T_i,T_i]$ by the measure $\rho_{K,T_{i+1}}.$
Certainly, by definition, for all $j\ge 2$ we have $$X_{i+1}^{j}(\pm T_i)\ge 0= X_{i}^{j}(\pm T_i).$$
We will now seek to show that with high probability we also have $$X_{i+1}^{1}(\pm T_i)\ge X_{i}^{1}(\pm T_i).$$
By definition, 
\begin{align*}
X_{i+1}^{1}(\pm T_{i+1})=T_{i+1}^2-2KT_{i+1}-T_{i+1}^{0.6}, \;\;\;\qquad %
X_{i}^{1}(\pm T_{i})=T_{i}^2-2KT_{i}-T_{i}^{0.6}.
\end{align*}
At this point we exploit %
the correction term $T_{i}^{0.6}$ to obtain %
the desired monotonicity with high probability. 
Using the fact that $X_{i+1}^{1}(\cdot)$ is stochastically larger than in the case of the single line with the same starting and ending points, using Lemma \ref{lem:closei} (note the lemma was stated in $\PAR$ language), we get (plugging in $n=T_{i+1,}$ $y^{\pm}=-2KT_{i+1}-T^{0.6}_{i+1}$ and $s=T_i$)
    \begin{align*}
        \P[X_{i+1}^1(\pm T_i) \in T_i^2-2KT_i-\frac{T^{0.6}_{i+1}}{2}+[-uT_i^{1/2}, \infty)] \geq 1 - C\Exp{-cu}\,.
    \end{align*}
    Thus plugging in $u=0.1T_i^{0.1}$, and using $\frac12{T^{0.6}_{i+1}}+uT_i^{1/2}\le T_i^{0.6}$,  it follows that $$\P(X_{i+1}^1(\pm T_i)\ge X_{i}^1(\pm T_i))\ge 1-C\exp(-cT_i^{0.1})=:1-a_i.$$ 
    Denote the event on the LHS above by $A_i.$ Thus, on $A_i$ we have the ordering of the boundary data $X_{i+1}^{j}(\pm T_i)\ge X_{i}^j(\pm T_i)$, for all $j\ge 1$.  
 Note that the $T^{0.6}$ term allowed us to conclude that it would beat the diffusive fluctuation of the Brownian bridge allowing the desired monotonicity relations between the boundary conditions.    
Now, conditioned on $A_i,$ %
using the monotone coupling, it follows that 
    $\rho_{K,T_{i+1}}(E\mid A_i)\ge \rho_{K,T_{i}}(E),$
    and hence by averaging, %
    $$\rho_{K,T_{i}}(E)\le \rho_{K,T_{i+1}}(E)+a_i.$$ Letting $p_i:=\rho_{K,T_{i}}(E)$ we infer %
    the approximate monotonicity relation $p_{i+1}\ge p_{i}-a_i.$ Since $\sum_{i\ge 1} a_i < \infty,$ it immediately follows that the $p_{i}\,$s have a limit as $i\to\infty$.
 To prove that the entire sequence $\rho_{K, T}(E)$ converges as $T\to\infty$, it remains to note that for any $T \in [2^{i+1},2^{i+2}],$ by the same argument as above,  $$\rho_{K,T_{i}}(E)\le \rho_{K,T}(E)+a_i$$
which is enough to conclude the proof.
\end{proof}
Having constructed $\mu_{K}$ we next show that it indeed has the desired asymptotic properties.

\begin{lem}\label{correct behavior} For any $K>0$, $\mu_K$-almost surely,
$$\lim_{t\to \pm\infty} \frac{X^1(t)-t^2}{|t|}=-2K
.$$
\end{lem}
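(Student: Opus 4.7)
By Lemma~\ref{lem:limexists123}, the limits $L$ and $R$ exist $\mu_{K}$-a.s.\ as extended real random variables, and the $t\mapsto -t$ symmetry of the boundary data of $\rho_{K,T}$ is inherited by $\mu_{K}$, so it suffices to show $R=-2K$; the argument for $L$ is identical. The plan is to sandwich the one-point marginal $X^{1}(s)$ under $\mu_{K}$ between two one-line ensembles controlled by Lemma~\ref{lem:closei}, and then conclude by a Borel--Cantelli argument combined with the a.s.\ existence of $R$.

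For the \emph{lower bound} $R\geq -2K$, Lemma~\ref{lem:monotonicity} shows that dropping the non-intersection constraint $X^{1}\succ X^{2}$ while retaining only the hard wall $X^{1}>0$ produces a stochastically smaller top line. Hence $X^{1}$ under $\rho_{K,T}$ dominates the one-line area-tilted excursion $\hat{X}\sim \L^{T^{2}-2KT-T^{0.6}}_{T}$; passing to the parabolic barrier representation (Lemma~\ref{lem:girsanov}) and applying Lemma~\ref{lem:closei} with $y^{\pm}=-2KT-T^{0.6}$ yields
\[
\P_{\rho_{K,T}}\bigl(X^{1}(s)\geq s^{2}-2Ks-u\sqrt{s}\bigr)\geq 1-Ce^{-cu}
\]
uniformly in $T$ large enough, and the weak convergence of Lemma~\ref{lem:rkt} transfers this bound to $\mu_{K}$.

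For the \emph{upper bound} $R\leq -2K$, I combine confinement of $X^{2}$ with a floor-raising argument. Removing $X^{1}$ turns $X^{2}$ into the top line of a shifted $\lambda$-tilted LE; the scaling relation (Lemma~\ref{lem:scaling}) and the Ferrari--Spohn-type confinement estimates of \cite{geomarea} then yield $\P\bigl(\max_{[-S,S]}X^{2}\leq h\bigr)\geq 1-CSe^{-ch^{3/2}}$, and these tails pass to $\mu_{K}$ by weak convergence and monotonicity. On this event, the Brownian-Gibbs property of $\mu_{K}$ applied on the interval $[-S,S]$, combined with the lower bound on $X^{1}(\pm S)$ from the previous step, reduces the problem to a one-line area-tilted excursion with floor $h$; the shift identity $\tilde{X}=h+X''$ reduces further to the one-line case with floor $0$ and boundary $X^{1}(\pm S)-h$, for which Lemma~\ref{lem:closei} gives
\[
\P_{\mu_{K}}\bigl(X^{1}(s)\leq s^{2}-2Ks+u\sqrt{s}\bigr)\geq 1-Ce^{-cu}
\]
for $u$ in a range that includes $u=C'\log s$.

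Combining the two sandwich estimates with $u=C'\log s$ and summing over $s\in\N$, Borel--Cantelli yields $|X^{1}(s)-s^{2}+2Ks|=o(s)$ $\mu_{K}$-a.s.\ for all but finitely many positive integers $s$; in view of the existence of $R$ from Lemma~\ref{lem:limexists123}, this forces $R=-2K$, as required. The main technical obstacle is the passage to the limit in the upper bound: the confinement estimate for $\max_{[-T,T]}X^{2}$ under $\rho_{K,T}$ degrades logarithmically in $T$, so a direct transfer via $\rho_{K,T}\Rightarrow\mu_{K}$ would require a delicate balancing of the parameters $h$, $s$, and $T$. I propose to sidestep this by running the floor-raising argument directly under $\mu_{K}$ using its Brownian-Gibbs property, exploiting that the $X^{2}$-tails on any \emph{fixed} interval $[-S,S]$ stabilize in the weak limit, and bootstrapping an a priori upper bound on $X^{1}(\pm S)$ from the already-established lower bound.
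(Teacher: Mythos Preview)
Your lower bound and the Borel--Cantelli conclusion are fine and track the paper. The gap is in the upper bound. To bound $X^{1}(s)$ from above under $\mu_{K}$ via the Gibbs property on $[-S,S]$, monotonicity requires an \emph{upper} bound on the boundary data $X^{1}(\pm S)$; your first step only supplies a lower bound, and the phrase ``bootstrapping an a priori upper bound on $X^{1}(\pm S)$ from the already-established lower bound'' does not correspond to any mechanism---a lower bound on the boundary pushes the resampled path up, not down. Under $\mu_{K}$ alone you have no control from above on $X^{1}(\pm S)$ at this stage (extremality of $\mu_K$ is not yet known, so you cannot even assert $R<\infty$ is deterministic), so the argument is circular.

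The paper resolves this precisely by staying under $\rho_{K,T}$, where the boundary $X^{1}(\pm T)=T^{2}-2KT-T^{0.6}$ is explicit. With $X^{2}$ confined (uniformly in $T$, by removing $X^1$ and applying the zero-boundary bounds from \cite{geomarea}), one switches to the \PAR\ picture, bounds $Z^{1}(0)$ via Lemma~\ref{lem:tailbounds}, and then uses Lemma~\ref{lem:brownianav2} to show the unconditioned Brownian bridge from the known boundary already avoids $-\p+X^{2}$ with high probability; standard bridge estimates then give the two-sided bound \eqref{toplinecouple4567} uniformly in $T$, which passes to $\mu_{K}$ by weak convergence. Your worry about logarithmic degradation of the $X^{2}$ confinement in $T$ is not an obstacle: the bound $X^{2}(t)\le t^{\delta}+s^{\delta}$ for all $t\ge 0$ follows from the stretched-exponential tail and a dyadic union bound, and is uniform in $T$. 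The correct fix for your argument is therefore not to move to $\mu_{K}$, but to stay with $\rho_{K,T}$ and its known boundary.
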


\begin{proof}
Let us first observe that under $\rho_{K,T}$ the lines indexed $2$ onwards  are confined, uniformly in $T$. By monotonicity, we can ignore  the top line, and thus $(X^2,X^3,\dots)$ is stochastically dominated by the $\lambda$-tilted line ensemble with zero boundary and overall strength $2\lambda$. Therefore,  \cite[Theorem 3.1]{geomarea} implies  that for suitable constants $C,c>0$, 
\begin{equation}\label{strbound}
\P\left[\max_{u\in[-S,S]}X^{2}(u) \ge y \right]\le CS\exp(-cy^{3/2}),
\end{equation}
for all $S\geq 1$ and $y>0$. These estimates are uniform in $T$, and thus they carry over to the limiting measure $\mu_K$. Moreover, note that by construction, the estimate is uniform in $K$ as well. 

Using \eqref{strbound}, and a union bound over the sequence of $S_i=s2^i$,  it follows that for any $\delta>0$ and  for any $s$ greater than a fixed constant, with probability at least $1-\exp(-s^{\delta}),$ for all $t\geq 0,$ one has 
\begin{equation}\label{floorbound}
X^2(t)\le t^{\delta}+s^{\delta}.
\end{equation}
Taking $\delta$ small enough, this will essentially allow us to ignore $X^2$ and treat $X^1$ as a single line allowing usage of one-line estimates.  Switching to $\PAR,$ we have that under $\rho_{K,T}$, conditionally on $X^2$, the random path $Z^1:=-\p+X^1$ has the law of the Brownian bridge $\B^{-2KT-T^{0.6}}_T$ conditioned to avoid $-\p+X^2$.
We claim that \begin{equation}\label{toplinecouple456}
\big|Z^1(\pm s)+2Ks\big|\le 
s^{2\delta}{\sqrt s},
\end{equation}
 with probability at least $1-\exp(-s^{c})$, for some constant $c>0$, uniformly in $T$. 
The lower bound on $Z^1(\pm s)+2Ks$, by monotonicity, can be reduced to the single line case, and therefore follows from  Lemma \ref{lem:closei}. Hence we will focus on the upper bound. 
For this, using the same argument as in the tail bounds from Lemma \ref{lem:tailbounds}, we see that $Z^1(0) \le s^{2\delta}$ with probability at least $1-\exp(-s^\delta)$. This uses Lemma \ref{lem:brownianav2} which shows that starting from $s^{2\delta},$ a Brownian bridge to $-2KT-T^{0.6}$ avoids $-\p+X^2$ with probability at least $1-\exp(-s^\delta)$, provided $X^2$ satisfies \eqref{floorbound} and $\delta$ is small enough. Hence it suffices to prove \eqref{toplinecouple456} for the Brownian bridge in which case it follows from standard Brownian estimates. 
Putting everything together, and switching back to the primal representation, we get, for any  $s>0$,
\begin{equation}\label{toplinecouple4567}
\big|X^1(\pm s)-s^2+2Ks\big|\le s^{2\delta}{\sqrt s},
\end{equation}
with probability $1-C\exp(-s^c)$, for some constants $C,c>0$. This estimates holds for the top path $X^1$ under $\rho_{K,T}$, uniformly in $T$, and thus continues to hold for $X^1$ under $\mu_K$. This, along with an application of the Borel-Cantelli lemma %
finishes the proof. 
\end{proof}

\subsection{The key technical coming down estimate}
Before addressing the problem of uniqueness, we present a  key technical device that will be needed. 
In words, this states that if all the lines start at $T^{2}-2KT$ at $\pm T,$ there exists a macroscopic domain $[-\eta T, \eta T]$ with $\eta=\eta(\lambda)$, such that by then the second line has descended to a height which is an arbitrarily  small polynomial in $T$.
\begin{ppn}\label{ppn:refinedbnd}  Let $\uX$ be the $\lambda$-tilted $n$-line ensemble on $[-T,T]$ with boundary conditions $x_T=T^2 - 2KT$ for all lines, that is $\uX
 \sim\L_{n;T}^{\ux_T}$ where $\ux_T\equiv x_T$. For any $\delta>0$, 
   there exist  constants $\eta=\eta(\lambda)\in(0,1)$, and $C,c>0$ depending on $\lambda$ and $\delta$, %
   such that, with $T_1=\eta T$, uniformly in $n\in\bbN$ and $K>0$,  %
    \begin{gather}
      \label{lowerline1}    \P[X^{2}(\pm T_1) \le {T_1^{\del}}]\geq 1 -C\Exp{-T^{c}}.
    \end{gather}
\end{ppn}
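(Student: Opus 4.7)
The plan is to first use monotonicity and Brownian scaling to reduce to a single-line-like PAR problem for the top line of a rescaled multi-line ensemble whose boundary lies strictly below the natural parabola, and then to establish a precise coming-down estimate via a reverse induction on the line index. By Lemma~\ref{lem:monotonicity}, removing the ceiling imposed on $X^2$ by $X^1$ shows that $X^2$ is stochastically dominated by the top line $\bar X^1$ of the $(n-1)$-line ensemble $(X^2,\ldots,X^n)$, which has tilt strengths $2\lambda,2\lambda^2,\ldots$ and common boundary $x_T=T^2-2KT$. The Brownian scaling relation of Lemma~\ref{lem:scaling} then identifies $\bar X^1(t)$ in law with $\lambda^{-1/3}\hat X^1(\lambda^{2/3}t)$, where $\hat X$ is a $\lambda$-tilted LE with top-line strength $2$ on $[-\hat T,\hat T]$, $\hat T=\lambda^{2/3}T$, and common boundary $\hat x_T=\lambda^{-1}\hat T^2-2K\lambda^{-1/3}\hat T$. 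Passing to PAR via Lemma~\ref{lem:girsanov}, set $\hat Y^1:=\hat X^1-t^2$; conditionally on $\hat X^{\ge 2}$, $\hat Y^1$ is a Brownian bridge from $\hat x_T-\hat T^2=-\alpha\hat T^2+O(\hat T)$ (with $\alpha=1-\lambda^{-1}$) to itself, conditioned to stay above $\hat X^2-t^2$. Lemma~\ref{lem:tangency} applied to the tangent from $(\pm\hat T,-\alpha\hat T^2)$ to the parabola $-t^2$ locates the internal tangency at $\pm\hat T(1-\lambda^{-1/2})(1+o(1))$. We choose $\eta:=\tfrac12(1-\lambda^{-1/2})\in(0,1)$, so that $\eta\hat T$ lies strictly inside the tangency window; there the hydrodynamic picture predicts that $\hat Y^1$ hugs $-t^2$ with Ferrari--Spohn-type $O(1)$ fluctuations, which translates to $\hat X^1=\hat Y^1+t^2$ being far below the target $\hat T^\delta$.

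To make this heuristic rigorous we carry out a reverse induction on the line index of $\hat X$, working from the bottom line $\hat X^{n-1}$ up to $\hat X^1$. For the base case, dropping the upper lines on $\hat X^{n-1}$ only increases it, reducing to a single-line area-tilted excursion with strength $2\lambda^{n-2}$ and boundary $\hat x_T$; Corollary~\ref{lem:comingdown2} then pins this excursion to height $O(\lambda^{-(n-2)/3})$ outside a thin boundary layer. For the inductive step going from line $i+1$ to line $i$, one first uses the hypothesis on $\hat X^{i+1}$ together with tangency-based Gaussian tail bounds in the spirit of Lemma~\ref{lem:tailbounds} to obtain a rough polynomial upper bound on $\hat X^i(\pm\eta_{i+1}\hat T)$. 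The strong Gibbs property (Lemma~\ref{sabg}) applied to a stopping domain just inside $[-\eta_{i+1}\hat T,\eta_{i+1}\hat T]$ then allows one to resample $\hat X^i$: conditionally on $\hat X^{>i}$ and on the endpoint values, its law is that of a single area-tilted excursion with strength $2\lambda^{i-1}$, floor $\hat X^{i+1}$ (small by induction), and polynomial boundary values. Switching to PAR via Lemma~\ref{lem:girsanov} and applying the double-sided coming-down estimate of Lemma~\ref{lem:doublesidedcomingdown} with $a=\lambda^{i-1}$ and $H$ given by the boundary bound then yields $\hat X^i(\pm\eta_i\hat T)\le h_i$ for a slightly shrunk $\eta_i<\eta_{i+1}$ and controlled $h_i$. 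Iterating up to $i=1$ and undoing the scaling delivers the estimate \eqref{lowerline1}.

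The chief obstacle is quantitative: the induction must be set up so that $(\eta_i,h_i)$ and the probability losses do not degenerate as $i$ ranges over $\{1,\ldots,n-1\}$, ensuring that $\eta_2=\eta$ depends only on $\lambda$ and that the cumulative failure probability remains stretched-exponentially small in $T$, uniformly in $n$. Two features of the geometric area-tilt model make this possible. First, after rescaling to line $i$ the admissible tangency window has width $1-\lambda^{-(i-1)/2}$, which approaches $1$ with $i$, so the shrinking gaps $\eta_{i+1}-\eta_i$ can be chosen summable (for instance geometrically decaying). Second, the effective time horizon for line $i$ is $\lambda^{(i-1)2/3}T$, which grows geometrically, so the Ferrari--Spohn-type coming-down estimates at each inductive level fail only with stretched-exponentially small probability, at a rate that improves as $i$ grows. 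A careful bookkeeping and a union bound over the at most $n-1$ steps then yield the uniform bound $1-C\exp(-T^c)$ asserted in the proposition.
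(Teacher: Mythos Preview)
Your high-level strategy — reverse induction on the line index, using that for line $i$ the PAR picture has a tangency window whose complement shrinks geometrically in $i$ — is indeed the paper's approach, and your initial reduction (removing $X^1$, then rescaling so the top remaining line has strength $2$ and boundary $\lambda^{-1}\hat T^2$) is a valid and clean way to set things up. Your identification of $\eta$ via the tangency location also matches what the paper obtains.

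However, the inductive step as you describe it does not close. The difficulty is in your first sub-step: obtaining a ``rough polynomial upper bound on $\hat X^i(\pm\eta_{i+1}\hat T)$'' from the hypothesis on $\hat X^{i+1}$. Your hypothesis controls $\hat X^{i+1}$ only on the middle interval $[-\eta_{i+1}\hat T,\eta_{i+1}\hat T]$. To bound $\hat X^i$ at $\pm\eta_{i+1}\hat T$ you must resample $\hat X^i$ on a domain containing these points, and the floor is $\hat X^{i+1}$ on that \emph{entire} domain. Outside $[-\eta_{i+1}\hat T,\eta_{i+1}\hat T]$ the only bound available on $\hat X^{i+1}$ is the crude $\hat X^{i+1}\le \hat x_T+O(1)\asymp \lambda^{-1}\hat T^2$. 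In PAR for line $i$, this floor sits at $\hat x_T-\lambda^{i-1}t^2+O(1)$, which on the stretch $(\eta_{i+1}\hat T,\xi_i)$ is \emph{above} the parabola $-\lambda^{i-1}t^2$ by order $\hat T^2$, hence above the typical tangent-line/parabola profile of $\hat Y^i$. The conditioning therefore pushes $\hat Y^i$ up by order $\hat T^2$, and ``Gaussian tail bounds in the spirit of Lemma~\ref{lem:tailbounds}'' yield only $\hat X^i(\pm\eta_{i+1}\hat T)=O(\hat T^2)$, not a polynomial bound. This is exactly why the paper carries a full-interval upper bound as the inductive hypothesis: the function $\mathrm{Floor}_k+\p_k$ bounds $X^{k+1}$ on all of $[-T,T]$, encoding the tangent-line behavior near the endpoints, and the core of the argument is the deterministic geometric verification that the path profile of line $k$ (Lemmas~\ref{lem:stayclose}--\ref{lem:stayclose3}) lies below the next floor $\mathrm{Floor}_{k-1}+\p_{k-1}$. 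Without a full-interval hypothesis of this shape, the step from $i+1$ to $i$ cannot be made.

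A secondary issue: your base case invokes Corollary~\ref{lem:comingdown2} for the bottom line, but the hypothesis $T\ge H^{3/2}a^{-1/6}$ there requires, with $H\asymp\lambda^{-1}\hat T^2$ and $a=\lambda^{n-2}$, that $\lambda^{n}\gtrsim \hat T^{12}$; this fails whenever $n\lesssim \log T$, i.e., precisely in the regime the proposition must cover uniformly. The paper handles this by splitting into ``heavy'' indices $k>n_0\asymp\log T$ (where your envelope-style argument via Corollary~\ref{lem:comingdown2} does work) and ``light'' indices $k\le n_0$ (where the Floor/Path machinery is needed).
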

The proof of Proposition \ref{ppn:refinedbnd} 
constitutes a significant part of the technical component of this work. It relies on an elaborate scheme providing control on the $k$-th path by induction over $k$. 
This is accomplished in Section \ref{sec:inductive}. In the present section we  assume its validity to finish the proof of Theorem \ref{thmmanylines} and Theorem \ref{thmmanylinesconf}.

\subsection{Uniqueness}
It will be useful to first obtain confinement estimates for the second line onwards that apply to any Gibbs state, and hence we begin by proving 
the following statement. %
\begin{lem}\label{confine345} 
The following applies to any Gibbs state $\mu'_K\in\cG_\infty$ whose top path has left and right limits $L=R=-2K$, with $K>0$. 
There exists a constant $C$ independent of $K$ such that for all integers $ k\ge 1$, for all $s\in\bbR$, if $\uX\sim \mu'_K$,
then 
\begin{equation}\label{eq:tight3}
\bbE\left[X^{k+1}(s)\right]\le C\lambda^{-k/3}, 
\end{equation}
and such that, for all $S>0$ and all $k\ge 1$
\begin{equation}\label{eq:tight3a}
\sup_{s\in\R}\,\bbE\left[\max_{u\in[-S,S]}X^{k+1}(s+u) \right]\le C\lambda^{-k/3}[1+\log(1+S\lambda^{2k/3})].
\end{equation}
Moreover, there exist constants $c,\alpha>0$ independent of $K$ such that for all $k\ge 1$,
\begin{equation}\label{eq:stretchedexp10002}
\P\left[\max_{s\in [-S,S]}X^{k+1}(s)> t\lambda^{-k/3}\right]\le CS \exp{\left(-c\,t^{\alpha}\right)}\,, \qquad \forall\, S\ge 1,\quad \forall \,t>0\,.
\end{equation}
\end{lem}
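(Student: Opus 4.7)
The strategy is to reduce the general Gibbs state $\mu'_K$ to the controlled-boundary setup of Proposition \ref{ppn:refinedbnd} via two successive applications of the Brownian--Gibbs property on nested intervals $[-T,T]\supset[-T_1,T_1]$ with $T_1=\eta T$, and then invoke the known confinement estimates for the zero-boundary state $\mu^0$ from \cite[Theorem 3.1]{geomarea}. This is the same pattern already used in the proof of Lemma \ref{correct behavior} for the specific measure $\mu_K$; the new twist is that we no longer have \emph{a priori} zero boundary for the lower lines.

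\emph{Step 1: top-line control at $\pm T$.} The hypothesis $L=R=-2K$ together with Lemma \ref{lem:limexists123} yields $(X^1(t)-t^2)/|t|\to -2K$ $\mu'_K$-a.s., and a quantitative refinement (in the spirit of the upper-bound argument in Proposition \ref{ppn:fscouple}) gives, for any $\eps\in(0,K)$,
\[
\P_{\mu'_K}\bigl[X^1(\pm T)\le T^2-(2K-\eps)T\bigr]\ge 1-\eps\,,\qquad T\ge T_0(\eps).
\]

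\emph{Step 2: descent of $X^2$ to height $T_1^\delta$.} Using the Brownian--Gibbs property on $[-T,T]$ for the first $n$ lines and monotonicity (Lemma \ref{lem:monotonicity}) --- raising every boundary value up to $T^2-(2K-\eps)T$ and dropping the floor $X^{n+1}$ to zero --- we see that on the Step 1 event the conditional law of $(X^1,\dots,X^n)|_{[-T,T]}$ is stochastically dominated by $\L^{\ux,\ux}_{n;T}$ with $\ux\equiv T^2-(2K-\eps)T$. Proposition \ref{ppn:refinedbnd} (applied with $K-\eps/2$ in place of $K$) then gives, uniformly in $n$,
\[
\P_{\mu'_K}\bigl[X^2(\pm T_1)\le T_1^\delta\bigr]\ge 1-\eps-Ce^{-T^{c}}.
\]

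\emph{Step 3: reduction to $\mu^0$ and conclusion.} Now apply the Brownian--Gibbs property on $[-T_1,T_1]$ to the lines $(X^2,X^3,\dots)$, dropping the top line $X^1$ (which by monotonicity only weakens the ceiling). The $i$-th remaining line carries area-tilt strength $2\lambda\cdot\lambda^{i-1}$, so this is a $\lambda$-tilted LE with overall strength $2\lambda$; on the Step 2 event, monotonicity further dominates it by that same ensemble with boundary $(T_1^\delta,0,0,\dots)$. Since $T_1^\delta=o(T_1)$, Lemma \ref{lem:doublesidedcomingdown} applied to the top line of this dominating ensemble shows that with probability $1-o(1)$ the top line descends to $O(1)$ outside a boundary layer of width $O(T_1^{3\delta/2})$; a further Gibbs resampling inside a stopping domain at which the top line reaches $O(1)$ then stochastically dominates the restriction to any fixed $s$-window by the stationary strength-$2\lambda$ zero-boundary LE. The bounds \eqref{eq:tight3}, \eqref{eq:tight3a}, \eqref{eq:stretchedexp10002} then follow from the corresponding estimates for $\mu^0$ in \cite[Theorem 3.1]{geomarea}, the factor $\lambda^{-k/3}$ arising either by iterating the argument $k$ times or directly via the scaling of Lemma \ref{lem:scaling}. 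Uniformity in $s\in\R$ is ensured by choosing $T=T(s)$ so that $s$ lies well inside $[-T_1/2,T_1/2]$, and sending $T\to\infty$ removes the residual $\eps+Ce^{-T^c}$ error. Independence of $C$ from $K$ is inherited from the $K$-uniform statement of Proposition \ref{ppn:refinedbnd}.

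\emph{Main difficulty.} The crux is Step 2: propagating the very weak linear-in-$T$ upper bound $X^1(\pm T)\le T^2-(2K-\eps)T$ into the polynomial-in-$T$ bound $X^2(\pm T_1)\le T_1^\delta$. This descent of one line is precisely the content of Proposition \ref{ppn:refinedbnd}, which is taken here as a black box; its proof, based on an inductive scheme across the lines, is the main technical work of the following section. The comparison with $\mu^0$ in Step 3 is more routine but not entirely trivial, requiring the stopping-domain argument above to absorb the residual $T_1^\delta$ boundary data.
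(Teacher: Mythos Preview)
Your overall plan matches the paper's: (i) use the assumption $L=R=-2K$ to bound $X^1(\pm T)$, (ii) feed this into Proposition~\ref{ppn:refinedbnd} to get $X^2(\pm T_1)\le T_1^\delta$, (iii) remove $X^1$ and invoke the confinement machinery from \cite{geomarea} with boundary data $T^\delta$. However, there is a genuine monotonicity error in Step~2, and Step~3 is sloppily executed.

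\textbf{The gap in Step 2.} You write that ``dropping the floor $X^{n+1}$ to zero'' yields stochastic domination of $(X^1,\dots,X^n)$ by $\L^{\ux}_{n;T}$. This is the wrong direction: by Lemma~\ref{lem:monotonicity}, lowering the floor produces a \emph{smaller} ensemble, so the zero-floor ensemble is a lower bound on $(X^1,\dots,X^n)$, not an upper bound. You cannot invoke Proposition~\ref{ppn:refinedbnd} this way. The missing ingredient is the asymptotic-pinning property (Definition~\ref{def:asympin}), which is part of the definition of $\cG_\infty$: for any $\eps>0$ one can choose $n$ large enough that $\sup_{[-T,T]}X^{n+1}\le\eps$ with probability $\ge 1-\eps$. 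On this event the floor $X^{n+1}$ is bounded above by $\eps$, and now monotonicity goes the right way: the conditional law is dominated by the $n$-line ensemble with constant floor $\eps$, to which Proposition~\ref{ppn:refinedbnd} applies after a harmless shift by $\eps$. This is exactly what the paper does.

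\textbf{Secondary issues in Step 3.} First, the dominating boundary should be $(T_1^\delta,T_1^\delta,\dots)$, not $(T_1^\delta,0,0,\dots)$; lowering the lower-line boundary to zero again gives a lower bound. Second, your application of Lemma~\ref{lem:doublesidedcomingdown} to ``the top line of the dominating ensemble'' is not justified: that line has a floor from the line below, and removing this floor gives only a lower bound on the top line. The paper sidesteps this by directly citing \cite[Proposition~7.2]{geomarea} (more precisely, the argument of Lemma~7.3 there), which is designed to handle an $n$-line ensemble with common boundary height $L=T^\delta$ as long as $\delta<1/2$, and then deduces \eqref{eq:tight3}--\eqref{eq:stretchedexp10002} via \cite[Lemma~5.6]{geomarea}. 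Your stopping-domain reduction to $\mu^0$ could in principle be carried out, but it would require an inductive bottom-up argument across the lines rather than the single-line coming-down lemma you invoke.
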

In the language introduced in \cite{geomarea} we say that the the paths indexed by $j\geq 2$ are uniformly confined, see \cite[Definition 5.1]{geomarea}. In fact, the arguments from \cite[Theorem 3.1]{geomarea} can be used to upgrade the exponent $\alpha>0$ to $3/2$ but the stated weaker bound will suffice for us.
\begin{proof}[Proof of Lemma \ref{confine345}]
We first invoke Proposition \ref{ppn:refinedbnd} which implies that for any $T$, $X^2(\pm T)$ is bounded by $T^{\delta}$ with arbitrarily large probability. To see this, note that since $L=R=-2K,$ for any $\e>0,$ for all large $T$, $X^{1}(\pm T)\le T^2-KT$ with probability at least $1-\e.$ Moreover, since $\mu'_K$ is asymptotically pinned to zero, see Definition \ref{def:asympin},  we also have that $X^{n+1}(t)\le \e$ for all $t\in[-T,T]$ with probability at least $1-\e$, for all $n$ sufficiently large. Conditionally on these events, by monotonicity we can resample the first $n$ paths with the same boundary height $x_T:=T^2-KT$ and with a floor at $\e$, so that the uniform (in $n$) bound of Proposition \ref{ppn:refinedbnd}, applied with $\delta/2$ instead of $\delta$, then implies that with probability at least $1-3\e,$ for all large $T,$
$X^{2}(\pm \eta T)\le T^{\delta/2}+\e\le (\eta T)^\delta$ for some constant $\eta=\eta(\lambda)\in(0,1)$. Replacing $\eta T$ by $T$ finishes the argument. Moreover, by a Borel-Cantelli argument, the above reasoning can be strengthened to obtain that the events $E_T:=\{X^2(\pm T)\le T^{\delta}\}$ hold eventually almost surely, say for $T\in\N$. 
Next, %
on the event $E_T$, by monotonicity we can remove the top line $X^1$ and work with the $\lambda$-LE with strength $a=2\lambda$ and boundary data all equal to $T^\delta$ on $[-T,T]$. 
At this point a straightforward modification of the proof of Proposition 7.2 in \cite{geomarea} yields the following estimates:
there exists an absolute constant $C>0$, such that 
\begin{gather}\label{eq:tight2y}
\max_{|s|\le T/2} \E\left[X^{k+1}(s)\ind_{E_T}\right]\le C\lambda^{-k/3},\\
\E\left[\max_{|s|\le S} X^{k+1}(s) \ind_{E_T}\right]\le C\lambda^{-k/3}\log(1+\lambda^{2k/3}S). \label{eq:tight2y2}
\end{gather}
for all $S\in[0,T/2]$, and for all $k\geq 1$. Note that the parameter $s$ is restricted to the bulk $[-T/2,T/2].$ This is crucial to have enough room for the paths to come down from the original height $T^\delta$ at $\pm T$, to a bounded height. 

While Proposition 7.2 in \cite{geomarea} is stated for bounded boundary data $L$, the argument in its proof, namely Lemma 7.3 there, applied with $L=T^\delta$, is actually sufficient to guarantee the statements  \eqref{eq:tight2y}-\eqref{eq:tight2y2} for our case with boundary data $O(T^\delta)$, as soon as $\delta<1/2$. Since $T$ can be taken to be arbitrarily large, and $\ind_{E_T}\to 1$ a.s., by passing to the limit $T\to\infty$, the uniform confinement bounds in \eqref{eq:tight3} and \eqref{eq:tight3a} follow by Fatou lemma.


Further, once the estimates \eqref{eq:tight2y}-\eqref{eq:tight2y2} are available, it follows from the same argument in the proof of  Lemma 5.6 in \cite{geomarea} that 
there exist constants  $\alpha,c>0$ and $C>0$\begin{equation}\label{eq:stretchedexp1001}
\P\left[X^{k+1}(s)> \lambda^{-k/3}y\right]\le C\,e^{-c\,y^\alpha}\,, \qquad \forall\, s\in [-T/2,T/2],\quad \forall \,y\in(0,T^c)\,,
\end{equation}
for any integer $k\ge 1$ whenever $y^{10}\lambda^{-2k/3} < T/2$.
By adjusting the value of the constant $c$ we can ensure that \eqref{eq:stretchedexp1001} holds for all $y\in(0,T^c)$, for all $k\geq 1$.
Again by sending $T$ to infinity, we get \eqref{eq:stretchedexp1001} for all $y$ and $s.$  %
Finally, 
a simple union bound allows one to get \eqref{eq:stretchedexp10002} from  \eqref{eq:stretchedexp1001}, see for instance \cite[Lemma 5.3]{geomarea} where a similar argument appears. 
\end{proof}

As a consequence, we have the following multi-line analog of Lemma \ref{lem:closei}.  
\begin{lem} %
For any Gibbs state $\mu'_K\in\cG_\infty$ whose top path has  limits $L=R=-2K$, with $K>0$, 
   \label{onelinelocation}
    \begin{align*}
        \P\left(X^1(s) \in s^2-2Ks + [-s^{0.6},s^{0.6}]\right) \geq 1 - C\Exp{-s^{c}}\,,
    \end{align*}
    for all $s \geq C$, where $c,C>0$ are constants. %
\end{lem}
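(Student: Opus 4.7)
The plan is to mirror the argument used for the specific measure $\mu_{K}$ in Lemma \ref{correct behavior}, using the confinement of the lower lines under $\mu'_{K}$ provided by Lemma \ref{confine345} in place of the ad hoc control used there for the second line of $\rho_{K,T}$. Specifically, we work in the parabolic barrier representation, where $Y=X^{1}-\p$ becomes a Brownian bridge avoiding the random barrier $-\p+X^{2}$; the confinement $X^{2}\le T^{\delta}$ with stretched-exponential probability (Lemma \ref{confine345}) makes this a negligible perturbation of the pure parabolic barrier $-\p$, so that the one-line concentration of Lemma \ref{lem:closei} applies almost verbatim.

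Concretely, fix $s$ large and take $T=s^{A}$ for a suitably large constant $A$. Conditioning on $X^{2}$ being confined and on the boundary values $X^{1}(\pm T)-T^{2}$ lying in a small symmetric window around $-2KT$, the strong Gibbs property (Lemma \ref{sabg}) lets us resample $X^{1}|_{[-T,T]}$, whose law in $\PAR$ is then a Brownian bridge of the type covered by Lemma \ref{lem:closei}. Choosing the parameter $u=s^{0.1}$ there yields
\[
\Abs{X^{1}(\pm s)-s^{2}-\tfrac{s\,(X^{1}(\pm T)-T^{2})}{T}}\le \tfrac12\, s^{0.6}
\]
with probability at least $1-C\exp(-c\,s^{0.1})$. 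Hence, provided one can further guarantee that $|X^{1}(\pm T)-T^{2}+2KT|\le T^{1-c_{0}}$ holds with stretched-exponential probability for some $c_{0}>0$, the linear term $s(X^{1}(\pm T)-T^{2})/T$ sits within $\tfrac12\, s^{0.6}$ of $-2Ks$, and the lemma follows by combining both bounds.

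The main obstacle is establishing this quantitative concentration of the boundary $X^{1}(\pm T)$, since the hypothesis $L=R=-2K$ gives only a qualitative almost-sure limit with no a priori rate. The plan is to bootstrap over a geometric sequence of scales $T_{k}$ starting from a very large initial scale $T^{*}=T^{*}(\mu'_{K},s)$ at which qualitative convergence already provides a weak boundary bound $|X^{1}(\pm T^{*})-T^{*\,2}+2KT^{*}|\le \eta\,T^{*}$ with failure probability at most $\exp(-s^{c})$ (the rate at which this qualitative convergence occurs for the given measure $\mu'_{K}$ is absorbed into the constant $C$ of the lemma, which is allowed to depend on $\mu'_{K}$). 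Each downward step from $T_{k+1}$ to $T_{k}$ refines the boundary accuracy by applying the $\PAR$ bridge concentration of the preceding paragraph, at the cost of a single stretched-exponential term in the union bound; after $O(\log\log s)$ iterations, the accuracy at the target scale $T=s^{A}$ reaches the required $T^{1-c_{0}}$ level. The overall philosophy parallels that of Lemma \ref{confine345}: qualitative a.s.\ hypotheses on the top line combined with the finite-volume stretched-exponential bounds (here from Lemma \ref{lem:closei} in $\PAR$, there from Proposition \ref{ppn:refinedbnd}) yield multi-line concentration estimates with the prescribed decay.
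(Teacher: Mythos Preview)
Your first two paragraphs are essentially the paper's approach: confine $X^{2}$ via Lemma~\ref{confine345}, pass to $\PAR$, and use Brownian-bridge concentration of the type in Lemma~\ref{lem:closei}. The problem is the third paragraph. The ``main obstacle'' you identify --- quantitative concentration $|X^{1}(\pm T)-T^{2}+2KT|\le T^{1-c_{0}}$ with stretched-exponential probability --- is not needed, and your proposed bootstrap does not deliver it anyway. Iterating Lemma~\ref{lem:closei} from scale $T_{k+1}$ down to $T_{k}$ gives, in $\PAR$, that $Z^{1}(T_{k})$ lies within $u\sqrt{T_{k}}$ of $\tfrac{T_{k}}{T_{k+1}}Z^{1}(T_{k+1})$; so if $|Z^{1}(T_{k+1})+2KT_{k+1}|\le \eta\,T_{k+1}$ then $|Z^{1}(T_{k})+2KT_{k}|\le \eta\,T_{k}+u\sqrt{T_{k}}$. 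The \emph{relative} accuracy $\eta$ is never improved, only degraded, so you cannot iterate your way from the qualitative hypothesis down to $T^{-c_{0}}$. Your remark that ``the rate \ldots\ is absorbed into the constant $C$'' does not help: for the lemma to hold for all large $s$ you would still need the starting failure probability at $T^{*}$ to be $\le\exp(-s^{c})$, and the almost-sure limit gives no such rate.

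The paper bypasses this entirely by a simple limiting argument. For each fixed $\e>0$, the qualitative hypothesis $L=R=-2K$ gives $\P(E_{\e})\to 1$ as $T\to\infty$, where $E_{\e}=\{(X^{1}(\pm T)-T^{2})/T\in -2K+[-\e,\e]\}$. On $E_{\e}$ (and with $X^{2}$ confined), one first anchors at the origin via Lemma~\ref{lem:tailbounds} to get $X^{1}(0)\le s^{3\delta}$ with probability $1-\exp(-s^{c})$; this tail bound depends only on $K$, not on $\e$. Then a Brownian-bridge estimate on $[0,T]$ from the origin anchor gives, with probability $1-C\exp(-s^{c})$ and constants independent of $\e$, that $X^{1}(s)\in s^{2}-(2K-\e)s+[-s^{0.6},s^{0.6}]$. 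Sending $T\to\infty$ removes the conditioning on $E_{\e}$, and then sending $\e\to 0$ (monotone convergence of the events) yields the stated bound. The point you missed is that the stretched-exponential constants are uniform in $\e$, so the $\e$-dependence sits only in the location of the window and disappears in the limit; no bootstrap is required.
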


\begin{proof}Since the lower bound can be inherited from the one-line case in Lemma \ref{lem:closei} using monotonicity, we move on to the upper bound. Thanks to \eqref{eq:stretchedexp10002}, the bound stated in \eqref{floorbound} on $X^2$ continues to hold even for $\mu'_K,$ %
with probability at least $1-\exp(-s^{c})$.  Further, since by hypothesis on $\mu'_{K},$ the limits $R=L=-2K,$ one has that that given $\e>0,$ for all large $T,$ 
with probability at least $1-\e,$
\begin{equation}\label{eq:tcond}
(X^{1}(\pm T)-T^2)/T
\in -2K+[-\e,\e].
\end{equation}

Conditioning on $X^2$ satisfying \eqref{floorbound} and further on $X^1({\pm T})$ as in \eqref{eq:tcond}, one can now follow the arguments of Lemma \ref{lem:tailbounds} to get that with probability at least $1-\exp(-s^{\delta}),$ we have $X^{1}(0) \le s^{3\delta}.$ 
We now
condition further on $X^{1}(0)$ at a value less than $s^{3\delta},$ and switching to $\PAR,$ and letting $Z^1=-\p+X^1,$ we see that $Z^{1}$ on $[-T,0]$ is upper bounded by a Brownian Bridge between $-(2K-\e)T$ and $s^{3\delta}$ conditioned to avoid $-\p+X^2.$ However by Lemma \ref{lem:brownianav2}, the avoidance constraint is satisfied by a Brownian bridge with the same endpoints with probability at least $1-\exp(-s^{\delta}).$ Now just by Brownian bridge estimates, we get, for a suitably small constant $c>0$,   
$$\P\left(X^1(s) \in s^2-(2K-\e)s + [-s^{0.6},s^{0.6}]\right)\ge 1-\exp(-s^{c}).$$
Now sending $T \to \infty$, and then $\e \to 0$, the proof is complete.
\end{proof}

We will now prove that $\mu'_K=\mu_K$. 
The proof will proceed by analyzing LEs on the domain $[-T,T]$ and then passing to the limit. 
Using the previous lemma, it is clear that one can couple $\uX\sim \rho'_{K,T}$, where $ \rho'_{K,T}$ denotes the restriction of $\mu'_K$ to $[-T,T]$, and $\uY\sim \rho_{K,T}$ on $[-T,T]$ such that with probability at least $1-\exp(-T^c),$  we have
$X\succeq Y.$ The proof will then be based on
constructing a different coupling which reverses this order with large probability.

\subsection{Reverse coupling}\label{sec:revco}
Let $ \rho'_{K,T}$ denote the restriction of $\mu'_K$ to $[-T,T]$, where $\mu'_K\in\cG_\infty$ is any Gibbs measure whose top path has left and right limits $L=R=-2K$, with $K>0$. 
\begin{lem}\label{reversecoupling}
There exists  a constant $c>0$ and a coupling of $\uX\sim \rho'_{K,T}$ and $\uY\sim \rho_{K,T}$
such that with probability at least $1- \varphi(T)$
one has  %
\begin{gather}\label{eq:revco}
X^j(s)-Y^j(s) %
\le  \varphi(T)\,,\qquad \forall \, j=1,2,\dots %
\,,\quad \forall \,s\in[-T^c,T^c],
\end{gather}
where $\varphi(T) = \exp{\left[-(\log T)^c\right]}$. 
 \end{lem}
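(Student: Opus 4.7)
The overall strategy is to sample $\uX\sim\rho'_{K,T}$ and $\uY\sim\rho_{K,T}$ independently, construct a random stopping domain $[\sigma_\ell,\sigma_r]$ with $\sigma_\ell\in[-2T^c,-T^c]$ and $\sigma_r\in[T^c,2T^c]$ on which $X^j(\sigma_\ell)\le Y^j(\sigma_\ell)$ and $X^j(\sigma_r)\le Y^j(\sigma_r)$ hold for every line $j\le n:=\lceil(\log T)^2\rceil$, and then apply the strong Gibbs property (Lemma \ref{sabg}) together with the monotonicity Lemma \ref{lem:monotonicity} to resample the first $n$ lines of both processes inside $[\sigma_\ell,\sigma_r]$ under the monotone coupling, producing $\uX^{\le n}\preceq\uY^{\le n}$ on the whole window. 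For the lines $j>n$, both $X^j$ and $Y^j$ are pointwise bounded by $\lambda^{-n/3}\le\varphi(T)$ with high probability by the confinement estimate \eqref{eq:stretchedexp10002} of Lemma \ref{confine345}, so \eqref{eq:revco} will hold automatically on those lines. The exponent $c\in(0,1)$ must be chosen small enough (say $c<1/10$) that inside $[-2T^c,2T^c]$ the paths have forgotten the different boundary corrections at $\pm T$, so that the marginals of $\uX$ and $\uY$ become statistically comparable on this mesoscopic window.

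The boundary a priori bounds required for the construction are already in hand. For the top line, Lemma \ref{onelinelocation} applied to $\mu'_K$ gives $|X^1(\pm s)-s^2+2Ks|\le s^{0.6}$ with probability $\ge 1-C\exp(-s^c)$, and \eqref{toplinecouple4567} from the proof of Lemma \ref{correct behavior} provides the identical estimate for $Y^1$; both are applied at $s\sim T^c$. For the lower lines, Lemma \ref{confine345} applied to both $\mu'_K$ and $\mu_K$ (the hypothesis of top-line limits $-2K$ is met by each) gives stretched-exponential confinement on the uniform scale $\lambda^{-j/3}$, whence $\max_{|s|\le 2T^c}\bigl(X^j(s)\vee Y^j(s)\bigr)\le \lambda^{-j/3}(\log T)^{O(1)}$ simultaneously for $j=2,\dots,n$ with probability at least $1-\varphi(T)$ by a union bound.

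The core and most delicate step (the main obstacle) is producing a single time $\sigma_r\in[T^c,2T^c]$ at which $X^j(\sigma_r)\le Y^j(\sigma_r)$ holds simultaneously for every $j=1,\dots,n$, with failure probability bounded by $\varphi(T)$. For $j=1$, one conditions on the boundary data at $\pm T$ and uses the Gaussian-bridge arguments underlying Lemma \ref{lem:closei} to see that $Y^1-X^1$ on $[T^c,2T^c]$ is, up to negligible deterministic corrections, a centered bridge-type Gaussian process with diffusive fluctuations of order $\sqrt{T^c}$; its running maximum over a subinterval of length $(\log T)^{O(1)}$ exceeds any polylogarithmic threshold with overwhelming probability. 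For $j\ge 2$, the plan is to adapt the pinning technique from the proof of Proposition \ref{ppn:fscouple}: through a careful monotone comparison that exploits the confinement of $Y^{j+1}$ as a high-probability floor and of $Y^{j-1}$ as a high-probability ceiling at a constant height $H=H(\lambda,K)$, $Y^j$ is shown to stochastically dominate an area-tilted excursion of strength $2\lambda^{j-1}$ pinned at zero at every integer in $[T^c,2T^c]$; the maximum of the resulting $\Theta(T^c)$ approximately independent peaks is of order $\lambda^{-j/3}\sqrt{\log T}$, comfortably exceeding the confinement-scale upper bound $O(\lambda^{-j/3})$ on $X^j(\sigma_r)$ from \eqref{eq:stretchedexp10002}. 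Choosing $\sigma_r$ as a joint peak-location across $j=2,\dots,n$ and applying a union bound yields the simultaneous ordering; $\sigma_\ell$ is constructed symmetrically. Once $[\sigma_\ell,\sigma_r]$ is in hand, a standard application of Lemma \ref{sabg} and Lemma \ref{lem:monotonicity} to the two $n$-line floored Gibbs measures $\L^{X^{\le n}(\sigma_\ell),X^{\le n}(\sigma_r)}_{n;\sigma_\ell,\sigma_r}[X^{n+1}]$ and $\L^{Y^{\le n}(\sigma_\ell),Y^{\le n}(\sigma_r)}_{n;\sigma_\ell,\sigma_r}[Y^{n+1}]$ furnishes the coupling of \eqref{eq:revco}, the $\varphi(T)$-error absorbing the mismatch between the two floors $X^{n+1},Y^{n+1}$.
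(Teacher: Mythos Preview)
Your high-level architecture matches the paper's: truncate to finitely many lines, find a mesoscopic stopping domain on which the boundary data of $\uY$ dominates that of $\uX$, and resample monotonically inside. However, two concrete choices in your proposal would cause the argument to fail as written.

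First, the truncation level $n=\lceil(\log T)^2\rceil$ is too large. The ``joint peak'' event you need---that at a single site all of the lower lines $Y^2,\dots,Y^n$ of the pinned comparison process simultaneously exceed a threshold $v\lambda^{-(j-1)/3}$---has probability bounded below only by $e^{-\Theta(nv^2)}$ (this is essentially the content of the estimate $p_k\ge e^{-kv^2}$ used in the paper, coming from \cite[Lemma~6.2]{geomarea}). Since you must take $v$ at least of order $(\log T)^{1/\alpha}$ to beat the confinement bound on $X^j$ uniformly over the window and over $n$ lines, you obtain $nv^2\gtrsim(\log T)^{2+2/\alpha}$, so the joint event has probability $\exp\!\bigl(-(\log T)^{2+2/\alpha}\bigr)$, and no number of trials in a window of size $T^c$ can produce a success. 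The paper takes $k=\lfloor(\log T)^\gamma\rfloor$ with $\gamma>0$ small precisely to keep $kv^2\ll\log T$; the trade-off that $X^{k+1}\le\lambda^{-k/6}$ then gives only $\exp\!\bigl(-(\log T)^\gamma\bigr)$ is exactly the origin of the function $\varphi(T)$.

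Second, and more structurally, the paper handles the top line $j=1$ by a different mechanism than the one you outline. Rather than seeking a crossing $Y^1(\sigma_r)\ge X^1(\sigma_r)$ (which would have to be synchronized with the lower-line joint peak, a coordination you do not explain), the paper defines the stopping domain $[\tau_\ell,\tau_r]$ using \emph{only} the lower lines $Y^2_k,\dots,Y^k_k$. This makes $(\tau_\ell,\tau_r)$ independent of $\uX$, so confinement of $X^2,\dots,X^k$ can be invoked directly at those random times via a union bound. Then, conditioning on the external $\sigma$-algebras of lines $\ge 2$ for both ensembles, the paper proves a total-variation bound between the one-point marginals of $X^1$ and $Y^1_k$ at $\tau_\ell,\tau_r$ (both are approximately the same Gaussian, since the floor constraints $X^2,Y^2_k\le T^\delta$ are essentially inactive and the $T^{0.6}$ boundary discrepancy washes out at scale $T^\zeta$), which allows coupling $X^1(\tau_\ell)=Y^1_k(\tau_\ell)$ and $X^1(\tau_r)=Y^1_k(\tau_r)$ exactly. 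Your Gaussian-crossing sketch for $j=1$ is in the right spirit but does not address how the crossing time is to coincide with the joint-peak time for $j\ge 2$; the paper's equality-via-TV coupling neatly avoids this issue.
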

 As a corollary of Lemma \ref{reversecoupling} and Lemma \ref{lem:rkt}, using standard measure theoretic arguments, see e.g.\ the proof of \cite[Theorem 3.7]{geomarea},
one concludes that also $ \rho'_{K,T}$ converges to $\mu_K$ as 
$T\to\infty$,  which implies that $\mu'_K=\mu_K$, finishing the proof of uniqueness. 

\begin{proof}[Proof of Lemma \ref{reversecoupling}]
We are going to modify the argument in the proof of Lemma 7.4 in \cite{geomarea}  to handle the presence of the top line with parabolic boundary data.  {First, we show that it is sufficient to prove \eqref{eq:revco}} where $Y$ is replaced by a truncated version $\uY_k$  where all lines below the $k$-th path are removed with $k=\lfloor \left(\log T\right)^{\gamma}\rfloor$,  where $\gamma>0$ is a small constant to be fixed later. That is we let $Y_{k}$ denote the $k$-line ensemble  $[-T,T]$ with boundary condition
\begin{gather}
Y_k^1(-T)= Y_k^1(T) =T^2-2KT-T^{0.6},\\
Y^j_k(-T)= Y^j_k(T) = 0,\quad j=2,\dots,k\,,\qquad k=\lfloor \left(\log T\right)^{\gamma}\rfloor.
\end{gather}   
To see that this indeed suffices, observe that with high probability for the ensemble $\rho'_{K,T}$,  
 the $(k+1)$-th line $X^{k+1}$ does not exceed the  height $\lambda^{-k/6}$ in $[-T,T]$. 
  Namely, consider the event
\begin{equation}\label{eq:ftf1}
F_T = \left\{\max_{s\in[-T,T]}X^{k+1}(s) \le \lambda^{-k/6}
\right\}\,.
\end{equation}
Using \eqref{eq:tight2y2} and Markov's inequality, our choice of $k$ implies that for all $T$ large enough one has
\begin{equation}\label{eq:ft2}
\P(F_T)\ge 1- \lambda^{-k/7}.
\end{equation}
Since one can take the constant $c$ in the definition of $\varphi$ so that $\lambda^{-k/7}\ll \varphi(T)$, thanks to the  control on $X^{k+1}$ in \eqref{eq:ft2} and the obvious fact that $Y^{k+1}\succeq 0$, we can reduce to proving \eqref{eq:revco} only for $j\le k$ and hence by monotonicity we may work with the truncated ensemble $\uY_k.$

 Next, we observe that we can replace $\uX|_{[-T,T]}$ with law $\rho'_{K,T}$ by the paths with law $\rho'_{K,T}$ conditioned on the pinning $X^1(\pm T)= T^2-2KT+T^{0.6}$ on the top path of $\uX$ at the endpoints of the interval. Indeed, it follows from Lemma \ref{onelinelocation} and monotonicity that this is sufficient to prove \eqref{eq:revco} up to a small probability error of size $\exp(-T^c)$ for some constant $c>0$. By slight abuse of notation we continue to denote by $\uX$ this line ensemble. 
  
 Let $\hat T=T^\zeta$ where $\zeta\in(0,1)$ is a suitably small constant to be fixed later. 
 Consider the discrete time steps $s_j=-\hat T+2j-1$, $j=1,\dots,j_{\max}$,
where {$j_{\max}=\lfloor \hat T/2\rfloor$}. Similarly, let $u_j = \hat T -2j+1$, $j=1,\dots,j_{\max}$. 
Set $v:=k^b$, with $b>0$ to be fixed later, and consider the indexes 
\begin{align}\label{eq:coupp2} 
\ell_* &= \inf\{j\in\{1,\dots,j_{\max}\}:   Y_{k}^i(s_j)\ge v\lambda^{-(i-1)/3}\,,\;\forall i=2,\dots,k\}\,, \\
r_*  & = \inf\{j\in\{1,\dots,j_{\max}\}:   Y_{k}^i(u_j)\ge v\lambda^{-(i-1)/3}\,,\;\forall i=2,\dots,k\}\,.
\end{align}
Accordingly, we define the random times $\tau_\ell=s_{\ell_*}$ and $\tau_r=u_{r_*}$, and  
consider the events 
\begin{gather}\label{eq:coupp3} 
 B_T=\{\tau_\ell<-\hat T/4\,,\;\tau_r>\hat T/4 \}\,,
 \\
 D_T=\left\{\max_{s\in[-T,\tau_\ell]\cap[\tau_r,T]}X^{2}(s) \le T^\delta\right\}
\,,\quad D'_T=\left\{\max_{s\in[-T,\tau_\ell]\cap[\tau_r,T]}Y^{2}_k(s) \le T^\delta\right\}\,,\\
  G_T:=\cap_{i=2}^kG^{\,i}_T\,,
\qquad G^{\,i}_T := \left\{X^i(\tau_\ell) \le v\lambda^{-(i-1)/3}\right\}\cap \left\{X^i(\tau_r) \le v\lambda^{-(i-1)/3}\right\}\,,
\end{gather}
where $\delta \in(0,1)$ is a small constant to be chosen later. 
We will later show that for a suitable choice of the parameters involved, under the independent coupling of $\uX$ and $\uY_k$, one has 
\begin{equation}\label{eq:claimgood1}
\P(E^{\c}_T)\ll \varphi(T)\,,\quad E_T:=B_T\cap D_T\cap D'_T\cap G_T\cap F_T.
\end{equation}
For the moment we assume the validity of \eqref{eq:claimgood1} and proceed with the definition of our coupling. 
Let us recall the notation of the external sigma algebra from \eqref{eq:Bext}
\begin{gather*}
\calB_{n; \ell , r}^{\sfe} 
= \sigma\lb X^i (t ):\, \text{{either $t{\notin} (\ell , r)$ {or} $i >n$}}\rb,
\end{gather*}
and let us define the analogous sigma algebra for the ensemble $\uY_k$ by $\calS_{n; \ell , r}^{\sfe}.$ For our purposes, we  need a variant of the above which only considers the lines indexed two or greater. Thus let
\begin{gather*}
{\calB}_{n; \ell , r}^{(1),\sfe} 
= \sigma\lb X^i (t ):\, \text{{either $2\le i \le n \,\,\text{and}\,\, t{\notin} (\ell , r)$ {or} $i >n$}}\rb,
\end{gather*}
and analogously for the ensemble $\uY_k$, let the corresponding notation be $\calS_{n; \ell , r}^{(1),\sfe}.$
Note that $\tau_{\ell}, \tau_r$ are measurable with respect to $\calS_{k; \tau_\ell , \tau_r}^{(1), \sfe}$, but independent of $X$, and that $E_T$ is in the $\sig$-algebra generated by ${\calB}_{k; \tau_\ell , \tau_r}^{(1),\sfe} ,\calS_{k; \tau_\ell , \tau_r}^{(1), \sfe}$. %

We will  prove that, conditionally on %
$\calB_{k; \tau_\ell , \tau_r}^{(1),\sfe}$, $\calS_{k; \tau_\ell , \tau_r}^{(1), \sfe}$, uniformly on the event $E_T$, the total variation distance between the %
two random vectors $(X^1(\tau_\ell),X^1(\tau_r))$ and $(Y^1_k(\tau_\ell),Y^1_k(\tau_r))$ is much smaller than $\varphi(T)$. Equivalently, that there exists a coupling such  that 
\begin{equation}\label{eq:coup}
\P_{E_T}(H^{\c}_T)\ll \varphi(T)\,,\qquad H_T :=  \{X^1(\tau_\ell)= Y^1_k(\tau_\ell)\}\cap\{X^1(\tau_r)= Y^1_k(\tau_r)\},
\end{equation}
where the notation $\P_{E_T}$ denotes the supremum of the probability over all realizations of
$\calB_{k; \tau_\ell , \tau_r}^{(1),\sfe}$ and $\calS_{k; \tau_\ell , \tau_r}^{(1), \sfe}$ satisfying the event $E_T$. 
Let us for a moment assume the validity of \eqref{eq:coup} and proceed with the proof of the lemma. 

We construct our coupling of $\uX$ and $\uY_k$ as follows. Consider two independent samples of the lines $\{X^j,\,j\geq 2\}$ and the $(k-1)$-line ensemble $\{Y^j_k,\,j=2,\dots,k\}$. If the event $E_T$ does not occur, we conclude by sampling $X^1,Y^1_k$ independently. Otherwise, conditionally on $\calB_{k; \tau_\ell , \tau_r}^{(1),\sfe}$, $\calS_{k; \tau_\ell , \tau_r}^{(1), \sfe}$ we sample $(X^1(\tau_\ell),X^1(\tau_r))$ and $(Y^1_k(\tau_\ell),Y^1_k(\tau_r))$ with the optimal coupling, which satisfies the event $H_T$ with large probability by \eqref{eq:coup}. %

By definition of the events $E_T,H_T$, on $E_T\cap H_T$, at the stopping domain $[\tau_\ell,\tau_r]\supset[-\hat T/4,\hat T/4],$ the boundary values of $\uY_{k}$ are higher than or equal to the boundary values of the top $k$ lines of $X$ and the $(k+1)$-th line of $\uX$ does not exceed height $\lambda^{-k/6}$. Thus, on this event we may resample the lines  \[
\{(X^{i}(s),Y^{i}_{k}(s)),\; s\in[\tau_\ell,\tau_r]\}\] under a monotone coupling,
 which by construction guarantees that 
\begin{align}\label{eq:coupp31} 
X^i(s)\le Y^i_{k}(s) + \lambda^{-k/6}, \qquad s\in[-\hat T/4,\hat T/4],\;\;\;i=1,\dots, k,
\end{align}
with probability one. This follows from the fact that the resampling of the top $k$ lines of $X$, on the event $F_T$, using stochastic domination, can be performed with an effective floor at height $\lambda^{-k/6}$, and therefore one  can compare with the lines $Y^i_{k}$, $i=1,\dots,k$ with floor at zero, with an overall shift by $ \lambda^{-k/6}$.  
{ Note that above we crucially used the fact that 
$[\tau_\ell,\tau_r]$ is a stopping domain for $Y_k$ and independent of $X$, which allows us to use the Gibbs property to simultaneously resample $X$ and $Y_k$ on $[\tau_{\ell},\tau_r]$, given the respective boundary data.}
From \eqref{eq:coupp31}, using also $\lambda^{-k/6} \le \varphi(T)$, on the event $E_T\cap H_T$ we have the desired bound \eqref{eq:revco} by adjusting the value of the constant $c$.
This concludes the proof, assuming the validity of  \eqref{eq:claimgood1} and \eqref{eq:coup}.

We turn to the proof of  \eqref{eq:coup}.  To this end, it is convenient to introduce the following 
 events, representing a stronger version of the events $D_T,D'_T$ defined above:
\begin{gather*}
A_T = \left\{\max_{s\in[-T,T]}X^{2}(s) \le T^\delta
\right\}\,, \qquad A'_T = \left\{\max_{s\in[-T,T]}Y^{2}_k(s) \le T^\delta
\right\}\,.
\end{gather*}
Suppose that all lines $X^j, j\geq 2$ are sampled, %
and, independently sample the lines $Y^j_k, j\ge 2$. We claim that conditionally on these realizations, uniformly on the event 
\[
E'_T:=B_T\cap A_T\cap A'_T,
\] 
we can couple the two vectors $(X^1(\tau_\ell),X^1(\tau_r))$ and $(Y^1_k(\tau_\ell),Y^1_k(\tau_r))$ in such a way that the event $H_T$ has probability 
 at least $1-T^{-c}$. 
Note that this is sufficient to prove \eqref{eq:coup}. Indeed, 
using Lemma \ref{confine345}, see \eqref{eq:stretchedexp10002}, and monotonicity, we know that 
\[\P(A'_T) \ge \P( A_T)\ge 1- \exp( -T^c)\,,\]
and moreover one clearly has $A_T\cap A'_T\subset D_T\cap D'_T$. 
Since $\exp( -T^c)\le T^{-c}\ll \varphi(T)$, the TV bound %
claimed in \eqref{eq:coup} is then obtained from the above claim about coupling on the event $E'_T$ by using the fact that TV distance can only decrease on further averaging.  

Thus we can now focus on proving the asserted estimate on the event $E'_T$. Here all lines $X^j$, $j\ge 2$ and $Y^j_k$, $2\le j\le k$ are sampled and satisfy $E'_T$, the top paths are pinned at \[X^1(\pm T)= T^2-2KT+T^{0.6}, \qquad Y^1_k(\pm T)= T^2-2KT-T^{0.6},\] and we want a coupling of the top paths at times of order $\widehat T=T^\zeta$.  
To prove the claim, we will actually prove the stronger fact  that 
uniformly on the given realization of the lower lines as above, 
the law of $X^{1}$ on $[-T, \tau_{\ell}] \cup [\tau_{r}, T]$  is very close in TV distance to the parabola $\p$ plus two independent Brownian bridges on $[-T, \tau_{\ell}]$ and $[ \tau_{r}, T]$ respectively, where the values at $\pm T$ are $T^2-2KT+T^{0.6}$ while the values at $\tau\in\{\tau_{\ell}, \tau_r\}$ are two independent Gaussians $(W_1,W_2),$ of mean $\tau^2-2K|\tau|$ up to a small polynomial correction in $T$ and with variance $|\tau |(1-\frac{|\tau|}{T}).$
The same argument also shows that the above conditional TV bound holds for the law of $Y_k^{1}$ on $[-T, \tau_{\ell}] \cup [\tau_{r}, T]$. %
The small discrepancies in the boundary conditions will allow us to obtain that $Y_k^{1}(\tau)=X^{1}(\tau)$ at $\tau\in\{\tau_{\ell}, \tau_r\}$ with high probability.

To prove this, we  condition on $X^1(0)$ as well, which makes $X^1(\tau_\ell),X^1(\tau_r)$ independent.
Since we are on the event $A_T,$ we can ignore $X^2$ and raise the floor to $T^\delta$ instead, and apply Lemma \ref{lem:tailbounds}  to conclude that up to a probability of $\exp(-T^{c}),$ one has $X^1(0) \le T^{3\delta}$.
We will now switch to $\PAR,$ and let $Z^1=-\p + X^1$ denote the corresponding Brownian bridge conditioned to avoid $-\p+X^2$. Thus we may assume $Z^1(0) \le T^{3\delta}.$
Given the above, by arguments as in Lemma \ref{onelinelocation}, 
\begin{equation}\label{toplinecouple}
\big|Z^1(\pm \kappa)+2K\kappa)\big|\le \kappa^{1/2}T^{2\delta},
\end{equation}
where $\kappa= T^{10\delta}$ with probability at least $1-\exp(-T^{c}).$ Therefore,  $Z^1\mid_{[\kappa, T]}$ is $\exp(-T^{c})$ close in total variation norm to a Brownian bridge with the same endpoints, since the constraint of avoiding $-\p+X^2$ is satisfied with probability $1-\exp(-T^{c})$ by Lemma \ref{lem:brownianav2}. On the event $B_T$ we have $\tau_r=\T{T^\zeta}$ and $|\tau_\ell|=\T{T^\zeta}$. This in particular implies that at $\tau \in \{\tau_\ell,\tau_r\}$, conditionally on $Z^1(0) \le T^{3\delta}$, the random variables $Z^1(\tau_\ell),Z^1(\tau_r)$, up to a TV error of $\exp(-T^{c})$, are independent Gaussian variables with variance $(|\tau|-\kappa)(1-\frac{|\tau|-\kappa}{T-\kappa})$ and means $-2K |\tau| + O(T^{7\delta})+O(\frac{T^{0.6}T^\zeta}{T}).$ The last two error terms appear due to the conditioning on the values at $\kappa$ as well as the discrepancy of the starting points at $\pm T.$  Note that the exact same conclusion holds for $Y^1_k$ as well. In conclusion, the triangle inequality and Lemma \ref{lem:tvgau}, show that as long as, say,  $7\delta <\zeta/4 <0.1,$  we can couple $X^{1}(\tau)$ and $Y^{1} (\tau)$ with failure probability $O(T^{-c}).$ This ends the proof of  \eqref{eq:coup}.

The last bit of the proof concerns the estimate \eqref{eq:claimgood1}.
Since $D_T\cap D'_T\supset A_T\cap A'_T$ we already know that $D_T\cap D'_T$ occurs with probability at least $1- \exp(-T^c)$. Similarly, $F_T$ has been already dealt with in \eqref{eq:ft2}. It remains to show that $B_T\cap G_T$ has large probability. 

Using the independence of the random variables $(\tau_\ell,\tau_r)$ and the line ensemble $\uX$, by Lemma \ref{confine345}, see \eqref{eq:stretchedexp10002}, and a union bound one has
 \begin{align}\label{eq:couppe4} 
\P(G_T^{\c}) \le %
2 k \exp(-ck^{b\alpha }).
\end{align}
Taking e.g.\ $b=1/\alpha$ and choosing the constant $\gamma $ in the definition of $k$ sufficiently small,
\[
\P(G_T^{\c}) \ll \varphi(T).
\]
It remains to estimate $\P( B_T^{\c}) $.
We note that this event concerns the $k$-line ensemble $\uY_k$ in the interval $[-\hat T, \hat T]$. 
As in the proof of Proposition \ref{ppn:fscouple},
we will use that $\uY_k$ is stochastically higher than the field obtained by pinning at zero all lines $Y^{j}_k$, $j=2,\dots,k$ at the endpoints of the intervals $I_j$ defined by
\[
I_j = [-\hat T+2(j-1), -\hat T+ 2j]\,,\quad j=1,\dots,j_{\max}.
\] 
The intervals have length $2$ and the points $s_j$ are the midpoints of $I_j$. We call $\uZ_k$ the new $k$-line ensemble obtained in this way.  
Let $\tau'_\ell$ be the random time defined as $\tau_\ell$ with $\uZ_k$ in place of $\uY_k$. Thus by monotonicity one has $\P(\tau_\ell<-\hat T/4)\ge\P(\tau'_\ell<-\hat T/4)$. We want to prove that 
\begin{align}\label{eq:tota} 
\P(\tau'_\ell<-\hat T/4) \ge 1 - \exp({- T^c})\,.
\end{align}
By symmetry this would imply $\P(B_T^{\c}) \le 2\exp({- T^c}) \ll \varphi(T)$ and it is thus sufficient to end the proof. 
We will first ensure that the first curve remains high enough on the interval $[-\hat T,-\hat T/4]$ and does not interact with the bottom curves. Towards this we define the event 
\begin{equation}\label{eq:fat1}
V_T = \left\{\min_{s\in[-T,-\hat T/4]}Z^{1}(s) \ge \hat T
\right\}\,. %
\end{equation}
By stochastic domination, we can ignore the bottom lines and use a one-line estimate, and by Lemma \ref{onelinelocation} we can restrict to the case where  $Z^{1}(s)$ is lower bounded on $[-T,0]$ by $\p+$ a Brownian bridge starting and ending at $-2KT-T^{0.6}$ and $0$ respectively.
Straightforward Brownian estimates then show that
\begin{equation}\label{eq:fat2}
\P(V_T)\geq 1-\exp({- T^c})\,.
\end{equation}
Consider now only the intervals $I_j$, $j=1,\dots,j'_{\max}$, where $j'_{\max} =\lfloor \hat T/4\rfloor$, such that  $I_T:=\cup_{j=1}^{j'_{\max}}I_j\subset [-\hat T,-\hat T/2]$. 
On the event $V_T$ by monotonicity we may replace the lines $Z^i_k(s)$, $s\in I_T$, $i=2,\dots,k$ by a $(k-1)$-line ensemble $\bar Z_k=\{\bar Z^i_k,\; i=2,\dots,k\}$, on the interval $I_T$, obtained by imposing a ceiling at height $\hat T$.  
Now the line ensemble $\bar Z_k$ is a product over the intervals $I_j$. Tail estimates for such an ensemble, albeit without the ceiling appear in the proof of Lemma 6.2 of \cite{geomarea}. 

Nonetheless, the argument works as is, even in presence of the rather high ceiling and yields that the probability $p_k$ of the event $Y_{k}^i(s_j)\ge v\lambda^{-(i-1)/3}\,,\;\forall i=2,\dots,k$, for any given $j$ satisfies 
\[
p_k \geq e^{-kv^2}.
\] Since $\tau'_\ell\ge -\hat T/4$ implies that these events fail for all $j=1,\dots,j'_{\rm max}$, the product structure shows that 
 \begin{equation}\label{eq:fat3}
\P[\tau'_\ell\ge-\hat T/4]\le (1-p_k)^{ j'_{\max}}\leq 
\exp{\left(-c\,\hat Tp_k \right)}\leq  \exp{\left(-c\,\hat T e^{-k^{1+2b}}\right)}\,,
\end{equation}
where we use $v=k^b$ and $j'_{\max}\sim \hat T/4$. Taking as above $b=1/\alpha$ and taking the constant $\gamma $ in the definition of $k$ sufficiently small, for some new constant $c>0$, and $T$ large enough one has \eqref{eq:tota}, which ends the proof. 
\end{proof}

\subsection{Proof of Theorem \ref{thmmanylines}}
Collecting the results presented so far in this section  allows us to end the proof of Theorem \ref{thmmanylines}.
Indeed, the existence and uniqueness of the Gibbs measure $\mu_K\in\cG_\infty$ with left and right limit $-2K$ for the top path, has been established in the case $K\in(0,\infty)$. As usual, by shifting, this also shows existence and uniqueness of the measures $\nu_{\infty,L,R}\in\cG_\infty$ with left and right limit $L,R$ respectively, for all $(L,R)\in\R^2$ with $L+R<0$. As in the one-line case, uniqueness also implies extremality. Thus, it remains only to check existence and uniqueness in the cases where one or both limits are equal to $-\infty$. %
Quick adaptations of the above arguments together with results from \cite{geomarea} allow us to treat these remaining cases, as we briefly discuss below. 

\bigskip

\noindent
$\bullet$ \textbf{$L=R= -\infty.$}
We only have to address the uniqueness part here, since the zero boundary measure $\mu^0$  is already known to be a Gibbs measure measure with the requisite asymptotic properties, see e.g.\ \cite[Theorem 1.4]{CIW19}. For the uniqueness we will recycle some of the argument  presented above.  Let $\uX\sim\nu\in\cG_\infty$ denote a $\lambda$-tilted LE with left and right limit for the top path equal to $-\infty$. 
By monotonicity and Lemma \ref{confine345},
we have that 
$\uX':=\{X^j, \,j\ge 2\}$ is 
uniformly confined in the sense of \cite[Definition 5.1]{geomarea}. 
Thus, for any fixed $T>0$, 
on the interval $[-10 T, 10T]$ we can pretend that $X^{2}(s)\le T^\delta$, for any fixed $\delta>0$. Now by raising the floor to $T^\delta,$ the  analogous argument in the one-line case of Proposition \ref{ppn:fscouple} can be followed verbatim to show that with high probability on $[-T/10,T/10]$ all the lines, not just the second line onwards, are below $T^{2\delta}.$ This then by an application of \cite[Proposition 7.2]{geomarea} as explained in the proof of Lemma \ref{confine345} above, allows us to argue that the entire LE with law $\nu$ is uniformly confined in the sense of \cite[Definition 5.1]{geomarea}. Thus, by appealing to the uniqueness established in \cite[Theorem 3.7]{geomarea}, one has  $\nu=\mu^0$.
\\

\noindent
$\bullet$ \textbf{$L= -\infty, R=x.$}
As usual, by shifting one can reduce to the case $L= -\infty, R=-2.$
We proceed as above except that we have to be somewhat mindful of the different behaviors on the positive and the negative sides.  For the construction part we argue as follows. 
We define the sequence of measures $\rho_{1, \infty, T}$ which is an LE on $[-T,T]$ with boundary conditions  $X^i(T)=0$ for all $i\ge 1$,  $X^{j}(-T)=0$ for all $j\ge 2$ and $X^1(-T)=T^{2}-2T-T^{0.6}.$ 
The same arguments as in Lemma \ref{lem:rkt} then show that the sequence of measure $\rho_{1, \infty, T}$ is approximately monotone and hence weakly converges to a limiting measure, say, $\mu_{1,\infty}.$ 
By monotonicity, Proposition \ref{ppn:refinedbnd} shows that the second line of this limiting ensemble is bounded by $T^\delta$ on $[-\eta T, \eta T]$ for some $\eta=\eta(\lambda)$  with high probability. We may then
apply the one-point tail estimate of Lemma \ref{lem:tailbounds} at the origin with the parabola $-\p$ replaced by the shift $-\p+T^\delta$. This allows us  to separate the left and right parts of the top path,   and to apply the previous arguments  on each side separately, to prove the desired asymptotic behavior. This settles the existence.
Finally, to argue for uniqueness, for any other $\mu$ with the same asymptotic properties, pick $T$ such that with probability, say, $1-\delta,$ one has $(X^{1}(T)-T^{2})/T\in -2+[-\delta,\delta]$ and $(X^{1}(-T)-T^{2})/T\le -K$ for some large $K.$ 
Again the arguments above imply that second curve onwards is uniformly confined on both sides. Finally, the origin has a tight distribution conditioning on which and arguing as in the $L=R=-\infty$ case implies that all the lines of $\mu$ are uniformly confined on $(-\infty,0]$ while the same is true only for $X^2$ onwards on the right side $[0,\infty)$. Applying the argument in Lemma \ref{onelinelocation}, then proves that under $\mu$, $X^{1}(T)\in T^2-2T+[-T^{0.6}, T^{0.6}]$ with high probability.  Thus, conditioned on such boundary data at $-T$ and $T$, clearly it dominates $\rho_{1,\infty, T}.$ On the other hand the exact same argument as in Lemma \ref{reversecoupling}, allows us to reverse couple on a meso-scopic interval $[-T^c,T^c]$ for some $c>0$,  finishing the proof.

\subsection{Proof of Theorem \ref{thmmanylinesconf}}
Suppose $\mu\in\cG_{\infty,\rm ext}$. Then by Theorem \ref{thmmanylines} we may assume that the top path $X^1$ under $\mu$ has left and right limits $(L,R)\in\calT$. We may further suppose that $(L,R)\in\R^2$ with $L+R<0$, since the degenerate cases can be obtained from this non-degenerate one by monotonicity considerations. Now, by shifting we may further suppose that $L=R=-2K$ for some $K>0$. At this point, by Theorem \ref{thmmanylines} we know that $\mu=\mu_K$ is the weak limit of the LE $\rho_{K,T}$ defined in Lemma \ref{lem:rkt}. Upon ignoring the top line of $\rho_{K,T}$ and passing to the limit $T\to\infty$ we see that the second line onwards of the LE $\mu$ is stochoastically dominated by the zero boundary LE $\mu^0$, that is if $\uX\sim \mu$ and $\uY\sim \mu^0$ we can couple them in such a way that  $X^{k+1}\preceq Y^k$ for all $k\geq 1$. Therefore, the confinement estimates stated in  Theorem \ref{thmmanylinesconf} are an immediate consequence of the estimates established in \cite{geomarea} for the zero boundary LE $\mu^0$, see Corollary 2.8 and Theorem 3.1 there.

\section{Inductive framework}
\label{sec:inductive}
 \def\fl{\mathsf{Floor}}
    \def\sfl{\mathsf{\widehat{Floor}}}
    \def\hq{\mathsf{\widehat{Path}}}
    \def\q{\mathsf{Path}}
    \def\bxi{\bar{\xi}}
     \def\Err{\mathsf{Err}}
The goal of this section is to prove Proposition \ref{ppn:refinedbnd}.
We begin by recalling some of the elements of the proof idea.
The key observation being that, in the one-line case, a Gibbs measure with parameter $a>0$ growing at infinity must necessarily, up to a linear correction term, exhibit \emph{exactly} the parabolic growth $t\to \frac{a}{2}t^2$. 
The boundary conditions $x_T=T^2-KT$ imposed on the LE $\uX
 \sim\L_{n;T}^{\ux_T}$ appearing in  Proposition \ref{ppn:refinedbnd}, force the second line to grow at most like $T^2$, which by the previous comment, falls short significantly of competing with the area tilt factor, causing them to descend rapidly. 
 
 Roughly speaking, this is the mechanism leading to the strong confinement properties for 
 the second line onwards.
 Note however that rigorously establishing this picture involves some work since the one-line results do not directly apply to any line in the multiline ensemble which feels the push up from the line below.  To address this we revise and strengthen our one-line results to accommodate the presence of certain floors. This allows us to set up an inductive framework: we prove an upper bound on the $k^{th}$ line given an upper bound on the $(k+1)$-th line, and the upper bound of the $k^{th}$ curve serves as a floor in the one-line set up to prove an upper bound on the $(k-1)$-th  curve, and so on.

The proof of this is somewhat technical and has two broad parts. One part considers paths with a large index, referred to as heavy paths. %
For heavy paths, by adapting some ideas from \cite{geomarea} and by using the coming down estimates established in Lemma \ref{lem:doublesidedcomingdown} it is not too difficult to show that they descend rapidly to the desired height. 
The paths with smaller index, referred to as light paths, require a more delicate inductive argument, which constitutes the core of the present section.

    \subsection{Heavy vs. light paths}\label{heavypaths}
    The threshold between heavy and light will not be very delicate and we do this by defining $n_0=n_0(T)$ to be the largest number $k$ such that $T^{-15} \leq T\lam^{-(k - 1)/2}$. We call heavy the paths with index $k\ge n_0+1$, and light the paths with index $k\le n_0$. The choice $T^{-15}$ is rather arbitrary and any inverse polynomial would have sufficed. 
        We also stipulate the following convention for the rest of this section. Given events $E_T$ indexed by the parameter $T>0$, we say that $E_T$ occurs {\em with high probability} (w.h.p.) if
 for some constant $c>0$,  for all sufficiently large $T$,
  \begin{align}\label{eq:whp}
\P[E_T]\ge 1-  \Exp{-T^c}.      
    \end{align}
    \begin{lem}[Heavy paths]\label{lem:crudebnd}
        Let $\uX
 \sim\L_{n;T}^{\ux_T}$, where $\ux_T\equiv x_T=T^2-2KT$. %
      For any $\delta>0$, %
      uniformly in $n>n_0(T)$, 
        \begin{gather}
          \label{lowerline}    X^{n_0 + 1}(t) \leq T^\del\,, \;\text{ for all } t \in {[-T+T^{-10},T-T^{-10}]}, \qquad {\rm w.h.p.} 
           \qquad \end{gather}
\end{lem}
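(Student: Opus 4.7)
My plan is to combine the monotonicity and scaling arguments of Section~\ref{sec2} with a sharpened coming-down estimate, exploiting the fact that the violent area tilt $2\lambda^{n_0}$ (which exceeds $T^{32}$ by the definition of $n_0$) forces the $(n_0+1)$-st line from boundary height $\sim T^2$ down to near the floor within a $T^{-10}$-neighborhood of each endpoint.

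First, removing the top $n_0$ ceilings by monotonicity (Lemma~\ref{lem:monotonicity}), $X^{n_0+1}$ is stochastically dominated by the top line $Y^1$ of the $(n-n_0)$-line $\lambda$-tilted ensemble with strength $2\lambda^{n_0}$ and common boundary $T^2-2KT$ on $[-T,T]$. The lower lines $\{Y^j:j\ge 2\}$ have even stronger area tilts; by iterating Lemma~\ref{lem:scaling} and invoking the zero-boundary confinement estimates of \cite{geomarea} (after a preliminary boundary-layer descent via Corollary~\ref{lem:comingdown2}), one obtains a high-probability event $E$ on which $\max_{|t|\le T-T^{-10}}Y^2(t)\le (\log T)^2$.

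Next, conditioning on $E$ and on the realization of $Y^{\ge 2}$, the strong Gibbs property (Lemma~\ref{sabg}) reduces $Y^1$ to a single area-tilted excursion with strength $2\lambda^{n_0}$, boundary $T^2-2KT$, and floor bounded by $(\log T)^2$. On this single-line problem the crucial input is a coming-down estimate sharper than Lemma~\ref{lem:doublesidedcomingdown}: with area tilt $a=\lambda^{n_0}\ge T^{32}$, the instantaneous downward drift in the parabolic-barrier representation of Lemma~\ref{lem:girsanov} is of order $a$, producing a deterministic drop of at least $a\,T^{-10}\ge T^{22}$ over a time-step of size $T^{-10}$, which overwhelms the starting height $T^2$. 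Combining this deterministic drift with the Gaussian fluctuation scale $\sqrt{T^{-10}}=T^{-5}$ of the underlying Brownian bridge then yields $Y^1(t)\le T^\delta$ on $[-T+T^{-10},T-T^{-10}]$ with probability $1-\Exp{-T^c}$, and hence $X^{n_0+1}(t)\le T^\delta$ on the same interval.

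The main obstacle will be making the sharp coming-down estimate in the previous paragraph quantitative. Lemma~\ref{lem:doublesidedcomingdown} as stated only provides a boundary-layer width of $H^{3/2}a^{-1/6}\sim T^{-7/3}$, which is larger than our target $T^{-10}$. To close this gap one has to revisit Lemma~\ref{lem:doublesidedcomingdown} and track the $a$-dependence of the failure probability in Corollary~\ref{lem:comingdown2} more carefully---the $\exp(-c\,a^{2/3}T)$ exponent there is already far stronger than what is exploited in Lemma~\ref{lem:doublesidedcomingdown}---so that the boundary layer can be shrunk to the true drift-limited scale $H/a\sim T^{-30}$, which is comfortably inside $T^{-10}$. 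Once this sharp single-line estimate is in place, combining it with the uniform confinement of the lower lines established earlier produces the claimed bound.
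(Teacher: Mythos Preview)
Your proposal has the right instinct---the enormous area tilt $2\lambda^{n_0}\ge 2T^{32}$ is what forces the rapid descent---but there are two genuine gaps.

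First, your Step~2 (controlling $Y^{\ge 2}$ to be $\le(\log T)^2$ on $[-T+T^{-10},T-T^{-10}]$) is circular. The lines $Y^2,Y^3,\dots$ all start at the same boundary height $T^2-2KT$, so establishing that $Y^2$ comes down within a $T^{-10}$ boundary layer is \emph{the same problem} you are trying to solve for $Y^1$. To apply Corollary~\ref{lem:comingdown2} to $Y^2$ you would first need to control its floor $Y^3$, which in turn requires controlling $Y^4$, and so on. This is inherently a bottom-up induction, and ``iterating Lemma~\ref{lem:scaling} and invoking zero-boundary confinement'' does not bypass it: the zero-boundary estimates apply only once the lines have already descended, which is precisely what is at stake.

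Second, your proposed sharpening of the single-line coming-down estimate is not correct as stated. The ``drift-limited scale $H/a\sim T^{-30}$'' is not the right answer: the instantaneous drift argument you sketch does not match the actual geometry of the conditioned Brownian bridge in the \PAR. The correct boundary-layer width comes from the tangency computation of Lemma~\ref{lem:tangency}: with $H\sim T^2$ and $a=\lambda^{k-1}$, the tangency location $\xi$ satisfies $T-\xi\sim T\lambda^{-(k-1)/2}$, which for $k>n_0$ is at most $O(T^{-15})$. This is exactly what the paper exploits in Lemma~\ref{lem:highenv}, which bounds a single high-tilt line by an explicit envelope with boundary layer $\Delta_1=T\lambda^{-k/2}$ and bulk height $\Delta_2=T^\delta\lambda^{-(k-n_0)/5}$. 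The paper then runs the induction \emph{from the bottom line $k=n$ upward}: at each step one removes the lines above, uses the previous envelope as a floor, and applies Lemma~\ref{lem:highenv} again. The accumulated boundary-layer widths $\sum_{j>n_0}T\lambda^{-j/2}=O(T\lambda^{-n_0/2})=O(T^{-15})$ stay comfortably inside $T^{-10}$, and the bulk heights sum to $O(T^\delta)$. This single inductive scheme simultaneously handles all heavy lines and avoids both of your obstacles.
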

 The proof of Lemma \ref{lem:crudebnd} is given in Section \ref{sec:pfheavy}. Next, we  record the statement concerning  light paths. 
      Here we take $\uX
 \sim\L_{n_0;T}^{\ux'_T}$ to be a system of $n_{0}$  lines on the interval $[-T, T]$, referred to as the light paths, with boundary conditions $\ux'_T$ slightly higher than $x_T=T^2 - 2KT$.  
    We use an inductive framework to show that there is a subdomain $[-\eta T,\eta T]$ %
    such that at its boundary all but the top path $X^1$ come down to a height of size $O(T^\del)$. 
      We introduce %
      some notation. Though they might seem somewhat ill motivated at this point, they will be useful in stating our results. 
      The reader may also wish to refer to Figure \ref{fig:inductive-step} for an illustration of these quantities.
     
    \begin{itemize}
    \item $\p_k$ will denote the parabola $t \mapsto \lam^{k - 1} t^2$.
    \item The starting location of the $k${th} line will be $(x'_T)_k:=S_k$, where 
     \begin{align}\label{startinglocation}
     S_k := T^2 - 2KT + 4(n_{0} - k + 1) T^\del.
     \end{align}
    \item $\xi_k$ is the $x$-tangency location %
    from the point $(T, -\p_k(T) + S_k%
    )$ to $-\p_k$, and we define 
    \[\bxi_k = \tfrac{1}{2}(\xi_k + \xi_{k + 1}).
    \]
    \item %
    $\fl_k,\, k < n_{0}$ is the path defined as follows: 
          \begin{align}\label{shiftedfloor123}
            \fl_k(t) := [4(n_0 - k) + 2] T^\del +
            \begin{cases}
                                                          \ell_1(t), & t \leq -\bxi_k,             \\
                                                          -\p_k(t),  & -\bxi_k \leq t \leq \bxi_k, \\
                                                          \ell_2(t), & \bxi_k \leq t,
                                                        \end{cases}
          \end{align}
          where the straight lines $\ell_1, \ell_2$ are defined by  
          \begin{align*}
            \ell_1(t): \text{ line joining } & (-T, -\p_k(-T) + T^2 - 2KT) \text{ to } (-\bxi_k, -\p_k(-\bxi_k)), \\
            \ell_2(t): \text{ line  joining } & (T, -\p_k(T) + T^2 - 2KT) \text{ to } (\bxi_k, -\p_k(\bxi_k)).
          \end{align*}
    {Finally, let $\fl_{n_{0}} := -\p_{n_{0}}$. }

    \end{itemize}

    The path $\fl_k$ will play the role of an effective floor for the $k$-th path in the \PAR \;representation, that is $\fl_k + \p_k$ will act as the floor for the path $X^k$.  
   
   Before we state our results, we record some facts about %
   $\xi_{k}$ and $\bxi_{k}$. Note that the $x$-tangency location from the point $(T, R)$ to $-\p_k$  is decreasing in $R$. Since
    \begin{align}
      S_k \geq T^2 - 2KT = \g T^2, \quad \text{where } \; %
      \g := 1 - 2K/T \label{eq:gdef},
    \end{align}
    $\xi_k$ is upper bounded by the tangency location from $(T, -\p_k(T) + \g T^2)$. Therefore by \rlem{lem:tangency}
    \begin{align}
      \xi_k \leq T \left(1 - \sqrt{\g\lam^{-(k - 1)}}\right)
      \label{eq:xiubnd}
    \end{align}
    Also note that since $n_0=O(\log T)$, for sufficiently large $T$ we have that $S_k \leq T^2$, so that
    \begin{align}
      \xi_k \geq T\left(1 - \sqrt{\lam^{-(k - 1)}}\right).
      \label{eq:xilbnd}
    \end{align}
    A straightforward algebra then shows that, for $\be := \f{1}{2}(1 + \sqrt{1/\lam}) < 1$,
    \begin{align}
      T\Rnd{1 - \be \sqrt{\lam^{-(k - 1)}}} \leq \bxi_k \leq T\Rnd{1 - \be \sqrt{\g \lam^{-(k - 1)}}}.
      \label{eq:bxibnd}
    \end{align}

With the above notational apparatus, we now state the main result in the analysis of the light paths. 
    
            \begin{lem}[Light paths]
        \label{lem:ensinduclem}
        Let $\uX = (X^{1}, \ldots, X^{n_{0}})\sim\L_{n_0;T}^{\ux'_T}$, with $(x'_T)_k=S_k$, be the system of light paths defined above. 
        Then %
        \begin{align}
          X^{k} &\preceq \fl_{k - 1} + \p_{k - 1} \,,\quad \forall k=2,\dots,n_0\,,\qquad {\rm w.h.p.} \label{eq:ubelowfl}
        \end{align}
    \end{lem}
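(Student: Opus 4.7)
The plan is to establish \eqref{eq:ubelowfl} by downward induction on $k$, with $k$ running from $n_0$ down to $2$. Adopting the conventions $X^{n_0+1}\equiv 0$ and $\fl_{n_0} := -\p_{n_0}$, the trivial identity $X^{n_0+1} \preceq \fl_{n_0} + \p_{n_0} = 0$ serves as the base case, and the inductive step takes the unified form: if $X^{k+1} \preceq \fl_k + \p_k$ with high probability, then $X^k \preceq \fl_{k-1} + \p_{k-1}$ with high probability.

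For the inductive step, condition on the event $\{X^{k+1} \preceq \fl_k + \p_k\}$ furnished by the previous step. By the monotonicity of Lemma \ref{lem:monotonicity}, replacing the random floor $X^{k+1}$ seen by $X^k$ with the deterministic upper bound $\fl_k + \p_k$ only increases $X^k$ stochastically, so it suffices to prove the desired bound for the one-line area-tilted ensemble $\widehat{X}^k$ on $[-T, T]$ with strength $2\lambda^{k-1}$, boundary value $S_k$ at both endpoints, and hard floor $\fl_k + \p_k$. Switching to the $\PAR$ representation of Lemma \ref{lem:girsanov} at level $k$, the centered path $Y^k := \widehat{X}^k - \p_k$ has the law of a Brownian bridge $\B^{S_k - \p_k(T),\, S_k - \p_k(T)}_T$ conditioned to stay above $\fl_k$. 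Since $\p_{k-1} - \p_k = -(1-\lambda^{-1})\p_k$, the target inequality $\widehat X^k \preceq \fl_{k-1} + \p_{k-1}$ rewrites as $Y^k \preceq g_k$, where $g_k := \fl_{k-1} - (1-\lambda^{-1})\p_k$; inside the central window $|t| \leq \bxi_{k-1}$ this gives $g_k(t) = [4(n_0-k+1)+2]T^\delta - \p_k(t)$, and outside $g_k$ reduces to affine arms inherited from $\fl_{k-1}$.

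The crossing probability $\P[Y^k \not\preceq g_k]$ is then bounded by partitioning $[-T, T]$ into three regions. On the deep exterior $|t| \geq \bxi_{k-1}$, both $\fl_k$ and $g_k$ are affine and separated by at least $2T^\delta$; the bridge is concentrated near its interpolating line, and Lemma \ref{lem:brownianav2} supplies the required avoidance estimate. In the intermediate annulus $\bxi_k \leq |t| \leq \bxi_{k-1}$, the floor $\fl_k$ has transitioned to the parabola $-\p_k$ while $g_k$ is still essentially affine, and the tangency geometry of Lemma \ref{lem:tangency} together with the bounds \eqref{eq:xiubnd}--\eqref{eq:bxibnd} certifies that the two curves remain separated by a macroscopic $\Theta(\lambda^{k-1} T^2)$ gap throughout, which a standard Brownian tube estimate handles easily. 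In the interior $|t| \leq \bxi_k$, both envelopes are parabolic with leading coefficient $\lambda^{k-1}$ and separated by a constant $4T^\delta$; the avoidance conditioning pulls the bridge to within the Ferrari--Spohn scale $\lambda^{-(k-1)/3} \ll T^\delta$ of $\fl_k$, and Corollary \ref{paraavoidancecor} applied on a dyadic partition of the interval yields the required $\Exp{-T^c}$ overshoot bound.

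The main obstacle is the transition analysis near the tangency points $\pm\xi_k$, where $\fl_k$ merges smoothly with $-\p_k$ so that the clearance between $g_k$ and $\fl_k$ shrinks to tangential order $O((|t|-\xi_k)^2) + T^\delta$, and the fluctuations of the conditioned bridge are of a comparable magnitude in precisely this window. This is the reason for the choice of the midpoints $\bxi_k$ between consecutive tangencies, and for inflating the vertical cushion to the polynomial scale $T^\delta$: the $4T^\delta$ incremental offset at each step is just enough to absorb the excess Brownian fluctuations along the tangency region. Since only $O(\log T)$ light paths are iterated over, the cumulative vertical slack amounts to $O((\log T)\,T^\delta)$, which remains well below any $T^{\delta'}$ with $\delta' > \delta$ and hence does not degrade the bound across the induction.
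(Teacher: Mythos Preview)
Your overall strategy --- downward induction, monotone reduction to a single area-tilted line over the floor $\fl_k + \p_k$, and passage to $Y^k$ via the $\PAR$ --- is exactly the paper's. The gap is in the execution of the interior estimate.

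First a notational slip: since $\xi_k$ is increasing in $k$ and $\bxi_k = \tfrac12(\xi_k+\xi_{k+1})$, one has $\bxi_{k-1} < \xi_k < \bxi_k$, so your intermediate annulus ``$\bxi_k \le |t| \le \bxi_{k-1}$'' is empty and the roles of your three regions are scrambled. The paper partitions instead around $\xi_k$ and the nearby point $\xi'_k$ of \eqref{plugin25}.

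The substantive issue is your claim that in the interior ``the avoidance conditioning pulls the bridge to within the Ferrari--Spohn scale $\lambda^{-(k-1)/3}$ of $\fl_k$''. This is precisely what requires proof, and Corollary~\ref{paraavoidancecor} does not furnish it: that corollary bounds an avoidance probability, not the height of the conditioned path. The naive fluctuation bound (Lemma~\ref{lem:stayclose}) gives only $|Y^k - \q_k| \le \Err(t) = T^\delta(T-|t|)^{1/2} + T^{-10}$, which for $|t|$ near $\bxi_{k-1}$ and small $k$ is of order $T^{1/2+\delta}$, swamping your $4T^\delta$ cushion. What actually closes the induction in the paper is a \emph{coming-down} step: having placed $Y^k(\pm\xi_k)$ at height $\lesssim \Err(\xi_k)$ above $-\p_k$, one switches back to the primal picture and applies Lemma~\ref{lem:doublesidedcomingdown} to the area-tilted line $\L^{2\lambda^{k-1};\cdot}_{\xi_k}$, showing that within a buffer of width $\sim T^{3\delta/2}(T-\xi_k)^{3/4}\lambda^{-(k-1)/6}$ the path drops to height $T^\delta$ and stays there. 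This produces the refined envelope $\Err'$ of Lemma~\ref{lem:stayclose2}, after which the claim reduces to the purely deterministic inequality \eqref{eq:reqb}, verified on $[-\xi'_k,\xi'_k]$, $[\xi'_k,\xi_k]$, $[\xi_k,T]$ by the tangency calculations you allude to. Without this coming-down input, the induction does not close.
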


Let us now show that Proposition \ref{ppn:refinedbnd} follows directly from Lemma \ref{lem:crudebnd} and  Lemma \ref{lem:ensinduclem}.
The proof of the two lemmas is given in Sections   \ref{sec:pfheavy} and \ref{sec:pflight} respectively.
 
\subsection{Proof of Proposition \ref{ppn:refinedbnd}}
          
          Let $\uX \sim\L_{n;T}^{\ux_T}$, where $\ux_T\equiv x_T=T^2-2KT$. 
          Raising the floor at height $x_T$, simple monotonicity considerations and an application of  the upper tail estimates on the maximum of an infinite LE, see e.g., Corollary 5.5 from \cite{geomarea}, we obtain that, uniformly in $n$, \[\displaystyle{\max_{t\in[- T,T]}\,X^1(t)} \le x_T + T^\delta\,, \quad {\rm w.h.p.}\] Therefore, setting $S := T - T^{-10}$, 
                     \rlem{lem:crudebnd} implies that uniformly in $n$, w.h.p. 
\[
X^{1}(\pm S) \leq x_T + T^{\del} \;\;\text{ and } \;\;X^{n_0 + 1}(t) \leq T^{\del},\quad  \forall\;t \in [-S, S].
\]
         If we condition on this event,  by monotonicity, on the interval $[-S,S]$, the LE $(X^{1}, \ldots, X^{n_{0}})$ is dominated by $CT^\delta + (\widehat X^{1}, \ldots, \widehat X^{n_{0}})$, where $C$ is a suitable constant and $\widehat X$ is  the $n_{0}$-line ensemble on $[-S, S]$ with all lines having boundary $x_S=S^2-2KS$ and floor at zero. By adjusting the value of the constant $\delta>0$, and by renaming $S$ to $T$, we have  thus reduced the proof to establishing that w.h.p. one has $X^{2}(t)\le T^\delta$ for all $t\in[-\eta T,\eta T]$, where $\eta=\eta(\lambda)>0$ is a constant. Here $X^2$ denotes the second line of the ensemble $\uX\sim\L_{n_0;T}^{\ux_T}$ with $n_0$ lines, where $\ux_T\equiv x_T=T^2-2KT$. 
We further note that, by monotonicity, we may raise the boundary condition to $\ux'_T$, where $(x'_T)_k=S_k= T^2 - 2KT + 4(n_{0} - k + 1) T^\del$, and thus we are in the setting of Lemma \ref{lem:ensinduclem}. The latter implies that w.h.p.\
\[
X^{2}(t)\preceq  
            \fl_{1}(t) + \p_{1}(t) %
            \leq 4n_{0} T^{\del}\,,\qquad \forall\,t\in[-\bxi_{1}, \bxi_{1}].
\]
Since, by \eqref{eq:bxibnd}, $\bxi_{1} \ge (1-\beta)T $, one has $[-\bxi_{1}, \bxi_{1}]\supset[-\eta T,\eta T]$, where $\eta = (1-\beta)$. Since  $n_0=O(\log T)$, by adjusting the value of the constant $\delta>0$, this completes the proof of Proposition \ref{ppn:refinedbnd}.

\subsection{Proof of Lemma \ref{lem:ensinduclem}}\label{sec:pflight}
Let $\uX\sim \L_{n_0;T}^{\ux'_T} $ be as in the statement of Lemma  \ref{lem:ensinduclem}. We proceed by induction over $k$. When proving an upper bound on the $k$-th line $X^k$, by monotonicity we may remove all lines above it. Thus assuming inductively that $X^{k+1} \preceq \p_k +  \fl_{k }$, by monotonicity we may restrict to a single line with floor at $\p_k +  \fl_{k }$. By using the \PAR \; representation from Lemma \ref{lem:girsanov}, we then  consider the  bound $Y^k+\p_k\preceq \fl_{k-1 }+\p_{k-1}$, where $Y^k \sim \Bav{-\p_k(T) + S_k}{ T}{\fl_k}$. 
The proof of Lemma  \ref{lem:ensinduclem} is complete once we establish the base case $k=n_0$ and the inductive step from $k+1$ to $k$ discussed above. Thus,  using a union bound over the $n_0=O(\log T)$ paths,  Lemma \ref{lem:ensinduclem} follows from the next %
lemma.

    \begin{lem}\label{lem:induclema}
       Let $Y^k \sim \Bav{-\p_k(T) + S_k}{ T}{\fl_k}$. Then, for all $ k=2,\dots,n_0$,
        \begin{gather}
          Y^k +\p_k\preceq \fl_{k-1 } +\p_{k-1} \quad {\rm w.h.p.}
          \label{eq:topr}
        \end{gather}
  \end{lem}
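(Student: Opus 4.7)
The plan is to analyze, for each fixed $k$, the Brownian bridge $Y^k$ on $[-T,T]$ with boundary values $S_k-\p_k(T)$ conditioned to stay above the piecewise floor $\fl_k$, and to show that the conditioning concentrates $Y^k$ closely enough to $\fl_k$ that the path stays below the target ceiling $\fl_{k-1}+\p_{k-1}-\p_k$. First I would verify the static geometry: at $t=\pm T$ the target exceeds $Y^k(\pm T)$ by exactly $2T^\delta$; in the central region $[-\bxi_k,\bxi_k]$ the target is a vertical shift of $-\p_k$ by $[4(n_0-k+1)+2]T^\delta$ while $\fl_k$ is a vertical shift of $-\p_k$ by $[4(n_0-k)+2]T^\delta$, leaving a uniform $4T^\delta$ buffer; and on the sides $[\bxi_k,T]\cup[-T,-\bxi_k]$ the two functions are affine interpolations whose gap is also of order $T^\delta$. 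The tangency estimates \eqref{eq:xiubnd}--\eqref{eq:bxibnd} together with $\bxi_{k-1}>\bxi_k$ are what guarantee these buffers across the seams at $\pm\bxi_k$ and $\pm\bxi_{k-1}$.

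Next I would split $[-T,T]$ into the side regions $[-T,-\bxi_k]\cup[\bxi_k,T]$ and the central region $[-\bxi_k,\bxi_k]$, and bound $Y^k$ on each. On the sides the most probable trajectory of the unconditioned bridge with these very low endpoints is the chord from $(\pm T,S_k-\p_k(T))$ up to $\fl_k$ near $\pm\bxi_k$, and a variant of Lemma \ref{lem:closei} adapted from the pure parabola $-\p$ to the piecewise floor $\fl_k$ should give fluctuations of order $u\sqrt{s}$ around this chord at time $\pm s$ with probability $1-Ce^{-cu}$, so the $T^\delta$ buffer dominates once $u$ is taken as a small polynomial in $T$. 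On the central region the primal path $Y^k+\p_k$ is Ferrari-Spohn-like with strength $2\lambda^{k-1}$, and applying Lemma \ref{lem:fsfact}(5) after the rescaling of Lemma \ref{lem:scaling} yields pointwise upper tails of order $\exp\bigl(-c\lambda^{(k-1)/2}t^{3/2}\bigr)$ above the floor, negligible against the $4T^\delta$ buffer. The two regimes would be stitched together via the strong Gibbs property (Lemma \ref{sabg}) applied at a stopping domain located just inside $\pm\bxi_k$, namely the first time $Y^k$ enters a $T^{\delta/2}$-tube around $-\p_k$, which renders the two outer halves conditionally independent Brownian bridges with effectively known endpoints.

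The hardest part will be the seam analysis at $\pm\bxi_k$, because $\bxi_k$ is not the tangency location $\xi_k$ itself but the midpoint $\tfrac12(\xi_k+\xi_{k+1})$. Consequently the chord $\ell_2$ defining $\fl_k$ is not tangent to $-\p_k$ and may in principle dip slightly below $-\p_k$ on an interval near $\bxi_k$; one must verify that the accumulated shift $[4(n_0-k)+2]T^\delta$ in $\fl_k$ simultaneously keeps $\fl_k$ strictly above $-\p_k$ there and keeps $\fl_{k-1}+\p_{k-1}-\p_k$ strictly above the typical location of $Y^k$ on the side regions. This balancing reduces to an explicit calculation using \eqref{eq:gdef} together with \eqref{eq:xiubnd}--\eqref{eq:bxibnd}, and the coefficient $4$ in the definition of $S_k$ in \eqref{startinglocation} is presumably what absorbs the resulting corrections. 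Once the pointwise bound is established at a polynomial mesh of times with the high-probability rate of \eqref{eq:whp}, a standard union bound combined with a modulus-of-continuity estimate for $Y^k$ upgrades it to the uniform-in-$t$ statement claimed in Lemma \ref{lem:induclema}.
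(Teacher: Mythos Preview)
There is a genuine gap stemming from conflating $\bxi_k$ (where the floor $\fl_k$ switches from parabolic to linear) with $\xi_k$ (the tangency location governing $Y^k$). The typical profile of $Y^k$ follows the tangent line from $(\pm T, S_k-\p_k(T))$ to $-\p_k$, which touches at $\pm\xi_k$; since $\bxi_k=\tfrac12(\xi_k+\xi_{k+1})>\xi_k$, on $[\xi_k,\bxi_k]$ the primal $Y^k+\p_k$ is not FS-like but sits near $\lambda^{k-1}(t-\xi_k)^2$, which at $t=\bxi_k$ is of order $T^2$ by \eqref{eq:xiubnd}--\eqref{eq:bxibnd}. Your stopping time (first entry into a $T^{\delta/2}$-tube of $-\p_k$) therefore lies near $\xi_k$, not ``just inside $\bxi_k$''. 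More seriously, the side buffer is not uniformly $O(T^\delta)$: at $t=T$ it equals $4T^\delta$, but at the inner endpoint the gap between the target $\fl_{k-1}+\p_{k-1}-\p_k$ and the typical path is of order $T^2$. This enlargement is essential, because for $k=2$ the side width $T-\xi_k$ is of order $T$, so the Brownian fluctuation there is of order $uT^{1/2}$; against an $O(T^\delta)$ buffer you would need $u\ll T^{\delta-1/2}$, which yields no w.h.p.\ statement.

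The paper proceeds differently. It first shows (Lemma~\ref{lem:stayclose3}) that a bridge conditioned only to stay above the pure shifted parabola $-\p_k+[4(n_0-k)+2]T^\delta$ already avoids $\fl_k$ w.h.p., so $Y^k$ may be analyzed with a parabolic floor. Then Lemma~\ref{lem:stayclose2} (relying on the coming-down estimate of Lemma~\ref{lem:doublesidedcomingdown}) yields the two-scale envelope $\Err'$ equal to $T^\delta$ on $[-\xi'_k,\xi'_k]$ and to $\Err(t)=T^\delta(T-|t|)^{1/2}+T^{-10}$ outside. What remains is the purely deterministic inequality $\hq_k+\Err'+\p_k\preceq\sfl_{k-1}+4T^\delta+\p_{k-1}$, checked separately on $[-\xi'_k,\xi'_k]$, $[\xi'_k,\xi_k]$, and $[\xi_k,T]$; on the last two pieces the $\Theta(T^2)$ gap at $\xi_k$ (computed in \eqref{eq:slopegap}) is exactly what absorbs the $O(T^{\delta+1/2})$ fluctuation.
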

We now embark on the proof of Lemma \ref{lem:induclema}.
 {We will establish three auxiliary results, appearing in the next three lemmas, that will allow us to prove the desired statement. The first is a technical lemma which quantifies %
 the following observation already implicit in our proofs for the one-line case.
        If $Y \sim \Bav{H}{T}{-\p}$, then  $Y$  follows the the boundary of the convex hull of the region delimited by  $(-T, H), (T, H)$ and $-\p$, up to Brownian fluctuations.
        When $H < 0$, this consists of the tangent lines from $(\pm T, H)$ to $-\p$, and the stretch of $-\p$ in between the tangency locations, namely the function denoted $\q$ in \eqref{erdefpathb} below, see also Figure \ref{fig:stay-close} for an illustration. }
   Define the 
   error function  
   \begin{align}\label{erdefn234}
          \Err(t) := T^\del  (T - |t|)^{1/2} + T^{-10}\,, \qquad  t\in[- T,T]\,.
        \end{align}
    \begin{lem}[Typical path]
        \label{lem:stayclose}
{For any $a\ge1,$}
        consider $Y \sim \Bav{H}{T}{-a\p}$ {where $H \in  [-a \p(T), 0]$}. Let $\pm \xi$ be the $x$-tangency locations from $(\pm T, H)$ to $-a\p$ respectively.
        Let $\q$ denote the %
        function %
        \begin{align}\label{erdefpathb}
          \q(t) := \begin{cases}
                     \ell_1(t), & t < -\xi,       \\
                     -a\p(t),   & -\xi < t < \xi, \\
                     \ell_2(t), & \xi < t,
                   \end{cases}
        \end{align}
        where $\ell_1, \ell_2$ are the tangent lines from $(-T, H)$ to $(-\xi, -a\p(-\xi))$ and from $(T, H)$ to $(\xi, -a\p(\xi))$ respectively. 
        Then,  %
        \begin{align}
          \label{eq:errclose}
          \Abs{Y(t) - \q(t)} \leq \Err(t),\quad \forall \, t\in[- T,T]\,,\qquad {\rm w.h.p.}
        \end{align}
\end{lem}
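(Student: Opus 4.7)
My approach is to decompose the interval $[-T, T]$ into the three natural regions determined by the tangency points: the two ``tangent'' regions $[-T, -\xi]$, $[\xi, T]$ where $\q$ coincides with a straight line, and the ``parabolic'' region $[-\xi, \xi]$ where $\q = -a\p$. In each region I will establish the bound $|Y(t) - \q(t)| \le \Err(t)$ separately, pinning the path at the transition points $\pm \xi$ and then using the strong Gibbs property (Lemma~\ref{sabg}) together with the corresponding boundary values to resample and compare to tractable unconditioned processes.

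The first and most delicate step is to show that $|Y(\pm \xi) + a\xi^2| \le T^\delta \sqrt{T - \xi}$ with high probability. The lower bound follows by monotonicity: dropping the floor $-a\p$ and using standard Gaussian estimates for $\B^{H,H}_T(\xi)$, together with the inequality $H \le -a\xi^2$ which is immediate from Lemma~\ref{lem:tangency} and the hypothesis $H \in [-a\p(T), 0]$. The upper bound proceeds in the spirit of Lemma~\ref{lem:closei}: one uses monotonicity to replace the floor $-a\p$ on the tangent region $[\xi, T]$ by the line $\ell_2$, which lies below $-a\p$, and then pays a constant factor using the parabolic avoidance estimate (Corollary~\ref{paraavoidancecor}), since the unconditioned bridge from $(\xi, -a\xi^2)$ to $(T, H)$ avoids $-a\p$ with probability bounded away from zero. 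The resulting bound then follows from Gaussian fluctuation estimates for the unconditioned bridge.

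Given Step~1, the tangent regions are handled as follows. Conditioning on $Y(\xi)$ and $Y(T) = H$, the strong Gibbs property yields $Y|_{[\xi, T]} \sim \B^{Y(\xi), H}_{\xi, T}[-a\p]$. The chord joining $(\xi, Y(\xi))$ and $(T, H)$ is $O(T^\delta\sqrt{T-\xi})$-close to $\ell_2$, which in turn sits strictly above $-a\p$ with a quadratically growing margin away from the tangency point $(\xi, -a\xi^2)$. Lemma~\ref{lem:brownianav2} (or Corollary~\ref{paraavoidancecor}) shows that the unconditioned bridge between these endpoints avoids $-a\p$ with probability $1 - o(1)$, so up to a negligible cost the floor conditioning is free. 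The classical tube estimate for the unconstrained bridge then delivers $|Y(t) - \q(t)| \le \Err(t)$ throughout $[\xi, T]$; the region $[-T, -\xi]$ is identical by symmetry. In the parabolic region, conditioning on the pinned values $Y(\pm \xi)$, I invert the \PAR{} of Lemma~\ref{lem:girsanov}: the process $X := Y + a\p$ has (up to a small shift in the boundary that is absorbed via monotonicity using Step~1) the law $\L^{2a;\,0,0}_{-\xi,\xi}$. Corollary~\ref{lem:comingdown2} and Lemma~\ref{lem:fsfact} then yield $\sup_{|t|\le \xi} X(t) = O(a^{-1/3}(\log T)^{2/3})$, which is dominated by $\Err(t) \ge T^{-10}$, and translating back gives $|Y(t) + a\p(t)| \le \Err(t)$ on $[-\xi, \xi]$. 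A union bound over the three regions finishes the proof.

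\textbf{Main obstacle.} The hardest part will be Step~1, i.e.\ the sharp control of $Y(\pm\xi)$. The tangency points are precisely where the hard constraint $Y \ge -a\p$ switches from inactive to active, and the marginal of $Y(\xi)$ is sensitive to the avoidance constraint on both sides simultaneously. Extracting the two-sided bound at the correct Brownian-bridge scale $\sqrt{T - \xi}$ requires balancing the monotone tangent-line replacement against the parabolic avoidance cost, so that one does not lose a polynomial factor in $T$ when going from the unconditioned bridge to the conditioned one; this is exactly the technique honed in the proof of Lemma~\ref{lem:closei}, adapted here to boundary data that depend on the shape parameter $a$ through $\xi$ itself.
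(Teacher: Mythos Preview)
Your decomposition into tangent and parabolic regions matches the paper, but two steps are genuinely broken.

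First, you have the geometry backwards: the tangent line $\ell_2$ to the concave function $-a\p$ lies \emph{above} it, not below. This is fatal for your Step~1 upper bound (lowering a floor cannot yield an upper bound on $Y$), and it also undermines your tangent-region argument --- conditioning on $Y(\xi)$ and claiming the unconditioned bridge avoids $-a\p$ with probability $1-o(1)$ fails, since $Y(\xi)$ may sit right on the parabola. The paper exploits the correct inequality $\q\ge -a\p$ to \emph{raise} the floor globally: $Y\preceq \Bav{H}{T}{\q}$, which on $[\xi,T]$ is a bridge above the line $\ell_2$, and Lemma~\ref{lem:brownianav2} then gives $Y(t)\le\ell_2(t)+\Err(t)$ directly, with control of $Y(\pm\xi)$ as a free corollary rather than a separate preliminary step. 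Your Step~1 lower bound is also wrong as stated: dropping the floor centers $\B^H_T(\xi)$ at $H=-a\xi^2-2a\xi(T-\xi)$, far below $-a\xi^2$. The correct lower bound $Y(\xi)\ge -a\xi^2$ is just the floor constraint, and the paper's lower bound on $[\xi,T]$ uses exactly this together with dropping the floor on $[\xi,T]$ only.

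Second, your parabolic-region bound does not work. The coming-down estimates do not give $\sup_{|t|\le\xi} X(t)=O(a^{-1/3}(\log T)^{2/3})$ --- near $\pm\xi$ the process is still at its boundary height $T^\delta\sqrt{T-\xi}$ --- and even where the FS bound applies, $a^{-1/3}(\log T)^{2/3}$ is certainly not $\le T^{-10}$ (take $a=1$). The paper's move here is to \emph{drop} the area tilt by monotonicity and compare to a Brownian excursion with boundary $z=\Err(\xi)/2$; since $\Err(t)-z\ge\tfrac12 T^\delta(\xi-|t|)^{1/2}+\tfrac12T^{-10}$ on $[-\xi,\xi]$, Lemma~\ref{lem:brownianav2} delivers $X(t)\le\Err(t)$ uniformly. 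The FS/coming-down machinery you invoke is exactly what powers the \emph{refined} bound $\Err'$ in Lemma~\ref{lem:stayclose2}, not the present lemma.
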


We stress that the above estimates hold for any fixed choice of the parameter $\del>0$ and that the constants hidden in the w.h.p.\ statements are independent of  $a\ge 1$, which may itself depend on $T$ in the applications below.  
        \begin{proof}
         Observe that $Y$ is stochastically dominated by the Brownian excursion on $[-T, T]$ conditioned to stay above the straight line $\ell_2$. %
       By an application of     \rlem{lem:brownianav2}
 we then conclude that
        \begin{align*}
          Y(t) \leq {\ell_2(t) + } \Err(t),\quad \forall\, t \in [\xi, T]\,,\qquad {\rm w.h.p.}
        \end{align*}
        The same bound holds on $[-T,-\xi]$ with $\ell_1$ replacing $\ell_2$. Hence, a union bound proves the upper bound in \eqref{eq:errclose} restricted to $|t|\ge \xi$. 
        To prove it also for $|t|\le \xi$, we condition on the high probability event 
        \[Y(\pm \xi)\le \q(\pm \xi) + \Err( \xi) = -a\p(\xi)+ \Err( \xi) \,,
        \] and focus on the behavior in the interval $[-\xi, \xi]$. In fact, by the same argument as above we may even assume $Y(\pm \xi)\le \q(\pm \xi) + \Err( \xi)/2$, and thus
            going back to the area-tilted model, by monotonicity, we may dominate $Y + a\p$ on this interval by $Z \sim \L^{2a;z}_{ \xi}$, with boundary values $z:=\Err(\xi)/2$. %
            By neglecting the area tilt, at this point one can again use Brownian excursion estimates and conclude, by another application of \rlem{lem:brownianav2},
            that $Z(t)\leq \Err(t)$, for all $t\in[-\xi,\xi]$ w.h.p. This proves the  upper bound in \eqref{eq:errclose}.
            
        For the lower bound, observe that it is sufficient to prove it only for $|t| \geq \xi$. For $t\in[\xi,T]$, 
         $X$ stochastically dominates $V$ where $V \sim \B^{\ell_2(\xi), \ell_2(T)}_{\xi, T}$, and so the bound follows from Brownian fluctuations, see again
\rlem{lem:brownianav2}.
\end{proof}
    
 We also need the following refinement of the above lemma, which holds under additional assumptions on  
 the tangency location $\xi$ guaranteeing that it is not too close to the endpoint $T$ and that it is well separated from the origin. Under these assumptions, it gives a stronger bound on the height between the two tangency locations, which exploits the pull to the parabolic barrier to achieve a height at most $T^\delta$ in the bulk.  
     \begin{lem}[Typical path 2]
        \label{lem:stayclose2}
In the setting of Lemma \ref{lem:stayclose}, suppose further that 
 \begin{gather}
          T - \xi \geq \max\{T^{-20},a^{-2/3}\}\,,\qquad 
          \xi \geq \al T,\label{eq:condxibig}
        \end{gather}
        where $\al>0$ is a fixed constant. Then %
       \begin{align}
          \label{eq:errclose3}
          \Abs{Y(t) - \q(t)} \leq \Err'(t),\quad \forall \, t\in[- T,T]\,,\qquad {\rm w.h.p.}
        \end{align}
        where
        \begin{align}\label{errdefn1}
          \Err'(t) := \begin{cases}\Err(t),
& \text{ if } \; |t| \geq \xi - C T^{3\del/2}(T - \xi)^{3/4}a^{-1/6}, \\                  
                         T^\del,                & \text{otherwise}
                       \end{cases}
        \end{align}
        and $C$ is some absolute constant. See Figure \ref{fig:stay-close} for an illustration.
    \end{lem}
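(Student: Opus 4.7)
The plan is to combine Lemma \ref{lem:stayclose} with the two-sided coming-down estimate of Lemma \ref{lem:doublesidedcomingdown}. The picture is clear: inside the parabolic region $[-\xi,\xi]$, after subtracting the barrier $-a\p$, $Y$ becomes an area-tilted excursion of strength $2a$ above zero whose boundary height at $\pm \xi$ is controlled by the envelope estimate $|Y(\pm\xi) - \q(\pm\xi)| \leq \Err(\xi) = O(T^\delta \sqrt{T-\xi})$ from Lemma \ref{lem:stayclose}. Under the additional hypothesis \eqref{eq:condxibig}, this excursion has enough room to descend all the way to the level $T^\delta$ on a time scale of $O(T^{3\delta/2}(T-\xi)^{3/4}a^{-1/6})$, which is precisely the threshold defining the bulk region in \eqref{errdefn1}.

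First I would apply Lemma \ref{lem:stayclose} to obtain $|Y(t)-\q(t)| \leq \Err(t)$ w.h.p.\ for all $t \in [-T,T]$. This immediately handles the region where $\Err'(t) = \Err(t)$. Setting $H_0 := 2T^{\delta}\sqrt{T-\xi}$, the hypothesis $T-\xi \geq T^{-20}$ absorbs the $T^{-10}$ summand in $\Err$ and gives $Y(\pm\xi) \leq -a\p(\xi) + H_0$ w.h.p.

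The heart of the proof is the interior region. I would condition on $Y(\pm\xi)$ satisfying the bound above and use the (strong) Gibbs property to resample $Y$ on $[-\xi,\xi]$. By monotonicity, $Y|_{[-\xi,\xi]}$ is stochastically dominated by an area-tilted excursion above the floor $-a\p$, of overall strength $2a$, with both endpoints equal to $-a\p(\xi) + H_0$. Passing to the dual picture via Lemma \ref{lem:girsanov}, the shifted path $Z := Y + a\p$ on $[-\xi,\xi]$ is then dominated by a sample from $\L^{2a;H_0}_\xi$. Applying Lemma \ref{lem:doublesidedcomingdown} to this dominating process with parameter assignment $(a,H,T) \to (a,H_0,\xi)$, and choosing the constant $C$ in \eqref{errdefn1} to be at least $2^{3/2}$, one obtains the inclusion $\{|t|\leq \xi - CT^{3\delta/2}(T-\xi)^{3/4}a^{-1/6}\} \subseteq \{|t|\leq \xi - H_0^{3/2}a^{-1/6}\}$, and the lemma yields $Z(t) \leq \xi^{\delta} \leq T^{\delta}$ throughout the bulk region. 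Since $Y \geq -a\p = \q$ on $[-\xi,\xi]$, this translates to the desired bulk bound $|Y(t)-\q(t)| \leq T^{\delta} = \Err'(t)$.

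Finally I would verify the hypotheses of Lemma \ref{lem:doublesidedcomingdown}. The quantitative requirement $\xi \geq 3H_0^{3/2}a^{-1/6}$ reduces, using $\xi \geq \alpha T$ and $T-\xi \leq T$, to $\alpha T \geq c\, T^{3/4+3\delta/2}a^{-1/6}$, which holds for $T$ large (depending only on $\alpha,\delta,\lambda$) once $\delta$ is below a fixed small threshold (say $1/6$). The failure probability is of order $\exp(-c\,a^{1/2}\min(H_0^{3/2}, \xi^{3\delta/2}))$; the hypothesis $T-\xi \geq a^{-2/3}$ yields $a^{1/2}H_0^{3/2} \geq c' T^{3\delta/2}$, while $a \geq 1$ together with $\xi \geq \alpha T$ gives $a^{1/2}\xi^{3\delta/2} \geq (\alpha T)^{3\delta/2}$, so the event holds w.h.p. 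The only real delicacy—and the main obstacle—is the bookkeeping of constants: matching the threshold in \eqref{errdefn1} to the coming-down timescale $H_0^{3/2}a^{-1/6}$, and ensuring that the two small hypotheses $T-\xi \geq a^{-2/3}$ and $T-\xi \geq T^{-20}$ are invoked at the correct points (respectively, to keep the failure probability under control, and to absorb the additive $T^{-10}$ in the boundary envelope).
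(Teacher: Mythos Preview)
Your proposal is correct and follows essentially the same approach as the paper: apply Lemma~\ref{lem:stayclose} to control the boundary values at $\pm\xi$, then dominate $Y+a\p$ on $[-\xi,\xi]$ by $\L^{2a;H_0}_\xi$ with $H_0=2T^\delta(T-\xi)^{1/2}$ and invoke Lemma~\ref{lem:doublesidedcomingdown}. You are in fact more explicit than the paper in verifying the w.h.p.\ probability bound via the hypothesis $T-\xi\ge a^{-2/3}$, which the paper's proof leaves implicit.
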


 \begin{figure}[ht]
        \centering
                \input{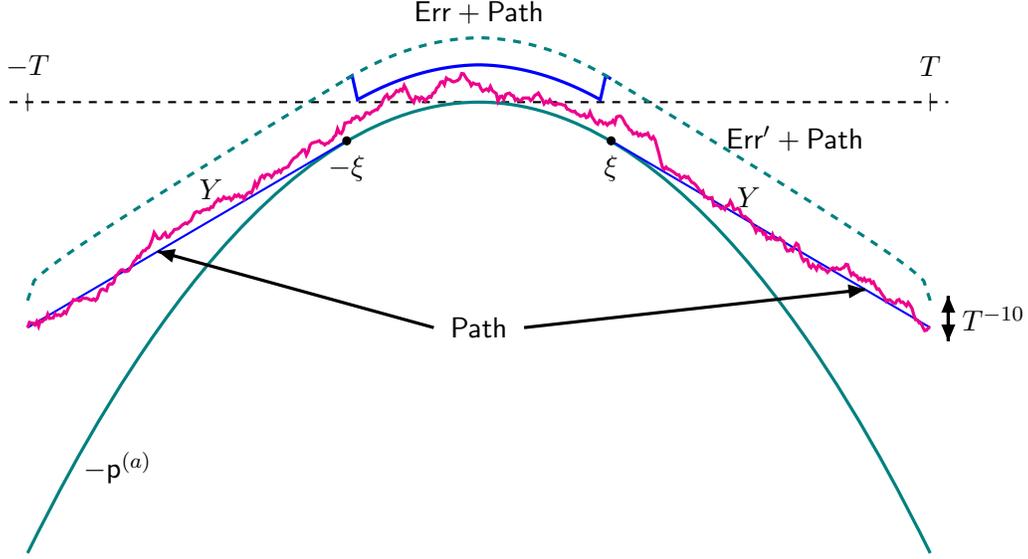}
        \caption{Illustration of the error functions $\Err$ and $\Err'$ defined in \eqref{erdefn234} and \eqref{errdefn1}.}
        \label{fig:stay-close}
    \end{figure}
    \begin{proof}
Thanks to the previous lemma we may restrict to $|t|\le \xi$, and we may assume that  
                \begin{align*}
              Y(\pm \xi) - \q(\pm \xi) \leq T^{\del}(T - \xi)^{1/2} + T^{-10} \leq 2T^{\del}(T - \xi)^{1/2},
            \end{align*}
            where we use the assumption $T - \xi \geq T^{-20}$ in the last inequality.
            Therefore, switching back to  the area-tilted model, by monotonicity, we dominate $Y + a\p$ on $[-\xi,\xi]$ by $Z \sim \L^{2a;z}_{\xi}$, with $z:=2T^{\del}(T - \xi)^{1/2}$.
             We now  apply \rlem{lem:doublesidedcomingdown} by plugging in $X = Z, H = 2T^{\del}(T - \xi)^{1/2}, T = \xi$. The
             hypotheses required by \rlem{lem:doublesidedcomingdown} are satisfied since by our assumptions \eqref{eq:condxibig} one has   $\xi \geq \al T %
             \geq 3H^{3/2}$ for all $T$ large enough, if $\del>0$ is sufficiently small, which can indeed assume without loss of generality.
             Since also $2T^{\del}(T - \xi)^{1/2} a^{1/3} \geq 2T^{\del}$, it follows that w.h.p. 
            \begin{align*}
              Z(t) \leq \begin{cases}
                         8T^{\del}(T - \xi)^{1/2} & t \in [-\xi, -\xi + S] \cup [\xi - S, \xi], \\
                         \xi^\del \leq T^{\del} & t \in [-\xi + S, \xi - S],
                       \end{cases}
            \end{align*}
            where $S = CT^{3\del/2}(T - \xi)^{3/4}a^{-1/6}$,
for an absolute constant $C$.

    \end{proof}
    \begin{rmk}\label{rem:conds}
   In the application to light paths one has the corresponding parabolas $-a\p$ where $a=\lambda^{k-1}$, $k=2,\dots,n_0$, $\xi=\xi_k$, and thus using \eqref{eq:xiubnd} and \eqref{eq:xilbnd}, the assumptions in \eqref{eq:condxibig} are easily seen to be satisfied for such cases. 
   Indeed, 
   \[T - \xi_{k} \geq T\sqrt{\g}\lam^{-(k - 1)/2} \geq T^{-20},\] as long as $T$ is large enough due to the assumption that $T\lam^{-(n_{0} - 1)/2} \geq T^{-15}$.
Moreover, %
\[T - \xi_{k} \geq T\sqrt{\g}\lam^{-(k - 1)/2} \geq \lam^{-2(k - 1)/3}\,,\] 
 for large enough  $T$ because $\lam \geq 1$. Finally,  \[\xi_{k} \geq T(1 - \lam^{-(k - 1)/2}) \geq T(1 - \lam^{1/2})\,,\] as long as $k \geq 2$.
\end{rmk}    
    We turn to the last auxiliary result for the proof of  Lemma \ref{lem:induclema}. Define the functions
         \begin{align}\label{shiftedpath123}
            \q_{k}(t) - [4(n_0 - k) + 2]T^{\del} := \widehat{\q}_{k}(t) := \begin{cases}
                                                                          \ell_{3}(t) & t \leq -\xi_{k}, \\
                                                                          -\p_{k}(t) & -\xi_{k} \leq t \leq \xi_{k}, \\
                                                                          \ell_{4}(t) & \xi_{k} \leq t,
                                                                       \end{cases}
          \end{align}
          where the lines $\ell_{3}, \ell_{4}$ are tangent lines to $-\p_{k}$ from $(\mp T, -\p_{k}(T) + T^{2} - 2KT + 2T^{\del})$ respectively, and $\pm\xi_k$ are the corresponding tangency locations, see \eqref{eq:xiubnd} and \eqref{eq:xilbnd}. Defining $\widehat{\q}_{k}(\cdot)$ to be a shifted version of ${\q}_{k}(\cdot)$ will be convenient for the arguments.

\begin{figure}[ht]
        \centering
        \input{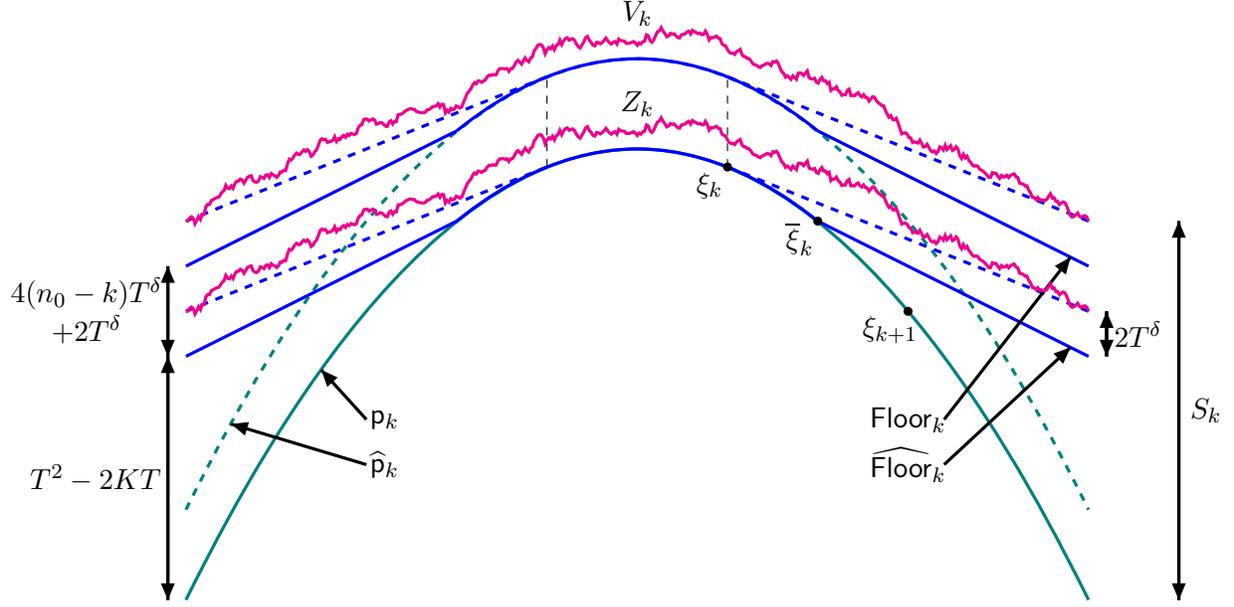}
        \caption{Important quantities in the statement and proof of \rlem{lem:ensinduclem}, in particular indicating the shifts involved in the definitions as in \eqref{shiftedpath123}.}
        \label{fig:inductive-step}
    \end{figure}
      \begin{lem}[Typical path 3]
        \label{lem:stayclose3}
In the setting of Lemma \ref{lem:induclema}, 
  \begin{align}
  \left|Y^{k} - \q_{k}\right|
   \preceq \Err\,,\qquad {\rm w.h.p.} 
  \label{eq:followpath}
        \end{align}
    \end{lem}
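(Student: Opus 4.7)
The bound $|Y^k - \q_k| \preceq \Err$ will be established by matching lower and upper bounds, which require rather different arguments. Throughout, let $c := [4(n_0-k)+2]T^\delta$ denote the constant shift built into the definitions of $\fl_k$ and $\q_k$, so that $\q_k = \widehat{\q}_k + c$.

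\textbf{Lower bound.} The plan here is straightforward monotonicity. Since $\fl_k(t) \geq -\p_k(t) + c$ pointwise (in the middle these are equal; on the outside we verified in the discussion preceding the lemma that $\ell_2 \geq -\p_k$), Lemma \ref{lem:monotonicity} gives the stochastic domination $Y^k - c \succeq V$, where $V \sim \B^{-\p_k(T) + 2T^\delta}_T[-\p_k]$ has matching boundary values. By Remark \ref{rem:conds}, the tangency location $\xi_k$ satisfies the assumptions of Lemma \ref{lem:stayclose2} with $a = \lambda^{k-1}$, so $V \geq \widehat{\q}_k - \Err$ w.h.p., and hence $Y^k \geq \q_k - \Err$ w.h.p.

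\textbf{Upper bound.} This direction is harder since the higher floor $\fl_k$ only increases $Y^k$ relative to $V$. The plan is to first establish the one-point control $Y^k(\pm \bxi_k) \leq \q_k(\pm \bxi_k) + \Err(\bxi_k)$ w.h.p., and then to propagate it separately on the middle interval $[-\bxi_k, \bxi_k]$ (where $\fl_k = -\p_k + c$) and the outer intervals $[\pm\bxi_k, \pm T]$ (where $\fl_k$ is linear) using the strong Gibbs property (Lemma \ref{sabg}). For the one-point control I would use a Radon--Nikodym argument comparing the laws of $Y^k - c$ and $V$, both of which are Brownian bridges with the same endpoints but with floors $\fl_k - c \geq -\p_k$ respectively. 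The Radon--Nikodym derivative is the indicator $\ind\{Z \succ \fl_k - c\}$ times the ratio $C_T$ of the two avoidance probabilities. One shows $C_T \leq \exp(T^{c'})$ for small $c'$ by bounding the denominator from below via an explicit trajectory close to $\widehat{\q}_k$ (and invoking Lemma \ref{lem:brownianav2} for the avoidance probability of Brownian fluctuations around it). Since the event $\{V(\bxi_k) > \widehat{\q}_k(\bxi_k) + \Err(\bxi_k)\}$ has $V$-probability $\leq \exp(-T^c)$ by Lemma \ref{lem:stayclose2}, this transfers to $Y^k - c$ with probability $\leq C_T \cdot \exp(-T^c) \leq \exp(-T^{c-c'})$, still w.h.p.

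Once the endpoint values $Y^k(\pm \bxi_k)$ are controlled, the remaining propagation is routine. On the middle interval, Lemma \ref{sabg} reduces the problem to a Brownian bridge with floor $-\p_k$ and boundary values within $\Err(\bxi_k)$ of $-\p_k(\pm \bxi_k)$; Lemma \ref{lem:stayclose2} (applied on $[-\bxi_k, \bxi_k]$) then gives the required bound. On an outer interval such as $[\bxi_k, T]$, after subtracting the linear floor $\ell_2 + c$ the conditional law of $Y^k - \ell_2 - c$ becomes a Brownian bridge on $[\bxi_k, T]$ conditioned to stay positive, with boundary values $O(\Err(\bxi_k))$ and $2T^\delta$; standard Brownian excursion estimates then confine it within $\Err$, matching $\q_k$ up to $\Err$.

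\textbf{Main obstacle.} The principal technical difficulty is the Radon--Nikodym bound $C_T \leq \exp(T^{c'})$ used to transfer the upper tail estimate from $V$ to $Y^k - c$. Both avoidance probabilities decay stretched-exponentially in $T$, and bounding their ratio requires an explicit lower bound on $\P(\B \succ \fl_k - c)$ via a careful ``tube'' construction around a deterministic profile close to $\widehat{\q}_k$. The construction must simultaneously respect the parabolic portion of the floor in the middle and the higher linear portions on the outside, while keeping the profile within the diffusive envelope $\Err$ of the typical path.
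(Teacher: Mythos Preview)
Your lower bound via monotonicity is essentially what the paper does. The upper bound, however, you are making much harder than it needs to be. The paper's key observation is that your $V$ (the bridge with the simpler parabolic floor $-\p_k$) already clears the higher floor $\fl_k - c$ with high probability, so the Radon--Nikodym ratio $C_T$ is not merely $\exp(T^{c'})$ but in fact $1+o(1)$. Concretely, Lemma~\ref{lem:stayclose} applied to $V$ gives $|V-\hat\q_k|\le \Err$ w.h.p.; then a purely deterministic computation shows that the gap $\hat\q_k-\hat\fl_k$ exceeds $\Err$ on the two outer pieces $[\bxi_k,T]$ and $[-T,-\bxi_k]$ (it is a straight line interpolating between $2T^\delta$ at the endpoint and a quantity of order $\lambda^{k-1}(\bxi_k-\xi_k)^2=\Theta(T^2)$ at $\bxi_k$), while on the middle piece the two functions coincide. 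Hence $\{V>\hat\fl_k\}$ holds w.h.p., which means $Y^k-c$ can be coupled to $V$ on a w.h.p.\ event, and both the upper and lower bounds follow at once from the single estimate $|V-\hat\q_k|\le\Err$.

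In particular, what you flagged as the ``main obstacle'' (a tube construction to bound $C_T$) evaporates, and the subsequent one-point control at $\pm\bxi_k$ together with the inward/outward propagation via the strong Gibbs property are unnecessary. Your route would likely work after some additional care (e.g., matching the tangent portion of $\hat\q_k$ on $[\xi_k,\bxi_k]$ during the middle propagation), but it is strictly more laborious than the paper's argument.
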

\begin{proof}
 We first show that Brownian motion conditioned to avoid the shifted parabola 
 \[\hat \p_k:=-\p_k + [4(n_0 - k) + 2]T^\del
 \]
  will avoid $\fl_k$ with high probability. For an illustration the reader may refer to Figure \ref{fig:inductive-step}.
        Since $\fl_k \gtr \hat \p_k$ by construction, this says that the constraints of avoiding $\hat \p_k$ and $\fl_k$ are essentially equivalent, which will be useful to establish the claim, since it allows us to use estimates already developed for Brownian bridges conditioned to avoid a parabola.

Let $V^k\sim\Bav{-\p_k(T) + S_k}{T}{\hat\p_{k}}$.  We claim that w.h.p. %
\begin{align}\label{eq:clyk}
V^k \succeq \fl_k.
\end{align}
        Shifting down by $ [4(n_0 - k) + 2]T^\del$, by the definition \eqref{shiftedfloor123},  this is equivalent to %
        $$Z^k \succeq \sfl_k\,,\qquad Z^k := V^k - [4(n_0 - k) + 2]T^\del\sim \Bav{-\p_k(T) + T^2 - 2KT + 2T^\del}{T}{-\p_{k}},$$
        where 
        \[
        \hat\fl_k(t)= \begin{cases}
                                                          \ell_1(t), & t \leq -\bxi_k,              \\
                                                          -\p_k(t),  & -\bxi_k \leq t \leq \bxi_k, \\
                                                          \ell_2(t), & \bxi_k \leq t.
                                                        \end{cases}
\]
   {
            We apply \rlem{lem:stayclose} to $Z^{k}$, so that $a=\lambda^{k-1}$, $H = Z^{k}(\pm T) = -\p_{k}(T) + T^{2} - 2KT + 2T^{\del}$, $\xi=\xi_k$. 
            Therefore, 
by \rlem{lem:stayclose}, %
            \begin{align}
              \label{eq:errbndz}
              \Abs{Z^{k} - \hq_{k}} \leq \Err, \qquad {\rm w.h.p.}
            \end{align}
                                  
 Our next step is to compute the distance between $\hq_k$ and $\sfl_k$ and show that it is bigger than $\Err$. Since $Z^k$ avoiding {$\sfl_k$} is the same as $Z^k$ avoiding the $\ell_1, \ell_2$ pieces in the definition of $\sfl_k$, using symmetry, we only need to compare the two functions in the interval $[\bxi_k, T]$. In this interval, 
the gap between $\hq_k$ and $\sfl_k$ is a line with vertical heights between $G_k$ and $2T^{\del}$ where
        \begin{align*}
          G_k &:= \hq_k(\bxi_k) - \sfl_k(\bxi_k), \\
              &= -2\lam^k \xi_k \bxi_k + \lam^k \xi_k^2 + \lam^k\bxi_k^2,    \\
              &= \lam^k(\bxi_k - \xi_k)^2 \geq \Rnd{\sqrt{\g} - \be}^2 T^2,
        \end{align*}
        where we used the bounds on $\xi_k, \bxi_k$ established in \eqref{eq:xiubnd}-\eqref{eq:bxibnd}. Thus, for large enough $T$ the gap is always more than $\Err(t)$ in $[\bxi_k, T]$ because at the right endpoint, $$\Err(T) = T^{-10} \leq \hq_{k}(T) - \sfl_{k}(T) = 2T^{\del}$$ and at the left endpoint 
        $$\Err(\bxi_{k}) = T^{\del}(T - \bxi_{k})^{0.5} + T^{-10} \leq T \leq (\sqrt{\g} - \be)^{2}T^{2}.$$}
       Therefore, \eqref{eq:errbndz} implies  that  $Z^k \succ \sfl_k$ w.h.p. which establishes the claimed bound \eqref{eq:clyk}.

We can now finish the proof of  the lemma. In the statement \eqref{eq:followpath} we have $Y^k \sim \Bav{-\p_k(T) + S_k}{T}{\fl_k}$, and therefore 
$$Y^k-[4(n_0 - k) + 2]T^\del\sim Z^k\left(\,\cdot \mid Z^k \gtr \sfl_k\right).$$ 
   Hence we may couple $Y^k$ and $V^k = Z^k + [4(n_0 - k) + 2]T^\del$ with high probability.
         In conclusion shifting back up by $[4(n_0 - k) + 2]T^\del$, \eqref{eq:errbndz} shows that 
         $\Abs{Y^{k} - \q_{k}} \leq \Err$ w.h.p. This proves \eqref{eq:followpath}.
\end{proof}
We are now ready to address  the proof of   Lemma \ref{lem:induclema}. 
 
     \subsection{Proof of Lemma \ref{lem:induclema}}
We need to prove 
  \begin{align}
 Y^k + \p_k \preceq \fl_{k - 1} + \p_{k - 1} \,,\qquad {\rm w.h.p.} \label{eq:floorav}
        \end{align}
                  By shifting everything down by $[4(n_0 - k) + 2]T^\del$, with the notation from the proof of Lemma \ref{lem:stayclose3}, the  inequality \eqref{eq:floorav} is equivalent to 
        \begin{align}\label{keybound123}
          Z^k + \p_k & \preceq \fl_{k - 1} - [4(n_0 - k) + 2]T^\del + \p_{k - 1} \\
          \notag                   &= \sfl_{k - 1} + [4(n_{0} - k + 1) + 2]T^\del - [4(n_{0}- k) + 2]T^\del + \p_{k - 1}  \\
          \notag                   &= \sfl_{k - 1} + 4T^\del + \p_{k - 1}.
        \end{align}
            From \rlem{lem:stayclose2} applied to $Z^{k}$, so that $H = Z^{k}(\pm T) = -\p_{k}(T) + T^2 - 2KT + 2T^{\del}$, we get that
            \begin{align}\label{thepath}
              Z^{k} \preceq \hq_{k} + \Err'\,,\qquad {\rm w.h.p.}
            \end{align}
          We refer to Remark \ref{rem:conds} for the validity of  the three conditions in \eqref{eq:condxibig}. 
          
Therefore, it is sufficient to prove that
        \begin{align}
          \hq_k + \Err' + \p_k \preceq \sfl_{k - 1} + 4T^\del + \p_{k - 1}. \label{eq:reqb}
        \end{align}
        Note that this a purely deterministic relation, which can be checked via elementary geometric considerations. Nonetheless, the proof requires a bit of work.    
We do this in three steps. 
   First, guided by \eqref{errdefn1}, we define 
   \begin{align}\label{plugin25}\xi'_k=\xi_k - C T^{3\del/2}(T - \xi_k)^{3/4}\lambda^{-(k-1)/6}.
\end{align}
We %
 verify \eqref{eq:reqb} sequentially on the intervals $[-\xi'_k,\xi'_k], [\xi'_k, \xi_k], [\xi_k,T]$ with their symmetric versions holding analogously.

We start with  the interval $[-\xi'_k,\xi'_k].$ Note that by \eqref{errdefn1}, { $\Err'(t) = T^\del$ on $[-\xi'_k, \xi'_k]$. Since $\sfl_{k - 1} + \p_{k - 1} \geq 0$ for all $t$ and $\hq_k + \p_k = 0$ on $[-\xi_k, \xi_k] \supseteq [-\xi'_k, \xi'_k]$, condition \eqref{eq:reqb} holds on this interval.}

Next, consider the interval $[\xi_k, T]$. On this interval, by \eqref{shiftedfloor123} we have $\sfl_{k - 1} = \ell_{2}$  and by \eqref{errdefn1} we have $\Err=\Err'$. Therefore, it is sufficient to check
\begin{align}\label{eq:checkfloor}
  \hq_{k} + \p_{k} - \p_{k - 1} + \Err \preceq \ell_{2} + 4T^{\del}.
\end{align}

This is accomplished in two steps: First we show that $\hq_{k} + \p_{k} - \p_{k - 1} \preceq \ell$ for a certain straight line $\ell$ due to the convexity of $\hq_{k} + \p_{k} - \p_{k - 1}$. In the second step we show that $\Err \preceq \ell_{2} - \ell + 4T^{\del}$.
{For the first step, observe that $$\hq_k + \p_k - \p_{k - 1} = \hq_{k} + (\lam^{k-1} - \lam^{k - 2})t^{2}$$ is a {convex parabola} since $\hq_{k}$ is linear on $[\xi_{k}, T]$, and $\lam^{k - 1} - \lam^{k - 2} > 0$.
    Thus we upper bound it on the interval $[\xi_k, T]$ by the straight line $\ell$ joining its endpoints. We now compute the endpoints of $\ell$.}
        At $T$, we have
        \begin{align*}
          \hq_k(T) + \p_k(T) - \p_{k - 1}(T) = T^2 - 2KT + 2T^\del - \lam^{k - 2}T^2
        \end{align*}
        and at $\xi_k$ we have
        \begin{align*}
          \hq_k(\xi_k) + \p_k(\xi_k) - \p_{k - 1}(\xi_k) = - \lam^{k - 2}\xi_k^2
        \end{align*}
        because $\hq_k(\xi_k) + \p_k(\xi_k) = 0$. %
        Thus, $\hq_k + \p_k - \p_{k - 1}$ is bounded on $[\xi_{k}, T]$ by the straight line $\ell$ satisfying:
        \begin{align}
          \label{eq:ellboundT}
          \ell(T) &= -\p_{k - 1}(T) + T^2 - 2KT + 2T^\del, \\
          \ell(\xi_{k}) &= -\p_{k - 1}(\xi_k).
        \end{align}   {
      Moving on to the second step, we wish to show that $\ell_{2} - \ell + 4T^{\del} \gtr \Err$. Since $\ell_{2} - \ell + 4T^{\del}$ is a line, we first lower bound its value at both endpoints $t = \xi_{k}$ and $t = T$, and then show that the line joining these lower bounds actually upper bounds $\Err$. As $\ell(\xi_{k}) = -\p_{k-1}(\xi_{k})$, to lower bound $\ell_{2} - \ell + 4T^{\del}$ at $\xi_{k}$, it is sufficient to lower bound $(\ell_{2} - \p_{k - 1})(\xi_{k})$. This is accomplished as follows: Let $\ell_{c}$ be the chord joining $(\bxi_{k - 1}, -\p_{k - 1}(\bxi_{k - 1}))$ and $(\xi_{k}, -\p_{k - 1}(\xi_{k}))$. Then
            \begin{align}
              \label{eq:goalslopediff}
              \ell_{2}(\xi_{k}) - \p_{k - 1}(\xi_{k}) = (\xi_{k} - \bxi_{k - 1}) \cdot (\text{absolute slope difference between $\ell_{2}$ and $\ell_{c}$}),
            \end{align}
            see Figure \ref{fig:inductive-step-2} for an illustration.
            We lower bound both factors next.

    \begin{figure}[ht]
        \centering
                \begin{tikzpicture}[scale=1.2]
\colorlet{colora}{magenta}
\colorlet{colorb}{teal}
\colorlet{colorc}{blue}
\draw[dashed] (-1.66667,0) -- (5.4,0);
\draw (5,-0.1) -- (5,0.1);
\draw (5,0.3) node {$T$};
\draw[very thick,colorb] (-1.66667,-0.55556) parabola bend (0,0) (5,-5);
\fill (3.5,-2.45) circle[radius=0.05];
\draw (3.3,-2.65) node {$\xi_k$};
\fill (2.25,-1.0125) circle[radius=0.05];
\draw (2.05,-1.2125) node {$\bar{\xi}_{k - 1}$};
\fill (1,-0.2) circle[radius=0.05];
\draw (0.8,-0.4) node {$\xi_{k - 1}$};
\fill (3,-1.8) circle[radius=0.05];
\draw (2.8,-2) node {$\xi_k'$};
\draw[very thick] (3,-1.8) -- (3.5,-2.45);
\draw (-1,-0.6) node {$-\mathsf{p}_{k - 1}$};
\draw[very thick] (2.5,-3) -- (3.25,-2.1125);
\fill[very thick] (3.25,-2.1125) -- (3.19059,-2.30347) -- (3.07161,-2.20293);
\draw (2.5,-3.2) node {$\ell_c$};
\draw[very thick,colorc] (5,-3) -- (2.25,-1.0125);
\draw (3.45,-1.6125) node {$\ell_2$};
\draw[dashed,colorb] (3.5,-2.45) -- (5,-2.5);
\draw[very thick,colorc,dashed] (5,-2) -- (2.25,-0.0125);
\draw (4,-0.5) node {$\widehat{\mathsf{Floor}_k} + 4T^\delta$};
\draw[very thick] (5.2,-5) -- (5.2,-3);
\fill[very thick] (5.2,-3) -- (5.27788,-3.18421) -- (5.12212,-3.18421);
\draw[very thick] (5.2,-3) -- (5.2,-5);
\fill[very thick] (5.2,-5) -- (5.12212,-4.81579) -- (5.27788,-4.81579);
\draw (6,-4) node {$T^2 - 2KT$};
\draw[very thick] (5.2,-3) -- (5.2,-2.5);
\fill[very thick] (5.2,-2.5) -- (5.27788,-2.68421) -- (5.12212,-2.68421);
\draw[very thick] (5.2,-2.5) -- (5.2,-3);
\fill[very thick] (5.2,-3) -- (5.12212,-2.81579) -- (5.27788,-2.81579);
\draw (5.6,-2.75) node {$2T^\delta$};
\draw[very thick] (5.2,-2.5) -- (5.2,-2);
\fill[very thick] (5.2,-2) -- (5.27788,-2.18421) -- (5.12212,-2.18421);
\draw[very thick] (5.2,-2) -- (5.2,-2.5);
\fill[very thick] (5.2,-2.5) -- (5.12212,-2.31579) -- (5.27788,-2.31579);
\draw (5.6,-2.25) node {$2T^\delta$};
\draw[very thick,colorc,dashed] (-1.66667,0.44444) parabola bend (0,1) (2.25,-0.0125);
\draw[colora,dashed,very thick] (5,-3) parabola bend (4.8,-3.5) (3.5,-2.45);
\draw[very thick] (5.5,-3.4) -- (4.8,-3.5);
\fill[very thick] (4.8,-3.5) -- (4.97135,-3.39685) -- (4.99338,-3.55105);
\draw (6.8,-3.4) node {$\widehat{\mathsf{Path}}_k + \mathsf{p}_k - \mathsf{p}_{k - 1}$};
\end{tikzpicture}
        \caption{Right side of the parabola $-\p_{k-1}$ along with quantities of interest in the proof of Lemma \ref{lem:induclema}. Objects illustrated include for instance, the chord $[\xi'_k, \xi_k]$, as well as $\hq_{k}+\p_k-\p_{k-1}$, the floor induced on $Y^{k-1}$ up to a shift.}
        \label{fig:inductive-step-2}
    \end{figure}
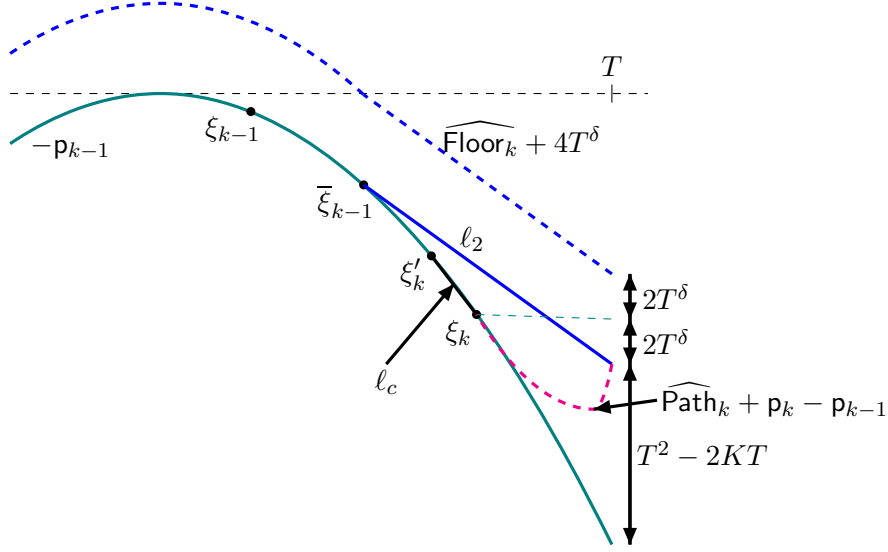
         Using \eqref{eq:xilbnd} and \eqref{eq:bxibnd} 
            \begin{align}
              \label{eq:xikmbxik}
              \xi_k - \bxi_{k - 1} \geq T(1 - \lam^{-(k - 1)/2}) - T(1 - \be\sqrt{\g} \lam^{-(k - 2)/2}) = T\lam^{-(k - 1)/2}(\be\sqrt{\g\lam} - 1).
            \end{align}
           The slope of the line $\l_2$ is
        \begin{align*}
          \f{T^2 - 2KT - \lam^{k - 2}T^{2} + \lam^{k - 2}\bxi_{k - 1}^2}{T - \bxi_{k - 1}}
          &\geq \f{T - 2K}{\be \sqrt{\lam}} \lam^{(k - 1)/2} - \lam^{k - 2}(T + \bxi_{k - 1})
        \end{align*}
        using the lower bound on $\bxi_{k - 1}$ from \eqref{eq:bxibnd}. The slope of the chord $\ell_c$ joining $(\bxi_{k - 1}, -\p_{k - 1}(\bxi_{k - 1}))$ and $(\xi_k, -\p_{k - 1}(\xi_k))$ is $-\lam^{k - 2}(\bxi_{k - 1} + \xi_k)$, so that the difference of slopes between $\ell_{2}$ and $\ell_{c}$ is at least
        \begin{align*}
          &\f{T - 2K}{\be \sqrt{\lam}} \lam^{(k - 1)/2} - \lam^{k - 2}(T + \bxi_{k - 1}) + \lam^{k - 2}(\bxi_{k - 1} + \xi_k) =
                                                                                                                         { \f{T - 2K}{\be \sqrt{\lam}} \lam^{(k - 1)/2} - \lam^{k - 2}(T - \xi_k)} \\
                                                                                                                       & \qquad \qquad\geq \f{T - 2K}{\be \sqrt{\lam}} \lam^{(k - 1)/2} - \lam^{k - 2}T\lam^{-(k - 1)/2} 
                                                                                                                     = T\lam^{(k - 1)/2}\Box{\f{\g}{\be\sqrt{\lam}} - \f{1}{\lam}} > 0,
        \end{align*}
        where we use \eqref{eq:gdef} for the last equality, as long as $T$ is sufficiently large.  Plugging all these estimates into \eqref{eq:goalslopediff} yields
        \begin{align}
          \label{eq:slopegap}
          \ell_{2}(\xi_{k}) - \p_{k - 1}(\xi_{k}) 
          \geq %
          \f{T^2}{\be\lam}\, (\g\sqrt{\lam} - \be)(\be\sqrt{\g\lam} - 1)
          = :\t \,T^2,
        \end{align}
        where $\t = \t(\g, \lam) > 0$ is a constant.

        Recall from definition \eqref{shiftedfloor123} that $\ell_{2}(T) = -\p_{k - 1}(T) + T^{2} - 2KT$ and from \eqref{eq:ellboundT} that $\ell(T) = -\p_{k - 1}(T) + T^{2} - 2KT + 2T^{\del}$. Therefore, in conclusion, $\l_2 + 4T^\del - \l$ is a line on $[\xi_{k}, T]$ with endpoints at least  $\t T^2 + 4T^\del$ and $2T^\del$, and therefore exceeds $\Err(t)$ throughout the interval. Therefore condition \eqref{eq:reqb} is also satisfied on $[\xi_k, T]$, and symmetrically on $[-T, -\xi_k]$.

    It remains to verify the condition  \eqref{eq:reqb} on the interval $[\xi'_k, \xi_k]$.
          For this, we will again check \eqref{eq:checkfloor}, but now on the interval $[\xi'_{k}, \xi_{k}]$. Recall that by \eqref{shiftedpath123}, on this interval, $\hq_{k} = -\p_{k}$, so that it is sufficient to check
        \begin{align*}
          -\p_{k - 1} + \Err\preceq \ell_{2} + 4T^{\del}.
        \end{align*}
        As long as $T$ is sufficiently large, %
        one has $\Err(t) \leq 2T^{\del + 1/2}$ for all $t \in [\xi'_{k}, \xi_{k}]$, so that it is sufficient to check the stronger statement
        \begin{align*}
          2T^{\del + 1/2} \preceq \ell_{2} - (-\p_{k - 1})
        \end{align*}
        throughout $[\xi'_{k}, \xi_{k}]$. %
        One can in fact show that the right hand side, i.e., the  ``gap'' between $\ell_{2}$ and $-\p_{k - 1}$ is at least $\T{T^{2}}$ throughout the said interval.

        To this end, we proceed as follows. We  lower bound the gap between $\ell_{2}$ and $\ell_{c}$ throughout the interval by $\t T^{2}/2$, and upper bound the gap between $\ell_{c}$ and $-\p_{k - 1}$ by $\t T^{2}/4$, see Figure \ref{fig:inductive-step-2} for an illustration. These two together will be sufficient to establish the desired claim.

        Since $\ell_{2} - \ell_{c}$ is straight line, to lower bound it by $\t T^{2}/2$, it is sufficient to show that it is at least $\t T^{2}/2$ at both endpoints, i.e., at $t = \xi'_{k}$ and $t = \xi_{k}$.
        We already know from \eqref{eq:slopegap} that the gap between $\ell_{2}$ and $-\p_{k - 1}$ is $\t T^{2}$ at $t = \xi_{k}$. This is also the gap between $\ell_{2}$ and $\ell_{c}$, because $\ell_{c}(\xi_{k}) = -\p_{k - 1}(\xi_{k})$. We now show that the gap is at least $\t T^{2}/2$ at $t = \xi'_{k}$. For that we will apply the exact same argument as in \eqref{eq:slopegap}, but with $\xi_{k}'$ instead of $\xi_{k}$. Observe that from \eqref{eq:slopegap}, to achieve the lower bound of at least $\t T^{2}/2$, it is sufficient to show that $\xi'_{k} - \bxi_{k - 1} \geq \f{1}{2}(\xi_{k} - \bxi_{k - 1})$ which is equivalent to showing that $\xi_{k} - \xi'_{k} \leq \f{1}{2}(\xi_{k} - \bxi_{k - 1})$. Recalling \eqref{plugin25}, the following inequality is then sufficient:
        \begin{align}
          CT^{3\del/2}&(T - \xi_{k})^{3/4}\lam^{-(k - 1)/6}  \leq CT^{3\del/2}T^{3/4}\lam^{-3(k - 1)/8}\lam^{-(k - 1)/6} \nonumber\\
                                                          &= CT^{3/4 + 3\del/2}\lam^{-\f{13}{24}(k - 1)} 
                                                          \leq \tfrac{1}{2}(\be\sqrt{\g\lam} - 1)%
                                                          T\lam^{-(k - 1)/2} 
                                                          \leq \tfrac{1}{2}(\xi_{k} - \bxi_{k - 1}),
          \label{eq:gapxi}
        \end{align}
        which holds as long as $T$ is sufficiently large and $\del$ is sufficiently small. This completes the proof of the claim that the gap between $\ell_{2}$ and $\ell_{c}$ is at least $\t T^{2}/2$ throughout the interval.

        The last remaining step is to upper bound the gap between $\ell_{c}$ and $-\p_{k - 1}$ by $\t T^{2}/4$. The gap is 0 at $t = \xi_{k}$, and the (absolute) slope of $-\p_{k - 1}$ is maximized at $t = \xi_{k}$. Therefore this gap is upper bounded by
        \begin{align}
          \label{eq:slopegap2}
          (\xi_{k} - \xi'_{k}) \cdot \Abs{(\text{slope of } \ell_{c}) - (\text{slope of $-\p_{k - 1}$ at $\xi_{k}$})}.
        \end{align}
            Note that the slope of $-\p_{k - 1}$ at $\xi_k$ is $-2\lam^{k - 2}\xi_k$. Hence, the gap between $\ell_{c}$ and $-\p_{k - 1}$ is upper bounded by
             \begin{align*}
              (\xi_k - \xi'_k) & \Box{2\lam^{k - 2}\xi_k - \lam^{k - 2}(\xi_k + \bxi_{k - 1})} 
                               = \lam^{k - 2}\Rnd{\xi_{k} - \bxi_{k - 1}}\Rnd{\xi_{k} - \xi'_{k}} \\
                               &\leq C\lam^{k - 2} T^{\f{3}{4} + \f{3\del}{2}}\lam^{-\f{13}{24}(k - 1)} \cdot \Box{T(1 - \sqrt{\g}\lam^{-(k - 1)/2}) - T(1 - \be\lam^{-(k - 2)/2})} \\
                               &\leq C T^{\f{7}{4} + \f{3\del}{2}} \Rnd{\be\sqrt{\lam} - \g}\lam^{k - 2 - \f{13}{24}(k - 1) - (k - 1)/2}\leq \t T^{2}/4
            \end{align*}
            as long as $T$ is large enough where in the first inequality we used the calculations in \eqref{eq:gapxi} and the bounds in     \eqref{eq:xiubnd} and \eqref{eq:bxibnd}. This establishes our claim, and therefore completes the proof.}

\smallskip

Next, we turn to the control on the heavy paths with index $n>n_0$. 
\subsection{Proof of Lemma \ref{lem:crudebnd}}\label{sec:pfheavy}
    We first prove an upper bound for a single area-tilted line with a high strength parameter $a$.  Iterating this as $a$ ranges from $\lambda^n$ to $\lambda^{n_0}$ will yield the desired estimate \eqref{lowerline}.

    \newcommand\Env{\mathsf{Envelope}}
    \begin{lem}[High parameter one-line envelope]
        \label{lem:highenv}
      {Let $X \sim \L^{2a;H}_{T}$} where $a = \lam^{k - 1}$ for $k > n_0$. 
      Then, for any $\delta<1/20$, $H \le 2T^2$, one has $X \preceq \Env$ 
         with probability at least
        \begin{align*}
          1 - \Exp{-\T{\lam^{k\del/2}}},
        \end{align*}
      where the symbol $\T{\cdot}$ hides an absolute constant, and we define
        \begin{align*}
          \Env(s) := \begin{cases}
                      H + \Delta_2,& \quad s \in [-T, -T + \Delta_1], \\
                      \Delta_2,& \quad s \in [-T + \Delta_1, T - \Delta_1], \\
                      H + \Delta_2,& \quad s \in [T - \Delta_1, T], \\
                    \end{cases}
        \end{align*}
        where $\Delta_2 = T^\del \lam^{-(k - n_0)/5}, \Delta_1 = T\lam^{-k/2}$.
    \end{lem}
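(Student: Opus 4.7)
\emph{Strategy.} The plan is to combine the parabolic-barrier (PAR) representation of Lemma \ref{lem:girsanov} with the Brownian scaling of Lemma \ref{lem:scaling} to reduce to the standard area-tilted excursion, and then invoke the coming-down estimate of Lemma \ref{lem:doublesidedcomingdown} in the rescaled coordinates. The envelope $\Env$ will be obtained by combining a bulk bound (coming from scaling) with a boundary bound (coming from the convex geometry of the PAR envelope $\q$).

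\emph{Bulk bound via scaling.} By Lemma \ref{lem:scaling}, $\tilde X(s) := a^{1/3} X(a^{-2/3} s)$ has the law $\L^{2; Ha^{1/3}}_{Ta^{2/3}}$. For $k > n_0$ the definition of $n_0$ gives $\lambda^{(k-1)/2} > T^{16}$, so $H^{3/2} a^{-1/6} \le 2^{3/2} T^3 \lambda^{-(k-1)/6} \le 2^{3/2} T^{-7/3}$, much smaller than $T$; the hypothesis of Lemma \ref{lem:doublesidedcomingdown} is satisfied for $\tilde X$. Applying it and scaling back gives $X(u) \le a^{-1/3}(Ta^{2/3})^\delta = T^\delta a^{-(1-2\delta)/3}$ for $|u| \le T - H^{3/2} a^{-1/6}$. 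Using $\delta < 1/20$ and $k > n_0 \ge 1$, a short comparison of exponents shows $T^\delta a^{-(1-2\delta)/3} \le T^\delta \lambda^{-(k-n_0)/5} = \Delta_2$, the desired bulk bound.

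\emph{Boundary bound via PAR.} Writing $X = Y + a\p$ with $Y \sim \B^{H - aT^2}_T[-a\p]$ and letting $\xi = T - \sqrt{H/a}$ denote the inner tangency location (Lemma \ref{lem:tangency}), Lemma \ref{lem:stayclose} yields $|Y - \q| \le \Err$ w.h.p. On $[\xi, T]$ the PAR envelope $\q$ coincides with the tangent line $\ell_2$, and the convex quadratic $\ell_2 + a\p$ takes values $0$ at $\xi$ and $H$ at $T$; hence $\ell_2 + a\p \le H$ throughout $[\xi, T]$, so that $X \le H + \Err$ there. The maximum of $\Err$ on this interval is $\Err(\xi) \le T^\delta (H/a)^{1/4} \le 2^{1/4} T^{1/2 + \delta} \lambda^{-(k-1)/4}$; since $\lambda^{n_0/2} > T^{16}$, a short calculation shows $\lambda^{(k + 4n_0 - 5)/20} \gg T^{1/2}$ for every $k > n_0$, giving $\Err(\xi) \le \Delta_2$ and thus $X \le H + \Delta_2$ throughout $[\xi, T]$.

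\emph{Matching, probability, and main obstacle.} The three length scales near the boundary, $\sqrt{H/a}$, $H^{3/2} a^{-1/6}$ and $\Delta_1 = T \lambda^{-k/2}$, are all of the same order up to a multiplicative constant depending only on $\lambda$; absorbing these constants into $\Delta_1$, the bulk bound covers $|u| \le T - \Delta_1$ and the boundary bound covers $|u| \ge T - \Delta_1$, establishing $X \preceq \Env$ pointwise. The probability estimate $1 - \exp(-\Theta(\lambda^{k\delta/2}))$ is dictated by Lemma \ref{lem:doublesidedcomingdown}, whose failure probability is $C \exp\bigl(-c \min((Ha^{1/3})^{3/2}, (Ta^{2/3})^{3\delta/2})\bigr)$: a direct inspection gives $(Ta^{2/3})^{3\delta/2} = T^{3\delta/2} \lambda^{(k-1)\delta} \ge c' \lambda^{k\delta/2}$, and the degenerate case $H = 0$ is handled via the FS tail estimate of Lemma \ref{lem:fsfact}. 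The principal technical difficulty is precisely this matching step, in particular the verification that $\Err(\xi)$, which a priori carries a factor of $T^{1/2}$, is still dominated by $\Delta_2$; this relies crucially on $n_0 = \Theta(\log T)$, which makes $\lambda^{n_0/4}$ larger than any fixed polynomial in $T$.
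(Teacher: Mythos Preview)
Your matching step contains a genuine error. You claim that the three boundary length scales $\sqrt{H/a}$, $H^{3/2}a^{-1/6}$, and $\Delta_1 = T\lambda^{-k/2}$ are all comparable up to constants depending only on $\lambda$. The first and third are indeed comparable (their ratio is $\sqrt{\lambda}$ when $H\approx T^2$), but the second is not: the ratio
\[
\frac{H^{3/2}a^{-1/6}}{\sqrt{H/a}} \;=\; H\,a^{1/3}
\]
is of order $T^2\lambda^{(k-1)/3}$ when $H\approx T^2$, and since $k>n_0$ forces $\lambda^{(k-1)/3}\ge \lambda^{n_0/3}>T^{32/3}$, this ratio exceeds $T^{12}$. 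Thus the bulk region $\{|u|\le T-H^{3/2}a^{-1/6}\}$ delivered by Lemma~\ref{lem:doublesidedcomingdown} and the boundary region $\{|u|\ge \xi=T-\sqrt{H/a}\}$ covered by your PAR argument leave an uncovered annulus of width $H^{3/2}a^{-1/6}-\sqrt{H/a}$, on which neither of your bounds gives $X\le \Delta_2$. The outer bound $X\le 4H$ from Lemma~\ref{lem:doublesidedcomingdown} is far too weak there, and the bound $X\le \Err$ coming from Lemma~\ref{lem:stayclose} on $[-\xi,\xi]$ gives at best $T^{\delta}(H^{3/2}a^{-1/6})^{1/2}\approx T^{\delta+3/2}\lambda^{-(k-1)/12}$, which for large $k$ is not dominated by $\Delta_2=T^{\delta}\lambda^{-(k-n_0)/5}$.

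The paper avoids this by \emph{not} treating bulk and boundary independently. It first runs the PAR argument on $[\xi,T]$, which yields not only $X\le H+(\text{error})$ there but, crucially, also $X(\pm\xi)\le T^{-7}\lambda^{-(k-n_0)/5}$, a value already below $\Delta_2$. It then conditions on this and dominates $X|_{[-\xi,\xi]}$ by an FS diffusion shifted up by this tiny amount, to which the tail bound \eqref{eq:fsmax} applies directly. In other words, the small value at $\pm\xi$ obtained from the boundary step is used as the \emph{starting height} for the bulk step, so the relevant coming-down width is $(\text{tiny})^{3/2}a^{-1/6}$ rather than $H^{3/2}a^{-1/6}$. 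Your decoupled approach cannot recover this because Lemma~\ref{lem:doublesidedcomingdown} starts the coming-down from the full height $H$.
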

    \begin{proof}  Switching to the $\PAR$, let $Y := X - \p_{k}$ so that
            \begin{align*}
          {
          Y \sim \Bav{-\lam^{k - 1} T^2 + H}{T}{-\p_{k}}.
          }
        \end{align*}
        As in \rlem{lem:stayclose}, let $\xi$ be the $x$-location of tangency on the tangent line $\ell$ from $(T, -\lam^{k - 1}T^2 + H)$ to $-\p_{k}$. {\rlem{lem:tangency} (with $a = \lam^{k - 1}$ and $\al = a - 2$)} shows that $ \xi= T(1 - O(\lam^{-k/2}))$. Monotonicity and Brownian excursion fluctuation arguments, as {in \rlem{lem:stayclose}}, show that
        {$Y|_{[\xi, T]}$ is bounded by the shifted line $ T^{1/2} \lam^{-k(1 - \del)/4} + \ell$} with probability at least
        \begin{align}\label{1est}
          1 - \Exp{-\T{\f{T \lam^{-k(1 - \del)/2}}{T\lam^{-(k -1)/2}}}} = 1 - \Exp{-\T{ \lam^{k\del/2}}}.
        \end{align}
        {
        Since by assumption we have $\lam^{n_{0}} \geq T^{32}$, it follows that
        \begin{align*}
          \f{T^{1/2}}{\lam^{k(1 - \del)/4}}
          &= \f{T^{1/2}}{\lam^{n_0(1 - \del)/4}} \lam^{-(k - n_0)(1 - \del)/4} 
          \leq \f{T^{1/2}}{T^{8(1 - \del)}} \lam^{-(k - n_0)(1 - \del)/4} \\
          &= T^{\f{1}{2} - 8(1 - \del)}\cdot %
          \lam^{-(k - n_0)/5} \leq T^{-7}\lam^{-(k - n_0)/5},
        \end{align*}
        using the bounds $8(1 - \del) \geq 7 + \f{1}{2}$ and $1 - \del \geq 4/5$, both of which can be satisfied with sufficiently small $\del$.
        This shows that with the same probability,
        \begin{align}
          X(t) &\leq  H + T^{-7} \lam^{-(k - n_{0})/5}, \quad \forall t \in [-T, -\xi] \cup [\xi, T]\ \ \text{and}, \nonumber\\
          X(\pm \xi) &\leq T^{-7} \lam^{-(k - n_{0})/5}. \label{eq:tangencybnd}
        \end{align}
        } To bound $X|_{[-\xi, \xi]}$, we condition on the bound \eqref{eq:tangencybnd} on $X(\pm \xi)$, and observe that in that case, $X|_{[-\xi, \xi]}$ is stochastically bounded by a single area-tilted line with zero boundary conditions and %
        with floor $T^{-7} \lam^{-(k - n_{0})/5}$. Therefore, with this conditioning, using  \eqref{eq:fsmax} we have
        {
        \begin{align}\label{2est}
          &\P[X|_{[-\xi, \xi]} \preceq T^\del \lam^{-(k - n_{0})/5}\ \Big|\ X(\pm \xi) \leq T^{-7} \lam^{-(k - n_{0})/5}] \nonumber 
          \\
          &\qquad \geq 1 - 2T\lam^{2k/3}\Exp{-\T{(T^\del\lam^{-(k - n_{0})/5})^{3/2} \lam^{k/2}}} \nonumber \\
                                                                                                            &\qquad \geq 1 - 2T\lam^{2k/3}\Exp{-\T{T^{3\del/2} \lam^{k/5}}} 
                                                                                                            \geq 1 - \Exp{-\T{T^{3\del/2}\lam^{k/5}}}.
        \end{align}
        }
        Combining \eqref{1est} and \eqref{2est} %
        finishes the proof.
    \end{proof}
\begin{figure}[ht]
        \centering
                \input{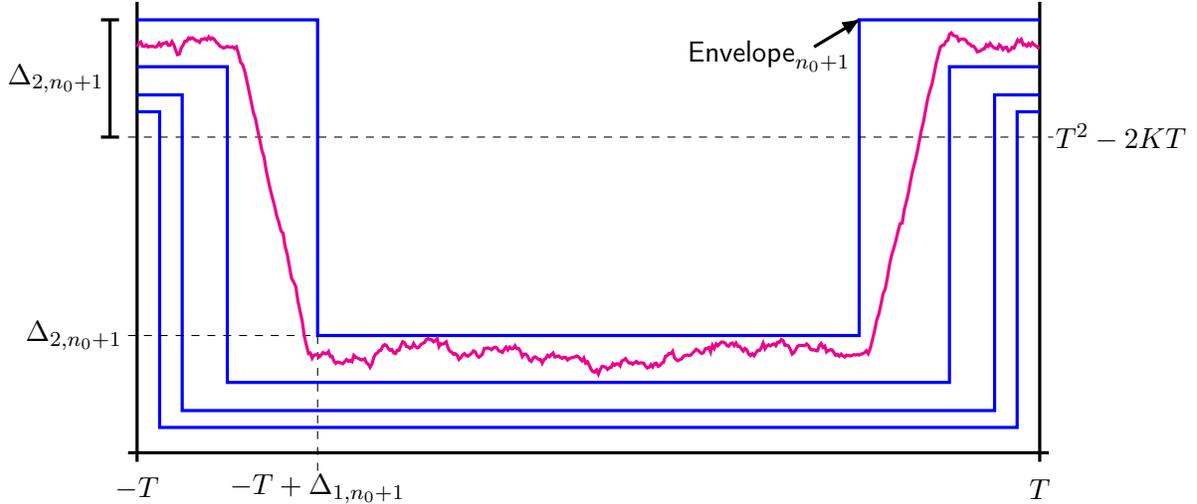}
        \caption{Illustration of the envelopes constructed in the proof of Lemma \ref{lem:crudebnd} to bound successively the heavier paths.}
        \label{fig:envelope}
    \end{figure}

    We are now in a position to complete the proof of  Lemma \ref{lem:crudebnd}.
    \begin{proof}[Proof of Lemma \ref{lem:crudebnd}]
        To prove \eqref{lowerline} we will repeatedly apply \rlem{lem:highenv} using monotonicity.
        Towards that, define for every $k > n_{0}$ (empty sums are $0$),
        \begin{align*}
          \Delta_{1,k} = T \sum_{j = k}^n \lam^{-j/2},\quad
          \Delta_{2,k} &= 2 \cdot T^\del \sum_{j = k}^n \lam^{-(j - n_{0})/5},\quad
                                               q_k = 2\sum_{j = k}^n \Exp{-c\lam^{k\del/2}}, \\
          \Env_k(t) &= \begin{cases}
                         T^2 - 2KT + \Delta_{2,k} & \quad t \in [-T, -T+\Delta_{1,k}],    \\
                         \Delta_{2,k}     & \quad t \in [-T + \Delta_{1,k}, T - \Delta_{1,k}], \\
                         T^2 -2KT + \Delta_{2,k} & \quad t \in [T - \Delta_{1,k}, T].      \\
                       \end{cases}
        \end{align*}
        Here $c>0$ is an absolute constant.
We show by induction that, if $c$ is sufficiently small,  with probability $1 - q_k$ we have $X^k \preceq \Env_k$ for all indices $k \in (n_0, n]$. {Observe that since $k > n_{0}$, $\Delta_{1, k} \leq \T{T^{-15}} \leq T^{-10},$ for all $k$ (and sufficiently large $T$). Similarly $\Delta_{2, k} \leq \T{T^{\del}}$ for all $k>n_0$.} Therefore, this inductive framework is sufficient to establish the desired bound in Lemma \ref{lem:crudebnd}.

        The base case $k=n$ immediately follows from \rlem{lem:highenv} with {$ H = T^2 - 2KT$ and $a = \lam^{n - 1}$}.
        For the inductive step, we assume that this bound is satisfied for $k + 1$ with probability $\geq 1 - q_{k + 1}$. Conditioned on this event, removing all lines above $X^k$, monotonicity implies $X^k \preceq Z^k$ where
        \begin{align*}
          Z^k \sim \Lav{2\lam^{k - 1};T^2 - 2KT + \Delta_{2,k + 1}}{ T}{\Env_{k + 1}}.
        \end{align*}
       {
        Again by monotonicity, $Z^k \preceq Y^k$ where $Y^k$ is  a single area-tilted line with zero boundary conditions and %
        with  %
        constant floor 
        $T^2 -2KT + \Delta_{2,k + 1}$ and parameter $2\lam^{k - 1}$.}
        Then by \eqref{eq:fsmax} we have
        \begin{align}
          &{\P[{Z^{k}(t) - (T^2  - 2KT + \Delta_{2,k + 1}) \le T^\del \lam^{-(k - n_{0})/5},\ \ \forall t \in [-T, -T + \Delta_{1, k + 1}, T - \Delta_{1, k + 1}, T]}]} \nonumber \\
          &\hskip10em \geq 1 - T\lam^{2k/3}\Exp{-\T{T^{3\del/2}\lam^{-3(k - n_{0})/10}\lam^{k/2}}} \nonumber \\
          &\hskip10em = 1 - \Exp{-\T{T^{3\del/2}\lam^{k/5}}}=:1-\eta_k. \label{finalbound1234}
        \end{align}
In particular, with this %
probability,
        \begin{align*}
          Z^{k}(\pm (T - \Delta_{1, k + 1})) \leq T^2 -2KT + \Delta_{2,k + 1} + T^\del \lam^{-(k - n_{0})/5} =: C_{k}
        \end{align*}
        Therefore, in this case,  $Z^{k}|_{[-T + \Delta_{1, k + 1}, T - \Delta_{1, k + 1}]}$ is stochastically dominated by $\hat Z^k$ where
        \begin{align*}
         \hat Z^k &\sim \L^{2\lam^{k - 1};C_k}_{ T - \Delta_{1, k + 1}}[\Delta_{2, k + 1}].
        \end{align*}
        Observe that $$\hat Z^k = V^k + \Delta_{2, k + 1}$$ where $V^k \sim \L^{2\lam^{k - 1};C_k - \Delta_{2, k + 1}}_{ T - \Delta_{1, k + 1}}$.
        Since $\Delta_{1, k + 1} \leq O(T\lam^{-(n_{0} - 1)/2}) = O(T^{-15})$,        \begin{align*}
          C_k - \Delta_{2,k + 1} \leq T^{2} \leq 2(T - \Delta_{1, k + 1})^2 = 2\Rnd{T - O(T^{-15})}^{2}.
        \end{align*}
        This allows us to apply \rlem{lem:highenv}, {with $T$ replaced by $ T - \Delta_{1, k + 1}, H = C_{k} - \Delta_{2, k  + 1}$} yielding
        \begin{align*}
          \P[V^k \preceq \Env'_k] \geq 1 - \Exp{-\T{\lam^{k\del/2}}}=:1-\eta'_k,    \end{align*}
      where we define $\Env'_k$ as
      \begin{align*}
        \Env'_k(t) = \begin{cases}
                       C_{k} - \Delta_{2, k + 1} + T^{\del}\lam^{-(k - n_{0})/5} & t \in [-T, -T + \Delta_{1, k}] \cup [T - \Delta_{1, k}, T] \\
                       T^{\del}\lam^{-(k - n_{0})/5} & t \in [-T + \Delta_{1, k}, T - \Delta_{1, k}].
                     \end{cases}
      \end{align*}
      Therefore $\hat Z^k \preceq \Env'_k + \Delta_{2,k + 1} \preceq \Env_k$ with probability $1-\eta'_k$. This along with \eqref{finalbound1234}, finishes the induction step by the observation that the  %
      probability of $X^k  \preceq \Env_k$ is at least 
      \begin{align*}
      (1-q_{k+1})(1-\eta_k)(1-\eta'_k)\geq 1-q_{k+1}-\eta_k-\eta'_k \ge 1-q_k.
      \end{align*}
      Note that above, the assumption that $\del$ is small enough, in particular $\del<1/20$, and that the constant $c>0$ defining $q_k$ is sufficiently small is used. %
      This ends the proof.
\end{proof}

{

}

\bibliographystyle{plain}

\bibliography{ref}

\end{document}